\newcommand{\be}{\begin{eqnarray}}
\newcommand{\ee}{\end{eqnarray}}
\newcommand{\bes}{\begin{eqnarray*}}
\newcommand{\ees}{\end{eqnarray*}}
\newcommand{\D}{\displaystyle}
\newcommand{\la}{\label}
\newcommand{\Om}{\Omega}
\newcommand{\ep}{\varepsilon}
\newcommand{\na}{\nabla}
\def\R{\mathbb{R}}
\newcommand{\p}{\partial}
\newtheorem{thm}{Theorem}[section]
\newtheorem{pro}{Proposition}[section]
\newtheorem{lem}{Lemma}[section]
\newtheorem{cor}{\textbf Corollary}[section]
\newtheorem{re}{Remark}[section]
\newcommand{\beq}{\begin{equation}}
\newcommand{\eeq}{\end{equation}}
\newcommand{\ben}{\begin{eqnarray}}
\newcommand{\een}{\end{eqnarray}}
\newcommand{\beno}{\begin{eqnarray*}}
\newcommand{\eeno}{\end{eqnarray*}}
\numberwithin{equation}{section}
\subjclass[2020]{35A01, 35B35, 35B65, 76D03, 76E25}
\keywords{3D MHD equations; Background magnetic field;  Partial dissipation; Stability; Decay estimates}
\begin{document}
	
	\title[ MHD equations]
   {Global Stability and Sharp decay estimates for 3D MHD equations with only  vertical dissipation near a background magnetic field}
	
\author[\small S. Lai, J. Wu, J. Zhang  and  X. Zhao]{\small Suhua Lai$^{1}$, Jiahong Wu$^{2}$, Jianwen Zhang$^{3}$ and  Xiaokui Zhao$^{4}$}

\address{1. Suhua Lai,  School of Mathematics and Statistics \& Jiangxi Provincial Center for Applied Mathematics, Jiangxi Normal University, Nanchang, Jiangxi,  330022, P. R. China}
 \email{shlai@jxnu.edu.cn}

 \address{2. Jiahong Wu, Department of Mathematics, University of Notre Dame, Notre Dame, IN 46556, USA}
 \email{jwu29@nd.edu}

\address{3. Jianwen Zhang, School of Mathematical Sciences, Xiamen University, Xiamen, Fujian, 361005, P. R. China}
\email{jwzhang@xmu.edu.cn}

\address{4. Xiaokui Zhao, School of Mathematics and Information Science, Henan Polytechnic University, Jiaozuo, Henan 454000, P. R. China}
 \email{xkzhao@hpu.edu.cn}

\vskip .2in
\begin{abstract}	
 This paper is concerned with the stability and large-time behavior of 3D incompressible MHD equations with only  vertical dissipation near a background magnetic field.  By making full use of  the dissipation generated by the background magnetic field, we first establish the global stability of the solutions in $H^3$-norm. Then, the optimal decay rates of the solutions are obtained, which are consistent with the 2D classical heat equation. Moreover, some  enhanced decay rates of $(u_1,b_1)$ are also achieved.  In other words, the decay estimates of the second or third component of velocity/magnetic field coincide with those of 2D heat kernel, while the first component behaves like the 3D heat kernel. This is mainly due to the divergence-free condition  and the anisotropic structure. The results obtained improve the previous ones due to Lin-Wu-Zhu \cite{LinWZ1,LinWZ2}.

\end{abstract}
	\maketitle

\section{Introduction}
Magnetohydrodynamics (MHD) is one of the most important branches of continuum mechanics, which deals with the interplay between electromagnetic fields and conducting fluids. The MHD system is coupled by the Navier-Stokes equations for fluid dynamics and the Maxwell equations for electromagnetism.
Since the pioneering work of Alfv\'{e}n \cite{Alf}, MHD is widely used in many fields, such as geomagnetism and planetary magnetism, astrophysics, nuclear fusion (plasma) physics, and liquid metal technology (see, e.g., \cite{BD, DA, PF}).  In recent years, the stability of MHD equations has attracted a lot of attention. A significant nonlinear phenomena of MHD  observed by physical experiments and numerical simulations is that a background magnetic field can smooth and stabilize turbulent electrically conducting fluids (see, e.g., \cite{AMSF, AL, GBM, IGBHM, Moffatt}).

In this paper, we consider  the following 3D  MHD equations with only vertical velocity dissipation:
\begin{equation}\label{MHD1}
\begin{cases}
U_t-\mu\partial_3^2U=B\cdot\nabla B-U\cdot\nabla U-\na P, \quad x\in\R^3,\,t>0,\\
B_t-\eta\p_2^2 B-\eta\p_3^2B=B\cdot\nabla U-U\cdot\nabla B,\\
{\rm \nabla\cdot}\,U={\rm \nabla\cdot}\,B=0,\\
(U,B)(x,t)|_{t=0}=(U_0,B_0)(x),
\end{cases}
\end{equation}	
where $U=(U_1, U_2, U_3)^{\top},~ B=(B_1, B_2, B_3)^{\top}$ and $P$ are the velocity field, the
magnetic field, and the pressure, respectively. The positive constants
$\mu>0$ and  $\eta>0$  are the viscosity coefficient and the magnetic diffusivity coefficient, respectively.
\vskip .1in
Our goal is to understand the stability of  3D anisotropic MHD system (\ref{MHD1}) near a equilibrium state $(U^{(0)},B^{(0)})$,
$$
U^{(0)} \equiv 0, \quad B^{(0)} \equiv e_2 := (0,1,0).
$$
Thus, the perturbation $(u, b)$ around the steady state $(U^{(0)},B^{(0)})$ with
$$
u :=U-U^{(0)}, \qquad b:= B-B^{(0)}.
$$
is  governed by
\begin{equation}\label{PMHD}
\left\{\begin{aligned} &u_t-\mu\partial_3^2u-\partial_2b=b\cdot\nabla b-u\cdot\nabla u-\na p,\quad x\in\R^3,\,t>0,
\\
&b_t-\eta\p_2^2 b-\eta\p_3^2b-\partial_2u=b\cdot\nabla u-u\cdot\nabla b,
\\
&{\rm \nabla\cdot}\,u={\rm \nabla\cdot}\,b=0,\\
&u(x, 0) = u_0(x),\,\,\, b(x, 0) = b_0(x).
\end{aligned}
\right.
\end{equation}
\vskip .1in
In fact,  if $B=0$ in (\ref{MHD1}), then it reduces to the 3D Navier-Stokes equations with only vertical  dissipation,
\begin{equation}\label{NS}
\begin{cases}
U_t-\mu\partial_3^2U=B\cdot\nabla B-U\cdot\nabla U-\na P, \quad x\in\R^3,\,t>0,\\
{\rm \nabla\cdot}\,U=0,\\
U(x,t)|_{t=0}=U_0(x).
\end{cases}
\end{equation}
The global stability and large-time behavior of (\ref{NS}) is a challenging  problem. The major difficulty lies in the fact that the dissipation in only one direction is too weak to  control all the nonlinear terms in the whole space $\R^3$.
In a recent work \cite{PZ},  Paicu-Zhang    established the global well-posedness of smooth solutions with  small data  for (\ref{NS}) on the strip domain  $\Om=\R^2\times [0,1]$. The analysis of \cite{PZ} strongly relies on the geometry of the domain and a {Poincar\'{e}} type inequality, which no longer holds true for the case of whole space. Motivated by this, the main purpose of this paper is to study the global stability of MHD and to reveal  the influence of magnetic field on fluid stability from a mathematical point of view.
\vskip .1in
In the following, we briefly recall some noteworthy works on the stability of   MHD equations near a background magnetic field. In the pioneering work \cite{BSS},   Bardos-Sulem-Sulem  first demonstrated the stability effect of magnetic fields in ideal MHD. Cai-Lei \cite{CL} and He-Xu-Yu \cite{HXY}, via different approaches, successfully solved the stability problem of the ideal MHD system and its fully dissipative counterpart
(with identical viscosity and resistivity) near a background magnetic field.
Wei-Zhang \cite{WZ} allowed the viscosity and resistivity coefficients to be slightly different.
The  stability of 2D viscous and non-resistive MHD equations was first considered by Lin-Xu-Zhang \cite{LXZ}, and then was extended and improved by  \cite{ CR,  JFJS, RWXZ, RXZ, Wu1, ZT2}. We also refer to \cite{AZ,DengZ,PZZ,TW} for the stability theory of 3D viscous and non-resistive MHD equations. In \cite{ZZ},  Zhou-Zhu  proved the stability of  2D inviscid and resistive MHD equations on the periodic domain by using  the odevity conditions proposed in \cite{PZZ}. Wu-Wu-Xu \cite{Wu3} (also cf.  \cite{DYZ}) studied the stability of 2D  MHD system with only velocity damping.
\vskip .1in

Recently, the stability of the incompressible MHD equations with partial dissipation has  attracted more and more attention, since it mathematically reveals the stability mechanism of a background magnetic field, compared with the Navier-Stokes equations. For 2D MHD equations with partial dissipation,  one can refer to  \cite{LJWY,BLWSIAM,FengFWu,LWZ1,LWZ2,LXW,PaicuZ} and the references therein for interest. Next, we would like to introduce some works on the stability of 3D  MHD equations with partial dissipation, which is certainly more complicated and difficult. The first work is due to  Wu-Zhu \cite{Wu4}, where the authors considered the  3D  MHD equations with horizontal dissipation $\Delta_{h}u$ and vertical magnetic diffusion $\p^2_{3}b$. Then, Lin-Wu-Zhu \cite{LinWZ1} proved  the stability of 3D  MHD equations with  only one-directional dissipation $\p^2_{1}u$ and horizontal magnetic diffusion $\Delta_{h}b$ in Sobolev space $H^4(\R^3)$. Lin-Wu-Zhu \cite{LinWZ2}  improved the stability result under the assumption that initial data $(u_0,b_0)$ satisfy
$$\|(u_0,b_0)\|_{H^3}+\|\|\p_1^k\left(u_0,b_0\right)\|_{L_{x_1}^2 L_{x_2x_3}^1} \leq \varepsilon, \quad\text{with}\quad k=0,1,2.$$
Recently,  the stability of an initial-boundary value problem of 3D  MHD equations with only vertical dissipation $\p^2_{3}u$ and $\p^2_{3}b$ was studied by  Lin-Wu-Suo \cite{LWS} in the case of strip domain $\Om=\R^2\times [0,1]$ with Dirichlet boundary conditions by using the Poincar\'{e} type inequality.

The main purpose of this paper is to study the stability theory and large-time behavior of the Cauchy problem (\ref{PMHD}) on the whole domain $\R^3$. Our first result of this paper, concerning the global solvability and stability   in the $H^3$-framework, is formulated in the following theorem.
\begin{thm}\la{thm1.1}
	Assume that $(u_0,b_0)\in H^3$ with ${\rm \nabla\cdot}\,u_0={\rm \nabla\cdot}\,b_0=0$. There exists an absolutely  positive constant  $\varepsilon>0$, depending only on $\mu$ and $\eta$, such that if
	$$\|(u_0,b_0)\|_{H^3}\leq \varepsilon,$$
	then the  problem \eqref{PMHD} has a unique global solution $(u, b)$ on $\R^3\times[0,\infty)$, satisfying
	\begin{equation}
	\|(u,b)(t)\|_{H^3}^2+\int_0^T\left(\|(\p_3u,\p_2b,\p_3 b)\|_{H^3}^2+\|\p_2u\|_{H^2}^2\right)d t\leq \ep^2.	\label{int0}
	\end{equation}
\end{thm}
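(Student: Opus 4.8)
The plan is to establish \eqref{int0} via a bootstrap (continuity) argument built on a single carefully chosen energy functional. First I would set up the standard local existence theory in $H^3$ for \eqref{PMHD}, which is routine since the nonlinearities are quadratic and the linear part is parabolic in the dissipated directions; this gives a maximal time $T^*$ and reduces the theorem to the a priori bound. Define
\begin{equation*}
E(t):=\sup_{0\le s\le t}\|(u,b)(s)\|_{H^3}^2+\int_0^t\left(\mu\|\p_3u\|_{H^3}^2+\eta\|(\p_2b,\p_3b)\|_{H^3}^2\right)ds,
\end{equation*}
and the goal of the bootstrap is to show $E(t)\le C_0E(0)+C_1E(t)^{3/2}$, so that smallness of $E(0)$ forces $E(t)\le\varepsilon^2$ on all of $[0,T^*)$ and hence $T^*=\infty$. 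The energy estimates proceed by applying $\p^\alpha$ for $|\alpha|\le 3$ to the two equations, pairing with $\p^\alpha u$ and $\p^\alpha b$ respectively, integrating over $\R^3$, and summing: the pressure drops out by $\dg u=0$, the linear coupling terms $-\p_2b$ and $-\p_2u$ cancel against each other, and one is left with the dissipation terms producing $\mu\|\p_3\p^\alpha u\|_{L^2}^2+\eta\|\p_2\p^\alpha b\|_{L^2}^2+\eta\|\p_3\p^\alpha b\|_{L^2}^2$ on the good side, and a collection of trilinear terms $\int\p^\alpha(b\cdot\na b)\cdot\p^\alpha u$, $\int\p^\alpha(u\cdot\na u)\cdot\p^\alpha u$, etc., on the right.

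The crux is that the available dissipation is anisotropic and does \emph{not} directly control $\p_2u$ or the horizontal derivatives $\p_1u,\p_2u$ of the velocity, nor $\p_1b$; so the trilinear terms cannot all be absorbed by a naive Cauchy--Schwarz. The main obstacle, and the step I expect to be hardest, is precisely controlling the "bad" velocity derivatives. The key observation — already forced on us by the structure of \eqref{PMHD} — is that the $b$-equation reads $\p_2u=b_t-\eta\p_2^2b-\eta\p_3^2b-b\cdot\na u+u\cdot\na b$, so $\p_2u$ can be traded for a time derivative of $b$ plus terms involving only the good dissipation of $b$ and quadratic remainders. Concretely, in the nonlinear estimates one isolates factors of $\p_2u$ (and, via $\dg u=0$, also $\p_1u_1=-\p_2u_2-\p_3u_3$, which converts one horizontal derivative into the dissipated $\p_3$ direction) and substitutes this identity; the $b_t$ piece is handled by an integration by parts in $t$, moving the time derivative onto the remaining factors and generating "interaction" terms that feed back into $E(t)$ at cubic order. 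This is the mechanism by which the background magnetic field "stabilizes" the system: the wave-type coupling $\p_2b\leftrightarrow\p_2u$ upgrades the partial dissipation into effective control of the missing directions. Anisotropic Sobolev inequalities of the Ladyzhenskaya type (bounding $\|f\|_{L^\infty_{x_i}L^2_{\text{rest}}}$ or $\|fgh\|_{L^1}$ by products of $L^2$ norms and one derivative in each coordinate) are the essential technical tool for distributing derivatives so that every trilinear term ends up dominated by $\|(u,b)\|_{H^3}\cdot(\text{square root of the dissipation integrand})$, hence by $E(t)^{3/2}$ after time integration.

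Once the a priori inequality $E(t)\le C_0E(0)+C_1E(t)^{3/2}$ is in hand, the bootstrap closes: choosing $\varepsilon$ small relative to $C_0,C_1$ (and these depend only on $\mu,\eta$, as claimed) gives $E(t)\le 2C_0\varepsilon^2$ uniformly, so the solution never leaves the small-$H^3$ regime and extends globally; uniqueness follows from a standard energy estimate on the difference of two solutions. Finally, to recover \eqref{int0} in the stated form — which also lists $\int_0^T\|\p_2u\|_{H^2}^2\,dt$ — I would run the $\p_2u$-substitution argument once more at the level of the time-integrated estimate: pairing $\p^\beta u$ ($|\beta|\le 2$) against $\p_2\p^\beta b$ via the $b$-equation and using Theorem's own dissipation bounds for $b$ shows $\int_0^T\|\p_2u\|_{H^2}^2\,dt\lesssim E(\infty)\lesssim\varepsilon^2$, completing the proof.
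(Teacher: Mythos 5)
Your strategy is the same in outline as the paper's (anisotropic $H^3$ energy estimates, the divergence-free trade $\p_1u_1=-\p_2u_2-\p_3u_3$, substitution of the magnetic equation for $\p_2u$ with an integration by parts in time, and a bootstrap), but as written it has two genuine gaps.

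First, the bootstrap quantity is insufficient. You cannot close $E(t)\le C_0E(0)+C_1E(t)^{3/2}$ when $E$ contains only the dissipation $\mu\|\p_3u\|_{H^3}^2+\eta\|(\p_2b,\p_3b)\|_{H^3}^2$. Several trilinear terms, for instance $\int\p_2^3u\cdot\nabla b\cdot\p_2^3b\,dx$ arising from $\p_2^3(u\cdot\nabla b)$, are quadratic in dissipated quantities only if one of the two dissipation factors is $\|\p_2u\|_{H^2}$; with your $E$ they are at best of the form $\|(u,b)\|_{H^3}^2\times(\text{one dissipation factor})$, which after time integration picks up a growing factor of $t^{1/2}$ and does not close. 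This is exactly why the paper runs a \emph{double} energy: $\mathcal{E}_2(t)=\int_0^t\|\p_2u\|_{H^2}^2\,d\tau$ as in \eqref{E2} must be part of the bootstrapped quantity, and its own estimate \eqref{EE2} contains $C\mathcal{E}_1(t)$ (not a small multiple) on the right, so \eqref{EE1} and \eqref{EE2} have to be added and bootstrapped simultaneously. Your plan of first closing $E(t)$ and only afterwards deriving the $\p_2u$ bound is circular, because closing $E(t)$ already requires that bound.

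Second, the single substitution $\p_2u_2=\p_tb_2-\eta\p_2^2b_2-\eta\p_3^2b_2+\dots$ does not suffice for the worst terms such as $D_1=2\int\p_2u_2(\p_1^3u_2)^2\,dx$: it leaves behind $-2\eta\int(\p_1^3u_2)^2\p_2^2b_2\,dx$, which is not quadratic in the dissipated quantities — the anisotropic inequalities would require a horizontal derivative on one of the fourth-order factors $\p_1^3u_2$, which is unavailable in $H^3$, and the best attainable bound $\|u\|_{H^3}^{3/2}\|\p_2b\|_{H^3}\|\p_3u\|_{H^3}^{1/2}$ again fails to close after time integration. The paper's resolution is the second substitution \eqref{m13}, obtained by feeding the velocity equation back in so that $-\eta\p_2^2b_2$ becomes $-\eta\p_t\p_2u_2+\mu\eta\p_2\p_3^2u_2+\dots$ and the time derivative acts on the combination $b_2-\eta\p_2u_{2}$; even then, the resulting term $F_4$ in \eqref{D1g} can only be handled through an exact cancellation with the term $E_1$ deliberately retained in \eqref{H53} from the $b\cdot\nabla u$ nonlinearity of the magnetic equation. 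Both devices are essential and absent from your sketch.
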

\begin{re}
   Theorem \ref{thm1.1} improves the resuts obtained in \cite{LinWZ1,LinWZ2}. In  \cite{LinWZ1}, Lin-Wu-Zhu studied the global stability under the smallness condition $\|(u_0,b_0)\|_{H^4}\leq \varepsilon$. This result was extended and improved by \cite{LinWZ2}, where the following conditions were technically needed for the global $H^3$ stability:
 $$\|(u_0,b_0)\|_{H^3}+ \|\p_1^k\left(u_0,b_0\right)\|_{L_{x_1}^2L_{x_2x_3}^1} \leq \varepsilon,, \quad\text{with}\quad k=0,1,2.$$
 In the present paper, the global  $H^3$ stability  of the solutions is shown by requiring only that the initial data $\|(u_0,b_0)\|_{H^3}$ is suitably small.
\end{re}

The issue of large-time behavior and  decay rates of the global solutions is more subtle.
We will derive the sharp decay rates of the solutions by the method of spectral analysis. For the notational convenience,  we denote by
$$
\begin{cases}
\|f\|_{L^p}\triangleq \|f\|_{L^{p}(\mathbb R^3)},\quad \|f\|_{W^{k,p}}\triangleq\|f\|_{W^{k,p}(\mathbb R^3)},\\
\|f\|_{L^2}\triangleq \|f\|_{L^{2}(\mathbb R^3)},\quad \|f\|_{W^{k,2}}\triangleq\|f\|_{H^k}=\|f\|_{H^k(\mathbb R^3)},\\
\|f\|_{L^rL^qL^p}\triangleq \left\|\left\|\|f\|_{L^p(\R)}\right\|_{L^q(\R)}\right\|_{L^r(\R)},\quad \|f\|_{L_{x_1}^qL_{x_2,x_3}^p}\triangleq \left\| \|f\|_{L_{x_2,x_3}^p(\R^2)} \right\|_{L_{x_1}^q(\R)}.
\end{cases}
$$
Moreover, the same letter  $ C $ will be used to denote the generic positive constants, which may be different from  line to line.

Our second and main result of this paper is concerned with the sharp decay rates of the global solutions.
\begin{thm}\label{thm1.2}
	In addition to the conditions of Theorem \ref{thm1.1}, assume that $(u_0,b_0)\in L^1$ and $\p_1^k(u_0,b_0)\in L^2_{x_1}L^1_{x_2,x_3} $ for $ k=0,1,2 $. There exists  an absolutely  positive constant  $\varepsilon>0$, such that if
	\begin{align} \label{int1}
	\|(u_0,b_0)\|_{H^3}+\|(u_0,b_0)\|_{L^1}+ \|\p_1^k(u_0, b_0)\|_{L_{x_1}^2L_{x_2,x_3}^1} \leq \varepsilon,
	\end{align}
	then   the global solutions  $(u,b)$ of the problem \eqref{PMHD} satisfy
	\begin{eqnarray*}
		&& \|\p_1^k(u,b)(t)\|_{L^2}\leq C_0\varepsilon(1+t)^{-\frac{1}{2}},  \quad\quad\,\,  \|\p_{i}(u, b)(t)\|_{L^2}\leq C_0\varepsilon(1+t)^{-1},  \\
		&& \|\p_{i}\p_3(u,b)(t)\|_{L^2}\leq C_0\varepsilon(1+t)^{-\frac32},\quad\;\,\,
		\|\p_l\p_2(u,b)(t)\|_{L^2}\leq C_0\varepsilon(1+t)^{-\frac{11}{12}},  \\
		&& \|\p_{1}\p_3(u,b)(t)\|_{L^2}\leq C_0\varepsilon(1+t)^{-1}, \quad\;\;\, \|\p_1^2(u,b)\|_{L^2}\leq C_0\varepsilon(1+t)^{-\frac{3}{8}}, \\
		&& \|\p_l\p_2\p_3(u,b)(t)\|_{L^2}\leq C_0\varepsilon(1+t)^{-\frac{11}{12}},\;\, \|\p_1^2\p_3(u, b)(t)\|_{L^2}\leq C_0\varepsilon(1+t)^{-\frac12},
		\\
		&&\|\p_1\p_3^2(u, b)(t)\|_{L^2}\leq C_0\varepsilon(1+t)^{-\frac54},\quad \;\; \|\p_2\p_3^2(u, b)(t)\|_{L^2}\leq C_0\varepsilon(1+t)^{-\frac{11}{6}},\\
		&& \| \p_3^3(u, b)(t)\|_{L^2}\leq C_0\varepsilon(1+t)^{-\frac{23}{12}},\quad\;\;\;\; \|(u_1,b_1)(t)\|_{L^2}\leq C_0\varepsilon(1+t)^{-\frac{3}{4}},\\
 && \|\p_{i}(u_1, b_1)(t)\|_{L^2}\leq C_0\varepsilon(1+t)^{-\frac{5}{4}},  \quad\;\;\,
 \|\p_{i}\p_3(u_1, b_1)(t)\|_{L^2}\leq C_0\varepsilon(1+t)^{-\frac{7}{4}},\\
 &&  \| \p_3^3(u_1, b_1)(t)\|_{L^2}\leq C_0\varepsilon(1+t)^{-\frac{9}{4}},
	\end{eqnarray*}
	where $k=0,1,~i=2,~3,~l=1,~2$.
	
\end{thm}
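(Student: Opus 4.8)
The plan is to couple a spectral (Fourier) analysis of the linearized operator with a continuity/bootstrap argument on the Duhamel formula, using the uniform bound \eqref{int0} of Theorem~\ref{thm1.1} to dispose of high frequencies and top-order derivatives. \emph{Step 1 (Mild formulation and the linear symbol).} Applying the Leray projection $\mathbb{P}$ to \eqref{PMHD} and noting $\mathbb{P}(\partial_2 b)=\partial_2 b$, $\mathbb{P}(\partial_2 u)=\partial_2 u$ (since $u,b$ are divergence free), the system reads $\partial_t(u,b)=\mathcal{A}(u,b)+\mathbb{P}(N_1,N_2)$ with $N_1=b\cdot\na b-u\cdot\na u$, $N_2=b\cdot\na u-u\cdot\na b$, so $(u,b)(t)=e^{t\mathcal{A}}(u_0,b_0)+\int_0^t e^{(t-s)\mathcal{A}}\mathbb{P}(N_1,N_2)(s)\,ds$. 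Because the linear terms $\mu\partial_3^2u$, $\eta(\partial_2^2+\partial_3^2)b$, $\partial_2b$, $\partial_2u$ are already divergence free, $e^{t\mathcal{A}}$ acts componentwise in Fourier: for each $j$, $(\widehat{u_j},\widehat{b_j})$ solves $\partial_t\widehat{u_j}=-\mu\xi_3^2\widehat{u_j}+i\xi_2\widehat{b_j}$, $\partial_t\widehat{b_j}=-\eta|\xi'|^2\widehat{b_j}+i\xi_2\widehat{u_j}$ with $\xi'=(\xi_2,\xi_3)$, a $2\times2$ system whose characteristic roots are $\lambda_\pm(\xi)=\tfrac12\big(-(\mu\xi_3^2+\eta|\xi'|^2)\pm\sqrt{(\mu\xi_3^2-\eta|\xi'|^2)^2-4\xi_2^2}\,\big)$; the constraint $\xi_1\widehat{u_1}=-(\xi_2\widehat{u_2}+\xi_3\widehat{u_3})$ (and its $b$-analogue) is propagated.

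\emph{Step 2 (Anisotropic linear decay).} The key fact is that $\xi_1$ is absent from the symbol, so $-\mathrm{Re}\,\lambda_\pm(\xi)\gtrsim\min(\mu,\eta)|\xi'|^2$ for $|\xi'|$ small, \emph{uniformly in} $\xi_1$; hence on low frequencies $|\widehat{u_j}(t)|+|\widehat{b_j}(t)|\lesssim e^{-c|\xi'|^2t}(|\widehat{u_j}(0)|+|\widehat{b_j}(0)|)$, with exponential decay on $\{|\xi'|\gtrsim1\}$ that \eqref{int0} converts to $H^3$-level bounds. Since there is no smoothing at all in $\xi_1$, for the transverse components one only gets $2$D-heat rates: applying Plancherel in $x_1$ first, then $\sup_{\xi'}|\widehat{f}(\xi_1,\xi')|\le\|f(x_1,\cdot)\|_{L^1_{x_2,x_3}}$ against $\int_{\R^2}|\xi'|^{2m}e^{-c|\xi'|^2t}\,d\xi'\lesssim t^{-1-m}$ yields $\|\partial_1^k e^{t\mathcal{A}}f\|_{L^2}\lesssim t^{-\frac12}\|\partial_1^k f\|_{L^2_{x_1}L^1_{x_2,x_3}}$, which is exactly why hypothesis \eqref{int1} is posed in $L^2_{x_1}L^1_{x_2,x_3}$ for $k=0,1,2$. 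For the first components, combining $|\widehat{u_1}(t)|\lesssim e^{-c|\xi'|^2t}\|(u_0,b_0)\|_{L^1}$ with $|\widehat{u_1}(t)|\lesssim\frac{|\xi'|}{|\xi_1|}e^{-c|\xi'|^2t}\|(u_0,b_0)\|_{L^1}$ (the latter from the divergence-free relation) gives $|\widehat{u_1}(t)|\lesssim e^{-c|\xi'|^2t}\min(1,|\xi'|/|\xi_1|)\|(u_0,b_0)\|_{L^1}$, and since $\int_{\R}\min(1,|\xi'|^2/\xi_1^2)\,d\xi_1\sim|\xi'|$, the $\xi_1$-integration restores one full heat dimension and upgrades the clean transverse rates by $t^{-1/4}$ ($t^{-1/2}\mapsto t^{-3/4}$, $t^{-1}\mapsto t^{-5/4}$, etc.), producing the $3$D-heat behaviour of $(u_1,b_1)$.

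\emph{Step 3 (Bootstrap, nonlinear estimates, interpolation).} I would define $\mathcal{M}(t)$ as the supremum over $[0,t]$ of all the norms in the statement divided by their claimed time weights, assume $\mathcal{M}(T)\le2C_0\varepsilon$ on the maximal existence interval, and prove $\mathcal{M}(T)\le C_0\varepsilon+C(\mathcal{M}(T))^2$. Concretely: feed the a priori bounds into $N_1,N_2$; estimate each quadratic term both in $L^2$ and in the $L^2_{x_1}L^1_{x_2,x_3}$-type norm (via anisotropic Sobolev/Ladyzhenskaya inequalities and $\|fg\|_{L^2_{x_1}L^1_{x'}}\le\|f\|_{L^\infty_{x_1}L^2_{x'}}\|g\|_{L^2_{x_1}L^2_{x'}}$); apply the Step~2 estimates to $e^{(t-s)\mathcal{A}}\mathbb{P}(N_1,N_2)(s)$; and run the time-convolution bounds $\int_0^t(1+t-s)^{-a}(1+s)^{-b}\,ds\lesssim(1+t)^{-\min\{a,b,a+b-1\}}$ with the usual split $\int_0^{t/2}+\int_{t/2}^t$. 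An essential structural point is that $\partial_1$ carries no dissipation, so every dangerous term $u_1\partial_1(\cdot)$, $b_1\partial_1(\cdot)$ must be rewritten using $\partial_1u_1=-(\partial_2u_2+\partial_3u_3)$ (and the $b$-analogue) to move the bare $\partial_1$ onto decaying quantities. The "fractional" rates — the $(1+t)^{-11/12}$, $(1+t)^{-23/12}$, $(1+t)^{-3/8}$ bounds for mixed derivatives hitting the weak directions ($\partial_2$, and $\partial_1$ at second order) and the top-order $\partial_3^3$ rate — are not read off the semigroup but obtained by interpolating the clean rates already established against the uniform $H^3$-bound and the time-integrated bound in \eqref{int0}; the interpolation exponents, tuned to close the loop, are exactly what causes the small losses relative to the naive heat exponents. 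Taking $\varepsilon$ small closes the bootstrap, and continuity extends it to all $t$.

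\emph{Main obstacle.} The crux lies in Steps~2--3 jointly: the linearized operator degenerates on the entire $\xi_1$-axis, so every power of time decay must be squeezed out of the $|\xi'|^2$-dissipation through the $L^2_{x_1}L^1_{x_2,x_3}$ norm; the enhanced $(u_1,b_1)$ rates require a quantitative marriage of this norm with the divergence-free structure; and on the nonlinear side the missing $\partial_1$-smoothing forces a careful algebraic rearrangement of the worst quadratic terms, maintained consistently across the long list of weighted norms. Reproducing exactly the claimed rates (and nothing weaker) from the nonlinear estimates — rather than the spectral analysis itself — is where the bulk of the technical effort goes.
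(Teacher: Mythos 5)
Your outline follows the paper's route essentially step for step: Duhamel formula after diagonalizing the $2\times2$ Fourier symbol, $2$D-heat rates extracted from the $|\xi_\nu|^2$-dissipation through the $L^2_{x_1}L^1_{x_2,x_3}$ norm of the data (the paper's Corollary \ref{PI}), the divergence-free gain for $(u_1,b_1)$ (Corollary \ref{divfree} — your pointwise bound $|\widehat{u_1}|\lesssim\min(1,|\xi_\nu|/|\xi_1|)\|u\|_{L^1}$ followed by the $\xi_1$-integration is a legitimate Fourier-side variant of the paper's physical-space $L^1_{x_1}$--$L^\infty_{x_1}$ interpolation and gives the same $(1+t)^{-3/4}$), and a bootstrap on a full list of weighted norms closed with anisotropic product estimates and the standard convolution lemmas. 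Your reading of where the fractional exponents ($11/12$, $3/8$, $23/12$) come from — interpolation of the ansatz rates and the Cauchy--Schwarz trick against the time-integrated dissipation in \eqref{int0} — is also accurate.

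There is, however, one concrete step in your Step 2 that fails as written. You propose to handle high frequencies by "exponential decay on $\{|\xi_\nu|\gtrsim1\}$ that \eqref{int0} converts to $H^3$-level bounds," i.e.\ replace the kernel by $e^{-c(t-\tau)}$ there and put all derivatives on the nonlinearity. For the third-order estimates in the vertical direction — $\|\p_2\p_3^2(u,b)\|_{L^2}\lesssim(1+t)^{-11/6}$ and $\|\p_3^3(u,b)\|_{L^2}\lesssim(1+t)^{-23/12}$ — this would require $\|\p_3^3(b\cdot\na b)\|_{L^2}$, which contains $b\cdot\na\p_3^3b$ and is not controlled under $H^3$ regularity (that is precisely why Theorem \ref{thm1.3} assumes $H^4$). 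The fix is the refined kernel bound of Proposition \ref{lem2.1}(III): in the only problematic high-frequency region ($|\xi_2|>|\xi_3|$ and $\eta\xi_\nu^2\geq\mu\xi_3^2$) the slow eigenvalue still satisfies $\lambda_2\le-\tfrac{\mu}{2}\xi_3^2-\tfrac{1}{4\eta}$, so one retains $e^{-c(1+\xi_3^2)(t-\tau)}$, trades one power of $|\xi_3|$ for $(t-\tau)^{-1/2}$, and puts only $\p_3^2$ on the quadratic terms. Relatedly, for the enhanced $(u_1,b_1)$ rates the divergence-free structure must also be exploited in the Duhamel term, not just on the data: one needs the decomposition \eqref{P1} of $\widehat{\mathbb P_1(b\cdot\na b)}$ into pieces carrying either $\xi_\nu^2|\xi|^{-2}$ or $\xi_1\xi_k\xi_l|\xi|^{-2}$ with $l\in\{2,3\}$, and the identity $N_{21}=\p_2(b_2u_1-u_2b_1)+\p_3(b_3u_1-u_3b_1)$ (no bare $\p_1$). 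Your sketch gestures at this but does not supply the mechanism; without it the nonlinear contribution to $u_1$ only decays at the transverse rate and the $(1+t)^{-3/4}$, $(1+t)^{-5/4}$, \dots claims do not close.
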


\begin{re} The decay rates stated in
  Theorem \ref{thm1.2} are  sharper  than those derived in \cite{LinWZ2}.  Indeed, Lin-Wu-Zhu \cite{LinWZ2} showed that for any $\delta\in(0,1)$,
	\begin{equation*}
	\begin{cases}
	\|(\p_1\p_ju,\p_1\p_jb)(t)\|_{L^2}\leq C(1+t)^{-\frac{5}{4}+\delta},\quad \quad~j=1,2,\\
	\|(\p_{2}\p_ju,\p_{2}\p_jb)(t)\|_{L^2}\leq C(1+t)^{-\frac{2}{3}+\delta}, ~~~j=2,3.	
	\end{cases}
	\end{equation*}
which are improved in the present paper to
	\begin{equation*}
	\begin{cases}
	\|(\p_1\p_ju,\p_1\p_jb)(t)\|_{L^2}\leq C(1+t)^{-\frac{3}{2}},\quad\quad~~~~j=1,2,\\
	\|(\p_{2}\p_ju,\p_{2}\p_jb)(t)\|_{L^2}\leq C(1+t)^{-\frac{11}{12}}, ~~j=2,3.	
	\end{cases}
	\end{equation*}
	Furthermore, we can prove the decay rates of the third-order derivatives $\|\p_i\p_j\p_3(u, b) \|_{L^2}$ with $i,j\in\{1,2\} $.
	It is also worth pointing out that the decay rates of $(u,b)$, $\p_i(u,b)$ and $\p_i\p_3(u,b)$ with $i\in\{2,3\}$ are the same as those for 2D heat equation, while the decay rates of $(u_1,b_1)$ are consistent with the 3D heat equation and are optimal.
\end{re}

If we assume further that $(u_0,b_0)\in{H^4}$,  then we can show that the decay rates of the partial derivatives involving the third direction $x_3$ are optimal, compared with the 2D heat equation.

\begin{thm}\label{thm1.3}
	Assume that for $ k=0,1,2,3$, $\p_1^k(u_0,b_0)\in L^2_{x_1}L^1_{x_2,x_3}$, $ (u_0,b_0)\in H^4$,
	$ \nabla\cdot u_0=0 $, and $ \nabla\cdot b_0=0 $. Then, there exists sufficiently small $\varepsilon>0 $, such that if
	\begin{equation}\label{int2}
	\|(u_0,b_0)\|_{H^4}+\|(u_0,b_0)\|_{L^1}+\|\p_1^k(u_0,b_0)\|_{L_{x_1}^2L_{x_2,x_3}^1} \leq\varepsilon,
	\end{equation}
	then  the following large-time decay estimates hold for the global solutions $(u,b)$:
	\begin{eqnarray*}
		&& \|\p_1^k(u,b)(t)\|_{L^2}\leq C_0\varepsilon(1+t)^{-\frac{1}{2}},  \;\;\;\;\:\;\;\;\;    \|\p_{i}(u,b)(t)\|_{L^2}\leq C_0\varepsilon(1+t)^{-1},  \\
		&& \|\p_1\p_{i}(u,b)(t)\|_{L^2}\leq C_0\varepsilon(1+t)^{-1},\;\;\;\;\:\;\:
		\|\p_i\p_j(u, b)(t)\|_{L^2}\leq C_0\varepsilon(1+t)^{-\frac32},  \\
		&& \|\p_1^2(u, b)\|_{L^2}\leq C_0\varepsilon(1+t)^{-\frac{1}{2}}, \;\;\;\;\;\;\;\;\:\;\;\:\; \|\p_{1}\p_{i}\p_3(u,b)(t)\|_{L^2}\leq C_0\varepsilon(1+t)^{-\frac32}, \\
		&& \|\p_l\p_2^2(u,b)(t)\|_{L^2}\leq C_0\varepsilon(1+t)^{-1},\;\;\,\;\:\;\:  \|\p_1^2\p_i(u,b)(t)\|_{L^2}\leq C_0\varepsilon(1+t)^{-1},\\
		&& \|\p_{i}\p_{j}\p_3(u,b)(t)\|_{L^2}\leq C_0\varepsilon(1+t)^{-2}, \quad
		\|\p_1^3(u,b)(t)\|_{L^2}\leq C_0\varepsilon(1+t)^{-\frac38},\\
		&&\|\p_1^3\p_3(u,b)(t)\|_{L^2}\leq C_0\varepsilon(1+t)^{-\frac12},\quad \;\; \|\p_1^2\p_3^2(u,b)\|_{L^2}\leq C_0\varepsilon(1+t)^{-\frac{11}{8}},\\
		&& \|\p_l\p_h\p_2\p_3(u,b)\|_{L^2}\leq C_0\varepsilon(1+t)^{-1},\quad\, \|\p_1\p_3^3(u,b)\|_{L^2}\leq C_0\varepsilon(1+t)^{-2},\\
		&&\|\p_l\p_2\p_3^2(u,b)\|_{L^2}\leq C_0\varepsilon(1+t)^{-2},
		\quad\quad\,
		\|\p_i\p_3^3(u,b)\|_{L^2}\leq C_0\varepsilon(1+t)^{-\frac52},\\
		&&\|(u_1,b_1)\|_{L^2}\leq C_0\varepsilon(1+t)^{-\frac34},
		\quad\quad\quad\quad
		\|\p_i(u_1,b_1)\|_{L^2}\leq C_0\varepsilon(1+t)^{-\frac54},	
		\\
		&&\|\p_i\p_j(u_1,b_1)\|_{L^2}\leq C_0\varepsilon(1+t)^{-\frac74},
		\quad\quad\,\,
		\|\p_i\p_3^2(u_1,b_1)\|_{L^2}\leq C_0\varepsilon(1+t)^{-\frac94},\\
		&&\|\p_i\p_3^3(u_1,b_1)\|_{L^2}\leq C_0\varepsilon(1+t)^{-\frac{11}{4}},	
	\end{eqnarray*}
	where $k=0,1,~i,~j=2,~3,~l,h=1,~2$.
	\end{thm}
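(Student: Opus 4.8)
The plan is to regard Theorem \ref{thm1.3} as a one-level upgrade of Theorems \ref{thm1.1}--\ref{thm1.2}: first promote the global stability estimate to the $H^4$ framework, and then run a time-weighted bootstrap based on the Duhamel formula together with sharp linear decay estimates obtained by spectral analysis of the linearized operator. For \emph{Step 1 (global $H^4$ stability)}, under \eqref{int2} one repeats the energy argument behind Theorem \ref{thm1.1} with one extra derivative: apply $\p^\alpha$ for $|\alpha|\le 4$ to \eqref{PMHD}, take $L^2$ inner products, and exploit the dissipation $\mu\p_3^2 u$, $\eta(\p_2^2+\p_3^2)b$ together with the hidden dissipation in $\p_2 u$ generated by the coupling with the background field $e_2$. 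Using $\dg\,u=\dg\,b=0$ and anisotropic product/Sobolev inequalities of Ladyzhenskaya type (e.g. $\|f\|_{L^\infty_{x_j}}\lesssim \|f\|_{L^2_{x_j}}^{1/2}\|\p_j f\|_{L^2_{x_j}}^{1/2}$) to move derivatives onto the good directions in the nonlinear terms, one closes
\begin{equation*}
\|(u,b)(t)\|_{H^4}^2+\int_0^t\Big(\|(\p_3u,\p_2b,\p_3b)\|_{H^4}^2+\|\p_2u\|_{H^3}^2\Big)\,ds\le C\ep^2 ,
\end{equation*}
so in particular $\|(u,b)(t)\|_{H^4}\le C\ep$ for all $t$; this bounded norm serves as the reservoir for all interpolation inequalities below.

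\emph{Step 2 (linear decay).} Write \eqref{PMHD} as $\p_t W=\mathbb{L}W+\mathbb{N}(W)$ with $W=(u,b)$, where $\mathbb{L}$ incorporates the Leray projection $\mathbb{P}$ (eliminating $\na p$) and $\mathbb{N}(W)=\mathbb{P}\big(b\cdot\na b-u\cdot\na u,\ b\cdot\na u-u\cdot\na b\big)$, so that $W(t)=e^{t\mathbb{L}}W_0+\int_0^t e^{(t-s)\mathbb{L}}\mathbb{N}(W(s))\,ds$. Diagonalizing the Fourier symbol of $\mathbb{L}$ (equivalently analyzing $\dot u=-\mu\xi_3^2u+i\xi_2 b$, $\dot b=-\eta(\xi_2^2+\xi_3^2)b+i\xi_2 u$ on divergence-free fields), the slow eigenvalue behaves near the origin like $-c\,\big(\xi_2^2+\mu\eta\,\xi_3^2(\xi_2^2+\xi_3^2)\big)\big/\big(\eta\xi_2^2+(\mu+\eta)\xi_3^2\big)$; the decisive point is that $\xi_1$ is \emph{absent} from the symbol, so at each fixed $\xi_1$ one sees only parabolic decay $e^{-ct(\xi_2^2+\xi_3^2)}$ in the $(\xi_2,\xi_3)$-plane. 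Combining this with Plancherel in $\xi_1$ gives
\begin{equation*}
\|\p^\alpha e^{t\mathbb{L}}W_0\|_{L^2}\lesssim (1+t)^{-\frac12-\frac{\alpha_2+\alpha_3}{2}}\Big(\|W_0\|_{L^1}+\|\p_1^{\alpha_1}W_0\|_{L^2_{x_1}L^1_{x_2,x_3}}\Big),\qquad \alpha_1\le 3,
\end{equation*}
i.e. each $\p_2$ or $\p_3$ gains $(1+t)^{-1/2}$ while $\p_1$ gains nothing — exactly why the hypotheses demand $\p_1^k(u_0,b_0)\in L^2_{x_1}L^1_{x_2,x_3}$ up to $k=3$. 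Interpolating the borderline top-order $\p_1$-quantities against the bounded $H^4$ norm from Step 1 produces the fractional rates $(1+t)^{-3/8},\ (1+t)^{-1/2},\ (1+t)^{-11/8}$, and the enhanced $(u_1,b_1)$-rates follow from $\p_1 u_1=-(\p_2u_2+\p_3u_3)$, $\p_1 b_1=-(\p_2 b_2+\p_3 b_3)$, which trade one bad $\p_1$ for a good $\p_2$ or $\p_3$ and upgrade every $(u_1,b_1)$-estimate by a half-power of $t$.

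\emph{Step 3 (time-weighted bootstrap).} Let $\mathcal{M}(t)$ be the supremum over $s\in[0,t]$ of the left-hand sides of all the asserted inequalities, each divided by its claimed weight (for instance $(1+s)^{1/2}\|\p_1^k(u,b)\|_{L^2}$, $(1+s)\|\p_i(u,b)\|_{L^2}$, \ldots, $(1+s)^{11/4}\|\p_i\p_3^3(u_1,b_1)\|_{L^2}$). Feed the conclusions of Theorems \ref{thm1.1}--\ref{thm1.2} in as already-known bounds for the lower-order pieces, apply $\p^\alpha$ to the Duhamel formula, and estimate $\|\p^\alpha e^{(t-s)\mathbb{L}}\mathbb{N}(W(s))\|_{L^2}$ by placing one or two derivatives on the linear kernel (via Step 2) and distributing the remaining derivatives, via the nonlinear structure, onto factors controlled by $\mathcal{M}$. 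Rewriting each nonlinear term so that a good derivative ($\p_2$, $\p_3$, or a $\p_1$ landing on $u_1,b_1$) is exposed — using $\dg\,u=\dg\,b=0$ and anisotropic Sobolev inequalities — reduces every contribution to $\mathcal{M}(t)^2\int_0^t(1+t-s)^{-\gamma}(1+s)^{-\beta}\,ds$ with $\gamma+\beta>1$ (borderline cases absorbed by a logarithmic factor or by using a slightly sub-optimal but still sufficient intermediate rate), which reproduces the prescribed weight. Hence $\mathcal{M}(t)\le C\ep+C\mathcal{M}(t)^2$, and a continuity argument with $\ep$ small yields $\mathcal{M}(t)\le 2C\ep$ for all $t\ge 0$, which is precisely the stated family of decay estimates.

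\emph{Main obstacle.} The technical heart is the bookkeeping in Step 3: for each of the (roughly thirty) estimates one must exhibit a splitting of the nonlinearity for which the Duhamel convolution converges at the sharp rate, and two families are delicate. The slowly decaying top-order $\p_1$-derivatives — $\|\p_1^3(u,b)\|_{L^2}\sim(1+t)^{-3/8}$, $\|\p_1^3\p_3(u,b)\|_{L^2}\sim(1+t)^{-1/2}$, $\|\p_1^2\p_3^2(u,b)\|_{L^2}\sim(1+t)^{-11/8}$ — are genuinely borderline and force a careful interpolation against the $H^4$ bound from Step 1, since the semigroup does not decay in $\xi_1$. At the other extreme, the fast mixed $(u_1,b_1)$-estimates (down to $(1+t)^{-11/4}$) require the full divergence-free cancellation \emph{and} enough decay of the lower-order quantities for the nonlinear input to decay at least that fast; this is where the extra hypothesis $\p_1^3(u_0,b_0)\in L^2_{x_1}L^1_{x_2,x_3}$ and the $H^4$ regularity are both indispensable. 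Finally, the top-order ($|\alpha|=4$) energy estimate in Step 1, where $\p^\alpha(u\cdot\na u)$ must be controlled with only vertical velocity dissipation, is the other point demanding genuine care.
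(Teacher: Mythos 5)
Your proposal follows the paper's proof essentially step for step: a global $H^4$ energy bound (which the paper simply imports from Lin--Wu--Zhu \cite{LinWZ1} rather than re-deriving), the Duhamel representation with the diagonalized $2\times 2$ Fourier symbol, anisotropic kernel bounds that decay parabolically only in $(\xi_2,\xi_3)$, and a weighted bootstrap on the full family of ansatz rates in which the nonlinear inputs are measured in $L^2_{x_1}L^1_{x_2,x_3}$ and $L^2$ (the paper's Lemma \ref{lemma4.5}). Two points need correction. First, your mechanism for the enhanced $(u_1,b_1)$ rates is misstated: there is no $\p_1$ on $u_1$ itself to ``trade'' via $\p_1u_1=-\p_2u_2-\p_3u_3$, and a full half-power upgrade is false (it would give $\|u_1\|_{L^2}\lesssim(1+t)^{-1}$, beating the 3D heat kernel). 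The actual gain is a quarter power: one first interpolates $\|u_1\|_{L^2_{x_1}}^2\le\|u_1\|_{L^1_{x_1}}\|u_1\|_{L^\infty_{x_1}}\lesssim\|u_1\|_{L^1_{x_1}}\|\p_1u_1\|_{L^1_{x_1}}$ and only then uses the divergence-free identity on the second factor; this is precisely Corollary \ref{divfree} and is the reason $\|(u_0,b_0)\|_{L^1}$ appears in \eqref{int2}, and the resulting rates $-\frac34,-\frac54,-\frac74,-\frac94,-\frac{11}{4}$ are each exactly $\frac14$ better than the corresponding $(u,b)$ rates. Second, your blanket semigroup bound in Step 2 is only valid after the frequency-space splitting of Proposition \ref{lem2.1}: on the region $\Om_{23}$ the slow eigenvalue decays only like $e^{-c(1+\xi_3^2)t}$, so the $\xi_2$-factors must be moved onto the data or the nonlinearity while the $\xi_3$-factors are absorbed via $|\xi_3|e^{-c\xi_3^2(t-\tau)}\lesssim(t-\tau)^{-1/2}$ together with the $e^{-c(t-\tau)}$ prefactor (the paper's inequality \eqref{ineq}); this is where the singular-but-integrable convolution kernels in the $\p_3$-heavy estimates come from. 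With these two repairs your outline reproduces the paper's argument.
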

\begin{re}
The method herein can be applied to the $H^N$-framework with $N\geq 4$. In particular, if
$$\|(u_0,b_0)\|_{H^N}+\|(u_0,b_0)\|_{L^1}+\|\p_1^k(u_0, b_0)\|_{L_{x_1}^2L_{x_2,x_3}^1} \leq\varepsilon
$$
with $ k=0,1,\cdots, N-1$, then it holds that
$$
\|(\p_3^{N}u,\p_3^{N}b)\|_{L^2}\leq C_0\varepsilon(1+t)^{-\frac{N+1}{2}}.
$$
\end{re}

\begin{re} Due to the resemblance of mathematical structure,  the same results as the ones stated in
 Theorem \ref{thm1.1}--\ref{thm1.3} can be achieved  for the 3D MHD equations with only  vertical magnetic diffusion:
$$
\begin{cases}
\partial_t u+u\cdot
\nabla u+\nabla p=
\nu\partial_2^2u+\nu\partial_3^2u+b\cdot \nabla b+\partial_{2}b,\\
\partial_t b+u\cdot
\nabla b=\eta\p_3^2b+b\cdot\nabla u+\partial_2u,\\
{\rm \nabla\cdot}\,u={\rm \nabla\cdot}\,b=0,\\
(u,b)(x,t)|_{t=0}=(u_0,b_0)(x).
\end{cases}
$$
\end{re}

Next, we briefly sketch the main ideas of the proofs in Theorems \ref{thm1.1}--\ref{thm1.3}.  Since the local-in-time existence result can be shown by the standard method (see, e.g., \cite{MaBe}), to prove Theorem \ref{thm1.1}, our main task is to derive the global-in-time a prior estimates of the solutions.  The main difficulties arise from  the lack of velocity dissipations in both $x_1$ and $x_2$ directions, which will be supplemented by  the additional dissipation generated by the background magnetic field.  To do this, we adopt the method of ``double energy", which have been successfully used in the previous works (see, for example, \cite{LWZ1,LWZ2}, \cite{LJWY}--\cite{LinWZ2}, etc). The first energy functional is the  natural $H^3$-energy $\mathcal{E}_1(t)$ induced by the partial dissipations of (\ref{PMHD}),
\begin{align}
\mathcal{E}_1(t)\triangleq\sup_{0\le\tau\le t}\|(u,b)(\tau)\|_{H^3}^2+2\int_0^t \left(\mu\|\p_3u(\tau)\|_{H^3}^2 +\eta\|\left(\p_2b, \p_3b\right)(\tau)\|_{H^3}^2  \right)d \tau, \label{E1}
\end{align}
and the second one $\mathcal{E}_2(t)$ is the additional $H^2$-energy of $\p_2u$,
\begin{align}
\mathcal{E}_2(t)\triangleq \int_0^t \|\p_2u(\tau)\|_{H^2}^2  d \tau,\label{E2}
\end{align}
which is indeed generated by the background magnetic field. The estimates $\mathcal{E}_1(t)$ and $\mathcal{E}_2(t)$ will be built up, based on the standard $L^2$-method and the applications of anisotropic Sobolev inequalities (cf. Lemmas \ref{anip1} and \ref{anip2}).
However, some of
the nonlinear terms   cannot be bounded by $\mathcal{E}_1(t)$ and $\mathcal{E}_2(t)$ directly. The most difficult terms are
\beno
{\rm D}_1 \triangleq2\int \p_2 u_2 \left(\p_1^3 u_2\right)^2   d x,
\,  {\rm D}_2 \triangleq \int  \p_1^3 u_2 \p_2 u_3\p_1^3 u_3  \ d x,
\, {\rm D}_3\triangleq3\int \p_2 u_2 \left(\p_1^3 u_3\right)^2   d x.
\eeno
To deal with these terms,  we make use of (\ref{PMHD}) to replace $\p_2u_2$ and $ \p_2u_3$ by (see (\ref{m13}))
\begin{align}
\p_2u_2&=\p_t(b_2-\eta\p_2u_{2})-\eta\p_3^2b_2
+\mu\eta\p_2\p_3^2u_2\nonumber
\\
&\quad
-\eta\p_2\left(
u\cdot\na u_2-b\cdot\na b_2+\p_2p
\right)+u\cdot\na b_2-b\cdot\na u_2,\nonumber\\
\p_2u_3&=\p_t(b_3-\eta\p_2u_{3})-\eta\p_3^2b_3
+\mu\eta\p_2\p_3^2u_3
\nonumber\\
&\quad
-\eta\p_2\left(
u\cdot\na u_3-b\cdot\na b_3+\p_3p
\right)+u\cdot\na b_3-b\cdot\na u_3.\label{m131}
\end{align}
For example, by several substitutions and integration by parts, the difficulty of the term $D_1$ is shifted to the treatment of $F_4$ (see \eqref{D1g}),
$$
F_4\triangleq4\int b\cdot\na\p_1^3 b_2\p_1^3u_2(b_2-\eta\p_2u_2)\ dx.
$$
Also, this cannot be bounded directly. To circumvent this difficulty, we  artificially retain the term $E_1$ (see \eqref{H53}),
\begin{align*}
E_1\triangleq2\int\left(\p_1^3b_2\right)^2\p_2u_2\ dx,
\end{align*}
which arises from  the treatment of the nonlinear term $b\cdot \na u $. Based on (\ref{m131}) and integration by parts, the term $E_1$ will produce a ``good" term $R_4$  (see \eqref{E11g}),
$$
R_4\triangleq4 \int b\cdot\na\p_1^3 u_2\p_1^3b_2(b_2-\eta\p_2u_2) dx,
$$
which, combined with $F_4$ and integrated by parts, leads to the desired bound of $D_1+E_1$ (see \eqref{DE1g}).
The estimates of the other two terms $D_2$ and $D_3$ can be done in a similar manner, if we add the terms $E_2$ and $E_3$,
\begin{align*}
 E_2\triangleq\int \p_1^3b_2 \p_2u_3 \p_1^3b_3\ dx,  \quad  E_3\triangleq3\int\left(\p_1^3b_3\right)^2\p_2u_2\ dx,
\end{align*}
to $D_2$ and $D_3$, respectively. With all the global a priori estimates at hand,  Theorem \ref{thm1.1} then follows from the bootstrap  arguments (see, e.g. \cite{Tao2006}). We refer to Section \ref{Sec.2} for more technical details.
\vskip .1in

The proofs of the decay rates  stated in Theorems \ref{thm1.2}--\ref{thm1.3} are built upon the spectral analysis and the integral representations in \eqref{u} and \eqref{b}. In particular, the precise  upper bounds for kernel functions play a mathematically important role. This forces us to divide  the frequency space into suitable sub-domains and to derive the sharp estimates of each kernel function in these sub-domains (see Proposition \ref{lem2.1}). By virtue of Proposition \ref{lem2.1},  we then carry out some  reasonable ansatz of the decay rates and provide the decay estimates of the nonlinear terms. With these preparations at hand, we can adopt the bootstrap argument to establish the desired decay estimates by elaborate calculations. The entire analysis strongly relies on Corollaries \ref{PI} and \ref{divfree}, which coincide respectively with the estimates of 2D and 3D heat kernel. It is worth mentioning that Corollary \ref{divfree} is a consequence of the divergence-free condition and  will be used to prove the enhanced  decay rates of $(u_1,b_1)$. Compared to the previous work \cite{LinWZ2}, the analysis of the present paper is more delicate and technical. The details can be found in Section 4 and 5.
\vskip .1in

The rest of the paper is organized as follows.  In Section 2, we present some useful mathematical tools and establish the sharp upper bounds of kernel functions in different sub-domains. The global stability result, i.e. Theorem \ref{thm1.1}, will be shown in Section 3. The decay rates stated in Theorems \ref{thm1.2} and \ref{thm1.3} will be derived in Section 4 and 5,
respectively.

\section{Preliminaries}
\subsection{Some elementary inequalities} \label{Sec}
In this subsection, we will recall some elementary inequalities
and results which will be used frequently later. We begin with  the anisotropic inequalities for triple products (see \cite{LinWZ1,LinWZ2, Wu4}).
\begin{lem}\label{anip1}
	Assume $f$, $\p_1f$, $g$, $\p_2g$ , $h$  and $\p_3 h$  are all in $L^2(\mathbb R^3)$. Then,
	\begin{align}
	&\int_{\mathbb R^3}|fgh|dx\leq C\|f\|_{L^2}^{\frac12}\|\partial_1f\|_{L^2}^{\frac12}\|g\|_{L^2}^{\frac12}\|\partial_{2}g\|_{L^2}^{\frac12}\|h\|_{L^2}^{\frac12}\|\partial_{3}h\|_{L^2}^{\frac12},\nonumber\\
	&\int_{\mathbb R^3}|fgh|dx\leq C\|f\|_{L^2}\|g\|_{L^2}^{\frac14}\|\partial_{i}g\|_{L^2}^{\frac14}
	\|\p_jg\|_{L^2}^{\frac14}\|\p_i\p_jg\|_{L^2}^{\frac14}\|h\|_{L^2}^{\frac12}\|\partial_{k}h\|_{L^2}^{\frac12}\nonumber\\
	&\quad \quad \quad \quad \quad \leq C\|f\|_{L^2}\|g\|_{H^1}^{\frac12}\|\p_ig\|_{H^1}^{\frac12}\|h\|_{L^2}^{\frac12}\|\partial_{k}h\|_{L^2}^{\frac12},\nonumber
	\end{align}
	where $i,j,k\in{1,2,3} $ and $i\neq j\neq k$.
\end{lem}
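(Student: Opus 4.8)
The plan is to derive both estimates from the one–dimensional Agmon (Gagliardo–Nirenberg) inequality
\[
\|\phi\|_{L^\infty(\R)}\leq \sqrt2\,\|\phi\|_{L^2(\R)}^{1/2}\|\phi'\|_{L^2(\R)}^{1/2},\qquad \phi,\phi'\in L^2(\R),
\]
applied successively in the three coordinate directions, together with Hölder's inequality and Minkowski's integral inequality to commute the various norms. Throughout, for a function $\psi(x_m)=\|g(\cdot,x_m,\cdot)\|_{L^2(\text{other variables})}$ one uses the elementary Cauchy--Schwarz bound $|\psi'(x_m)|\leq\|\partial_m g\|_{L^2(\text{other variables})}$, which lets one apply the 1D Agmon inequality directly to $g$ rather than to the composed norm function.

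For the first inequality I would integrate in $x_1$ first, pulling $|f|$ out in $L^\infty_{x_1}$ and keeping $g,h$ in $L^2_{x_1}$ by Hölder; then integrate in $x_2$, pulling $\|g\|_{L^2_{x_1}}$ out in $L^\infty_{x_2}$ and applying Cauchy--Schwarz in $x_2$ to the remaining two factors; then integrate in $x_3$, pulling $\|h\|_{L^2_{x_1,x_2}}$ out in $L^\infty_{x_3}$ and using Cauchy--Schwarz in $x_3$. This produces
\[
\int_{\R^3}|fgh|\,dx\leq \|f\|_{L^\infty_{x_1}L^2_{x_2,x_3}}\,\|g\|_{L^2_{x_1}L^\infty_{x_2}L^2_{x_3}}\,\|h\|_{L^2_{x_1,x_2}L^\infty_{x_3}}.
\]
Each of the three anisotropic norms is then bounded by $C\|\cdot\|_{L^2}^{1/2}\|\partial_m\cdot\|_{L^2}^{1/2}$ (with $m=1,2,3$ respectively): apply 1D Agmon in the designated direction pointwise in the remaining variables, and then Cauchy--Schwarz in those variables. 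Multiplying the three bounds gives the first estimate.

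For the second inequality, fix $\{i,j,k\}=\{1,2,3\}$. Here the plan is to integrate in $x_k$ first, pulling $|h|$ out in $L^\infty_{x_k}$ and keeping $f,g$ in $L^2_{x_k}$; then integrate in $x_j$ and in $x_i$, each time pulling the $g$–factor out in $L^\infty$ and the $f$–, $h$–factors in $L^2$, to get
\[
\int_{\R^3}|fgh|\,dx\leq \|f\|_{L^2}\,\|g\|_{L^\infty_{x_i}L^\infty_{x_j}L^2_{x_k}}\,\big\|\,\|h\|_{L^\infty_{x_k}}\big\|_{L^2_{x_i,x_j}}.
\]
The $h$–factor is controlled by $C\|h\|_{L^2}^{1/2}\|\partial_k h\|_{L^2}^{1/2}$ exactly as in the first part. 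For the $g$–factor, one applies 1D Agmon \emph{twice}: first in $x_j$ to bound $\sup_{x_j}\|g\|_{L^2_{x_k}}\leq\sqrt2\,\|g\|_{L^2_{x_j,x_k}}^{1/2}\|\partial_j g\|_{L^2_{x_j,x_k}}^{1/2}$; then take $\sup_{x_i}$ of this product using $\sup(AB)^{1/2}\leq(\sup A)^{1/2}(\sup B)^{1/2}$ and apply 1D Agmon in $x_i$ separately to $\|g(x_i,\cdot,\cdot)\|_{L^2_{x_j,x_k}}$ and to $\|\partial_j g(x_i,\cdot,\cdot)\|_{L^2_{x_j,x_k}}$. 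This yields
\[
\|g\|_{L^\infty_{x_i}L^\infty_{x_j}L^2_{x_k}}\leq C\,\|g\|_{L^2}^{1/4}\|\partial_i g\|_{L^2}^{1/4}\|\partial_j g\|_{L^2}^{1/4}\|\partial_i\partial_j g\|_{L^2}^{1/4},
\]
which is the first bound in the second estimate; the second (coarser) bound then follows from Young's inequality, since $\|g\|_{L^2}^{1/4}\|\partial_j g\|_{L^2}^{1/4}\leq C\|g\|_{H^1}^{1/2}$ and $\|\partial_i g\|_{L^2}^{1/4}\|\partial_i\partial_j g\|_{L^2}^{1/4}\leq C\|\partial_i g\|_{H^1}^{1/2}$.

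There is no deep difficulty in this lemma; the main thing to watch is the bookkeeping — keeping precise track of the order in which the $L^\infty$ and $L^2$ norms are taken, legitimately interchanging them via Minkowski's integral inequality, and (in the second inequality) applying the 1D Agmon inequality directly to $g$ rather than to the map $x\mapsto\|g(\cdot)\|_{L^2_{x_k}}$, so as to avoid producing a spurious and possibly unbounded weight $1/\|g\|_{L^2_{x_k}}$ when differentiating the norm.
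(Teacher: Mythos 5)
Your proposal is correct and follows essentially the same route the paper uses: the paper states Lemma \ref{anip1} with a citation rather than a proof, but its proof of the companion Lemma \ref{anip2} rests on exactly the same ingredients — the one-dimensional Agmon/Sobolev inequality \eqref{1IE}, Minkowski's inequality \eqref{MIE}, H\"older in one variable at a time, and the mixed-norm bound \eqref{ge}, which is precisely your estimate $\|g\|_{L^\infty_{x_i}L^\infty_{x_j}L^2_{x_k}}\leq C\|g\|_{L^2}^{1/4}\|\p_ig\|_{L^2}^{1/4}\|\p_jg\|_{L^2}^{1/4}\|\p_i\p_jg\|_{L^2}^{1/4}$. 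Your bookkeeping of the successive $L^\infty$/$L^2$ splittings and the two applications of the 1D inequality to the $g$-factor is sound.
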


The following lemma provides the anisotropic upper bounds for quadruple product, which is  very mathematically helpful in the treatments of nonlinear terms.
\begin{lem} \label{anip2}
	The following estimates hold when the right-hand sides are all bounded.
	\begin{align}
	\int_{\mathbb R^3}|efgh|dx
	&\leq C \|e\|_{L^2}^{\frac12}\|\p_i e\|_{L^2}^{\frac12}\|f\|_{L^2}^{\frac12}\|\p_i f\|_{L^2}^{\frac12} \|g\|_{H^{1}}^{\frac12}\|\p_kg\|_{H^{1}}^{\frac12} \|h\|_{H^{1}}^{\frac12}\|\p_kh\|_{H^{1}}^{\frac12}\nonumber\\
	&\leq C(\| (e,f )\|_{L^2}^{2}+\| (g,h )\|_{H^1}^{2} ) (\| (\p_ie,\p_if )\|_{L^2}^{2}+\|(\p_kg,\p_kh)\|_{H^1}^{2}),\nonumber
	\end{align}
where $i,k\in{1,2,3} $ and $i\neq k$.
\end{lem}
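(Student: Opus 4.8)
The plan is to prove the first inequality by distributing the three coordinate directions among the four functions and invoking a one-dimensional Sobolev embedding in each direction, exactly in the spirit of the proof of Lemma~\ref{anip1}; the second inequality will then be a routine consequence of Young's inequality. Fix the indices $i\neq k$ appearing in the statement and let $j$ be the unique element of $\{1,2,3\}\setminus\{i,k\}$. I will place $e$ and $f$ in $L^\infty$ in the single direction $x_i$ and in $L^2$ in the pair $(x_j,x_k)$, and place $g$ and $h$ in $L^\infty$ in the two directions $x_j,x_k$ and in $L^2$ in the single direction $x_i$. With this assignment the H\"older exponents sum correctly in every direction: $\tfrac1\infty+\tfrac1\infty+\tfrac12+\tfrac12=1$ for the integration in $x_i$, and $\tfrac12+\tfrac12+\tfrac1\infty+\tfrac1\infty=1$ for the integrations in $x_j$ and in $x_k$. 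This is exactly where the hypothesis $i\neq k$ is used: if $i=k$ all four functions would be forced into $L^\infty$ in the same variable and H\"older's inequality would fail there.

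Concretely, first I would integrate in $x_i$, bounding $e$ and $f$ by their suprema in $x_i$ and applying Cauchy--Schwarz to $g,h$ in $x_i$, and then integrate in $(x_j,x_k)\in\R^2$ with H\"older exponents $(2,2,\infty,\infty)$, to obtain
\[
\int_{\R^3}|efgh|\,dx\le \|e\|_{L^\infty_{x_i}L^2_{x_j,x_k}}\,\|f\|_{L^\infty_{x_i}L^2_{x_j,x_k}}\,\big\|\,\|g\|_{L^2_{x_i}}\,\big\|_{L^\infty_{x_j,x_k}}\big\|\,\|h\|_{L^2_{x_i}}\,\big\|_{L^\infty_{x_j,x_k}}.
\]
For the first two factors I would use the one-dimensional inequality $\|\phi\|_{L^\infty(\R)}^2\le 2\|\phi\|_{L^2(\R)}\|\phi'\|_{L^2(\R)}$ in the $x_i$-variable followed by Cauchy--Schwarz in $(x_j,x_k)$, which gives $\|e\|_{L^\infty_{x_i}L^2_{x_j,x_k}}\le\sqrt{2}\,\|e\|_{L^2}^{1/2}\|\p_i e\|_{L^2}^{1/2}$ and likewise for $f$. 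For the last two factors I would bound $\Phi(x_j,x_k):=\|g(\cdot,x_j,x_k)\|_{L^2_{x_i}}$ in $L^\infty(\R^2)$ by applying the one-dimensional Sobolev inequality successively in $x_j$ and $x_k$; carrying this out via the fundamental theorem of calculus on $\Phi^2=\int_\R|g|^2\,dx_i$ produces
\[
\|g\|_{L^\infty_{x_j,x_k}L^2_{x_i}}^2\le 2\big(\|\p_j g\|_{L^2}\|\p_k g\|_{L^2}+\|g\|_{L^2}\|\p_j\p_k g\|_{L^2}\big)\le 4\,\|g\|_{H^1}\|\p_k g\|_{H^1},
\]
and the same for $h$; note that only the $H^1$-norms of $g$ and $\p_k g$ enter, since $\p_j g$, $\p_k g$ and $\p_j\p_k g$ are each controlled by $\|g\|_{H^1}^{1/2}\|\p_k g\|_{H^1}^{1/2}$ squared. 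Substituting these four bounds into the displayed H\"older estimate yields the first claimed inequality. The second then follows from Young's inequality: split the eight factors into the four ``size'' factors $\|e\|_{L^2},\|f\|_{L^2},\|g\|_{H^1},\|h\|_{H^1}$ and the four ``derivative'' factors $\|\p_i e\|_{L^2},\|\p_i f\|_{L^2},\|\p_k g\|_{H^1},\|\p_k h\|_{H^1}$, and apply the arithmetic--geometric mean inequality (for four terms) to each group.

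I do not expect a genuine obstacle here: the only step carrying real content is the anisotropic embedding $\|g\|_{L^\infty_{x_j,x_k}L^2_{x_i}}\lesssim\|g\|_{H^1}^{1/2}\|\p_k g\|_{H^1}^{1/2}$, and even that is just a two-fold iteration of the one-dimensional Sobolev inequality. The one point requiring care is the bookkeeping of the mixed norms, namely applying each successive one-dimensional estimate to the innermost remaining variable so that the order of the $L^p$-norms is respected throughout.
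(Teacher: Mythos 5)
Your proof is correct and takes essentially the same route as the paper: the identical H\"older split (putting $e,f$ in $L^\infty$ in $x_i$ and $g,h$ in $L^\infty$ in $(x_j,x_k)$), the one-dimensional Sobolev inequality for $e$ and $f$, the anisotropic bound $\|g\|_{L^\infty_{x_j,x_k}L^2_{x_i}}\leq C\|g\|_{H^1}^{1/2}\|\p_kg\|_{H^1}^{1/2}$, and an AM--GM step at the end. The only (immaterial) difference is that you derive the anisotropic bound by applying the fundamental theorem of calculus to $\int_{\R}|g|^2\,dx_i$ as a function of $(x_j,x_k)$, whereas the paper iterates the one-dimensional inequality twice to reach the quarter-power estimate $\|g\|_{L^2}^{1/4}\|\p_jg\|_{L^2}^{1/4}\|\p_kg\|_{L^2}^{1/4}\|\p_j\p_kg\|_{L^2}^{1/4}$ before relaxing it to the $H^1$ form.
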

\begin{proof}
The proof  relies on the basic one-dimensional Sobolev inequality:
 	\begin{align} \|g\|_{L^{\infty}(\R)}\leq\sqrt{2}\|g\|_{L^2(\R)}^{\frac12}\|g'\|_{L^2(\R)}^{\frac12},\label{1IE}
\end{align}
and the Minkowski inequality
\begin{align}
\|\|f\|_{L^{q}_y(\R^n)}\|_{L^{p}_x(\R^m)}\leq\|\|f\|_{L^{p}_x(\R^m)}\|_{L^{q}_y(\R^n)}, \quad \forall \,\, 1\leq q\leq p \leq \infty,\label{MIE}
\end{align}
where $f=f(x,y)$ is a measurable function on $(x,y)\in\R^m\times\R^n$. With the help of \eqref{1IE}--\eqref{MIE}, we have
\begin{align}
&\int_{\mathbb R^3}|efgh|dx
\leq C \|e\|_{L_{x_j,x_k}^{2}L_{x_i}^{\infty}}\|f\|_{L_{x_j,x_k}^{2}L_{x_i}^{\infty}}\|g\|_{L_{x_j,x_k}^{\infty}L_{x_i}^{2}}\|h\|_{L_{x_j,x_k}^{\infty}L_{x_i}^{2}}\nonumber \\
&\quad\leq C \left\|\|e\|_{L^2_{x_i}}^{\frac12}\|\p_i e\|_{L^2_{x_i}}^{\frac12}\right\|_{L_{x_j,x_k}^{2} }\left\|\|f\|_{L^2_{x_i}}^{\frac12}\|\p_i f\|_{L^2_{x_i}}^{\frac12}\right\|_{L_{x_j,x_k}^{2} }
\|g\|_{L_{x_i}^{2}L_{x_j,x_k}^{\infty}}\|h\|_{L_{x_i}^{2}L_{x_j,x_k}^{\infty}}\nonumber \\
&\quad\leq C \|e\|_{L^2}^{\frac12}\|\p_i e\|_{L^2}^{\frac12}\|f\|_{L^2}^{\frac12}\|\p_i f\|_{L^2}^{\frac12}\|g\|_{L_{x_i}^{2}L_{x_j,x_k}^{\infty}}\|h\|_{L_{x_i}^{2}L_{x_j,x_k}^{\infty}}. \label{4an}
\end{align}

By virtue $\eqref{1IE}$ and  H\"{o}lder's inequality, we obtain
\begin{align}
\|g\|_{L_{x_i}^{2}L_{x_j,x_k}^{\infty}}
&\leq C\left\| \|g\|_{L^2_{x_k}}^{\frac12}\|\p_k g\|_{L^2_{x_k}}^{\frac12}\right\|_{L_{x_i}^2L_{x_j}^{\infty}}\leq C\left\| \|g\|_{L_{x_j}^{\infty} }\right\|_{L_{x_i,x_k}^{2}}^{\frac12}\left\|\|\p_k g\|_{L_{x_j}^{\infty}}\right\|_{L_{x_i,x_k}^{2}} ^{\frac12}\nonumber\\
&\leq C \|g\|_{L^{2}}^{\frac14}\|\p_jg\|_{L^{2}}^{\frac14} \|\p_kg\|_{L^{2}}^{\frac14}\|\p_{jk}g\|_{L^{2}}^{\frac14}\leq C \|g\|_{H^{1}}^{\frac12}\|\p_kg\|_{H^{1}}^{\frac12}, \label{ge}
\end{align}
and analogously,
\begin{align}
\|h\|_{L_{x_i}^{2}L_{x_j}^{\infty}L_{x_k}^{\infty}}
\leq C \|h\|_{H^{1}}^{\frac12}\|\p_kh\|_{H^{1}}^{\frac12}. \label{he}
\end{align}

Now, substituting \eqref{ge} and \eqref{he} into \eqref{4an}, we infer from Cauchy-Schwar's inequality that
\begin{align}
&\int_{\mathbb R^3}|efgh|dx\leq C \|e\|_{L^2}^{\frac12}\|\p_i e\|_{L^2}^{\frac12}\|f\|_{L^2}^{\frac12}\|\p_i f\|_{L^2}^{\frac12} \|g\|_{H^{1}}^{\frac12}\|\p_kg\|_{H^{1}}^{\frac12} \|h\|_{H^{1}}^{\frac12}\|\p_kh\|_{H^{1}}^{\frac12}\nonumber\\
&\quad\leq C \|e\|_{L^2}^2\|\p_i e\|_{L^2}^2+\|f\|_{L^2}^2\|\p_i f\|_{L^2}^2+ \|g\|_{H^{1}}^2\|\p_kg\|_{H^{1}}^2+ \|h\|_{H^{1}}^2\|\p_kh\|_{H^{1}}^2\nonumber\\
&\quad\leq C\left(\|\left(e,f\right)\|_{L^2}^{2}+\|\left(g,h\right)\|_{H^1}^{2}\right)\left(\|\left(\p_ie,\p_if\right)\|_{L^2}^{2}+\|\left(\p_kg,\p_kh\right)\|_{H^1}^{2}\right). \nonumber
\end{align}
The proof of Lemma \ref{anip2}  is thus complete.
\end{proof}

\subsection{ Integral representation  and  the kernels}

In this subsection, we aim to derive
integral representation of  (\ref{PMHD}) and study the sharp bounds for the kernel functions, which will be used to prove the decay rates of the solutions. To begin, we first operate the Fourier transform of (\ref{PMHD}) to get that ($\widehat u= (\widehat{u_1},\widehat{u_2},\widehat{u_3})$ and $\widehat b= (\widehat{b_1},\widehat{b_2},\widehat{b_3})$)
\begin{equation*}
\p_t \left(\begin{array}{c}
\widehat{u_i}\\\\
\widehat{b_i}\\\end{array}\right)=\mathbb A\left(\begin{array}{c}
\widehat{u_i}\\\\
\widehat{b_i}\\\end{array}\right)+\left(\begin{array}{c}
\widehat{N_{1i}}\\\\
\widehat{N_{2i}}\\\end{array}\right), \quad\forall\ i=1,2,3,
\end{equation*}
where $\mathbb A$ comes from the linear operators, and $N_1$, $N_2$ are the nonlinear terms,
$$\mathbb A\triangleq \left(\begin{array}{cc}
-\mu\xi_3^2~&i\xi_2\\\\
i\xi_2~&-\eta\left(\xi_2^2+\xi_3^2\right)\\\end{array}\right) $$
and
$$
\begin{cases}
 N_1=( N_{11},N_{12},N_{13})\triangleq{\mathbb P}\left(b\cdot \nabla b-u\cdot \nabla u\right), \\[2mm]
   N_2=(N_{21},N_{22},N_{23})\triangleq b\cdot \nabla u-u\cdot \nabla b.
   \end{cases}
$$

The characteristic polynomial of $A$ is determined by
\begin{equation}\nonumber\label{cpA}
\lambda^2+\left[\mu\xi^2_3+\eta\left(\xi^2_2+\xi^2_3\right)\right]\lambda+\mu\eta\xi^2_3\left(\xi^2_2+\xi^2_3\right)+\xi^2_2=0,
\end{equation}
from which we obtain the eigenvalues of $\mathbb A$:
\begin{align}
\lambda_1=\frac{-\left[\mu\xi^2_3+\eta\left(\xi^2_2+\xi^2_3\right)\right]-\sqrt{\Gamma}}{2},  \quad  \lambda_2=\frac{-\left[\mu\xi^2_3+\eta\left(\xi^2_2+\xi^2_3\right)\right]+\sqrt{\Gamma}}{2}\label{la}
\end{align}
with
\begin{equation*}
\Gamma\triangleq\left(\mu\xi^2_3+\eta\xi^2_{\nu}\right)^2-4\left(\mu\eta\xi^2_3 \xi^2_{\nu}+\xi^2_2\right),\quad \xi^2_{\nu}\triangleq \xi^2_2+\xi^2_3.
\end{equation*}

By computing the corresponding eigenvectors and diagonalizing $\mathbb A$, we find
\begin{align}
\widehat{u}(\xi, t)= \widehat{K_1}(t)\widehat{u_0}+\widehat{K_2}(t)\widehat{b_0}+\int_0^t\left(\widehat{K_1}(t-\tau)\widehat{N_1}(\tau)+\widehat{K_2}(t-\tau)\widehat{N_2}(\tau)\right)d\tau \label{u}
\end{align}
and
\begin{align}
\widehat{b}(\xi, t)= \widehat{K_2}(t)\widehat{u_0}+\widehat{K_3}(t)\widehat{b_0}+\int_0^t\left(\widehat{K_2}(t-\tau)\widehat{N_1}(\tau)+\widehat{K_3}(t-\tau)\widehat{N_2}(\tau)\right)d\tau, \label{b}
\end{align}
where the kernel functions $\widehat{K_1}, \widehat{K_2} $ and $\widehat{K_3}$ are given by
\begin{align}
\widehat{K_1}=-\mu\xi_3^2G_1+G_2,\quad  \widehat{K_2}=i\xi_2G_1,\quad \widehat{K_3}=\mu\xi_3^2G_1+G_3,\label{kerf}\
\end{align}
with
\begin{align*}
&G_1=\frac{e^{\lambda_2 t}-e^{\lambda_1 t}}{\lambda_2-\lambda_1},\quad  G_2=\frac{\lambda_2e^{\lambda_1 t}-\lambda_1e^{\lambda_2 t}}{\lambda_2-\lambda_1}=e^{\lambda_1 t}-\lambda_1G_1=e^{\lambda_2 t}-\lambda_2G_1,\nonumber\\
&G_3=\frac{\lambda_2e^{\lambda_2 t}-\lambda_1e^{\lambda_1 t}}{\lambda_2-\lambda_1}=e^{\lambda_1 t}+\lambda_2G_1=e^{\lambda_2 t}+\lambda_1G_1. 
\end{align*}

It is worth pointing out that  when $\lambda_1 =\lambda_2$, the representation in \eqref{u}  and \eqref{b} remains valid, provided we
replace $G_1$ by its limiting form:
$$
G_1= \lim_{\lambda_2\rightarrow\lambda_1}\frac{e^{\lambda_2t}- e^{\lambda_1t}}{\lambda_2-\lambda_1}=te^{\lambda_1t}.
$$

Obviously, the behavior of $\widehat{K_1}(\xi, t)$, $\widehat{K_2}(\xi, t)$ and $\widehat{K_3}(\xi, t)$
depends strongly on the Fourier frequencies $\xi$. So, we need to study the upper bounds for the kernel functions in the different sub-domains of frequency space.
\begin{pro}\label{lem2.1}
	Let   $\Om_1$ and $\Om_2$ be the sub-domains  of $\R^3$ defined by
	$$\Om_1\triangleq\left\{\xi \in \R^3:\  \Gamma \leq \frac{\left(\mu\xi_3^2+\eta\xi_{\nu}^2\right)^2}{4}  \quad {\rm or} \quad \left(\mu\xi_3^2+\eta\xi_{\nu}^2\right)^2 \leq\frac{16}{3}\left(\mu\eta\xi_3^2\xi_{\nu}^2+\xi_2^2\right)\right\},$$
	
	$$\Om_2\triangleq\left\{\xi \in \R^3:\  \Gamma > \frac{\left(\mu\xi_3^2+\eta\xi_{\nu}^2\right)^2}{4}  \quad {\rm or} \quad \left(\mu\xi_3^2+\eta\xi_{\nu}^2\right)^2 >\frac{16}{3}\left(\mu\eta\xi_3^2\xi_{\nu}^2+\xi_2^2\right)  \right\}.$$
	Then $G_1$, $G_2$ and $G_3$, and $\widehat{K_1}$, $\widehat{K_2}$ and $\widehat{K_3}$ admit the following upper bounds:

	\begin{enumerate}
		\item[(I)]  There exists some $c=c(\nu,\eta)>0$ such that for any $\xi \in \Om_1$,
		\begin{align}
		\D & Re\lambda_1 \leq -\frac{1}{2}\left(\mu\xi_3^2+\eta\xi_{\nu}^2\right), \quad Re\lambda_2 \leq -\frac{1}{4}\left(\mu\xi_3^2+\eta\xi_{\nu}^2\right),\nonumber\\
		\D &|G_1|\leq te^{-\frac{1}{4}\left(\mu\xi_3^2+\eta\xi_{\nu}^2\right)t},\quad  |\widehat{K_1}|, |\widehat{K_2}|,|\widehat{K_3}| \leq C e^{-c\xi_{\nu}^2 t}.\label{KS1}
		\end{align}
		\item[(II)]   There exists some $c=c(\nu,\eta)>0$ such that for any  $\xi \in \Om_2$,
		\begin{align}
		\D &\lambda_1 \leq -\frac{3}{4}\left(\mu\xi_3^2+\eta\xi_{\nu}^2\right), \quad \lambda_2 \leq - \frac{\mu\eta\xi_3^2\xi_{\nu}^2+\xi_2^2 }{\mu\xi_3^2+\eta\xi_{\nu}^2},\nonumber\\
		\D &|G_1|\leq 2\left(\mu\xi_3^2+\eta\xi_{\nu}^2\right)^{-1}\left(e^{\lambda_1 t}+e^{\lambda_2t}\right),\nonumber\\
		\D & |\widehat{K_1}|, |\widehat{K_2}|,|\widehat{K_3}|\leq Ce^{-c\left(\mu\xi_3^2+\eta\xi_{\nu}^2\right)t}+Ce^{- \frac{\mu\eta\xi_3^2\xi_{\nu}^2+\xi_2^2}{\mu\xi_3^2+\eta\xi_{\nu}^2}t}.\label{KS2}
		\end{align}
	\item[(III)]
Let  $\Om_{2i}$ with $i=1,2,3$ be the subsets of $\Om_2$ as follows,
	\begin{align}
	\D & \Om_{21}\triangleq\left\{\xi\in \Om_2 ,\quad \mu\xi_3^2>\eta\xi_{\nu}^2\right\},\nonumber\\
	\D & \Om_{22}\triangleq\left\{\xi\in \Om_2, \quad \mu\xi_3^2\leq\eta\xi_{\nu}^2\quad {\rm and}\quad |\xi_2|\leq|\xi_3|\right\},\nonumber\\
	\D & \Om_{23}\triangleq\left\{\xi\in \Om_2, \quad \mu\xi_3^2\leq\eta\xi_{\nu}^2\quad {\rm and}\quad |\xi_2|>|\xi_3|\right\}.\nonumber
	\end{align}
	Then  the following bounds hold for $K_1$, $K_2$ and $K_3$,
		\begin{align}\label{KS21}
		|\widehat{K_1}|, |\widehat{K_2}|,|\widehat{K_3}| &\leq C e^{-c\xi_{\nu}^2 t},\quad\forall\ \xi\in \Om_{21},\\
		\label{KS22}
		|\widehat{K_1}|, |\widehat{K_2}|,|\widehat{K_3}| &\leq C e^{-c\xi_{\nu}^2 t},\quad\forall\  \xi\in \Om_{22},
		\\
		 \label{KS23}
		|\widehat{K_1}|, |\widehat{K_2}|,|\widehat{K_3}| &\leq C\left(e^{-c(1+\xi_3^2)t}+e^{-c\xi_{\nu}^2t}\right),\quad\forall\ \xi\in \Om_{23}.
		\end{align}	

\end{enumerate}
\end{pro}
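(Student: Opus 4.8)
The plan is to analyze the eigenvalues $\lambda_1,\lambda_2$ given by \eqref{la} separately on $\Om_1$ and $\Om_2$, and then to exploit the explicit formulas for $G_1,G_2,G_3$ and the relations \eqref{kerf}. The starting point is the elementary observation that $\lambda_1+\lambda_2=-(\mu\xi_3^2+\eta\xi_\nu^2)$ and $\lambda_1\lambda_2=\mu\eta\xi_3^2\xi_\nu^2+\xi_2^2\geq0$; I will use $a\triangleq\mu\xi_3^2+\eta\xi_\nu^2$ and $d\triangleq\mu\eta\xi_3^2\xi_\nu^2+\xi_2^2$ as shorthand, so $\Gamma=a^2-4d$.

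For part (I), on $\Om_1$ either $\Gamma\le a^2/4$ or $a^2\le\tfrac{16}{3}d$ (the two conditions are in fact equivalent after rearranging, since $a^2\le\tfrac{16}{3}d\iff a^2-4d\le\tfrac14 a^2$). When $\Gamma\le0$ the eigenvalues are complex conjugates with $\mathrm{Re}\,\lambda_{1,2}=-a/2$, and when $0<\Gamma\le a^2/4$ one has $\sqrt\Gamma\le a/2$, so $\mathrm{Re}\,\lambda_1\le -a/2$ and $\mathrm{Re}\,\lambda_2\le -a/4$; this gives the first line of \eqref{KS1}. The bound $|G_1|\le t e^{-\frac14 a t}$ then follows from the standard estimate $|e^{\lambda_2 t}-e^{\lambda_1 t}|\le |\lambda_2-\lambda_1|\,t\,\max(e^{\mathrm{Re}\,\lambda_1 t},e^{\mathrm{Re}\,\lambda_2 t})$ applied to $G_1=(e^{\lambda_2 t}-e^{\lambda_1 t})/(\lambda_2-\lambda_1)$ (valid also in the limiting case $\lambda_1=\lambda_2$). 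For the kernels, using \eqref{kerf} I bound $|\widehat{K_2}|=|\xi_2|\,|G_1|\le |\xi_2|\,t\,e^{-\frac14 a t}$ and $|\widehat{K_1}|\le\mu\xi_3^2|G_1|+|G_2|$, $|\widehat{K_3}|\le\mu\xi_3^2|G_1|+|G_3|$, where $|G_2|,|G_3|\le e^{\mathrm{Re}\,\lambda_1 t}+|\lambda_1|\,|G_1|$ from the identities listed for $G_2,G_3$; since $|\lambda_1|\le a$ and $\mu\xi_3^2\le a$, every term carries a polynomial prefactor times $e^{-\frac14 a t}$, and absorbing the polynomial factor into the exponential (using $s\,e^{-s/4}\le C$ with $s=at\ge c\xi_\nu^2 t$, and $a\ge\eta\xi_\nu^2$) produces $|\widehat K_i|\le Ce^{-c\xi_\nu^2 t}$.

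For part (II), on $\Om_2$ we have $\Gamma>a^2/4>0$, so both eigenvalues are real, and $0<\sqrt\Gamma<a$, whence $\lambda_1=\tfrac12(-a-\sqrt\Gamma)<\tfrac12(-a-\tfrac a2)=-\tfrac34 a$. For $\lambda_2$ I use $\lambda_2=\tfrac12(-a+\sqrt\Gamma)=\tfrac12\cdot\frac{(-a)^2-\Gamma}{-a-\sqrt\Gamma}=\frac{-d}{a+\sqrt\Gamma}\cdot\frac{2}{2}$ — more precisely $\lambda_1\lambda_2=d$ gives $\lambda_2=d/\lambda_1$, and since $|\lambda_1|=\tfrac12(a+\sqrt\Gamma)\le a$ we get $\lambda_2\le -d/a$, which is the claimed bound. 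The estimate $|G_1|\le 2a^{-1}(e^{\lambda_1 t}+e^{\lambda_2 t})$ follows from $|\lambda_2-\lambda_1|=\sqrt\Gamma\ge a/2$ together with $|e^{\lambda_2 t}-e^{\lambda_1 t}|\le e^{\lambda_1 t}+e^{\lambda_2 t}$. The kernel bounds in \eqref{KS2} then come exactly as before: $|\widehat K_2|=|\xi_2| |G_1|$ with $|\xi_2|\le a^{1/2}\cdot\eta^{-1/2}$-type bounds, $|\widehat K_1|\le\mu\xi_3^2|G_1|+e^{\lambda_1 t}+|\lambda_1| |G_1|$, etc., and since $\mu\xi_3^2\le a$, $|\lambda_1|\le a$, the prefactor $a\cdot a^{-1}=O(1)$ multiplies $e^{\lambda_1 t}+e^{\lambda_2 t}\le e^{-ca t}+e^{-\frac da t}$, giving \eqref{KS2}.

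The real work is part (III), where on the three subregions of $\Om_2$ I must convert the bound $e^{-ca t}+e^{-\frac da t}$ from \eqref{KS2} into a pure Gaussian-type decay in $\xi_\nu^2$ (plus an extra low-frequency term on $\Om_{23}$). On $\Om_{21}$, $\mu\xi_3^2>\eta\xi_\nu^2$ forces $a\le 2\mu\xi_3^2\le 2\mu\eta^{-1}\cdot\eta\xi_3^2$-comparable, and more usefully $d=\mu\eta\xi_3^2\xi_\nu^2+\xi_2^2\ge\mu\eta\xi_3^2\xi_\nu^2\ge c\,\eta\xi_\nu^2\cdot\xi_\nu^2/\mu\cdot(\mu/\eta)$, so $d/a\gtrsim\xi_\nu^2$; combined with $a\gtrsim\xi_\nu^2$ this gives \eqref{KS21}. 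On $\Om_{22}$, where $\mu\xi_3^2\le\eta\xi_\nu^2$ and $|\xi_2|\le|\xi_3|$ (so $\xi_\nu^2\le 2\xi_3^2$, i.e. $\xi_3^2\gtrsim\xi_\nu^2$), again $a\ge\eta\xi_\nu^2$ handles $e^{-cat}$, and $d\ge\mu\eta\xi_3^2\xi_\nu^2\gtrsim\mu\eta\xi_\nu^4$ while $a\le(\mu+\eta)\xi_\nu^2$ gives $d/a\gtrsim\xi_\nu^2$, yielding \eqref{KS22}. The delicate case is $\Om_{23}$: here $\mu\xi_3^2\le\eta\xi_\nu^2$ but $|\xi_2|>|\xi_3|$, so $\xi_2^2$ can dominate and $d/a\approx\xi_2^2/(\eta\xi_\nu^2)$ need not be large when $\xi_\nu^2$ is large with $\xi_3$ small; but then $d\ge\xi_2^2\ge\tfrac12\xi_\nu^2$ is not enough by itself — one splits further according to whether $a=\mu\xi_3^2+\eta\xi_\nu^2$ is $\le 1$ or $\ge 1$. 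When $a\ge 1$, $d/a\ge\xi_2^2/a\ge\tfrac12\xi_\nu^2/a$ is still weak, so instead use $d/a\ge\xi_2^2/(2\eta\xi_\nu^2)\ge$ a constant when $\xi_2^2\gtrsim\xi_\nu^2$ (true here), giving $e^{-\frac da t}\le e^{-ct}\le e^{-c(1+\xi_3^2)t}$ after noting $\xi_3^2\le\xi_\nu^2\cdot(\mu/\eta)$... this needs care, and the term $e^{-c\xi_\nu^2 t}$ comes from $e^{-cat}$ with $a\ge\eta\xi_\nu^2$. When $a\le 1$ one has $1+\xi_3^2\approx 1$ and $d/a\ge\xi_2^2/(\mu\xi_3^2+\eta\xi_\nu^2)\ge c$ since $\xi_2^2$ is comparable to $\xi_\nu^2\ge\xi_3^2$ and $a\le1$, so $e^{-\frac da t}\le e^{-ct}=e^{-c(1+\xi_3^2)t}$ up to constants, and the $e^{-c\xi_\nu^2 t}$ is again from $e^{-cat}$; but when $a$ is very small, $e^{-cat}$ gives no $\xi_\nu^2$ decay, so one must note $\xi_\nu^2\le a/\eta\le1/\eta$ is bounded and $e^{-c\xi_\nu^2 t}\le 1$ is enough since we only claim that term up to a constant. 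Carefully bookkeeping these casewise comparisons — in particular identifying the regime $\xi_3$ small, $\xi_2$ large where the $\xi_2^2$-in-the-numerator, $\xi_\nu^2$-in-the-denominator ratio stalls, and recovering decay from the $e^{-cat}$ branch there — is the main obstacle; once the region $\Om_{23}$ is correctly subdivided, each piece reduces to an elementary inequality between the monomials $\xi_2^2,\xi_3^2,\xi_\nu^2$ and the constant $1$.
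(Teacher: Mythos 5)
Your overall strategy (eigenvalue bounds, the mean-value/oscillation estimate for $G_1$, then region-by-region comparison of $(\mu\eta\xi_3^2\xi_\nu^2+\xi_2^2)/(\mu\xi_3^2+\eta\xi_\nu^2)$ with $\xi_\nu^2$) matches the paper's, and Parts (II) and (III) are essentially recoverable. But Part (I) has a genuine gap. Writing $a=\mu\xi_3^2+\eta\xi_\nu^2$ and $d=\mu\eta\xi_3^2\xi_\nu^2+\xi_2^2$, you bound $|\widehat{K_1}|,|\widehat{K_3}|$ by terms of the form $|\lambda_1|\,|G_1|\le |\lambda_1|\,t\,e^{-at/4}$ and then absorb the prefactor using ``$|\lambda_1|\le a$''. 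This is false precisely in the oscillatory regime $\Gamma<0$: there $\lambda_2=\overline{\lambda_1}$, so $|\lambda_1|=\sqrt{\lambda_1\lambda_2}=\sqrt d$, and $\Gamma<0$ forces $\sqrt d>a/2$; along $\xi_3=0$, $\xi_2\to0$ one has $\sqrt d/a=1/(\eta|\xi_2|)\to\infty$, and $\sqrt d\,t\,e^{-at/4}$ evaluated at $t\sim 1/a$ blows up while the target $Ce^{-c\xi_\nu^2 t}$ stays bounded there. The same problem affects $\widehat{K_2}$: the prefactor $|\xi_2|\,t$ cannot be absorbed into $e^{-at/4}$ because $|\xi_2|$ need not be $O(a)$ on $\Om_1$ (only $|\xi_2|\le\sqrt{a/\eta}$ is automatic). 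The missing ingredient is the dichotomy used in the paper: when $|\lambda_1|\le\sqrt{-\Gamma}$ (resp.\ $|\xi_2|\le\sqrt{-\Gamma}$) one must exploit the large divisor and the cancellation $|e^{\lambda_2t}-e^{\lambda_1t}|\le2e^{-at/2}$, giving $|\lambda_1 G_1|\le\frac{|\lambda_1|}{\sqrt{-\Gamma}}\cdot2e^{-at/2}\le2e^{-at/2}$ with no factor of $t$; only in the complementary case $|\lambda_1|>\sqrt{-\Gamma}$ does one deduce $|\lambda_1|<a/\sqrt3$ and may then safely absorb $a\,t\,e^{-at/2}$.

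Two smaller points. In Part (II) you justify the $\widehat{K_2}$ bound via ``$|\xi_2|\le a^{1/2}\eta^{-1/2}$'', which is insufficient against the factor $a^{-1}$ in $|G_1|$; you must instead use the defining inequality of $\Om_2$, namely $a^2>\tfrac{16}{3}d\ge\tfrac{16}{3}\xi_2^2$, to get $|\xi_2|<\tfrac{\sqrt3}{4}a$. In the $\Om_{23}$ case your step $e^{-ct}\le e^{-c(1+\xi_3^2)t}$ goes the wrong way; the clean route is to keep both terms of $\frac da\ge\frac{d}{2\eta\xi_\nu^2}=\frac\mu2\xi_3^2+\frac{\xi_2^2}{2\eta\xi_\nu^2}\ge\frac\mu2\xi_3^2+\frac1{4\eta}$ (using $\xi_2^2\ge\tfrac12\xi_\nu^2$ there), which yields the $e^{-c(1+\xi_3^2)t}$ bound directly and makes your splitting on the size of $a$ unnecessary.
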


\begin{proof}
	(I) For  $\xi \in \Om_1$, the eigenvalues $\lambda_1$  and $\lambda_2$ given by \eqref{la} obviously satisfy
	$$Re\lambda_1 \leq -\frac{1}{2}\left(\mu\xi_3^2+\eta\xi_{\nu}^2\right), \quad Re\lambda_2 \leq -\frac{1}{4}\left(\mu\xi_3^2+\eta\xi_{\nu}^2\right).$$

	We divide the analysis into two cases,
	\beq\nonumber\label{cas}
	\Gamma < 0\quad\mbox{and}\quad 0\leq\sqrt{\Gamma}\leq \frac{\mu\xi^2_3+\eta\left(\xi^2_2+\xi^2_3\right)}{2}.
	\eeq

	\textbf{Case 1. $\Gamma < 0$.} In this case, the eigenvalues
	$\lambda_1$ and $\lambda_2$ are a pair of complex conjugates, satisfying
	\begin{equation}\nonumber\label{lab2}
	|\lambda_1|,~|\lambda_2|=\sqrt{ \mu\eta\xi^2_3\left(\xi^2_2+\xi^2_3\right)+\xi^2_2 }.
	\end{equation}
	In addition,
	\beq \label{gb}
	G_1 =\frac{e^{\lambda_2 t}-e^{\lambda_1 t}}{\lambda_2-\lambda_1} = e^{-\frac{1}{2}\left(\mu\xi_3^2+\eta\xi_{\nu}^2\right)t} \,\frac{2\sin (\frac12Q t)}{Q},
	\eeq
	where
	$$
	Q:=\sqrt{4\left(\mu\eta\xi^2_3\left(\xi^2_2+\xi^2_3\right)+\xi^2_2\right)-\left[\mu\xi^2_3+\eta\left(\xi^2_2+\xi^2_3\right)\right]^2}.
	$$

	By the simple fact that $|\sin \rho|\le |\rho|$ for any $\rho\in \mathbb R$, we infer from (\ref{gb}) that
	$$
	|G_1| \le t \, e^{-\frac{1}{2}\left(\mu\xi_3^2+\eta\xi_{\nu}^2\right)t}.
	$$
	It follows from the definitions of $\widehat K_1$ and $\widehat K_3$  in (\ref{kerf}) that
	\begin{align}
	& |\widehat{K_1}|=|\mu\xi_3^2G_1+G_2|=|\mu\xi_3^2G_1+e^{\lambda_1t}-\lambda_1G_1|,\label{K1}	\\
	& |\widehat{K_3}|=|\mu\xi_3^2G_1+G_3|=|\mu\xi_3^2G_1+e^{\lambda_2t}+\lambda_1G_1|.\label{K3}
	\end{align}
	
Next, we first  estimate the term  $|\lambda_1G_1|$. On one hand, if $|\lambda_1|\leq\sqrt{-\Gamma}$, then
	\begin{align*}
	|\lambda_1G_1|&=|\lambda_1|\frac{\left| e^{\lambda_2 t}-e^{\lambda_1 t}\right|} {\left| \sqrt{-\Gamma}\right|}\leq |e^{\lambda_1 t}|+|e^{\lambda_2 t}|  \leq Ce^{-\frac{1}{2}\left(\mu\xi_3^2+\eta\xi_{\nu}^2\right) t}.
	\end{align*}
	On the other hand, if $|\lambda_1|>\sqrt{-\Gamma}$, then
	$$
	3\left(\mu\eta\xi^2_3\left(\xi^2_2+\xi^2_3\right)+\xi^2_2\right)<\left(\mu\xi_3^2+\eta\xi_{\nu}^2\right)^2 \quad
	{\rm or} \quad  |\lambda_1|<\frac{1}{\sqrt{3}}\left(\mu\xi_3^2+\eta\xi_{\nu}^2\right),$$
which,  together with \eqref{gb},  yields
	\begin{align*}
	|\lambda_1G_1|&=|\lambda_1|e^{-\frac{1}{2}\left(\mu\xi_3^2+\eta\xi_{\nu}^2\right)t} \left|\frac{2\sin (\frac12Q t)}{Q}\right|\\
	&\leq \frac{1}{\sqrt{3}}\left(\mu\xi_3^2+\eta\xi_{\nu}^2\right)t\, e^{-\frac{1}{2}\left(\mu\xi_3^2+\eta\xi_{\nu}^2\right)t}  \leq Ce^{-c \xi_{\nu}^2 t}.
	\end{align*}

So, in the case that $\Gamma<0$ we have $|\lambda_1G_1|\leq Ce^{-c \xi_{\nu}^2 t}$. As a result,
	\begin{align*}
	|\widehat{K_1}|=|\mu\xi_3^2G_1+e^{\lambda_1t}-\lambda_1G_1| \leq  Ce^{-c \xi_{\nu}^2 t},
	\end{align*}
	and analogously,
	\begin{align*}
	|\widehat{K_3}|=|\mu\xi_3^2G_1+e^{\lambda_2t}+\lambda_1G_1| \leq  Ce^{-c \xi_{\nu}^2 t}.
	\end{align*}

	To deal with $\widehat{K_2}$, we first observe that if   $|\xi_2|\leq\sqrt{-\Gamma}$, then
	\begin{align*}
	|\widehat{K_2}|&=|\xi_2|\frac{\left| e^{\lambda_2 t}-e^{\lambda_1 t}\right|} { \sqrt{-\Gamma}}\leq |e^{\lambda_1 t}|+|e^{\lambda_2 t}|  \leq Ce^{-\frac{1}{2}\left(\mu\xi_3^2+\eta\xi_{\nu}^2\right) t}.
	\end{align*}

Moreover, if $|\xi_2|>\sqrt{-\Gamma}$, then
	$$
	\xi^2_2> 4\left(\mu\eta\xi^2_3\left(\xi^2_2+\xi^2_3\right)+\xi^2_2\right)-\left(\mu\xi_3^2+\eta\xi_{\nu}^2\right)^2,
	$$
	which implies
	\begin{align}\label{k2x}
	3\xi^2_2<\left(\mu\xi_3^2+\eta\xi_{\nu}^2\right)^2 \quad
	{\rm i.e.} \quad |\xi_2|<\frac{1}{\sqrt{3}}\left(\mu\xi_3^2+\eta\xi_{\nu}^2\right).
	\end{align}
	so that, it follows from \eqref{kerf}, \eqref{gb}  and \eqref{k2x} that
	\begin{align*}
	|\widehat{K_2}|&=|\xi_2|e^{-\frac{1}{2}\left(\mu\xi_3^2+\eta\xi_{\nu}^2\right)t} \left|\frac{2\sin (\frac12Q t)}{Q}\right|  \leq Ce^{-c \xi_{\nu}^2 t}.
	\end{align*}
This finishes the proof of (\ref{KS1}) in the case $\Gamma<0$.

\vskip 2mm

	\textbf{Case 2. $ 0\leq\sqrt{\Gamma}\leq \frac{\mu\xi^2_3+\eta\xi_{\nu}^2}{2}$.} In this case,
	both $\lambda_1$ and $\lambda_2$ are real and satisfy
	\begin{align*}
	-\frac{3}{4}\left(\mu\xi_3^2+\eta\xi_{\nu}^2\right)\leq\lambda_1\leq -\frac{1}{2}\left(\mu\xi_3^2+\eta\xi_{\nu}^2\right),\\[1mm]
-\frac{1}{2}\left(\mu\xi_3^2+\eta\xi_{\nu}^2\right)\leq\lambda_2\leq -\frac{1}{4}\left(\mu\xi_3^2+\eta\xi_{\nu}^2\right).
	\end{align*}

	It follows from the mean-value theorem that there is a $\zeta \in (\lambda_1, \lambda_2)$ such that
	$$
	G_1 =te^{\zeta}\leq te^{-\frac{1}{4}\left(\mu\xi_3^2+\eta\xi_{\nu}^2\right)t},
	$$
	and thus,
	\beq
	|\lambda_1G_1|\leq \frac{3}{4}\left(\mu\xi_3^2+\eta\xi_{\nu}^2\right)te^{-\frac{1}{4}\left(\mu\xi_3^2+\eta\xi_{\nu}^2\right)t}\leq Ce^{-c\xi_{\nu}^2 t}.\label{gb2}
	\eeq
With (\ref{gb2}) at hand, we can derive the desired bounds of $|\widehat{K_1}|$ and $|\widehat{K_3}|$ in a similar manner as that in Case 1.

	To bound $|\widehat{K_2}|$, we first use the fact that $\sqrt{\Gamma}\geq0$ to get
	$$
	\left(\mu\xi_3^2+\eta\xi_{\nu}^2\right)^2\geq4\left(\mu\eta\xi^2_3\xi_{\nu}^2+\xi^2_2\right),  $$
	and hence,
	\begin{align*}
	4\xi^2_2\leq4\left(\mu\eta\xi^2_3\xi_{\nu}^2+\xi^2_2\right)\leq \left(\mu\xi_3^2+\eta\xi_{\nu}^2\right)^2 \quad
	{\rm i.e.} \quad |\xi_2|\leq\frac{1}{2}\left(\mu\xi_3^2+\eta\xi_{\nu}^2\right),
	\end{align*}
	from which it immediately follows that
	$$
	|\widehat{K_2}|=|\xi_2G_1|\leq |\xi_2|te^{-\frac{1}{4}\left(\mu\xi_3^2+\eta\xi_{\nu}^2\right)t}\leq Ce^{-c\xi_{\nu}^2 t}.
	$$
	This, together with Case 1, shows that (\ref{KS1}) holds for all  $\xi\in\Omega_1$.
	
	\vskip .1in
	(II) For  $\xi \in \Om_2$, it is clear that $\lambda_1$ and $\lambda_2$ are real and
	\begin{align}
	&-\left(\mu\xi_3^2+\eta\xi_{\nu}^2\right)\leq\lambda_1\leq -\frac{3}{4}\left(\mu\xi_3^2+\eta\xi_{\nu}^2\right),\nonumber\\
	&\quad\lambda_2=\frac{-\left[\mu\xi^2_3+\eta\left(\xi^2_2+\xi^2_3\right)\right]+\sqrt{\Gamma}}{2}\nonumber\\
	&\qquad=-\frac{2\left(\mu\eta\xi^2_3\xi_{\nu}^2+\xi^2_2\right)}{\sqrt{\Gamma}+\mu\xi^2_3+\eta\xi_{\nu}^2} \leq -\frac{ \mu\eta\xi^2_3\xi_{\nu}^2+\xi^2_2}{ \mu\xi^2_3+\eta\xi_{\nu}^2},\label{ll}
	\end{align}
	from which it is easily seen that
	$$
	\lambda_2-\lambda_1=\sqrt{\Gamma} \geq  \frac{1}{2}\left( \mu\xi^2_3+\eta\xi_{\nu}^2\right),
	$$
	and consequently, the term $G_1$ admits the following upper bound:
	\begin{align}
	|G_1| \le 2\left( \mu\xi^2_3+\eta\xi_{\nu}^2\right)^{-1}\left(e^{\lambda_1 t}+e^{\lambda_2t}\right).\label{G2}
	\end{align}

	As an immediate result of \eqref{ll} and \eqref{G2}, one has
	$$
	|\lambda_1G_1| \le 2\left(e^{\lambda_1 t}+e^{\lambda_2t}\right),
	$$
	so that, it is easily derived from \eqref{K1}--\eqref{K3} and
	$$
	|\widehat{K_1}|,\ |\widehat{K_3}| \le C\left(e^{\lambda_1 t}+e^{\lambda_2t}\right)\leq Ce^{-c\left(\mu\xi_3^2+\eta\xi_{\nu}^2\right)t}+Ce^{- \frac{\mu\eta\xi_3^2\xi_{\nu}^2+\xi_2^2}{\mu\xi_3^2+\eta\xi_{\nu}^2}t}.
	$$

	For $\widehat{K_2}$, since $\xi\in \Om_2$, we have
	$$
	\left(\mu\xi_3^2+\eta\xi_{\nu}^2\right)^2 >\frac{16}{3}\left(\mu\eta\xi_3^2\xi_{\nu}^2+\xi_2^2\right),
	$$
	that is,
	$$
	\frac{16}{3} \xi_2^2 <\left(\mu\xi_3^2+\eta\xi_{\nu}^2\right)^2 \quad
	{\rm or}
	\quad
	|\xi_2| < \frac{\sqrt{3}}{4}\left(\mu\xi_3^2+\eta\xi_{\nu}^2\right),
	$$
from which and \eqref{G2}, we find
	\begin{align*}
	|\widehat{K_2}|=|\xi_2 G_1|\leq C\left(e^{\lambda_1 t}+e^{\lambda_2t}\right)\leq Ce^{-c\left(\mu\xi_3^2+\eta\xi_{\nu}^2\right)t}+Ce^{- \frac{\mu\eta\xi_3^2\xi_{\nu}^2+\xi_2^2}{\mu\xi_3^2+\eta\xi_{\nu}^2}t}.
	\end{align*}
This finishes the proof of the assertions stated in (\ref{KS2}).

\vskip .1in
	(III) For $\xi\in \Om_{21}$,
	$$
	-\frac{\mu\eta\xi_3^2\xi_{\nu}^2+\xi_2^2}{\mu\xi_3^2+\eta\xi_{\nu}^2}\leq -\frac{\mu\eta\xi_3^2\xi_{\nu}^2+\xi_2^2}{2\mu\xi_3^2}\leq-\frac{\eta}{2}\xi_{\nu}^2.
	$$
	For $\xi\in \Om_{22}$,
	$$
	-\frac{\mu\eta\xi_3^2\xi_{\nu}^2+\xi_2^2}{\mu\xi_3^2+\eta\xi_{\nu}^2}\leq -\frac{\mu\eta\xi_3^2\xi_{\nu}^2+\xi_2^2}{2\eta\xi_{\nu}^2}\leq-\frac{\mu}{2}\xi_{3}^2 \leq-\frac{\mu}{4}\xi_{\nu}^2.
	$$
	For $\xi\in \Om_{23}$,
	$$
	-\frac{\mu\eta\xi_3^2\xi_{\nu}^2+\xi_2^2}{\mu\xi_3^2+\eta\xi_{\nu}^2}\leq -\frac{\mu\eta\xi_3^2\xi_{\nu}^2+\xi_2^2}{2\eta\xi_{\nu}^2}=-\frac{\mu}{2}\xi_{3}^2-\frac{\xi_{2}^2}{2\eta\left(\xi_{2}^2+\xi_{3}^2\right)}\leq-\frac{\mu}{2}\xi_{3}^2-\frac{1}{4\eta}.
	$$
	Thus, combining these bounds with (\ref{KS2})$_3$, we immediately arrive at (\ref{KS21})--(\ref{KS23}). The proof of Proposition \ref{lem2.1} is therefore complete.
\end{proof}

\subsection{Mathematical tools for the decay estimates}
In this subsection, we state some useful inequalities which play important roles in the derivations of  the decay rates. We first  recall the following lemma concerning  the exact decay rate for the solution operator
associated with a fractional Laplacian (cf. \cite{Schonbek,Wu0}).

\begin{lem}\label{OD}
	Let $\alpha \ge 0$ and $\beta>0$ be real numbers. Then for any $1\le q\le p\le\infty$,
	\begin{align*}
	\|(-\Delta)^\alpha e^{-(-\Delta)^\beta t}f\|_{L^p(\mathbb R^d)}\le \,C\, t^{-\frac{\alpha}{\beta} - \frac{d}{2\beta}(\frac1q-\frac1p)}\, \|f\|_{L^q(\mathbb R^d)},\quad \forall\ t>0.
	\end{align*}
\end{lem}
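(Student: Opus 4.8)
The plan is to reduce everything to a direct computation with the Fourier multiplier. First I would write, for $f \in \mathcal{S}(\mathbb R^d)$ (the general case following by density), the operator in Fourier variables: the symbol of $(-\Delta)^\alpha e^{-(-\Delta)^\beta t}$ is $m_t(\xi) = |\xi|^{2\alpha} e^{-|\xi|^{2\beta}t}$, so that $(-\Delta)^\alpha e^{-(-\Delta)^\beta t}f = \mathcal{F}^{-1}(m_t) * f$, where $\mathcal{F}^{-1}(m_t)$ denotes the inverse Fourier transform of $m_t$. By the Hausdorff--Young inequality combined with Young's convolution inequality, for $1 \le q \le p \le \infty$ we have $\|\mathcal{F}^{-1}(m_t)*f\|_{L^p} \le \|\mathcal{F}^{-1}(m_t)\|_{L^r}\|f\|_{L^q}$ with $1 + \tfrac1p = \tfrac1r + \tfrac1q$, i.e. $\tfrac1r = 1 - (\tfrac1q - \tfrac1p) \in [0,1]$. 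So the whole matter is reduced to estimating $\|\mathcal{F}^{-1}(m_t)\|_{L^r(\mathbb R^d)}$ for every such $r$.

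Next I would exploit the scaling structure of $m_t$. Writing $\xi = t^{-1/(2\beta)}\zeta$, one has $m_t(\xi) = t^{-\alpha/\beta} |\zeta|^{2\alpha} e^{-|\zeta|^{2\beta}} = t^{-\alpha/\beta} m_1(\zeta)$, and the inverse Fourier transform scales as $\mathcal{F}^{-1}(m_t)(x) = t^{-\alpha/\beta} \, t^{-d/(2\beta)} \, \mathcal{F}^{-1}(m_1)\big(t^{-1/(2\beta)}x\big)$. Taking the $L^r$ norm in $x$ and changing variables gives
\begin{align*}
\|\mathcal{F}^{-1}(m_t)\|_{L^r(\mathbb R^d)} = t^{-\frac{\alpha}{\beta}} \, t^{-\frac{d}{2\beta}} \, t^{\frac{d}{2\beta r}} \, \|\mathcal{F}^{-1}(m_1)\|_{L^r(\mathbb R^d)} = C \, t^{-\frac{\alpha}{\beta} - \frac{d}{2\beta}(1 - \frac1r)},
\end{align*}
and since $1 - \tfrac1r = \tfrac1q - \tfrac1p$, this is exactly $C\, t^{-\frac{\alpha}{\beta} - \frac{d}{2\beta}(\frac1q - \frac1p)}$. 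Combining with the Young/Hausdorff--Young bound above yields the claim, provided the constant $C = \|\mathcal{F}^{-1}(m_1)\|_{L^r}$ is finite for all $r \in [1,\infty]$.

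The only genuine point to check — and the main (mild) obstacle — is therefore that $K_1 := \mathcal{F}^{-1}\big(|\xi|^{2\alpha} e^{-|\xi|^{2\beta}}\big)$ lies in $L^r(\mathbb R^d)$ for every $1 \le r \le \infty$. This is a statement purely about the fixed function $m_1(\xi) = |\xi|^{2\alpha}e^{-|\xi|^{2\beta}}$. Near $\xi = 0$, $m_1$ is continuous with rapid control, and away from the origin it decays faster than any polynomial; although $|\xi|^{2\alpha}$ need not be smooth at the origin when $\alpha$ is not an integer, $m_1 \in L^1(\mathbb R^d)$, so $K_1 \in L^\infty$, and one checks that $\langle x\rangle^{N} K_1(x)$ is bounded for every $N$ by writing $x^\gamma K_1(x) = \mathcal{F}^{-1}(\partial^\gamma m_1)(x)$ for multi-indices $\gamma$ (the derivatives $\partial^\gamma m_1$ remain integrable, the possible singularity $|\xi|^{2\alpha - |\gamma|}$ at the origin being locally integrable once combined with the Gaussian-type decay, after splitting the frequency integral into $|\xi|\le 1$ and $|\xi|>1$). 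Hence $K_1$ decays faster than any negative power of $|x|$ and, being bounded, belongs to $L^r$ for all $r$. This finishes the proof; one may alternatively cite the references \cite{Schonbek,Wu0} for this kernel estimate.
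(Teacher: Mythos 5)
The paper does not prove Lemma \ref{OD} at all --- it is recalled from \cite{Schonbek,Wu0} --- so there is no in-paper argument to compare with. Your overall strategy is the standard one and is surely what those sources do: realize the operator as convolution with $K_t=\mathcal{F}^{-1}\bigl(|\xi|^{2\alpha}e^{-|\xi|^{2\beta}t}\bigr)$, apply Young's convolution inequality (Hausdorff--Young plays no role here), and use the exact scaling $K_t(x)=t^{-\alpha/\beta-d/(2\beta)}K_1\bigl(t^{-1/(2\beta)}x\bigr)$ to reduce the whole lemma to the single statement $K_1\in L^r(\mathbb{R}^d)$ for all $1\le r\le\infty$. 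That reduction, and the exponent bookkeeping, are correct.

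The gap is in the last step, which you yourself flag as the only genuine point to check. The assertion that $\langle x\rangle^{N}K_1(x)$ is bounded for \emph{every} $N$ is false in general: for $\alpha=0$, $\beta=1/2$ the kernel $K_1$ is the Poisson kernel $c_d(1+|x|^2)^{-(d+1)/2}$, which decays only like $|x|^{-d-1}$; more generally, whenever $\alpha$ (or $\beta$) is not an integer, $m_1$ is not smooth at $\xi=0$ and $K_1$ has only finite polynomial decay. Your justification breaks down at exactly this point: for $|\gamma|\ge 2\alpha+d$ the derivative $\partial^{\gamma}m_1$ behaves like $|\xi|^{2\alpha-|\gamma|}$ near the origin, which is \emph{not} locally integrable, and the exponential factor equals $1+O(|\xi|^{2\beta})$ there, so it cannot repair a singularity at $\xi=0$ (splitting into $|\xi|\le1$ and $|\xi|>1$ only helps on the outer region). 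Hence the identity $x^{\gamma}K_1=\mathcal{F}^{-1}(\partial^{\gamma}m_1)$ cannot be paired with an $L^1$ bound on $\partial^{\gamma}m_1$ for large $|\gamma|$, and for small $\alpha$ there may be no integer number of derivatives that simultaneously beats $|x|^{-d}$ at infinity and keeps the origin integrable. What is true, and all that is needed, is a finite amount of decay, $|K_1(x)|\le C(1+|x|)^{-d-\varepsilon}$ with $\varepsilon>0$ determined by the fractional smoothness of $m_1$ at the origin (roughly $\varepsilon=2\alpha$ for noninteger $\alpha>0$, and $\varepsilon=2\beta$ when $\alpha=0$). Establishing this requires a different device: split $m_1=\chi m_1+(1-\chi)m_1$ with $\chi$ a cutoff near the origin (the high-frequency piece is Schwartz), and handle the low-frequency piece by a dyadic decomposition in $|\xi|$, by the known asymptotics of $\mathcal{F}^{-1}\bigl(\chi(\xi)|\xi|^{2\alpha}\bigr)$, or, for $\beta\le1$, by subordination to the Gaussian. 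As written, $K_1\in L^1$ --- and with it the lemma --- is not actually established, even in the case $\beta=1$ with fractional $\alpha$ that the paper uses in Corollaries \ref{PI} and \ref{divfree}.
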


\vskip .1in
It is clear from Proposition \ref{lem2.1} that there is lack of $\xi_1$-decay in  kernel functions. To overcome this difficulty and to derive the decay rates, we have to assume that the initial data is equipped with $L^2_{x_1}L^1_{x_2,x_3}$-norm. Moreover, as an  immediate consequence of  Lemma \ref{OD}, we have
the following corollary which is  technically used in the analysis.
\begin{cor}\label{PI} Assume that  $u\in L_{x_1}^2L^1_{x_2,x_3}\cap L^2$. Then for any $\alpha\geq0$, there exists an absolutely   positive constant $C>0$, depending only on $\alpha$, such that
\begin{equation}\label{Pli}
\||\xi_\nu|^\alpha e^{- \xi_{\nu}^2t}\widehat{u}(\xi)\|_{L^2_\xi}\leq C(1+t)^{-\frac{1+\alpha}{2}} \left\|\|u\|_{L^1_{x_2,x_3}}\right\|_{L^2_{x_1}} ,\quad\forall\ t\geq 1,
\end{equation}
where $\xi=(\xi_1,\xi_\nu)$ with $\xi_\nu=(\xi_2,\xi_3)$.
\end{cor}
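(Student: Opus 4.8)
\textbf{Proof plan for Corollary \ref{PI}.}
The plan is to split the $\xi$-integral into the low-frequency region $|\xi_\nu|\le 1$ and the high-frequency region $|\xi_\nu|>1$, and in each region reduce matters to Lemma \ref{OD} applied in the two transverse variables $(x_2,x_3)$ only, treating $x_1$ (equivalently $\xi_1$) as a parameter. First I would note that, for fixed $\xi_1$, the function $\xi_\nu\mapsto |\xi_\nu|^\alpha e^{-\xi_\nu^2 t}\widehat u(\xi)$ is the Fourier transform (in the $(x_2,x_3)$ variables, with $x_1$ frozen) of $(-\Delta_{x_2,x_3})^{\alpha/2} e^{t\Delta_{x_2,x_3}} u(\cdot\,,x_1)$, so by Plancherel in $\xi_\nu$,
\begin{align*}
\big\| |\xi_\nu|^\alpha e^{-\xi_\nu^2 t}\widehat u(\xi)\big\|_{L^2_{\xi_\nu}}
= C\,\big\| (-\Delta_{x_2,x_3})^{\alpha/2} e^{t\Delta_{x_2,x_3}} u(\cdot\,,x_1)\big\|_{L^2_{x_2,x_3}}.
\end{align*}
Applying Lemma \ref{OD} in dimension $d=2$ with $\beta=1$, $p=2$, $q=1$ (valid for $t\ge 1$, say $t\ge t_0>0$, after absorbing the constant) gives the pointwise-in-$x_1$ bound by $C\,t^{-\frac{\alpha}{2}-\frac12}\,\|u(\cdot\,,x_1)\|_{L^1_{x_2,x_3}}$.

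Next I would take the $L^2$ norm in $\xi_1$ of the left-hand side; by Plancherel in $\xi_1$ this equals (up to a constant) the $L^2_{x_1}$ norm of the quantity just estimated, so
\begin{align*}
\big\| |\xi_\nu|^\alpha e^{-\xi_\nu^2 t}\widehat u(\xi)\big\|_{L^2_\xi}
\le C\, t^{-\frac{\alpha+1}{2}}\,\big\|\,\|u\|_{L^1_{x_2,x_3}}\,\big\|_{L^2_{x_1}},\qquad t\ge 1,
\end{align*}
which already gives the claimed bound for $t\ge 1$ since $t^{-\frac{\alpha+1}{2}}\le C(1+t)^{-\frac{\alpha+1}{2}}$ on $[1,\infty)$. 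Strictly speaking one could also split into $|\xi_\nu|\le1$ and $|\xi_\nu|>1$ and use the extra decay $e^{-\xi_\nu^2 t}\le e^{-\xi_\nu^2/2}e^{-\xi_\nu^2 t/2}$ on the high-frequency part to handle any fixed $t$-range, but since the statement only claims $t\ge 1$ the direct application above suffices; I would include the splitting only if one wants the estimate uniformly down to $t=0$.

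The only genuine subtlety — and the step I would be most careful about — is the measure-theoretic justification of interchanging the $L^2_{x_1}$ norm with the Fourier transform / heat semigroup acting in the $(x_2,x_3)$ variables, i.e. that the two-step application of Plancherel (once in $\xi_\nu$ with $x_1$ frozen, once in $\xi_1$) is legitimate and that $u(\cdot\,,x_1)\in L^1_{x_2,x_3}$ for a.e.\ $x_1$ with $\|u(\cdot\,,x_1)\|_{L^1_{x_2,x_3}}\in L^2_{x_1}$ — but this is exactly the hypothesis $u\in L^2_{x_1}L^1_{x_2,x_3}$, and the interchange is justified by Fubini–Tonelli together with the fact that all integrands are nonnegative after taking absolute values. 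Everything else is a direct invocation of Lemma \ref{OD}; no nonlinear structure of \eqref{PMHD} enters here, so the proof is short.
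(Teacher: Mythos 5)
Your proposal is correct and is essentially the paper's own proof: full Plancherel converts the left-hand side to $\|(-\Delta_\nu)^{\alpha/2}e^{t\Delta_\nu}u\|_{L^2}$, one writes $L^2=L^2_{x_1}L^2_{x_2,x_3}$ by Fubini, applies Lemma \ref{OD} with $d=2$, $\beta=1$, $p=2$, $q=1$ slice-by-slice in $x_1$, and uses $t\ge1$ to replace $t^{-(1+\alpha)/2}$ by $(1+t)^{-(1+\alpha)/2}$; no frequency splitting is needed. One small presentational fix: your intermediate pointwise identity freezes $\xi_1$ on the left but $x_1$ on the right (for fixed $\xi_1$, $\widehat u(\xi_1,\cdot)$ is the transverse transform of $\mathcal F_{x_1}u(\xi_1,\cdot)$, not of $u(\cdot,x_1)$), so it is cleaner to apply the three-dimensional Plancherel first and only then slice in the physical variable $x_1$, exactly as the paper does.
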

\begin{proof}
 Let $\Delta_\nu \triangleq\p_2^2+\p_3^2$. Then, it follows from Plancherel theorem, Fubini theorem and Lemma \ref{OD}  that for any $t\geq1$,
\begin{align*}
&\||\xi_\nu|^\alpha e^{- \xi_{\nu}^2t}\widehat{u}(\xi)\|_{L^2_\xi}\\
&\quad\leq \|(-\Delta_\nu)^{\frac{\alpha}{2}}e^{\Delta_{\nu}t}u\|_{L^2}
=\left\|\|(-\Delta_\nu)^{\frac{\alpha}{2}}e^{\Delta_{\nu}t}u \|_{L_{x_2,x_3}^2}  \right\|_{L_{x_1}^2} \nonumber\\
&\quad\leq Ct^{-\frac{1+\alpha}{2}}\left\|\|u \|_{L_{x_2,x_3}^1} \right\|_{L_{x_1}^2}\leq C(1+t)^{-\frac{1+\alpha}{2}}\left\|\|u \|_{L_{x_2,x_3}^1} \right\|_{L_{x_1}^2} ,
\end{align*}
which ends the proof of (\ref{Pli}).
\end{proof}

Furthermore, if $u$ is a  vector-valued function satisfying the divergence-free condition, then we have the following refined decay rate of $u_1$.
\begin{cor}
\label{divfree} Let $u=(u_1,u_2,u_3)$ be a smooth function, satisfying $u\in L^1\cap L^2$ and $\nabla\cdot u=0$. Then for any $\beta\geq0$, there exists an absolutely   positive constant $C>0$, depending only on $\beta$, such that
\begin{equation}
\||\xi_\nu|^\beta e^{-\xi_{\nu}^2t}\widehat{u_1}(\xi)\|_{L^2_\xi}\leq C(1+t)^{-\frac{3+2\beta}{4}}  \|u\|_{L^1} ,\quad\forall \ t\geq1.\label{udiv}
\end{equation}
\end{cor}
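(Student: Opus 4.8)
The plan is to exploit the divergence-free condition in Fourier variables to trade one factor of $|\xi_\nu|$ for an extra $\xi_1$-localization of $\widehat{u_1}$, which is precisely what upgrades the $L^1$-type decay from Corollary \ref{PI}'s rate to the sharper $3D$-heat-kernel rate. Writing $\nabla\cdot u=0$ as $i\xi_1\widehat{u_1}+i\xi_2\widehat{u_2}+i\xi_3\widehat{u_3}=0$, we get the pointwise identity $|\xi_1||\widehat{u_1}(\xi)|\le |\xi_2||\widehat{u_2}(\xi)|+|\xi_3||\widehat{u_3}(\xi)|\le |\xi_\nu|\,|\widehat u(\xi)|$, and in particular $|\widehat{u_1}(\xi)|\le |\xi_\nu||\xi_1|^{-1}\|u\|_{L^1}$ (using $\|\widehat{u_j}\|_{L^\infty}\le\|u_j\|_{L^1}\le\|u\|_{L^1}$). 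This extra $|\xi_1|^{-1}$ near $\xi_1=0$ is dangerous, so it should be used only on a moderate-frequency piece; near $\xi_1=0$ one instead falls back on the trivial bound $|\widehat{u_1}(\xi)|\le\|u\|_{L^1}$.

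The key steps, in order, are as follows. First, by Plancherel, write
\begin{align*}
\||\xi_\nu|^\beta e^{-\xi_\nu^2 t}\widehat{u_1}\|_{L^2_\xi}^2=\int_{\R^3}|\xi_\nu|^{2\beta}e^{-2\xi_\nu^2 t}|\widehat{u_1}(\xi)|^2\,d\xi,
\end{align*}
and split the $\xi_1$-integral into $|\xi_1|\le t^{-1/2}$ and $|\xi_1|>t^{-1/2}$. On the region $|\xi_1|\le t^{-1/2}$ I would use $|\widehat{u_1}|\le\|u\|_{L^1}$, so this piece is bounded by $\|u\|_{L^1}^2\big(\int_{|\xi_1|\le t^{-1/2}}d\xi_1\big)\big(\int_{\R^2}|\xi_\nu|^{2\beta}e^{-2\xi_\nu^2 t}\,d\xi_\nu\big)\le C\|u\|_{L^1}^2\, t^{-1/2}\, t^{-(1+\beta)}=C\|u\|_{L^1}^2\,t^{-(3/2+\beta)}$, using the standard $2D$ Gaussian moment bound $\int_{\R^2}|\xi_\nu|^{2\beta}e^{-2\xi_\nu^2t}d\xi_\nu\le C t^{-(1+\beta)}$ (equivalently Lemma \ref{OD} with $d=2$). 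On the region $|\xi_1|>t^{-1/2}$ I would instead use the divergence-free bound $|\widehat{u_1}(\xi)|\le |\xi_1|^{-1}|\xi_\nu|\,\|u\|_{L^1}$, giving a contribution bounded by $\|u\|_{L^1}^2\big(\int_{|\xi_1|>t^{-1/2}}|\xi_1|^{-2}d\xi_1\big)\big(\int_{\R^2}|\xi_\nu|^{2\beta+2}e^{-2\xi_\nu^2 t}d\xi_\nu\big)\le C\|u\|_{L^1}^2\, t^{1/2}\, t^{-(2+\beta)}=C\|u\|_{L^1}^2\,t^{-(3/2+\beta)}$. Adding the two pieces yields $\||\xi_\nu|^\beta e^{-\xi_\nu^2 t}\widehat{u_1}\|_{L^2_\xi}\le C t^{-(3/4+\beta/2)}\|u\|_{L^1}$ for $t\ge1$, and replacing $t^{-(3/4+\beta/2)}$ by $C(1+t)^{-(3/4+\beta/2)}$ for $t\ge1$ gives \eqref{udiv}.

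The only real subtlety, and the step I expect to need the most care, is the handling of the $|\xi_1|^{-1}$ singularity: the threshold $|\xi_1|\sim t^{-1/2}$ must be chosen so that the $|\xi_1|^{-2}$ integral over $|\xi_1|>t^{-1/2}$ (which is $\sim t^{1/2}$) and the measure of $|\xi_1|\le t^{-1/2}$ (also $\sim t^{-1/2}$) balance the $\xi_\nu$-Gaussian moments to produce the same power $t^{-(3/2+\beta)}$ on both pieces — any other cutoff gives a strictly worse rate. One should also note that the argument only invokes $u\in L^1$ (for the $L^\infty$ bound on $\widehat u$) together with $\nabla\cdot u=0$; the hypothesis $u\in L^2$ is needed merely to make the Fourier transform and Plancherel's identity meaningful. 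The constant $C$ depends only on $\beta$ through the Gaussian moment constants, as claimed.
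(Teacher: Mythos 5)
Your proof is correct. Both pieces of your decomposition are computed accurately: on $\{|\xi_1|\le t^{-1/2}\}$ the trivial bound $|\widehat{u_1}|\le\|u\|_{L^1}$ together with the Gaussian moment $\int_{\R^2}|\xi_\nu|^{2\beta}e^{-2\xi_\nu^2t}\,d\xi_\nu\le Ct^{-(1+\beta)}$ gives $Ct^{-(3/2+\beta)}\|u\|_{L^1}^2$, and on $\{|\xi_1|>t^{-1/2}\}$ the identity $\xi_1\widehat{u_1}=-\xi_2\widehat{u_2}-\xi_3\widehat{u_3}$ gives $|\widehat{u_1}|\le C|\xi_1|^{-1}|\xi_\nu|\|u\|_{L^1}$, and $\int_{|\xi_1|>t^{-1/2}}|\xi_1|^{-2}d\xi_1\cdot Ct^{-(2+\beta)}$ again yields $Ct^{-(3/2+\beta)}\|u\|_{L^1}^2$; the cutoff at $|\xi_1|\sim t^{-1/2}$ is indeed the one that balances the two contributions. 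However, your route is genuinely different from the paper's. The paper stays in physical space: after Plancherel it writes $\|f\|_{L^2}^2\le\bigl\|\|f\|_{L^1_{x_1}}\|f\|_{L^\infty_{x_1}}\bigr\|_{L^1_{x_2,x_3}}$, bounds $\|f\|_{L^\infty_{x_1}}\le\|\p_1 f\|_{L^1_{x_1}}$, substitutes $\p_1u_1=-\na_\nu\cdot u_\nu$ (the same structural use of the divergence-free condition that you make in Fourier variables), and then applies Minkowski's inequality and the two-dimensional semigroup estimate of Lemma \ref{OD} to the factors $e^{\Delta_\nu t}u_1$ and $e^{\Delta_\nu t}\na_\nu\cdot u_\nu$, obtaining $t^{-(1+\beta)/2}\cdot t^{-(2+\beta)/2}$. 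The paper's interpolation $\|f\|_{L^2_{x_1}}^2\le\|f\|_{L^1_{x_1}}\|\p_1f\|_{L^1_{x_1}}$ performs automatically the optimization that you carry out by hand through the choice of cutoff, and it recycles Lemma \ref{OD}, which is already in place for Corollary \ref{PI}; your version is more elementary and self-contained (only explicit Gaussian moments and the pointwise Fourier identity), at the cost of an extra case split. One cosmetic remark: the identity you label as Plancherel in your first display is simply the definition of the $L^2_\xi$ norm — your argument never actually needs Plancherel, whereas the paper's does in order to pass to physical space.
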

\begin{proof} Let $\nabla_\nu \triangleq (\p_2,\p_3)$. Then, it holds that  $\p_1u_1=-\p_\nu u_\nu$ with $u_\nu\triangleq(u_2,u_3)$, due to the divergence-free condition $\nabla\cdot u=0$. Thus, using Plancherel theorem and Fubini theorem, we deduce from Lemma \ref{OD}  that for any $t\geq1$,
\begin{align*}
&\||\xi_\nu|^\beta e^{-\xi_{\nu}^2t}\widehat{u_1}(\xi)\|_{L^2_\xi}^2
=\|(-\Delta_\nu)^{\frac\beta 2}e^{\Delta_{\nu}t}u_1\|_{L^2}^2 \nonumber\\
&\quad\leq C\left\|\|(-\Delta_\nu)^{\frac\beta 2}e^{\Delta_{\nu}t}u_1 \|_{L_{x_1}^1}^{\frac12} \|(-\Delta_\nu)^{\frac\beta 2}e^{\Delta_{\nu}t}u_1 \|_{L_{x_1}^{\infty}}^{\frac12} \right\|_{L_{x_2,x_3}^1}^2 \nonumber\\
&\quad\leq C\left\|\|(-\Delta_\nu)^{\frac\beta 2}e^{\Delta_{\nu}t}u_1 \|_{L_{x_1}^1} \right\|_{L_{x_2,x_3}^2}\left \|\|(-\Delta_\nu)^{\frac\beta 2}e^{\Delta_{\nu}t}\p_1u_1 \|_{L_{x_1}^1}\right\|_{L_{x_2,x_3}^2} \nonumber\\
&\quad\leq C\left\|(-\Delta_\nu)^{\frac\beta 2}\|e^{ \Delta_{\nu}t}u \|_{L_{x_2,x_3}^2} \right\|_{L_{x_1}^1} \left\|\|(-\Delta_\nu)^{\frac\beta 2}e^{ \Delta_{\nu}t}\na_{\nu}\cdot u_{{\nu}}\|_{L_{x_2,x_3}^2}\right\|_{L_{x_1}^1} \nonumber\\
&\quad\leq C\left(t^{-\frac{1+\beta}{2}}\left\|\|u \|_{L^1_{x_2,x_3}}\right\|_{L^1_{x_1}} \right)\left(t^{-\frac{2+\beta}{2}}\left\|\|u \|_{L^1_{x_2,x_3}}\right\|_{L^1_{x_1}}\right)\notag\\
&\quad \leq C t^{-\frac{3+2\beta}{2}}\|u \|_{L^1}^2\leq (1+t)^{-\frac{3+2\beta}{2}}\|u \|_{L^1}^2,
\end{align*}
where we have also used H\"{o}lder and Minkowski inequalities.
\end{proof}

The following two lemmas are concerned with the upper bounds with sharp decay rates for two special integrals (see, e.g.,  \cite{Wan}).

\begin{lem}\label{ID}
	For any  $0<s_1\le s_2$, there exists a positive constant $C$, depending only on $s_1$ and $s_2$, such that for any $t\geq0$,
	\begin{align*}
	&\int_0^t(1+t-\tau)^{-s_1}(1+\tau)^{-s_2}\ d\tau
	\le\left\{
	\begin{array}{ll}
	C(1+t)^{-s_1},\quad\quad&{\rm if }\quad  s_2> 1;
	\\
	C(1+t)^{-s_1}\ln(1+t), \quad\quad&{\rm if }\quad s_2=1;
	\\
	C(1+t)^{1-s_1-s_2},\quad\quad&{\rm if }\quad  s_2<1.
	\end{array}
	\right.
	\end{align*}
\end{lem}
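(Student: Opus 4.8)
The final statement to prove is Lemma~\ref{ID}, a standard time-decay convolution estimate. The plan is to split the integral $\int_0^t(1+t-\tau)^{-s_1}(1+\tau)^{-s_2}\,d\tau$ at the midpoint $\tau=t/2$ and estimate each half separately, using that on $[0,t/2]$ one has $1+t-\tau\sim 1+t$, while on $[t/2,t]$ one has $1+\tau\sim 1+t$. First I would handle the piece $\int_0^{t/2}$: here $(1+t-\tau)^{-s_1}\le C(1+t)^{-s_1}$, so this piece is bounded by $C(1+t)^{-s_1}\int_0^{t/2}(1+\tau)^{-s_2}\,d\tau$, and the remaining one-variable integral $\int_0^{t/2}(1+\tau)^{-s_2}\,d\tau$ is elementary: it is $O(1)$ if $s_2>1$, $O(\ln(1+t))$ if $s_2=1$, and $O((1+t)^{1-s_2})$ if $s_2<1$. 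This already produces exactly the three claimed right-hand sides for the first half.

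Next I would handle $\int_{t/2}^{t}$. On this interval $(1+\tau)^{-s_2}\le C(1+t)^{-s_2}$, so the piece is bounded by $C(1+t)^{-s_2}\int_{t/2}^{t}(1+t-\tau)^{-s_1}\,d\tau = C(1+t)^{-s_2}\int_0^{t/2}(1+\sigma)^{-s_1}\,d\sigma$ after the substitution $\sigma=t-\tau$. Since $0<s_1\le s_2$, I distinguish cases on $s_1$: if $s_1>1$ the $\sigma$-integral is $O(1)$, giving $C(1+t)^{-s_2}\le C(1+t)^{-s_1}$ (using $s_1\le s_2$), which is dominated by the first-half bound; if $s_1=1$ it is $O(\ln(1+t))$, giving $C(1+t)^{-s_2}\ln(1+t)$, again dominated by the appropriate first-half term since $s_1\le s_2$; if $s_1<1$ it is $O((1+t)^{1-s_1})$, giving $C(1+t)^{1-s_1-s_2}$. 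In every case the second-half contribution is bounded by (a constant times) the stated right-hand side, because $s_1\le s_2$ forces the second-half bound to decay at least as fast as the first-half bound when $s_2\ge 1$, and the two bounds coincide when $s_2<1$. Adding the two halves yields the claim.

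There is no real obstacle here; the only point requiring a little care is bookkeeping the case distinctions so that the second-half estimate is always subsumed by (or equal to) the first-half estimate, which is precisely where the hypothesis $s_1\le s_2$ is used. One should also note the degenerate case $t\le 2$ (or $t$ bounded), where all quantities $1+t-\tau$, $1+\tau$, $1+t$ are comparable to absolute constants and the bound is trivial; this lets one assume $t$ large in the main argument. I would write the proof in this two-region form, citing the elementary evaluation of $\int_0^{s}(1+r)^{-a}\,dr$ in the three regimes $a>1$, $a=1$, $a<1$ without further comment.
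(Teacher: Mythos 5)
Your proof is correct. The paper does not actually prove this lemma—it is stated with only a pointer to the reference \cite{Wan}—and your argument is the standard one that any such reference would use: split at $\tau=t/2$, absorb $(1+t-\tau)^{-s_1}$ (resp. $(1+\tau)^{-s_2}$) into $(1+t)^{-s_1}$ (resp. $(1+t)^{-s_2}$) on the appropriate half, and evaluate the remaining elementary integral in the three regimes. Your bookkeeping of where $s_1\le s_2$ enters (to subsume the second-half contribution) is exactly right; the only point worth a half-line more care is the case $s_2=1$ for small $t$, where the right-hand side $C(1+t)^{-s_1}\ln(1+t)$ vanishes as $t\to0$, but so does the integral at the same linear rate, so the bound still holds with a suitable constant.
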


\begin{lem} \label{ED}
	For any $c>0$ and $s>0$, it holds that
	\begin{align*}	
	\int_0^t e^{-c(t-\tau)}(1+\tau)^{-s}\ d\tau\le C(1+t)^{-s},\quad\forall\ t\geq0.
	\end{align*}
\end{lem}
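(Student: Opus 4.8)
\textbf{Proof proposal for Lemma \ref{ED}.}

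The plan is to split the integral at $t/2$ and estimate each piece by elementary means, exploiting that the exponential factor decays much faster than any polynomial. First I would write
$$
\int_0^t e^{-c(t-\tau)}(1+\tau)^{-s}\,d\tau = \int_0^{t/2} e^{-c(t-\tau)}(1+\tau)^{-s}\,d\tau + \int_{t/2}^t e^{-c(t-\tau)}(1+\tau)^{-s}\,d\tau =: I_1 + I_2.
$$
For $I_1$, on the range $0\le\tau\le t/2$ we have $t-\tau\ge t/2$, so $e^{-c(t-\tau)}\le e^{-ct/2}$, and since $(1+\tau)^{-s}\le 1$ the integrand is bounded by $e^{-ct/2}$; integrating over an interval of length $t/2$ gives $I_1\le \tfrac{t}{2}e^{-ct/2}$. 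Because $t\,e^{-ct/2}\le C(1+t)^{-s}$ for all $t\ge 0$ (the left side is bounded and decays exponentially, hence is dominated by any negative power of $1+t$ up to a constant depending on $c$ and $s$), we get $I_1\le C(1+t)^{-s}$.

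For $I_2$, on the range $t/2\le\tau\le t$ we have $1+\tau\ge 1+t/2\ge \tfrac12(1+t)$, hence $(1+\tau)^{-s}\le 2^s(1+t)^{-s}$; pulling this factor out,
$$
I_2 \le 2^s(1+t)^{-s}\int_{t/2}^t e^{-c(t-\tau)}\,d\tau \le 2^s(1+t)^{-s}\int_0^\infty e^{-c\sigma}\,d\sigma = \frac{2^s}{c}(1+t)^{-s},
$$
after the substitution $\sigma = t-\tau$. Combining the two bounds yields $\int_0^t e^{-c(t-\tau)}(1+\tau)^{-s}\,d\tau\le C(1+t)^{-s}$ with $C=C(c,s)$, as claimed.

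There is no real obstacle here; the only point requiring a word of care is the elementary inequality $t\,e^{-ct/2}\le C(1+t)^{-s}$, which one justifies by noting that $(1+t)^s t\,e^{-ct/2}\to 0$ as $t\to\infty$ and the function is continuous and bounded on $[0,\infty)$, so it attains a finite maximum $C$. Everything else is a direct splitting-and-estimating argument, and the constant depends only on $c$ and $s$ as stated.
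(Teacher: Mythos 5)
Your proof is correct and is the standard splitting argument for this type of convolution estimate; the paper itself gives no proof of this lemma, simply citing an external reference, so there is nothing to contrast it with. Both pieces of your estimate check out: $I_1\le \tfrac{t}{2}e^{-ct/2}\le C(1+t)^{-s}$ and $I_2\le 2^s c^{-1}(1+t)^{-s}$, with the constant depending only on $c$ and $s$ as required.
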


\section{Global Stability and Proof of Theorem \ref{thm1.1} }\label{Sec.2}

This section concerns the global well-posedness of smooth solutions  stated in Theorem \ref{thm1.1}. Since the local existence result can be shown in a similar manner as that in \cite{MaBe}, the main task of the present paper is to derive the global a priori estimates of the solutions. This will be done by using the method of dual-layer energy and applying the bootstrapping arguments.

\subsection{ A Priori Estimates (I)}

The first step is to deal with the natural $H^3$-energy $\mathcal{E}_1(t)$, based on the anisotropic Sobolev inequalities and the divergence-free conditions.

\begin{lem}\label{lemma3.1} Let $\mathcal{E}_1(t)$ and $\mathcal{E}_2(t)$ be the same ones defined in \eqref{E1} and \eqref{E2}, respectively. Then for any $t\geq0$,
\begin{align}
\mathcal{E}_{1}(t)
\leq C\left(\mathcal{E}_1(0)+\mathcal{E}_1^{\frac32}(0)\right)+C\left(\mathcal{E}_1^{\frac32}(t)+\mathcal{E}_2^{\frac32}(t)\right)
+C\left(\mathcal{E}_1^2(t)+\mathcal{E}_2^2(t)\right). \label{EE1}
\end{align}
\end{lem}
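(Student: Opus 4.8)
The plan is to perform energy estimates at the levels $L^2$, $\partial^2$ (i.e.\ $\dot H^2$) and $\partial^3$ (i.e.\ $\dot H^3$) for the system \eqref{PMHD}, since $\mathcal{E}_1(t)$ is essentially $\sup_\tau\|(u,b)(\tau)\|_{H^3}^2$ plus the accumulated dissipation $\int_0^t(\mu\|\p_3u\|_{H^3}^2+\eta\|(\p_2b,\p_3b)\|_{H^3}^2)\,d\tau$. First I would take the $L^2$ inner product of the $u$-equation with $u$ and the $b$-equation with $b$ and add: the coupling terms $-\int\p_2 b\cdot u-\int\p_2u\cdot b$ cancel after integration by parts, the nonlinear terms $\int(b\cdot\na b)\cdot u-\int(u\cdot\na u)\cdot u+\int(b\cdot\na u)\cdot b-\int(u\cdot\na b)\cdot b$ cancel in the standard MHD way (using $\nabla\cdot u=\nabla\cdot b=0$), and the pressure drops out, leaving $\frac12\frac{d}{dt}\|(u,b)\|_{L^2}^2+\mu\|\p_3u\|_{L^2}^2+\eta\|(\p_2b,\p_3b)\|_{L^2}^2=0$. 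Then for $\alpha$ a multi-index with $|\alpha|=2,3$, I apply $\partial^\alpha$ to both equations, pair with $\partial^\alpha u$ and $\partial^\alpha b$, and sum; again the linear coupling and pressure terms vanish, and one is left with $\frac12\frac{d}{dt}\|\partial^\alpha(u,b)\|_{L^2}^2+\mu\|\partial^\alpha\p_3u\|_{L^2}^2+\eta\|\partial^\alpha(\p_2b,\p_3b)\|_{L^2}^2$ equal to a sum of commutator/trilinear integrals of the form $\int\partial^\alpha(f\cdot\na g)\,\partial^\alpha h$ with $(f,g,h)$ drawn from $(u,b)$. Integrating in time from $0$ to $t$ and taking the supremum gives $\mathcal{E}_1(t)\le C\mathcal{E}_1(0)+C\sum(\text{time integrals of trilinear terms})$.

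The core of the argument is then to bound each trilinear integral $\int_0^t\!\!\int\partial^\alpha(f\cdot\na g)\,\partial^\alpha h\,dx\,d\tau$ by the right-hand side of \eqref{EE1}. The strategy is to distribute derivatives via the Leibniz rule, and for each resulting term to apply the anisotropic product inequalities of Lemma \ref{anip1} and Lemma \ref{anip2} so that every factor carries either a dissipated derivative ($\p_3$ on $u$, or $\p_2$ or $\p_3$ on $b$) or is absorbed into an undifferentiated $\sup$-norm. The key point making this possible in the absence of $\p_1u$ and $\p_2u$ dissipation is that the divergence-free condition lets one trade a bad $\p_1u_1$ or $\p_2u_2$ for $-\p_2u_2-\p_3u_3$ or $-\p_1u_1-\p_3u_3$, converting some occurrences into $\p_3$-derivatives controlled by $\mathcal{E}_1$, and the remaining genuinely problematic pieces (the terms $D_1,D_2,D_3$ flagged in the introduction) are handled by substituting the evolution-equation identity \eqref{m131} for $\p_2u_2$ and $\p_2u_3$; the time-derivative piece of \eqref{m131} is integrated by parts in $t$, and the artificially retained terms $E_1,E_2,E_3$ from the $b\cdot\na u$ contribution combine with the resulting $F_j$ terms to close. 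Schematically every term is bounded by a product of the form $\|(u,b)\|_{H^3}\cdot(\text{dissipation norms})$, hence by $\mathcal{E}_1^{1/2}(t)\,(\mathcal{E}_1(t)+\mathcal{E}_2(t))$ or a quartic analogue $(\mathcal{E}_1+\mathcal{E}_2)^2$, plus the boundary-in-time contributions which produce the $\mathcal{E}_1^{3/2}(0)$ term and, via Young's inequality applied to the $\p_t$ boundary terms, also $\mathcal{E}_1(t)$-type pieces that are either absorbed on the left or bounded by $\mathcal{E}_1^{3/2}(t)+\mathcal{E}_2^{3/2}(t)$.

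A couple of structural points to carry out carefully: the $\p_2u$ dissipation does not appear in $\mathcal{E}_1$ at all, so wherever the estimates need a $\p_2u$ factor it must be supplied by $\mathcal{E}_2(t)$, which is why $\mathcal{E}_2$ enters \eqref{EE1}; and the highest-order terms with all three derivatives landing on a single $u_2$ or $u_3$ factor (producing $\p_1^3u_2$, $\p_1^3u_3$ with no dissipation) are exactly the ones requiring the substitution \eqref{m131}. I would organize the estimate of each $\int_0^t\!\!\int\partial^\alpha(f\cdot\na g)\partial^\alpha h$ by splitting according to how many derivatives hit $f$ versus $g$, treating the two extreme cases (all on $g$, versus derivatives shared) separately, and in the worst subcase invoking the replacement identity.

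The step I expect to be the main obstacle is the treatment of the top-order ($|\alpha|=3$) nonlinear terms of the type $D_1,D_2,D_3$, i.e.\ those where all three spatial derivatives concentrate on a horizontal velocity component that enjoys no dissipation. This is where the plain anisotropic inequalities fail, and one must run the two-stage trick: substitute \eqref{m131} to convert $\p_2u_2,\p_2u_3$ into a combination of a time derivative, dissipated ($\p_3$) terms, and genuinely nonlinear terms; integrate the $\p_t$-piece by parts in time (generating initial-data boundary terms, which are cubic, hence $\mathcal{E}_1^{3/2}(0)$, and running-time boundary terms absorbed via Young); and then, crucially, recognize that the residual bad term $F_4$ (and its analogues $F$ for $D_2,D_3$) is cancelled up to controllable remainders by the term $R_4$ (resp.\ $R$'s) obtained by the same manipulation applied to the artificially retained $E_1$ (resp.\ $E_2,E_3$) coming from $b\cdot\na u$. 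Getting the bookkeeping of these cancellations right — ensuring that after all integrations by parts every surviving integral has at least one dissipated derivative per ``dangerous'' factor so it is controlled by $\mathcal{E}_1^{1/2}(t)(\mathcal{E}_1(t)+\mathcal{E}_2(t))+(\mathcal{E}_1(t)+\mathcal{E}_2(t))^2$ — is the delicate heart of the proof; everything else is a routine (if lengthy) application of Lemmas \ref{anip1}--\ref{anip2}, Hölder, and Young.
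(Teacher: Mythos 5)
Your proposal follows essentially the same route as the paper: the $L^2$ energy identity, top-order estimates via $\partial_i^3$ with the anisotropic inequalities of Lemmas \ref{anip1}--\ref{anip2} and the divergence-free substitutions, isolation of the non-dissipated terms $D_1,D_2,D_3$, the replacement identity \eqref{m13} with time integration by parts producing the $\mathcal{E}_1^{3/2}(0)$ and $\mathcal{E}_1^{3/2}(t)$ contributions, and the cancellation of the residual terms $F_4$ (and analogues) against the $R_4$-type terms generated by the artificially retained $E_1,E_2,E_3$. The plan is correct and matches the paper's argument in all essential respects.
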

\noindent
\begin{proof} Since $\|(u, b)\|_{H^3}\sim\|(u, b)\|_{L^2}+\|(\nabla^3u, \nabla^3b)\|_{L^2}$, it suffices to bound the $L^2$-norms and the homogeneous $\dot H^3$-norms of $(u, b)$.
\vskip .1in
\textbf{Step I. The estimate of $L^2$-norm}
\vskip .1in
Based on the divergence-free conditions  $\nabla\cdot u=\nabla\cdot b=0$, it is easily derived from (\ref{PMHD}) that for any $0\leq t\leq T$,
\begin{equation}\label{L2bound}
\|(u,b)(t)\|_{L^2}^2+2\int_0^t\left(\mu\|\p_3u\|_{L^2}^2+\eta\|\p_2b \|_{L^2}^2+\eta\|\p_3b \|_{L^2}^2\right)d\tau=\|(u_0,b_0)\|_{L^2}^2.
\end{equation}

\textbf{Step II. The preliminary bound  of $\dot{ H^3}$-norm}
\vskip .1in
Applying $\partial_i^3$ with $i=1,2,3$ to \eqref{PMHD} and dotting them with $(\p_i^3u, \p_i^3b)$  in $L^2$, we obtain after integrating by parts that
\begin{align}
&\frac12\frac{d}{d t}\sum_{i=1}^{3}\|(\partial_i^3u, \partial_i^3b)\|_{L^2}^2+\mu \sum_{i=1}^{3}\|\p_i^3\p_3u\|_{L^2}^2
+\eta\sum_{i=1}^{3}\left(\|\p_i^3\p_{2}b\|_{L^2}^2+ \|\p_i^3\p_{3}b\|_{L^2}^2\right)\nonumber\\
&\quad\triangleq H_1+H_2+H_3+H_4+H_5,\label{H3bound}
\end{align}
where
\beno
&& H_1\triangleq \sum_{i=1}^{3}\int  \left(\partial_i^3\partial_2 b \cdot \partial_i^3 u +\partial_i^3\partial_2 u \cdot \partial_i^3 b\right) d x,
\quad
H_2 \triangleq -\sum_{i=1}^{3}\int \partial_i^3(u\cdot \nabla u)\cdot \partial_i^3 u \ d x,
\\
&& H_3 \triangleq \sum_{i=1}^{3}\int \left[\partial_i^3(b\cdot \nabla b)-b\cdot \nabla\partial_i^3 b \right]\cdot \partial_i^3 u \ dx,
\quad
H_4 \triangleq -\sum_{i=1}^{3}\int \partial_i^3(u\cdot \nabla b)\cdot \partial_i^3 b \ d x,
\\
&&
H_5 \triangleq \sum_{i=1}^{3}\int \left[\partial_i^3(b\cdot \nabla u)-b\cdot \nabla\partial_i^3 u  \right]\cdot \partial_i^3 b \ dx.
\eeno

We are now in a position of estimating each term on the right-hand side of (\ref{H3bound}). First, it is easily seen that
\begin{align}
H_1=0.  \label{H1}
\end{align}

Due to $\nabla \cdot u=0$, by direct calculations we have
\begin{align*}
H_2&=-\sum_{i=1}^{3}\int \partial_i^3u\cdot \nabla u\cdot \partial_i^3 u \ d x-3\sum_{i=1}^{3}\int \partial_i^2 u\cdot \nabla\partial_i u\cdot \partial_i^3 u \ d x\\
&\quad -3\sum_{i=1}^{3}\int \partial_iu\cdot \nabla\partial_i^2 u\cdot \partial_i^3 u \ d x\triangleq\sum_{j=1}^{3}H_{2j}.
\end{align*}

It follows from  H\"{o}lder's and Sobolev's inequalities that
\begin{align}
H_{21}&=-\int \left(\partial_1^3u\cdot \nabla u\cdot \partial_1^3 u+\partial_2^3u\cdot \nabla u\cdot \partial_2^3 u+\partial_3^3u\cdot \nabla u\cdot \partial_3^3 u\right) \ d x\notag\\
&\leq C\|\nabla u\|_{L^\infty}\left(\|\partial_2^3 u\|_{L^2}^2+\|\partial_3^2u\|_{L^2}^2\right)+H_{211}\notag\\
&\leq C\|u\|_{H^3}\left(\|\partial_2  u\|_{H^2}^2+\|\partial_3 u\|_{H^2}^2\right)+H_{211}, \label{H21}
\end{align}
where
$$
H_{211}\triangleq-\int \partial_1^3u\cdot \nabla u\cdot \partial_1^3 u \ d x.
$$

In view of the divergence-free condition $\nabla\cdot u=0$, we get
\begin{equation}\label{u1-23}
\|\p_1 u_1\|_{\dot H^k}\leq \|\p_2u_2\|_{\dot H^k}+\|\p_3u_3\|_{\dot H^k}\leq \|\p_2u\|_{\dot H^k}+\|\p_3u\|_{\dot H^k},\quad k=1,2,\ldots,
\end{equation}
so that, by Lemma \ref{anip1} we have from integration by parts that
\begin{align}
H_{211}
&=-\int \partial_1^3u_1\p_1 u\cdot\partial_1^3 u \ d x-\int \partial_1^3u_2\p_2 u_1\partial_1^3 u_1\ d x -\int \partial_1^3u_2\p_2 u_2\partial_1^3 u_2  \ d x \notag\\
&\quad-\int \partial_1^3u_2\p_2 u_3\partial_1^3 u_3 \ d x+\int \p_3\partial_1^3u_3\left(u\cdot\partial_1^3 u\right) d x+\int \partial_1^3u_3\left(u\cdot\p_3\partial_1^3 u\right) d x \notag\\
&\leq C\|\p_1^3 u_1\|_{L^2}\|\p_1u\|_{L^2}^{\frac14}\|\p_1^2u\|_{L^2}^{\frac14}\|\p_1\p_2u\|_{L^2}^{\frac14}\|\p_1^2\p_2u\|_{L^2}^{\frac14}\|\p_1^3 u \|_{L^2}^{\frac12}\|\p_3\p_1^3 u \|_{L^2}^{\frac12}\notag\\
&\quad+ C\|\p_1^3 u_1\|_{L^2}\|\p_2u_1\|_{L^\infty}\|\p_1^3u_2\|_{L^2}+D_1^1+D_2\notag\\
&\quad+ C\|\p_3\p_1^3 u \|_{L^2}\|u\|_{L^2}^{\frac14}\|\p_1u\|_{L^2}^{\frac14}\|\p_2u\|_{L^2}^{\frac14}\|\p_1\p_2u\|_{L^2}^{\frac14}\|\p_1^3 u \|_{L^2}^{\frac12}\|\p_3\p_1^3 u \|_{L^2}^{\frac12}\notag\\
&\leq C\| u\|_{H^3}\left(\|\partial_2 u\|^{2}_{H^2}+\|\p_3u\|^{2}_{H^3} \right)+D_1^1+D_2,\label{H211}
\end{align}
where
$$
D_1^1\triangleq-\int  \p_2 u_2\left(\partial_1^3u_2 \right)^2 d x, \quad\quad
D_2\triangleq-\int \partial_1^3u_2\p_2 u_3\partial_1^3 u_3 \ d x.
$$

Thus, substituting \eqref{H211} to \eqref{H21}, we find
\begin{align}
H_{21}\leq C\|u\|_{H^3}\left(\|\partial_2  u\|_{H^2}^2+\|\partial_3 u\|_{H^2}^2\right)+D_1^1+D_2. \label{H21g}
\end{align}

In terms of (\ref{u1-23}) and Lemma \ref{anip1}, we can  bound $H_{22}$ by
\begin{align}
H_{22}&=-3\int \left(\partial_1^2u_1\partial_1^2u\cdot \partial_1^3 u+\partial_1^2u_2\partial_2\partial_1 u\cdot \partial_1^3 u+\partial_1^2u_3 \p_3\partial_1 u\cdot \partial_1^3 u\right) \ d x\notag\\
&\quad-3\int \left(\partial_2^2u\cdot \nabla \partial_2u\cdot \partial_2^3 u+\partial_3^2u\cdot \nabla\partial_3 u\cdot \partial_3^3 u\right)  d x\notag\\
&\leq C\|\partial_1^2u_1\|_{L^2}^{\frac12}\|\partial_1^3u_1\|_{L^2}^{\frac12}\|\p_1^2u\|_{L^2}^{\frac12}\|\p_2\p_1^2u \|_{L^2}^{\frac12}\|\partial_1^3 u\|_{L^2}^{\frac12}\|\p_3\partial_1^3 u\|_{L^2}^{\frac12}
\notag\\
&\quad+C\|\partial_1^2u_2\|_{L^2}^{\frac12}\|\p_2\partial_1^2u_2\|_{L^2}^{\frac12}\|\p_2\p_1u\|_{L^2}^{\frac12}\|\p_2\p_1^2u \|_{L^2}^{\frac12}\|\partial_1^3 u\|_{L^2}^{\frac12}\|\p_3\partial_1^3 u\|_{L^2}^{\frac12}
\notag\\
&\quad+C\|\partial_1^2u_3\|_{L^2}^{\frac12}\|\p_2\partial_1^2u_3\|_{L^2}^{\frac12}\|\p_3\p_1u\|_{L^2}^{\frac12}\|\p_3\p_1^2u \|_{L^2}^{\frac12}\|\partial_1^3 u\|_{L^2}^{\frac12}\|\p_3\partial_1^3 u\|_{L^2}^{\frac12}
\notag\\
&\quad+C\left(\|\partial_2^2u \|_{L^3}\|\na\partial_2 u\|_{L^6}\|\partial_2^3 u\|_{L^2} +\|\p_3^2 u \|_{L^3}\|\na\p_3 u \|_{L^6}\|\partial_3^3 u\|_{L^2}\right)\notag\\
&\leq C\|u\|_{H^3}\left(\|\partial_2  u\|_{H^2}^2+\|\partial_3 u\|_{H^3}^2\right),\label{H22}
\end{align}
and analogously,
\begin{align}
H_{23}&= -3\int \left(\partial_1u_1\partial_1^3 u\cdot \partial_1^3 u+\partial_1u_2\partial_2\partial_1^2 u\cdot \partial_1^3 u+\partial_1u_3 \p_3\partial_1^2 u\cdot \partial_1^3 u\right)  d x \notag\\
&\quad
-3\int \left(\partial_2u\cdot \nabla \partial_2^2u\cdot \partial_2^3 u+\partial_3u\cdot \nabla\partial_3^2 u\cdot \partial_3^3 u\right) d x\notag\\
&\leq C\|\p_2\p_1^2u \|_{L^2} \|\partial_1 u_2\|_{L^2}^{\frac14}\|\partial_1^2 u_2\|_{L^2}^{\frac14}\|\partial_1\p_2 u_2\|_{L^2}^{\frac14}\|\partial_1^2 \p_2u_2\|_{L^2}^{\frac14} \|\partial_1^3 u\|_{L^2}^{\frac12}\|\p_3\partial_1^3 u\|_{L^2}^{\frac12}
\notag\\
&\quad+C\|\p_3\p_1^2u \|_{L^2} \|\partial_1 u_3\|_{L^2}^{\frac14}\|\partial_1^2 u_3\|_{L^2}^{\frac14}\|\partial_1\p_2 u_3\|_{L^2}^{\frac14}\|\partial_1^2 \p_2u_3\|_{L^2}^{\frac14} \|\partial_1^3 u\|_{L^2}^{\frac12}\|\p_3\partial_1^3 u\|_{L^2}^{\frac12}
\notag\\
&\quad+C\left(\|\partial_2 u \|_{L^\infty}\|\na\partial_2 u\|_{L^2}\|\partial_2^3 u\|_{L^2} +\|\p_3 u \|_{L^\infty}\|\na\p_3^2 u \|_{L^2}\|\partial_3^3 u\|_{L^2}\right)+H_{231}\notag\\
&\leq C\|u\|_{H^3}\left(\|\partial_2  u\|_{H^2}^2+\|\partial_3 u\|_{H^3}^2\right)+H_{231}, \label{H23}
\end{align}
where
\begin{align*}
H_{231}\triangleq-3\int \partial_1u_1\partial_1^3u \cdot \partial_1^3 u  \ d x.
\end{align*}

Note that $\|\p_1u_1\|_{L^\infty}\leq C\|\p_1u_1\|_{H^2}\leq C\|(\p_2u_2,\p_3u_3)\|_{H^2}$, due to the divergence-free condition $\nabla \cdot u=0$ and the Sobolev embedding inequality. So, integrating by parts and using Lemma \ref{anip1}, we obtain
\begin{align}
H_{231}&=-3\int \partial_1u_1\partial_1^3u_1 \partial_1^3 u_1 \ d x-3\int\partial_1u_1\partial_1^3u_2 \partial_1^3 u_2\ d x-3\int\partial_1u_1\partial_1^3u_3 \partial_1^3 u_3  \ d x\notag\\
&=-3\int \partial_1u_1\partial_1^3u_1 \partial_1^3 u_1 \ d x+3\int\partial_2u_2\partial_1^3u_2 \partial_1^3 u_2\ d x-6\int u_3\partial_1^3u_2 \partial_3\partial_1^3 u_2  \ d x\notag\\
&\quad+3\int\partial_2u_2\partial_1^3u_3 \partial_1^3 u_3\ d x-6\int u_3\partial_1^3u_3 \partial_3\partial_1^3 u_3  \ d x\notag\\
&\leq C\|\p_1 u_1\|_{L^\infty} \|\partial_1^3 u_1\|_{L^2}^2+D_1^2+D_3\notag\\
&\quad+C\|\p_3\p_1^3u \|_{L^2} \|u_3\|_{L^2}^{\frac14}\|\partial_1 u_3\|_{L^2}^{\frac14}\|\p_2 u_3\|_{L^2}^{\frac14}\|\partial_1 \p_2u_3\|_{L^2}^{\frac14} \|\partial_1^3 u\|_{L^2}^{\frac12}\|\p_3\partial_1^3 u\|_{L^2}^{\frac12}\notag\\
&\leq C\|u\|_{H^3}\left(\|\partial_2  u\|_{H^2}^2+\|\partial_3 u\|_{H^3}^2\right)+D_1^2+D_3, \label{H231}
\end{align}
where
$$
D_1^2\triangleq3\int\partial_2u_2\left(\partial_1^3u_2 \right)^2  d x, \quad
D_3\triangleq3\int\partial_2u_2\left(\partial_1^3u_3 \right)^2  d x.
$$

Now, plugging \eqref{H231} into \eqref{H23}, we arrive at
\begin{align*}
H_{23}
\leq C\|u\|_{H^3}\left(\|\partial_2  u\|_{H^2}^2+\|\partial_3 u\|_{H^3}^2\right)+D_1^2+D_3,
\end{align*}
which, together with \eqref{H21g} and \eqref{H22}, gives
\begin{align}
H_2\leq C\|u\|_{H^3}\left(\|\partial_2  u\|_{H^2}^2+\|\partial_3 u\|_{H^3}^2\right)+D_1+D_2+D_3.\label{H2g}
\end{align}
Here, the quantities $D_2, D_3$ are the same ones as above and
$$
D_1\triangleq D_1^1+D_1^2=2\int\partial_2u_2\left(\partial_1^3u_2 \right)^2 d x.
$$

The treatments of $D_1$, $D_2$ and $D_3$ are postponed to the next step.
We proceed to estimate $H_{3}$, $H_{4}$  and $H_{5}$ term by term.
For $H_{3}$,  we have
\begin{align}
H_3&=3\sum_{i=1}^{3} \int \p_i b \cdot \nabla \p_i^2b \cdot \partial_i^3 u \ d x+3\sum_{i=1}^{3} \int\p_i^2b\cdot \nabla \p_i b\cdot \partial_i^3 u \ d x\notag\\
&\quad+\sum_{i=1}^{3}\int \p_i^3b\cdot \nabla b\cdot \partial_i^3 u \ d x
\triangleq\sum_{j=1}^{3}H_{3j}.\label{H3}
\end{align}

To estimate the terms on the right-hand side, we first observe that
that
\begin{equation}\label{b1-23}
\|\p_1 b_1\|_{\dot H^k}\leq \|\p_2b_2\|_{\dot H^k}+\|\p_3b_3\|_{\dot H^k}\leq \|\p_2b\|_{\dot H^k}+\|\p_3b\|_{\dot H^k},\quad k=1,2,\ldots,
\end{equation}
due to the divergence-free condition  $\nabla\cdot b=0$. This, together with   Lemma \ref{anip1}, yields that (keeping in mind that $\p_\nu^k=(\p_2^k,\p_3^k)$ with $k=1,2,3$)
\begin{align}
H_{31}
&
=3\int\p_1b_1\p_1^3b\cdot\p_1^3u\ d x+3\int\p_1b_2\p_2\p_1^2b\cdot\p_1^3u\ d x\notag\\
&\quad+3\int\p_1b_3\p_3\p_1^2b\cdot\p_1^3u\ d x
+3\int\p_\nu b\cdot\na\p_\nu^2b\cdot\p_\nu^3u\ d x
\notag\\
 &\leq
C\|\p_1b_1\|_{L^2}^{\frac{1}{2}}\|\p_1^2b_1\|_{L^2}^{\frac{1}{2}}\|\p_1^3b\|_{L^2}^{\frac{1}{2}}
\|\p_2\p_1^3b\|_{L^2}^{\frac{1}{2}}\|\p_1^3u\|_{L^2}^{\frac{1}{2}}\|\p_3\p_1^3u\|_{L^2}^{\frac{1}{2}}\notag\\ 	
&\quad+C\|\p_1b_2\|_{L^2}^{\frac{1}{2}}\|\p_2\p_1b_2\|_{L^2}^{\frac{1}{2}}\|\p_2\p_1^2b \|_{L^2}^{\frac{1}{2}}\|\p_2\p_1^3b\|_{L^2}^{\frac{1}{2}}\|\p_1^3u\|_{L^2}^{\frac{1}{2}}\|\|\p_3\p_1^3u\|_{L^2}^{\frac{1}{2}}
\notag\\
&\quad+C\|\p_1b_3\|_{L^2}^{\frac{1}{2}}\|\p_2\p_1b_3\|_{L^2}^{\frac{1}{2}}\|\p_3\p_1^2b \|_{L^2}^{\frac{1}{2}}\|\p_3\p_1^3b\|_{L^2}^{\frac{1}{2}}\|\p_1^3u\|_{L^2}^{\frac{1}{2}}\|\|\p_3\p_1^3u\|_{L^2}^{\frac{1}{2}}
\notag\\
&\quad
+C\|\p_2b\|_{L^\infty}\|\na\p_2^2b\|_{L^2}\|\p_2^3u\|_{L^2}
+C\|\p_3b\|_{L^\infty}\|\na\p_3^2b\|_{L^2}\|\p_3^3u\|_{L^2}
\notag\\
&\leq
C\|(u,b)\|_{H^3}\left(\|(\p_3b,\p_3u)\|_{H^3}^2+\|\p_2b\|_{H^3}^2\right).\label{H31}
\end{align}

Analogously to the derivations of (\ref{H22}) and (\ref{H23}), we deduce
\begin{align}
H_{32}
& \leq
C\|\p_1^2b_1\|_{L^2}^{\frac{1}{2}}\|\p_1^3b_1\|_{L^2}^{\frac{1}{2}}\|\p_1^2b\|_{L^2}^{\frac{1}{2}}\|\p_2\p_1^2b\|_{L^2}^{\frac{1}{2}}\|\p_1^3u\|_{L^2}^{\frac{1}{2}}\|\p_3\p_1^3u\|_{L^2}^{\frac{1}{2}}
\notag\\
&\quad
+C\|\p_1^2b_2\|_{L^2}^{\frac{1}{2}}\|\p_2\p_1^2b_2\|_{L^2}^{\frac{1}{2}}\|\p_2\p_1b\|_{L^2}^{\frac{1}{2}}\|\p_2\p_1^2b\|_{L^2}^{\frac{1}{2}}\|\p_1^3u\|_{L^2}^{\frac{1}{2}}\|\p_3\p_1^3u\|_{L^2}^{\frac{1}{2}}
\notag\\
&\quad
+C\|\p_1^2b_3\|_{L^2}^{\frac{1}{2}}\|\p_2\p_1^2b_3\|_{L^2}^{\frac{1}{2}}\|\p_3\p_1b\|_{L^2}^{\frac{1}{2}}\|\p_3\p_1^2b\|_{L^2}^{\frac{1}{2}}\|\p_1^3u\|_{L^2}^{\frac{1}{2}}\|\p_3\p_1^3u\|_{L^2}^{\frac{1}{2}}
\notag\\
&\quad
+C\|\p_2^2b\|_{L^6}\|\na\p_2b\|_{L^3}\|\p_2^3u\|_{L^2}+C\|\p_3^2b\|_{L^6}\|\na\p_3b\|_{L^3}\|\p_3^3u\|_{L^2}
\notag\\
&\leq C\|(u,b)\|_{H^3}\Big(\|(\p_3b,\p_3u)\|_{H^3}^2+\|\p_2b\|_{H^3}^2\Big),\label{H32}
\end{align}
and
\begin{align}
H_{33}
& \leq
C\|\p_1^3b_1\|_{L^2}^{\frac{1}{2}}\|\p_1^4b_1\|_{L^2}^{\frac{1}{2}}\|\p_1 b\|_{L^2}^{\frac{1}{2}}\|\p_2\p_1b\|_{L^2}^{\frac{1}{2}}\|\p_1^3u\|_{L^2}^{\frac{1}{2}}\|\p_3\p_1^3u\|_{L^2}^{\frac{1}{2}}
\notag\\
&\quad
+C\|\p_1^3b_2\|_{L^2}^{\frac{1}{2}}\|\p_2\p_1^3b_2\|_{L^2}^{\frac{1}{2}}\|\p_2b\|_{L^2}^{\frac{1}{2}}\|\p_2\p_1b\|_{L^2}^{\frac{1}{2}}\|\p_1^3u\|_{L^2}^{\frac{1}{2}}\|\p_3\p_1^3u\|_{L^2}^{\frac{1}{2}}
\notag\\
&\quad
+C\|\p_1^3b_3\|_{L^2}^{\frac{1}{2}}\|\p_2\p_1^3b_3\|_{L^2}^{\frac{1}{2}}\|\p_3b\|_{L^2}^{\frac{1}{2}}\|\p_3\p_1b\|_{L^2}^{\frac{1}{2}}\|\p_1^3u\|_{L^2}^{\frac{1}{2}}\|\p_3\p_1^3u\|_{L^2}^{\frac{1}{2}}
\notag\\
&\quad
+C\|\p_2^3b\|_{L^2}\|\na b\|_{L^\infty}\|\p_2^3u\|_{L^2}+C\|\p_3^3b\|_{L^2}\|\na b\|_{L^\infty}\|\p_3^3u\|_{L^2}
\notag\\
&\leq C\|(u,b)\|_{H^3}\left(\|(\p_3b,\p_3u)\|_{H^3}^2+\|\p_2b\|_{H^3}^2\right).\label{H33}
\end{align}

Thus, putting \eqref{H31},\eqref{H32} and \eqref{H33} into \eqref{H3}, we get
\begin{align}
H_3\leq C\|(u,b)\|_{H^3}\left(\|(\p_3b,\p_3u)\|_{H^3}^2+\|\p_2b\|_{H^3}^2\right).\label{H3g}
\end{align}

In order to  estimate $ H_4 $, we rewrite it in the form:
\begin{align}
H_4&=-3\sum_{i=1}^{3} \int \p_i u \cdot \nabla \p_i^2b \cdot \partial_i^3 b \ d x-3\sum_{i=1}^{3} \int\p_i^2u\cdot \nabla \p_i b\cdot \partial_i^3 b \ d x\notag\\
&\quad-\sum_{i=1}^{3}\int \p_i^3u\cdot \nabla b\cdot \partial_i^3 b \ d x
\triangleq\sum_{j=1}^{3}H_{4j}.\label{H4}
\end{align}

Using (\ref{u1-23})  and Lemma \ref{anip1}, we find
\begin{align}
H_{41}
&
=-3\int\p_1u_1\p_1^3b\cdot\p_1^3b\ d x-3\int\p_1u_2\p_2\p_1^2b\cdot\p_1^3b\ d x
\notag\\
&\quad-3\int\p_1u_3\p_3\p_1^2b\cdot\p_1^3b\ d x
-3\int\p_\nu u\cdot\na\p_\nu^2b\cdot\p_\nu^3b\ d x
\notag\\
&\leq
C\|\p_1u_1\|_{L^2}^{\frac{1}{2}}\|\p_1^2u_1\|_{L^2}^{\frac{1}{2}}\|\p_1^3b\|_{L^2}^{\frac{1}{2}}\|\p_2\p_1^3b\|_{L^2}^{\frac{1}{2}}\|\p_1^3b\|_{L^2}^{\frac{1}{2}}\|\p_3\p_1^3b\|_{L^2}^{\frac{1}{2}}\notag\\ 	
&\quad+C\|\p_1u_2\|_{L^2}^{\frac{1}{2}}\|\p_3\p_1u_2\|_{L^2}^{\frac{1}{2}}\|\p_2\p_1^2b \|_{L^2}^{\frac{1}{2}}\|\p_2\p_1^3b\|_{L^2}^{\frac{1}{2}}\|\p_1^3b\|_{L^2}^{\frac{1}{2}}\|\|\p_2\p_1^3b\|_{L^2}^{\frac{1}{2}}
\notag\\
&\quad+C\|\p_1u_3\|_{L^2}^{\frac{1}{2}}\|\p_3\p_1u_3\|_{L^2}^{\frac{1}{2}}\|\p_3\p_1^2b \|_{L^2}^{\frac{1}{2}}\|\p_3\p_1^3b\|_{L^2}^{\frac{1}{2}}\|\p_1^3b\|_{L^2}^{\frac{1}{2}}\|\|\p_2\p_1^3b\|_{L^2}^{\frac{1}{2}}
\notag\\
&\quad
+C\|\p_2u\|_{L^\infty}\|\na\p_2^2b\|_{L^2}\|\p_2^3b\|_{L^2}
+C\|\p_3u\|_{L^\infty}\|\na\p_3^2b\|_{L^2}\|\p_3^3b\|_{L^2}
\notag\\
&\leq
C\|(u,b)\|_{H^3}\left(\|(\p_3u,\p_2b,\p_3b )\|_{H^3}^2+\|\p_2u\|_{H^2}^2\right).\label{H41}
\end{align}

In a similar manner,
\begin{align}
H_{42}
& \leq
C\|\p_1^2u_1\|_{L^2}^{\frac{1}{2}}\|\p_1^3u_1\|_{L^2}^{\frac{1}{2}}\|\p_1^2b\|_{L^2}^{\frac{1}{2}}\|\p_2\p_1^2b\|_{L^2}^{\frac{1}{2}}\|\p_1^3b\|_{L^2}^{\frac{1}{2}}\|\p_3\p_1^3b\|_{L^2}^{\frac{1}{2}}
\notag\\
&\quad
+C\|\p_1^2u_2\|_{L^2}^{\frac{1}{2}}\|\p_3\p_1^2u_2\|_{L^2}^{\frac{1}{2}}\|\p_2\p_1b\|_{L^2}^{\frac{1}{2}}\|\p_2\p_1^2b\|_{L^2}^{\frac{1}{2}}\|\p_1^3b\|_{L^2}^{\frac{1}{2}}\|\p_2\p_1^3b\|_{L^2}^{\frac{1}{2}}
\notag\\
&\quad
+C\|\p_1^2u_3\|_{L^2}^{\frac{1}{2}}\|\p_3\p_1^2u_3\|_{L^2}^{\frac{1}{2}}\|\p_3\p_1b\|_{L^2}^{\frac{1}{2}}\|\p_3\p_1^2b\|_{L^2}^{\frac{1}{2}}\|\p_1^3b\|_{L^2}^{\frac{1}{2}}\|\p_2\p_1^3b\|_{L^2}^{\frac{1}{2}}
\notag\\
&\quad
+C\|\p_2^2u\|_{L^6}\|\na\p_2b\|_{L^3}\|\p_2^3b\|_{L^2}+C\|\p_3^2u\|_{L^6}\|\na\p_3b\|_{L^3}\|\p_3^3b\|_{L^2}
\notag\\
&\leq C\|(u,b)\|_{H^3}\left(\|(\p_3u,\p_2b,\p_3b )\|_{H^3}^2+\|\p_2u\|_{H^2}^2\right),\label{H42}
\end{align}
 and
\begin{align}
H_{43}
& \leq
C\|\p_1^3u_1\|_{L^2} \|\p_1 b \|_{L^2}^{\frac{1}{4}}\|\p_1^2 b\|_{L^2}^{\frac{1}{4}}\|\p_2\p_1b\|_{L^2}^{\frac{1}{4}}\|\p_2\p_1^2b\|_{L^2}^{\frac{1}{4}}\|\p_1^3b\|_{L^2}^{\frac{1}{2}}\|\p_3\p_1^3b\|_{L^2}^{\frac{1}{2}}
\notag\\
&\quad
+C\|\p_1^3u_2\|_{L^2}^{\frac{1}{2}}\|\p_3\p_1^3u_2\|_{L^2}^{\frac{1}{2}}\|\p_2b\|_{L^2}^{\frac{1}{2}}\|\p_2\p_1b\|_{L^2}^{\frac{1}{2}}\|\p_1^3b\|_{L^2}^{\frac{1}{2}}\|\p_2\p_1^3b\|_{L^2}^{\frac{1}{2}}
\notag\\
&\quad
+C\|\p_1^3u_3\|_{L^2}^{\frac{1}{2}}\|\p_3\p_1^3u_3\|_{L^2}^{\frac{1}{2}}\|\p_3b\|_{L^2}^{\frac{1}{2}}\|\p_3\p_1b\|_{L^2}^{\frac{1}{2}}\|\p_1^3b\|_{L^2}^{\frac{1}{2}}\|\p_2\p_1^3b\|_{L^2}^{\frac{1}{2}}
\notag\\
&\quad
+C\|\p_2^3u\|_{L^2}\|\na b\|_{L^\infty}\|\p_2^3b\|_{L^2}+C\|\p_3^3u\|_{L^2}\|\na b\|_{L^\infty}\|\p_3^3b\|_{L^2}
\notag\\
&\leq C\|(u,b)\|_{H^3}\left(\|(\p_3u,\p_2b,\p_3b )\|_{H^3}^2+\|\p_2u\|_{H^2}^2\right).\label{H43}
\end{align}

Thus, substituting \eqref{H41},\eqref{H42} and \eqref{H43} into \eqref{H4} gives
\begin{align}
H_4\leq C\|(u,b)\|_{H^3}\left(\|(\p_3u,\p_2b,\p_3b )\|_{H^3}^2+\|\p_2u\|_{H^2}^2\right).\label{H4g}
\end{align}

Next, we estimate the term $H_5$, which can be written as follows:
\begin{align}
H_5&=3\sum_{i=1}^{3} \int \p_i b \cdot \nabla \p_i^2u \cdot \partial_i^3 b \ d x+3\sum_{i=1}^{3} \int\p_i^2b\cdot \nabla \p_i u\cdot \partial_i^3 b \ d x\notag\\
&\quad+\sum_{i=1}^{3}\int \p_i^3b\cdot \nabla u\cdot \partial_i^3 b \ d x
\triangleq\sum_{j=1}^{3}H_{5j}.\label{H5}
\end{align}

Similarly to the proof of (\ref{H31}), we deduce from (\ref{b1-23}) and Lemma \ref{anip1} that
\begin{align}
H_{51}&
=3\int\p_1b_1\p_1^3u\cdot\p_1^3b\ d x+3\int\p_1b_2\p_2\p_1^2u\cdot\p_1^3b\ d x\notag\\
&\quad +3\int\p_1b_3\p_3\p_1^2u\cdot\p_1^3b\ d x
+3\int\p_\nu b\cdot\na\p_\nu^2u\cdot\p_\nu^3b\ d x
\notag\\
&\leq
C\|\p_1b_1\|_{L^2}^{\frac{1}{2}}\|\p_1^2b_1\|_{L^2}^{\frac{1}{2}}\|\p_1^3u\|_{L^2}^{\frac{1}{2}}\|\p_3\p_1^3u\|_{L^2}^{\frac{1}{2}}\|\p_1^3b\|_{L^2}^{\frac{1}{2}}\|\p_2\p_1^3b\|_{L^2}^{\frac{1}{2}}\notag\\ 	
&\quad+C\|\p_1b_2\|_{L^2}^{\frac{1}{4}}\|\p_1^2b_2\|_{L^2}^{\frac{1}{4}}\|\p_1\p_2b_2\|_{L^2}^{\frac{1}{4}}\|\p_2\p_1^2b_2\|_{L^2}^{\frac{1}{4}}\|\p_2\p_1^2u \|_{L^2}\|\p_1^3b\|_{L^2}^{\frac{1}{2}}\|\p_3\p_1^3b\|_{L^2}^{\frac{1}{2}}
\notag\\
&\quad+C\|\p_1b_3\|_{L^2}^{\frac{1}{2}}\|\p_2\p_1b_3\|_{L^2}^{\frac{1}{2}}\|\p_3\p_1^2u \|_{L^2}^{\frac{1}{2}}\|\p_3\p_1^3u\|_{L^2}^{\frac{1}{2}}\|\p_1^3b\|_{L^2}^{\frac{1}{2}}\|\|\p_3\p_1^3b\|_{L^2}^{\frac{1}{2}}
\notag\\
&\quad
+C\|\p_2b\|_{L^\infty}\|\na\p_2^2u\|_{L^2}\|\p_2^3b\|_{L^2}
+C\|\p_3b\|_{L^\infty}\|\na\p_3^2u\|_{L^2}\|\p_3^3b\|_{L^2}
\notag\\
&\leq
C\|(u,b)\|_{H^3}\left(\|(\p_3u,\p_2b,\p_3b )\|_{H^3}^2+\|\p_2u\|_{H^2}^2\right),\label{H51}
\end{align}
and
\begin{align}
H_{52}
& \leq
C\|\p_1^2b_1\|_{L^2}^{\frac{1}{2}}\|\p_1^3b_1\|_{L^2}^{\frac{1}{2}}\|\p_1^2u\|_{L^2}^{\frac{1}{2}}\|\p_2\p_1^2u\|_{L^2}^{\frac{1}{2}}\|\p_1^3b\|_{L^2}^{\frac{1}{2}}\|\p_3\p_1^3b\|_{L^2}^{\frac{1}{2}}
\notag\\
&\quad
+C\|\p_1^2b_2\|_{L^2}^{\frac{1}{2}}\|\p_2\p_1^2b_2\|_{L^2}^{\frac{1}{2}}\|\p_2\p_1u\|_{L^2}^{\frac{1}{2}}\|\p_2\p_1^2u\|_{L^2}^{\frac{1}{2}}\|\p_1^3b\|_{L^2}^{\frac{1}{2}}\|\p_3\p_1^3b\|_{L^2}^{\frac{1}{2}}
\notag\\
&\quad
+C\|\p_1^2b_3\|_{L^2}^{\frac{1}{2}}\|\p_2\p_1^2b_3\|_{L^2}^{\frac{1}{2}}\|\p_3\p_1u\|_{L^2}^{\frac{1}{2}}\|\p_3\p_1^2u\|_{L^2}^{\frac{1}{2}}\|\p_1^3b\|_{L^2}^{\frac{1}{2}}\|\p_3\p_1^3b\|_{L^2}^{\frac{1}{2}}
\notag\\
&\quad
+C\|\p_2^2b\|_{L^6}\|\na\p_2u\|_{L^3}\|\p_2^3b\|_{L^2}+C\|\p_3^2b\|_{L^6}\|\na\p_3u\|_{L^3}\|\p_3^3b\|_{L^2}
\notag\\
&\leq C\|(u,b)\|_{H^3}\left(\|(\p_3u,\p_2b,\p_3b )\|_{H^3}^2+\|\p_2u\|_{H^2}^2\right).\label{H52}
\end{align}

Let $u_h \triangleq (u_1,u_2)$ and $b_h\triangleq(b_1,b_2)$. By direct calculations, we have
\begin{align*}
H_{53}
&=\int\p_1^3b_1\p_1u\cdot\p_1^3b\ d x
+\int\p_1^3b_2\p_2u_1\p_1^3b_1\ dx
+2\int\left(\p_1^3b_2\right)^2\p_2u_2\ dx
\notag\\
&\quad-\int\left(\p_1^3b_2\right)^2\p_2u_2\ dx+2\int\p_1^3b_2\p_2u_3\p_1^3b_3\ dx-\int\p_1^3b_2\p_2u_3\p_1^3b_3\ dx\notag\\
&\quad+\int\p_1^3b_3\p_3u_h\cdot\p_1^3b_h\ d x
-\int\p_1^3b_3\p_1u_1 \p_1^3b_3\ d x
-4\int\p_1^3b_3\p_2u_2 \p_1^3b_3\ d x
\notag\\
&\quad
+3\int\p_1^3b_3\p_2u_2\p_1^3b_3\ d x
+\int\p_2^3b\cdot\na u\cdot\p_2^3b\ d x
+\int\p_3^3b\cdot\na u\cdot\p_3^3b\ d x,
\end{align*}
and hence, it follows from (\ref{u1-23}), (\ref{b1-23}) and Lemma \ref{anip1} that
\begin{align}
H_{53}
& \leq
C\|\p_1^3b_1\|_{L^2}^{\frac{1}{2}}\|\p_1^4b_1\|_{L^2}^{\frac{1}{2}}\|\p_1u\|_{L^2}^{\frac{1}{2}}\|\p_3\p_1u\|_{L^2}^{\frac{1}{2}}\|\p_1^3b\|_{L^2}^{\frac{1}{2}}\|\p_2\p_1^3b\|_{L^2}^{\frac{1}{2}}
\notag\\
&\quad
+C\|\p_1^3b_2\|_{L^2}^{\frac{1}{2}}\|\p_2\p_1^3b_2\|_{L^2}^{\frac{1}{2}}\|\p_2u\|_{L^2}^{\frac{1}{2}}\|\p_2\p_1 u\|_{L^2}^{\frac{1}{2}}\|\p_1^3b\|_{L^2}^{\frac{1}{2}}\|\p_3\p_1^3b\|_{L^2}^{\frac{1}{2}}
\notag\\
&\quad
+C\|\p_1^3b_3\|_{L^2}^{\frac{1}{2}}\|\p_2\p_1^3b_3\|_{L^2}^{\frac{1}{2}}\|\p_3u\|_{L^2}^{\frac{1}{2}}\|\p_3\p_1 u\|_{L^2}^{\frac{1}{2}}\|\p_1^3b\|_{L^2}^{\frac{1}{2}}\|\p_3\p_1^3b\|_{L^2}^{\frac{1}{2}}
\notag\\
&\quad +C\|\p_1b_3\|_{L^2} \|\p_2\p_1^3b_3\|_{L^2}^{\frac{1}{2}}\|\p_3\p_1^3b_3\|_{L^2}^{\frac{1}{2}} \|\p_1 u_1\|_{L^2}^{\frac{1}{2}}\|\p_{1}^2u_1\|_{L^2}^{\frac{1}{2}} \notag\\
&\quad +C\|\p_1b_3\|_{L^2} \|\p_2\p_1^3b_3\|_{L^2}^{\frac{1}{2}}\|\p_3\p_1^3b_3\|_{L^2}^{\frac{1}{2}} \|\p_2 u_2\|_{L^2}^{\frac{1}{2}}\|\p_{1}\p_2u_2\|_{L^2}^{\frac{1}{2}} \notag\\
&\quad
+C \|\na u\|_{L^
\infty}\|\p_2^3b\|_{L^2}^2+C \|\na u\|_{L^\infty}\|\p_3^3b\|_{L^2}^2+ E_1+E_2+E_3
\notag\\
&\leq C\|(u,b)\|_{H^3}\left(\|(\p_3u,\p_2b,\p_3b )\|_{H^3}^2+\|\p_2u\|_{H^2}^2\right)+ E_1+E_2+E_3,\label{H53}
\end{align}
where
\begin{align*}
E_1&\triangleq2\int\left(\p_1^3b_2\right)^2\p_2u_2\ dx, \quad  E_2\triangleq-\int \p_1^3b_2 \p_2u_3 \p_1^3b_3\ dx,  \\
E_3&\triangleq3\int\left(\p_1^3b_3\right)^2\p_2u_2\ dx.
\end{align*}

Combining  \eqref{H51}, \eqref{H52}, \eqref{H53} with \eqref{H5} gives
\begin{align}
H_5\leq C\|(u,b)\|_{H^3}\left(\|(\p_3u,\p_2b,\p_3b )\|_{H^3}^2+\|\p_2u\|_{H^2}^2\right)+ E_1+E_2+E_3.\label{H5g}
\end{align}

Therefore, inserting \eqref{H1}, \eqref{H2g}, \eqref{H3g}, \eqref{H4g} and   \eqref{H5g} into \eqref{H3bound}, we immediately obtain the following preliminary bound of $\dot H^3$-norms:
\begin{align}
\frac12&\frac{d}{d t}\|(\nabla^3 u, \nabla^3 b)\|^2_{L^2}+\mu\|\p_3\nabla^3 u \|^2_{L^2}+\eta\left(\|\p_2\nabla^3 b\|^2_{L^2}+\|\p_3\nabla^3 b\|^2_{L^2}\right)\notag\\
& \leq C\|(u,b)\|_{H^3}\Big(\|(\p_3u,\p_2b,\p_3b )\|_{H^3}^2+\|\p_2u\|_{H^2}^2\Big)+\sum_{i=1}^3D_i+\sum_{i=1}^3 E_i. \label{HU3}
\end{align}

\textbf{Step III. The estimates of $D_1$, $D_2$  and $D_3$ }
\vskip .1in
Clearly, to close the a priori estimates stated in (\ref{HU3}), we still need to deal with $D_i$   and $E_i$ with $i=1,2,3$, based on the special structure of (\ref{PMHD}).
\vskip .1in
\textbf{Step III-1. The estimates of $D_1$ and $D_3$}
\vskip .1in
We start with  the estimate of  $D_1$. First, it follows from \eqref{PMHD} that
\begin{align}
&\p_2u_2=\p_tb_2-\eta\p_2^2b_2-\eta\p_3^2b_2+u\cdot\na b_2-b\cdot\na u_2,\label{pu2}\\
&\p_2u_3=\p_tb_3-\eta\p_2^2b_3-\eta\p_3^2b_3+u\cdot\na b_3-b\cdot\na u_3,\label{pu3}\\
&\p_2b_2=\p_tu_2-\mu\p_3^2u_2+u\cdot\na u_2-b\cdot\na b_2+\p_2p,\label{pb2}\\
&\p_2b_3=\p_tu_3-\mu\p_3^2u_3+u\cdot\na u_3-b\cdot\na b_3+\p_3p.\label{pb3}
\end{align}
Plugging \eqref{pu2}--\eqref{pu3} into \eqref{pb2}--\eqref{pb3} gives
\begin{equation}\label{m13}
\begin{cases}
\p_2u_2=\p_t(b_2-\eta\p_2u_{2})-\eta\p_3^2b_2
+\mu\eta\p_2\p_3^2u_2
\\ \quad\qquad
-\eta\p_2\left(
u\cdot\na u_2-b\cdot\na b_2+\p_2p
\right)+u\cdot\na b_2-b\cdot\na u_2,\\[2mm]
\p_2u_3=\p_t(b_3-\eta\p_2u_{3})-\eta\p_3^2b_3
+\mu\eta\p_2\p_3^2u_3
\\ \quad\qquad
-\eta\p_2\left(
u\cdot\na u_3-b\cdot\na b_3+\p_3p
\right)+u\cdot\na b_3-b\cdot\na u_3.
\end{cases}
\end{equation}

Recalling the definition of $D_1$, by virtue of $\eqref{m13}_1 $ we obtain after integrating by parts that
\begin{align}
D_1
&=2\int\left(\p_1^3u_2\right)^2\p_t(b_2-\eta\p_2u_{2})\ d x
-2\eta\int\left(\p_1^3u_2\right)^2\p_3^2b_2\ d x
\notag\\
&\quad
+2\mu\eta\int\left(\p_1^3u_2\right)^2\p_2\p_3^2u_2\ d x+2\int\left(\p_1^3u_2\right)^2\left(
u\cdot\na b_2-b\cdot\na u_2
\right) d x
\notag\\
&\quad
-2\eta\int\left(\p_1^3u_2\right)^2\p_2\left(
u\cdot\na u_2-b\cdot\na b_2+\p_2p
\right) d x
\notag\\
&=
2\frac{d}{d t}\int\left(\p_1^3u_2\right)^2(b_2-\eta\p_2u_{2})\ d x
-4\int\p_1^3u_2\p_1^3\p_tu_2(b_2-\eta\p_2u_{2})\ d x
\notag\\
&\quad
+4\eta\int\p_3\p_1^3u_2\p_1^3u_2\p_3b_2\ d x
-4\mu\eta\int\p_3\p_1^3u_2\p_1^3u_2\p_3\p_2u_2\ d x
\notag\\
&\quad
-2\eta\int\left(\p_1^3u_2\right)^2\p_2\left(
u\cdot\na u_2-b\cdot\na b_2+\p_2p
\right) d x
\notag\\
&\quad
+2\int\left(\p_1^3u_2\right)^2
u\cdot\na b_2\ d x-2\int\left(\p_1^3u_2\right)^2b\cdot\na u_2
\ d x\notag\\
&\triangleq
2\frac{d}{d t}\int\left(\p_1^3u_2\right)^2(b_2-\eta\p_2u_{2})\ d x
+\sum_{j=1}^{6}D_{1j}.\label{D1}
\end{align}

The terms on the right-hand side of (\ref{D1}) will be treated one by one. First, thanks to \eqref{pb2}, it holds that
\begin{align}
D_{11}
&=-4\int\p_1^3u_2\p_1^3\p_2b_2(b_2-\eta\p_2u_{2})\ d x
+4\mu\int\p_3\p_1^3u_2
\p_3\p_1^3u_2(b_2-\eta\p_2u_2)\ d x
 \notag\\
 &\quad
+4\mu\int\p_1^3u_2
\p_3\p_1^3u_2\p_3b_2\ d x
-4\mu\eta\int\p_1^3u_2
\p_3\p_1^3u_2\p_3\p_2u_2\ d x
 \notag\\
&\quad+4\int\p_1^3(u\cdot\na u_2-b\cdot\na b_2+\p_2p)\p_1^3u_2(b_2-\eta\p_2u_2)\ d x
\triangleq\sum_{j=1}^{5}D_{11j},\label{D11}
\end{align}
where the first four terms can be bounded as follows, using  Lemma \ref{anip1}, H\"older's and Sobolev's inequalities.
\begin{align}
\sum_{j=1}^4D_{11j}
&\leq
C\|\p_1^3\p_2b_2\|_{L^2}\|\p_1^3u_2\|_{L^2}^{\frac{1}{2}}\|\p_3\p_1^3u_2\|_{L^2}^{\frac{1}{2}}\|b_2\|_{H^1}^{\frac{1}{2}}\|\p_2b_2\|_{H^1}^{\frac{1}{2}}
\notag\\
&\quad
+C\|\p_1^3u_2\|_{L^2}\|\p_2\p_1^3b_2\|_{L^2} \|\p_2u_2\|_{L^\infty}
\notag\\
&\quad
+C\|\p_3\p_1^3u_2\|_{L^2}\|\p_3\p_1^3u_2\|_{L^2}\left(\|b_2\|_{L^\infty}
+\|\p_2u_2\|_{L^\infty}\right)
\notag\\
&\quad
+C\|\p_1^3u_2\|_{L^2}\|\p_3\p_1^3u_2\|_{L^2}\left(\|\p_3b_2\|_{L^\infty}+\|\p_3\p_2u_2\|_{L^\infty}\right)   \notag\\
&\quad\leq
C\|(u,b)\|_{H^3}(\|\p_2u\|_{H^2}^2+\|\p_3u\|_{H^3}^2+\|\p_2b\|_{H^3}^2).\label{D111-4}
\end{align}

To deal with the term associated with the pressure,
we observe that
\begin{align}
p&=(-\Delta)^{-1}\na\cdot(u\cdot\na u- b\cdot\na b)  =(-\Delta)^{-1}(\p_iu_j\p_ju_i-\p_ib_j\p_jb_i),\label{p}
\end{align}
and hence
\begin{align}
D_{115}&=4\int(\p_1^3u\cdot\na u_2
+3\p_1^2u\cdot\na\p_1 u_2
+3\p_1u\cdot\na\p_1^2 u_2
)\p_1^3u_2(b_2-\eta\p_2u_2)\ dx
\notag\\
&\quad
-4\int(\p_1^3b\cdot\na b_2
+3\p_1^2b\cdot\na\p_1 b_2
+3\p_1b\cdot\na\p_1^2 b_2
)\p_1^3u_2(b_2-\eta\p_2u_2)\ dx \label{D115}
\\
&\quad
+2\int u\cdot\na(\p_1^3u_2)^2(b_2-\eta\p_2u_2)\ dx
-4\int b\cdot\na\p_1^3 b_2\p_1^3u_2(b_2-\eta\p_2u_2)\ dx
\notag\\
&\quad
+4\int \p_2\p_1^3(-\Delta)^{-1}(\p_iu_j\p_ju_i-\p_ib_j\p_jb_i)\p_1^3u_2(b_2-\eta\p_2u_2)\ dx
\triangleq\sum_{j=1}^5F_j.\notag
\end{align}

Using (\ref{u1-23}), Lemma \ref{anip2} and Sobolev's embedding inequality, one has
\begin{align}
F_1&=4\int\left(\p_1^3u\cdot\na u_2
+3\p_1u\cdot\na\p_1^2 u_2
)\p_1^3u_2(b_2-\eta\p_2u_2\right)\ dx\notag\\
&\quad
+12\int (\p_1^2u_1\p^2_1 u_2+ \p_1^2u_2\p_2\p_1 u_2+\p_1^2u_3\p_3\p_1 u_2)\p_1^3u_2\left(b_2-\eta\p_2u_2\right) dx \notag\\
&\leq
C\left(\|(\p_1^3 u,\p_1^3u_2)\|_{L^2}^{2}+\|(\na u_2, b_2, \p_2u_2)\|_{H^1}^{2}\right)\notag\\
&\qquad\times\left(\|(\p_3\p_1^3 u,\p_3\p_1^3u_2)\|_{L^2}^{2}+\|(\p_2\na u_2,\p_2b_2, \p_2^2u_2)\|_{H^1}^{2}\right)\notag\\
&\quad+C\left(\|(\na\p_1^2 u_2,\p_1^3u_2)\|_{L^2}^{2}+\|(\p_1 u, b_2, \p_2u_2)\|_{H^1}^{2}\right)\notag\\
&\qquad\times\left(\|(\p_3\na\p_1^2 u_2,\p_3\p_1^3u_2)\|_{L^2}^{2}+\|(\p_2\p_1 u,\p_2b_2, \p_2^2u_2)\|_{H^1}^{2}\right)\notag\\
&\quad+C\|(b_2,\p_2u_2)\|_{L^\infty}\left(\|\p_1^2u_{1}\|_{L^2}^{\frac{1}{2}}\|\p_1^3u_{1}\|_{L^2}^{\frac{1}{2}}\|\p_1^2u_{2}\|_{L^2}^{\frac{1}{2}}
\|\p_2\p_1^2u_2\|_{L^2}^{\frac{1}{2}}\|\p_1^3u_2\|_{L^2}^{\frac{1}{2}}\right.\notag\\
&\qquad+\left. \|\p_1^2u_{\nu}\|_{L^2}^{\frac{1}{2}}\|\p_2\p_1^2u_{\nu}\|_{L^2}^{\frac{1}{2}}\|\p_\nu\p_1u_{2}\|_{L^2}^{\frac{1}{2}}
\|\p_\nu\p_1^2u_2\|_{L^2}^{\frac{1}{2}}\right)\|\p_1^3u_2\|_{L^2}^{\frac{1}{2}}\|\p_3\p_1^3u_2\|_{L^2}^{\frac{1}{2}}
\notag\\
&\leq C\|(u, b)\|_{H^3}^2\left(\|\p_2u\|_{H^2}^2+\|\p_3u \|_{H^3}^2+\|\p_2b\|_{H^3}^2\right),\label{F1}
\end{align}
and analogously, by (\ref{b1-23}) one gets
\begin{align}
F_2\leq C\|(u, b)\|_{H^3}^2\left(\|\p_2u\|_{H^2}^2+\|\p_3u \|_{H^3}^2+\|\p_2b\|_{H^3}^2+\|\p_3b\|_{H^3}^2\right).\label{F2}
\end{align}

Since the Riesz operator $\p_i (-\Delta)^{-\frac12}$ with $i=1,2,3$
is bounded in $L^r$  for any $1<r<\infty$, we see that
\begin{align}
&\|\p_2\p_1^3(-\Delta)^{-1}(\p_iu_j\p_ju_i-\p_ib_j\p_jb_i)\|_{L^2}\notag\\
&\quad\leq C\|\p_2\p_{1}\left(\p_iu_j\p_ju_i-\p_ib_j\p_jb_i\right)\|_{L^2}\notag\\
&\quad\leq C\|\p_2\p_{1}\na u\|_{L^2}\|\na u\|_{L^\infty}+C\|\p_2\na u\|_{L^6}\|\p_1\na u\|_{L^3}\notag\\
&\qquad+C\|\p_2\p_{1}\na b\|_{L^2}\|\na b\|_{L^\infty}+C\|\p_2\na b\|_{L^6}\|\p_1\na b\|_{L^3}\notag\\
&\quad\leq C\|(u,b)\|_{H^3}\left(\|\p_2u\|_{H^2} +\|\p_2b\|_{H^2} \right),\label{RS}
\end{align}
from which, \eqref{he}  and Lemma \ref{anip1} it follows that
\begin{align}
F_5&=4\int \p_2\p_1^3(-\Delta)^{-1}(\p_iu_j\p_ju_i-\p_ib_j\p_jb_i)\p_1^3u_2(b_2-\eta\p_2u_2)\ dx\notag\\
&\leq
C\|\p_2\p_1^3(-\Delta)^{-1}(\p_iu_j\p_ju_i-\p_ib_j\p_jb_i)\|_{L^2}\notag\\
&\quad
\times\|\p_1^3u_2\|_{{L_{x_1}^{2}L_{x_2}^{2}L_{x_3}^{\infty}}}\left(\|b_2\|_{{L_{x_1}^{\infty}L_{x_2}^{\infty}L_{x_3}^{2}}}
+\|\p_2u_2\|_{{L_{x_1}^{\infty}L_{x_2}^{\infty}L_{x_3}^{2}}}\right)\notag\\
&\leq
C\|\p_2\p_{1}\left(\p_iu_j\p_ju_i-\p_ib_j\p_jb_i\right)\|_{L^2}\notag\\
&\quad\times\|\p_1^3u_2\|_{L^2}^{\frac12}\|\p_3\p_1^3u_2\|_{L^2}^{\frac12} \left(\|b_2\|_{H^1}^{\frac12}  \|\p_2b_2\|_{H^1}^{\frac{1}{2}}+\|\p_2u_2\|_{H^1}^{\frac12}  \|\p_2^2u_2\|_{H^1}^{\frac{1}{2}}\right)\notag\\
&\leq
C\|(u,b)\|^2_{H^3}\left(\|\p_2u\|_{H^2}^2+\|\p_3u\|_{H^3}^2+\|\p_2b\|_{H^2}^2\right).\label{F5}
\end{align}

Thus, inserting \eqref{F1}, \eqref{F2} and \eqref{F5} into \eqref{D115} shows
\begin{align*}
D_{115}\leq C\|(u,b)\|^2_{H^3}\left(\|\p_2u\|_{H^2}^2+\|\p_3u\|_{H^3}^2+\|\p_2b\|_{H^2}^2\right)+F_3+F_4,
\end{align*}
which, combined with  \eqref{D111-4} and \eqref{D11}, yields
\begin{align}\label{D11g}
D_{11}\leq C\left(\|(u,b)\|_{H^3}+\|(u,b)\|^2_{H^3}\right)\left(\|\p_2u\|_{H^2}^2+\|(\p_3u,\p_2b)\|_{H^3}^2 \right)+F_3+F_4.
\end{align}

For $D_{12}$, $D_{13}$ and $D_{16}$, we infer from  Lemmas \ref{anip1} and   \ref{anip2} that
\begin{align}
&D_{12}+D_{13}+D_{16}\notag\\
&\quad =4\eta\int\p_3\p_1^3u_2\p_1^3u_2\p_3b_2\ d x
-4\mu\eta\int\p_3\p_1^3u_2\p_1^3u_2\p_3\p_2u_2\ d x\notag\\
&\qquad-2\int \p_1^3u_2\p_1^3u_2 (b\cdot\na u_2)
\ d x\notag\\
&\quad \leq C\|\p_{3}\p_1^3u_{2}\|_{L^2}\|\p_1^3u_{2}\|_{L^2}^{\frac{1}{2}}\|\p_3\p_1^3u_{2}\|_{L^2}^{\frac{1}{2}}\notag\\
&\qquad\quad\times\left(\|\p_3b_2\|_{H^1}^{\frac{1}{2}}\|\p_2\p_3b_2\|_{H^1}^{\frac{1}{2}}+\|\p_3\p_2u_2\|_{H^1}^{\frac{1}{2}}\|\p_3\p_2^2u_2\|_{H^1}^{\frac{1}{2}}\right)\notag\\
&\qquad+C\left(\|\p_1^3u_2\|_{L^2}^{2}+\|(b, \na u_2)\|_{H^1}^{2}\right)\left(\|\p_3\p_1^3u_2\|_{L^2}^{2}+\|(\p_2b,\p_2\na u_2)\|_{H^1}^{2}\right)\notag\\
&\quad\leq C\left(\|(u,b)\|_{H^3}+\|(u,b)\|^2_{H^3}\right)\left(\|\p_2u\|_{H^2}^2+\|(\p_3u,\p_2b,\p_3b)\|_{H^3}^2\right).\label{D1236}
\end{align}

In order to  estimate $D_{14}$, we first deal with the pressure term. Indeed, similarly to the derivation of (\ref{RS}), it is easily seen from (\ref{p}) that
\begin{align}
\|\p_2^2p\|_{{L_{x_1,x_2}^{\infty} L_{x_3}^{2}}}
& \leq C\|\p_2^2p\|_{L^{2}}^{\frac14}\|\p_2^3p\|_{L^{2}}^{\frac14}\|\p_1\p_2^2p\|_{L^{2}}^{\frac14}\|\p_1\p_2^3p\|_{L^{2}}^{\frac14}\notag\\
& \leq C\|\p_2(u\cdot \na u- b\cdot\na b)\|_{L^{2}}^{\frac14}\|\p_2(\p_iu_j\p_ju_i-\p_ib_j\p_jb_i)\|_{L^{2}}^{\frac12}\notag\\
&\quad\times\|\p_2^2(\p_iu_j\p_ju_i-\p_ib_j\p_jb_i)\|_{L^{2}}^{\frac14}.\label{22pk}
\end{align}

 By \eqref{1IE}, \eqref{MIE} and \eqref{ge}, we have
\begin{align}
&\|\p_2(u\cdot \na u)\|_{L^{2}}^{\frac14}\|\p_2(\p_iu_j\p_ju_i)\|_{L^{2}}^{\frac12}\|\p_2^2(\p_iu_j\p_ju_i)\|_{L^{2}}^{\frac14}\notag\\
&\quad \leq C\left(\|\p_2u\|_{{L_{x_1,x_2}^{2}L_{x_3}^{\infty}}}\|\na u\|_{{L_{x_1,x_2}^{\infty} L_{x_3}^{2}}}+\|\p_2\na u\|_{{L_{x_1,x_2}^{2}L_{x_3}^{\infty}}}\| u\|_{{L_{x_1,x_2}^{\infty}L_{x_3}^{2}}}\right)^{\frac14}\notag\\
&\qquad\times C\left(\|\p_2\na u\|_{{L_{x_1,x_2}^{2}L_{x_3}^{\infty}}}\|\na u\|_{{L_{x_1,x_2}^{\infty}L_{x_3}^{2}}}\right)^{\frac12}\notag\\
&\qquad\times C\left(\|\p_2^2\na u\|_{{L_{x_1,x_2}^{2}L_{x_3}^{\infty}}}\|\na u\|_{{L_{x_1,x_2}^{\infty}L_{x_3}^{2}}}+\|\p_2\na u\|_{L_6}\| \p_2\na u\|_{L^3}\right)^{\frac14}\notag\\
&\quad \leq C\left(\|\p_2u\|_{L^2}^{\frac12}\|\p_3\p_2u\|_{L^2}^{\frac12}\|\na u\|_{H^1}^{\frac12}\|\p_2\na u\|_{H^1}^{\frac12}\right.\notag\\
&\qquad\qquad+\left.\|\p_2\na u\|_{L^2}^{\frac12}\|\p_3\p_2\na u\|_{L^2}^{\frac12}\| u\|_{H^1}^{\frac12}\|\p_2 u\|_{H^1}^{\frac12}\right)^{\frac14}\notag\\
&\qquad\times C\left(\|\p_2\na u\|_{L^2}^{\frac12}\|\p_3\p_2\na u\|_{L^2}^{\frac12}\|\na u\|_{H^1}^{\frac12}\|\p_2\na u\|_{H^1}^{\frac12}\right)^{\frac12}\notag\\
&\qquad\times C\left(\|\p_2^2\na u\|_{L^2}^{\frac12}\|\p_3\p_2^2\na u\|_{L^2}^{\frac12}\|\na u\|_{H^1}^{\frac12}\|\p_2\na u\|_{H^1}^{\frac12}+\|\p_2\na u\|_{H^1}^2\right)^{\frac14}\notag\\
&\quad \leq C\|u\|_{H^3}^{\frac12}\|\p_2u\|_{H^2}^{\frac{11}{8}}\|\p_3u\|_{H^3}^{\frac{1}{8}}.\label{22pu}
\end{align}
The other terms in (\ref{22pk}) admit similar bounds. Hence, we deduce
\begin{align}
\|\p_2^2p\|_{{L_{x_1,x_2}^{\infty}L_{x_3}^{2}}}
\leq C\|(u,b)\|_{H^3}^{\frac12}\|(\p_2u,\p_2b)\|_{H^2}^{\frac{11}{8}}\|(\p_3u,\p_3b)\|_{H^3}^{\frac{1}{8}}.\label{22pg}
\end{align}

In view of (\ref{22pg}), Lemmas \ref{anip1} and   \ref{anip2}, we find
\begin{align}
D_{14}
&=-2\eta\int\left(\p_1^3u_2\right)^2\left(\p_2
u\cdot\na u_2-\p_2
b\cdot\na b_2-b\cdot \na\p_2 b_2\right)\ d x\notag\\
&\quad-2\eta\int\left(\p_1^3u_2\right)^2\p_2^2p
\ d x-2\eta\int\left(\p_1^3u_2\right)^2
u\cdot \na \p_2u_2\ d x\notag\\
& \leq
C\left(\|\p_1^3u_2\|_{L^2}^{2}+\|(\p_2 u, \na u_2, \p_2b, \na b_2, b, \na\p_2 b_2)\|_{H^1}^{2}\right)\notag\\
&\qquad\times\left(\|\p_3\p_1^3u_2 \|_{L^2}^{2}+\|(\p_2^2 u, \p_2\na u_2, \p_2^2b, \p_2\na b_2, \p_2b, \p_2^2\na b_2)\|_{H^1}^{2}\right)\notag\\
&\quad+C\|\p_1^3u_2 \|_{L^2}\|\p_1^3u_2\|_{{L_{x_1,x_2}^{2}L_{x_3}^{\infty}}}\|\p_2^2p\|_{{L_{x_1,x_2}^{\infty}L_{x_3}^{2}}}+\Xi_1\notag\\
& \leq C\|(u,b)\|^2_{H^3}\left(\|\p_2u\|_{H^2}^2+\|(\p_3u,\p_2b,\p_3b)\|_{H^3}^2\right)+\Xi_1\notag\\
&\quad+C\|\p_1^3u_2\|_{L^2}^{\frac32}\|\p_3\p_1^3u_2\|_{L^2}^{\frac12}
\|(u,b)\|_{H^3}^{\frac12}\|(\p_2u,\p_2b)\|_{H^2}^{\frac{11}{8}}\|(\p_3u,\p_3b)\|_{H^3}^{\frac{1}{8}}\notag\\
&\quad\leq C\|(u,b)\|^2_{H^3}\left(\|\p_2u\|_{H^2}^2+\|(\p_3u,\p_2b,\p_3b)\|_{H^3}^2\right)+\Xi_1,\label{D14}
\end{align}
where
$$
\Xi_1\triangleq-2\eta\int\left(\p_1^3u_2\right)^2
u\cdot \na \p_2u_2\ d x.
$$

A key and amazing consequence of the above estimates is that
\begin{align}
D_{15}+F_3+\Xi_1&=2\int\left(\p_1^3u_2\right)^2
u\cdot\na b_2\ d x+2\int u\cdot\na(\p_1^3u_2)^2(b_2-\eta\p_2u_2)\ dx\notag\\
&\quad-2\eta\int\left(\p_1^3u_2\right)^2
u\cdot \na \p_2u_2\ d x
=0,\label{DF3H}
\end{align}
and consequently, it follows from (\ref{D1}), \eqref{D11g}, \eqref{D1236}, \eqref{D14}  and \eqref{DF3H} that
\begin{align}\label{D1g}
D_{1}&\leq2\frac{d}{d t}\int\left(\p_1^3u_2\right)^2(b_2-\eta\p_2u_{2})\ d x \notag\\
&\quad+C\|(u,b)\|^2_{H^3}\left(\|\p_2u\|_{H^2}^2+\|(\p_3u,\p_2b,\p_3b)\|_{H^3}^2\right)+F_4.
\end{align}
where
$$
F_4=-4\int b\cdot\na\p_1^3 b_2\p_1^3u_2(b_2-\eta\p_2u_2)\ dx.
$$

It remains to estimate $ F_4 $, which will be tackled with the term $ E_1 $ in $ H_{53} $. To do this, we use \eqref{m13}$ _{1} $  to get  that
\begin{align}
E_1
&=2\int\left(\p_1^3b_2\right)^2\p_t(b_2-\eta\p_2u_{2})\ dx-2\eta\int\left(\p_1^3b_2\right)^2\p_3^2b_2\ dx\notag\\
&\quad
+2\mu\eta\int\left(\p_1^3b_2\right)^2\p_2\p_3^2u_2\ dx+2\int\left(\p_1^3b_2\right)^2\left(u\cdot\na b_2-b\cdot\na u_2\right)\ dx\notag\\
&\quad-2\eta\int\left(\p_1^3b_2\right)^2\p_2\left(
u\cdot\na u_2-b\cdot\na b_2+\p_2p
\right)\ dx\notag\\
&=
2\frac{d}{d t}\int\left(\p_1^3b_2\right)^2(b_2-\eta\p_2u_{2})\ dx
-4\int\p_1^3\p_tb_2\p_1^3b_2\left(b_2-\eta\p_2u_{2}\right)\ dx\notag\\
&\quad
-2\eta\int\left(\p_1^3b_2\right)^2\p_3^2b_2\ dx
+2\mu\eta\int\left(\p_1^3b_2\right)^2\p_2\p_3^2u_2\ dx\notag\\
&\quad
-2\eta\int\left(\p_1^3b_2\right)^2\p_2\left(
u\cdot\na u_2-b\cdot\na b_2+\p_2p
\right) dx\notag\\
&\quad+2\int\left(\p_1^3b_2\right)^2\left(u\cdot\na b_2-b\cdot\na u_2\right) dx \notag\\
&\triangleq2\frac{d}{d t}\int\left(\p_1^3b_2\right)^2(b_2-\eta\p_2u_{2})\ dx+\sum_{i=1}^{5}E_{1i}.\label{E1-1}
\end{align}

In view of  \eqref{pu2}, we have
\begin{align}
E_{11}
&=-4\int\p_1^3\left(\p_2u_2+\eta\p_2^2b_2+\eta\p_3^2b_2\right) \p_1^3b_2\left(b_2-\eta\p_2u_{2}\right) dx\notag\\
&\quad
+4\int\p_1^3\left(u\cdot\na b_2\right) \p_1^3b_2\left(b_2-\eta\p_2u_{2}\right) dx\notag\\
&\quad
-4\int\p_1^3\left(b\cdot\na u_2\right) \p_1^3b_2\left(b_2-\eta\p_2u_{2}\right) dx
\triangleq\sum_{i=1}^{3}E_{11i}, \label{E11}
\end{align}

Based upon integration by parts, we have
\begin{align*}
E_{111}
&=4\int\p_1^3\left(u_2+\eta\p_2b_2\right) \p_2\p_1^3b_2\left(b_2-\eta\p_2u_{2}\right) dx\notag\\
&\quad+4\int\p_1^3\left(u_2+\eta\p_2b_2\right) \p_1^3b_2\left(\p_2b_2-\eta\p_2^2u_{2}\right) dx
\notag\\
&\quad+4\eta\int \left(\p_3\p_1^3b_2\right)^2\left(b_2-\eta\p_2u_{2}\right)dx\notag\\
&\quad+4\eta\int\p_1^3\p_3b_2\p_1^3b_2\left(\p_3b_2-\eta\p_3\p_2u_{2}\right) dx,
\end{align*}
so that, by Lemma \ref{anip1} we deduce
\begin{align}\label{E111}
E_{111}
&\leq C\|\p_1^3u_2\|_{L^2}^{\frac12}\|\p_3\p_1^3u_2\|_{L^2}^{\frac12}\|\p_2\p_1^3b_2\|_{L^2} \|\left(b_2, \p_2 u_2\right)\|_{H^1}^{\frac12}
\|\p_2\left(b_2, \p_2 u_2\right)\|_{H^1}^{\frac12}
\notag\\
&
\quad+C\|\p_1^3u_2\|_{L^2}^{\frac12}\|\p_3\p_1^3u_2\|_{L^2}^{\frac12}\|\p_1^3b_2\|_{L^2}^{\frac12}\|\p_2\p_1^3b_2\|_{L^2}^{\frac12}  \|\p_2b_2 \|_{L^2}^{\frac12}\|\p_1\p_2b_2\|_{L^2}^{\frac12}\notag\\
&
\quad+C\|\p_1^3u_2\|_{L^2}^{\frac12}\|\p_3\p_1^3u_2\|_{L^2}^{\frac12}\|\p_1^3b_2\|_{L^2}^{\frac12}\|\p_2\p_1^3b_2\|_{L^2}^{\frac12}  \|\p_2^2 u_2\|_{L^2}^{\frac12}\|\p_1\p_2^2 u_2\|_{L^2}^{\frac12}\notag\\
&
\quad+C \|\p_2\p_1^3b_2\|_{L^2}\| \p_1^3b_2\|_{L^2}^{\frac12}\|\p_2\p_1^3b_2\|_{L^2}^{\frac12}  \|\p_2^2 u_2\|_{H^1}^{\frac12} \|\p_3\p_2^2 u_2\|_{H^1}^{\frac12} \notag\\
&
\quad
+C \left(\|\p_3\p_1^3b_2\|_{L^2}^{2}+ \|\p_2\p_1^3b_2\|_{L^2}^{2}\right)\|\left(b_2, \p_2 u_2\right)\|_{L^\infty}\notag\\
&
\quad+C \left(\|\p_2\p_1^3b_2\|_{L^2}+\|\p_3\p_1^3b_2\|_{L^2}\right)\| \p_1^3b_2\|_{L^2} \|\left(\p_2b_2,\p_3b_2, \p_3\p_2  u_2\right)\|_{L^\infty}\notag\\
&\leq C\|(u,b)\|_{H^3}\left(\|\p_2u\|_{H^2}^2+\|(\p_3u,\p_2b,\p_3b)\|_{H^3}^2\right).
\end{align}

By straightforward calculation, we have
\begin{align}
& E_{112}+E_{113}\notag\\
&\quad=4\int(\p_1^3u\cdot\na b_2
+3\p_1^2u\cdot\na\p_1 b_2
+3\p_1u\cdot\na\p_1^2 b_2
)\p_1^3b_2(b_2-\eta\p_2u_2)\ dx
\notag\\
&\qquad
-4\int(\p_1^3b\cdot\na u_2
+3\p_1^2b\cdot\na\p_1 u_2
+3\p_1b\cdot\na\p_1^2 u_2
)\p_1^3b_2(b_2-\eta\p_2u_2)\ dx
\notag\\
&\qquad
-2\int (\p_1^3b_2)^2 u\cdot\na(b_2-\eta\p_2u_2)\ dx
-4 \int b\cdot\na\p_1^3 u_2\p_1^3b_2\left(b_2-\eta\p_2u_2\right) dx
\notag\\
&\quad\triangleq\sum_{j=1}^{4}R_{j}.\label{EF}
\end{align}

In terms of Lemmas \ref{anip1} and \ref{anip2}, we obtain in a similar manner as the derivation of (\ref{F1}) that
\begin{align}
R_1
&\leq
C\left(\|(\p_1^3 u,\p_1^3b_2,\p_1^2\na b)\|_{L^2}^{2}+\|(\na b_2, b_2,\p_2u_2,\p_1u)\|_{H^1}^{2}\right)\notag\\
&\qquad\times \left(\|\p_3(\p_1^3 u,\p_1^3b_2,\p_1^2\na b)\|_{L^2}^{2}+\|\p_2(\na b_2, b_2,\p_2 u_2,\p_1u)\|_{H^1}^{2}\right)\notag\\
&\quad+C\|\left(b_2,\p_2u_2\right)\|_{L^\infty}\left(\|\p_1^2u_{1}\|_{L^2}^{\frac{1}{2}}\|\p_1^3u_{1}\|_{L^2}^{\frac{1}{2}}\|\p_1^2b_{2}\|_{L^2}^{\frac{1}{2}}
\|\p_2\p_1^2b_2\|_{L^2}^{\frac{1}{2}}\right.\notag\\
&\qquad+\left.\|\p_1^2u_{\nu}\|_{L^2}^{\frac{1}{2}}\|\p_2\p_1^2u_{\nu}\|_{L^2}^{\frac{1}{2}}\|\p_\nu\p_1b_{2}\|_{L^2}^{\frac{1}{2}}
\|\p_\nu\p_1^2b_2\|_{L^2}^{\frac{1}{2}}\right)\|\p_1^3b_2\|_{L^2}^{\frac{1}{2}}\|\p_3\p_1^3b_2\|_{L^2}^{\frac{1}{2}}\notag\\
&\leq C\| (u, b )\|_{H^3}^2\left(\|\p_2u\|_{H^2}^2+\|\p_3u \|_{H^3}^2+\|\p_2b\|_{H^3}^2\right).\label{R1}
\end{align}

Analogously,
\begin{align}
R_2\leq C\| (u, b )\|_{H^3}^2\left(\|\p_2u\|_{H^2}^2+\|\p_3u \|_{H^3}^2+\|\p_2b\|_{H^3}^2+\|\p_3b\|_{H^3}^2\right),\label{R2}
\end{align}
and
\begin{align}
R_3
&\leq
C\left(\|(\p_1^3b_2,\p_1^3b_2) \|_{L^2}^{2}+\|(u,\na b_2,  \na\p_2u_2)\|_{H^1}^{2}\right)\notag\\
&\qquad\times\left(\|\p_2(\p_1^3b_2,\p_1^3b_2) \|_{L^2}^{2}+\|\p_3(u,\na b_2,  \na\p_2u_2)\|_{H^1}^{2}\right)\notag\\
&\leq C\|(u, b)\|_{H^3}^2\left(\|\p_2u\|_{H^2}^2+\|\p_2b\|_{H^3}^2+\|\p_3b\|_{H^3}^2\right).\label{R3}
\end{align}

It follows from (\ref{E11}), \eqref{E111}, \eqref{EF}, \eqref{R1}, \eqref{R2} and \eqref{R3},   we have
\begin{align}
E_{11}
&\leq C \left(\|(u, b)\|_{H^3}+\|(u, b)\|_{H^3}^2\right)\left(\|\p_2u\|_{H^2}^2+\|(\p_3u, \p_2b, \p_3b) \|_{H^3}^2 \right)+R_4,\label{E11g}
\end{align}
where
$$
R_4=-4 \int b\cdot\na\p_1^3 u_2\p_1^3b_2(b_2-\eta\p_2u_2) dx.
$$

For $E_{12}$, $E_{13}$  and $E_{15}$, an application of Lemmas \ref{anip1}  and  \ref{anip2} yields
\begin{align}
&E_{12}+E_{13}+E_{15}\notag\\
&\quad \leq C \|\p_1^3b_{2}\|_{L^2} \|\p_2\p_1^3b_{2}\|_{L^2}^{\frac{1}{2}}\|\p_3\p_1^3b_{2}\|_{L^2}^{\frac{1}{2}} \|\p_3^2b_{2}\|_{L^2}^{\frac{1}{2}}\|\p_1\p_3^2b_{2}\|_{L^2}^{\frac{1}{2}}\notag\\
&\qquad+C\|\p_1^3b_{2}\|_{L^2} \|\p_2\p_1^3b_{2}\|_{L^2}^{\frac{1}{2}}\|\p_3\p_1^3b_{2}\|_{L^2}^{\frac{1}{2}} \|\p_2\p_3^2u_{2}\|_{L^2}^{\frac{1}{2}}\|\p_1\p_2\p_3^2u_{2}\|_{L^2}^{\frac{1}{2}}\notag\\
&\qquad+C\left(\|\p_1^3b_2 \|_{L^2}^{2}+\|(u,\na b_2)\|_{H^1}^{2}\right)\left(\|\p_3\p_1^3b_2 \|_{L^2}^{2}+\|\p_2(u,\na b_2 )\|_{H^1}^{2}\right)\notag\\
&\qquad+C\left(\| \p_1^3b_2\|_{L^2}^{2}+\|(b,\na u_2)\|_{H^1}^{2}\right)\left(\|\p_3\p_1^3b_2 \|_{L^2}^{2}+\|\p_2(b,\na u_2 )\|_{H^1}^{2}\right)\notag\\
&\quad\leq C\left(\| (u, b )\|_{H^3}+\| (u, b )\|_{H^3}^2\right)\left(\|\p_2u\|_{H^2}^2+\| (\p_3u, \p_2b, \p_3b ) \|_{H^3}^2 \right). \label{E1235}
\end{align}

We proceed to estimate  $E_{14}$.  Similarly to the estimate of (\ref{D14}), we infer from  \eqref{22pg}  and Lemma \ref{anip2} that
\begin{align}
E_{14}
& \leq
C\left(\|\p_1^3b_2 \|_{L^2}^{2}+\|(\p_2 u, \na u_2)\|_{H^1}^{2}\right)\left(\|\p_3\p_1^3b_2\|_{L^2}^{2}+\|\p_2(\p_2u, \na u_2)\|_{H^1}^{2}\right)\notag\\
&\quad+C\|\p_1^3b_{2}\|_{L^2} \|\p_2\p_1^3b_{2}\|_{L^2}^{\frac{1}{2}}\|\p_3\p_1^3b_{2}\|_{L^2}^{\frac{1}{2}} \|\na \p_2u_{2}\|_{L^2}^{\frac{1}{2}}\|\p_1\na \p_2u_{2}\|_{L^2}^{\frac{1}{2}}\|u\|_{L^\infty}
\notag\\
&\quad+C\left(\|\p_1^3b_2\|_{L^2}^{2}+\|(\p_2b, \na b_2)\|_{H^1}^{2}\right)\left(\|\p_3\p_1^3b_2 \|_{L^2}^{2}+\|\p_2(\p_2b, \na b_2)\|_{H^1}^{2}\right)\notag\\
&\quad+C\left(\|\p_1^3b_2\|_{L^2}^{2}+\|(b, \na\p_2 b_2)\|_{H^1}^{2}\right)\left(\|\p_3\p_1^3b_2 \|_{L^2}^{2}+\|\p_2(b, \p_2\na b_2)\|_{H^1}^{2}\right)\notag\\
&\quad+C\|\p_1^3b_2 \|_{L^2}\|\p_1^3b_2\|_{{L_{x_1}^{2}L_{x_2}^{2}L_{x_3}^{\infty}}}\|\p_2^2p\|_{{L_{x_1}^{\infty}L_{x_2}^{\infty}L_{x_3}^{2}}}\notag\\
&\leq C\|(u,b)\|^2_{H^3}\left(\|\p_2u\|_{H^2}^2+\|(\p_3u,\p_2b,\p_3b)\|_{H^3}^2\right)\notag\\
&\quad+C\|\p_1^3b_2\|_{L^2}^{\frac32}\|\p_3\p_1^3b_2\|_{L^2}^{\frac12}\|(u,b)\|_{H^3}^{\frac12}\|(\p_2u,\p_2b)\|_{H^2}^{\frac{11}{8}}\|(\p_3u,\p_3b)\|_{H^3}^{\frac{1}{8}}\notag\\
&\leq C\|(u,b)\|^2_{H^3}\left(\|\p_2u\|_{H^2}^2+\|(\p_3u,\p_2b,\p_3b)\|_{H^3}^2\right).\label{E14}
\end{align}

Thus, plugging \eqref{E11g}, \eqref{E1235}, \eqref{E14} into \eqref{E1-1}, we have
\begin{align}
E_{1} &\leq 2\frac{d}{d t}\int\left(\p_1^3b_2\right)^2(b_2-\eta\p_2u_{2})\ dx \notag\\
&\quad +C\left(\| (u, b )\|_{H^3}+\| (u, b )\|_{H^3}^2\right)\left(\|\p_2u\|_{H^2}^2+\|(\p_3u,\p_2b,\p_3b)\|_{H^3}^2\right)+R_4,  \notag\
\end{align}
which, combined with \eqref{D1g}, yields
\begin{align}
D_1+E_{1}&\leq 2\frac{d}{d t}\int\left(\left(\p_1^3u_2\right)^2+\left(\p_1^3b_2\right)^2\right)(b_2-\eta\p_2u_{2})\ dx\notag\\
&\quad +C\left(\| (u, b )\|_{H^3}+\| (u, b )\|_{H^3}^2\right)\left(\|\p_2u\|_{H^2}^2+\|(\p_3u,\p_2b,\p_3b)\|_{H^3}^2\right),   \label{DE1g}
\end{align}
since it follows from Lemmas \ref{anip1} and \ref{anip2} that
\begin{align}
&R_4+F_4=-4\int b\cdot\na (\p_1^3 u_2\p_1^3b_2 )(b_2-\eta\p_2u_2) dx=4\int \p_1^3 u_2\p_1^3b_2  b\cdot\na(b_2-\eta\p_2u_2) dx \notag\\
&\quad\leq C\left(\|(\p_1^3 u_2, \p_1^3b_2)\|_{L^2}^2+\|(b,\na b_2)\|_{H^1}^2\right)
\left(\|\p_3(\p_1^3 u_2, \p_1^3b_2)\|_{L^2}^2+\|\p_2(b,\na b_2)\|_{H^1}^2\right)\notag\\
&\qquad +C\|\p_1^3 u_2\|_{L^2}^{\frac12}\|\p_3\p_1^3u_2\|_{L^2}^{\frac12}
\|\p_1^3 b_2\|_{L^2}^{\frac12}\|\p_2\p_1^3b_2\|_{L^2}^{\frac12}\|\p_2\nabla u_2\|_{L^2}^{\frac12}\|\p_1\p_2\nabla u_2\|_{L^2}^{\frac12}\|b\|_{L^\infty}\notag\\
&\quad\leq C\| (u, b)\|_{H^3}^2\left(\|\p_2u\|_{H^2}^2+\|\p_3u \|_{H^3}^2+\|\p_2b\|_{H^3}^2+\|\p_3b\|_{H^3}^2\right).\notag
\end{align}

In the exactly same way as the treatments of $D_1$ and $E_1$, we have
\begin{align}
D_3+E_{3}&\leq 3\frac{d}{d t}\int\left(\left(\p_1^3u_3\right)^2+\left(\p_1^3b_3\right)^2\right)(b_2-\eta\p_2u_{2})\ dx\notag\\
&\quad +C\left(\| (u, b )\|_{H^3}+\| (u, b )\|_{H^3}^2\right)\left(\|\p_2u\|_{H^2}^2+\|(\p_3u,\p_2b,\p_3b)\|_{H^3}^2\right).   \label{DE3g}
\end{align}

\vskip .1in
\textbf{Step III-2. The estimate of $D_2$}
\vskip .1in
Based upon $\eqref{m13}_2 $ and integration by parts, we see that
\begin{align}
D_2&=
-\frac{d}{d t}\int\partial_1^3u_2\partial_1^3 u_3(b_3-\eta\p_2u_{3})\ d x
+ \int\p_1^3u_2\p_1^3\p_tu_3(b_3-\eta\p_2u_{3})\ d x
\notag\\
&\quad+ \int\p_1^3\p_tu_2\p_1^3u_3(b_3-\eta\p_2u_{3})\ d x-\eta\int\p_3\p_1^3u_2\p_1^3u_3\p_3b_3\ d x
\notag\\
&\quad
-\eta\int\p_1^3u_2\p_3\p_1^3u_3\p_3b_3\ d x
+\mu\eta\int\p_3\p_1^3u_2\p_1^3u_3\p_3\p_2u_3\ d x\notag\\
&\quad
+\mu\eta\int\p_1^3u_2\p_3\p_1^3u_3\p_3\p_2u_3\ d x-\int\p_1^3u_2 \p_1^3u_3
u\cdot\na b_3\ d x\notag\\
&\quad+\int \p_1^3u_2 \p_1^3u_3b\cdot\na u_3
\ d x+\eta\int\p_1^3u_2 \p_1^3u_3\p_2\left(
u\cdot\na u_3-b\cdot\na b_3+\p_3p
\right) d x
\notag\\
&\triangleq-\frac{d}{d t}\int\partial_1^3u_2\partial_1^3 u_3(b_3-\eta\p_2u_{3})\ d x+\sum_{j=1}^{9}D_{2j}. \label{D2}
\end{align}

In view of \eqref{pb2} and \eqref{pb3}, one has
\begin{align}
D_{21}+D_{22}
=&\int\p_1^3u_2\p_1^3\p_tu_3(b_3-\eta\p_2u_{3})\ d x
 + \int\p_1^3\p_tu_2\p_1^3u_3(b_3-\eta\p_2u_{3})\ d x\notag\\
=&\int\p_1^3u_2\p_1^3\p_2b_3(b_3-\eta\p_2u_{3})\ d x
-\mu\int\p_3\p_1^3u_2
\p_3\p_1^3u_3(b_3-\eta\p_2u_3)\ d x
\notag\\&
-\mu\int\p_1^3u_2
\p_3\p_1^3u_3\p_3b_3\ d x
+\mu\eta\int\p_1^3u_2
\p_3\p_1^3u_3\p_3\p_2u_3\ d x
\notag\\
&-\int \p_1^3(u\cdot\na u_3-b\cdot\na b_3+\p_3p)\p_1^3u_2(b_3-\eta\p_2u_3)\ d x\notag\\
&
+\int\p_1^3\p_2b_2\p_1^3u_3(b_3-\eta\p_2u_{3})\ d x
-\mu\int
\p_3\p_1^3u_2\p_3\p_1^3u_3(b_3-\eta\p_2u_3)\ d x
\notag\\
&
-\mu\int\p_3\p_1^3u_2
 \p_1^3u_3\p_3 b_3\ d x
+\mu\eta\int\p_3\p_1^3u_2
\p_1^3u_3\p_3\p_2u_3\ d x
\notag\\
&-\int\p_1^3(u\cdot\na u_2-b\cdot\na b_2+\p_2p)\p_1^3u_3(b_3-\eta\p_2u_3)\ d x
\triangleq\sum_{j=1}^{10}L_{j}.\label{D2122}
\end{align}

Similarly to the estimate of \eqref{D111-4} and \eqref{D115}, we have
\begin{align}
\sum_{i=1}^4L_{i}+\sum_{i=6}^9L_{i}
\leq
C\|(u,b)\|_{H^3}\left(\|\p_2u\|_{H^2}^2+\|(\p_3u,\p_3b,\p_2b)\|_{H^3}^2\right).\label{L1234}
\end{align}
and
\begin{align}
L_{5}+L_{10}
\leq
C\|(u,b)\|^2_{H^3}\left(\|\p_2u\|_{H^2}^2+\|\left(\p_3u,\p_2b, \p_3b\right)\|_{H^3}^2 \right)+S_1+S_2.\label{L510}
\end{align}
where
\begin{align*}
S_1&=-\int u\cdot\na(\p_1^3u_3\p_1^3u_2) (b_3-\eta\p_2u_3) dx,\\
S_2&=\int \left(b\cdot\na\p_1^3 b_3\p_1^3 u_2+b\cdot\na\p_1^3 b_2\p_1^3 u_3\right)(b_3-\eta\p_2u_3) dx.
\end{align*}

Plugging \eqref{L1234} and \eqref{L510} into (\ref{D2122}) gives
\begin{align}
D_{21}+D_{22}&\leq
C\left(\|(u,b)\|_{H^3}+\|(u,b)\|^2_{H^3}\right)\notag\\
&\quad\times\left(\|\p_2u\|_{H^2}^2+\| (\p_3u,\p_2b, \p_3b )\|_{H^3}^2 \right)+S_1+S_2.\label{D211}
\end{align}

Analogously to the estimates of  $D_{12},D_{13},D_{16}$ in (\ref{D1236}) and $D_{14}$ in (\ref{D14}), we have
\begin{align}
&D_{23}+D_{24}+D_{25}+D_{26}+D_{28}\notag\\
&\leq C\left(\|(u,b)\|_{H^3}+\|(u,b)\|^2_{H^3}\right)\left(\|\p_2u\|_{H^2}^2+\|(\p_3u,\p_2b,\p_3b)\|_{H^3}^2\right),\label{D234569}
\end{align}
and
\begin{align}
D_{29}\leq C\|(u,b)\|^2_{H^3}\left(\|\p_2u\|_{H^2}^2+\|(\p_3u,\p_2b,\p_3b)\|_{H^3}^2\right)+\Xi_2.\label{D27}
\end{align}
where
$$
\Xi_2=\eta\int \p_1^3u_2 \p_1^3u_3
u\cdot \na \p_2u_3\ d x.
$$

Similarly to that in (\ref{DF3H}), it is easily checked that
\begin{align}
D_{27}+S_1+\Xi_2
=0.\label{DSH}
\end{align}

Thanks to \eqref{D211}, \eqref{D234569}, \eqref{D27} and \eqref{DSH}, we conclude from (\ref{D2}) that
\begin{align}\label{D2g}
D_{2}&\leq-\frac{d}{d t}\int\left(\p_1^3u_2\p_1^3u_3\right) (b_3-\eta\p_2u_{3})\ d x +S_2\notag\\
&\quad+\left(\|(u,b)\|_{H^3}+\|(u,b)\|^2_{H^3}\right)\left(\|\p_2u\|_{H^2}^2+\|(\p_3u,\p_2b,\p_3b)\|_{H^3}^2\right).
\end{align}

The treatment of $ S_2 $ strongly relies on the estimate of  $ E_2 $ in (\ref{H53}). Similarly to the derivation of  (\ref{E1-1}), we have by \eqref{m13}$ _{1} $ that
\begin{align}
E_2
&=
-\frac{d}{d t}\int\p_1^3b_2\p_1^3b_3(b_3-\eta\p_2u_{3})\ dx
+\int\p_1^3\p_tb_2\p_1^3b_3\left(b_3-\eta\p_2u_{3}\right) dx\notag\\
&\quad
+\int\p_1^3b_2\p_1^3\p_tb_3\left(b_3-\eta\p_2u_{3}\right)\ dx+\eta\int\p_1^3b_2\p_1^3b_3\p_3^2b_3\ dx\notag\\
&\quad
-\mu\eta\int\p_1^3b_2\p_1^3b_3\p_2\p_3^2u_3\ dx-\int\p_1^3b_2\p_1^3b_3\left(u\cdot\na b_3-b\cdot\na u_3\right)\ dx\notag\\
&\quad+\eta\int\p_1^3b_2\p_1^3b_3\p_2\left(
u\cdot\na u_3-b\cdot\na b_3+\p_3p
\right) dx\notag\\
&\triangleq -\frac{d}{d t}\int\p_1^3b_2\p_1^3b_3(b_3-\eta\p_2u_{3})\ dx+\sum_{i=1}^{6}E_{2i}.\label{E2-1}
\end{align}

By \eqref{pu2} and \eqref{pu3}, we have
\begin{align}
E_{21}+E_{22}
&= \int\p_1^3\p_tb_2\p_1^3b_3\left(b_3-\eta\p_2u_{3}\right)\ dx+\int\p_1^3b_2\p_1^3\p_tb_3\left(b_3-\eta\p_2u_{3}\right)  dx
\notag\\
&= \int\p_1^3\left(\p_2u_2+\eta\p_2^2b_2+\eta\p_3^2b_2\right) \p_1^3b_3\left(b_3-\eta\p_2u_{3}\right)\ dx\notag\\
&\quad+ \int\p_1^3\left(\p_2u_3+\eta\p_2^2b_3+\eta\p_3^2b_3\right)\p_1^3b_2 \left(b_3-\eta\p_2u_{3}\right)  dx
\notag\\
&\quad-\int\left[\p_1^3\left(u\cdot\na b_2\right) \p_1^3b_3+\p_1^3\left(u\cdot\na b_3\right) \p_1^3b_2\right]\left(b_3-\eta\p_2u_{3}\right)  dx\notag\\
&\quad
+\int\left[\p_1^3\left(b\cdot\na u_2\right) \p_1^3b_3+\p_1^3\left(b\cdot\na u_3\right) \p_1^3b_2\right]\left(b_3-\eta\p_2u_{3}\right)  dx\notag\\
&\triangleq \sum_{i=1}^{4}E_{21i},\label{E2122}
\end{align}

Analogously to the estimate of $E_{111}$ in (\ref{E111}), by Lemma \ref{anip1} we obtain after  integrating by parts and
\begin{align}\label{E211}
E_{211}+E_{212}
\leq& C\|(u,b)\|_{H^3}\left(\|\p_2u\|_{H^2}^2+\|(\p_3u,\p_2b,\p_3b)\|_{H^3}^2\right).
\end{align}

Note that
\begin{align}
E_{213}+E_{214}
&=-\int(\p_1^3u\cdot\na b_2
+3\p_1^2u\cdot\na\p_1 b_2
+3\p_1u\cdot\na\p_1^2 b_2
)\p_1^3b_3(b_3-\eta\p_2u_3)\ dx
\notag\\
&\quad-\int(\p_1^3u\cdot\na b_3
+3\p_1^2u\cdot\na\p_1 b_3
+3\p_1u\cdot\na\p_1^2 b_3
)\p_1^3b_2(b_3-\eta\p_2u_3)\ dx
\notag\\
&\quad
+\int(\p_1^3b\cdot\na u_2
+3\p_1^2b\cdot\na\p_1 u_2
+3\p_1b\cdot\na\p_1^2 u_2
)\p_1^3b_3(b_3-\eta\p_2u_3)\ dx
\notag\\
&\quad
+\int(\p_1^3b\cdot\na u_3
+3\p_1^2b\cdot\na\p_1 u_3
+3\p_1b\cdot\na\p_1^2 u_3
)\p_1^3b_2(b_3-\eta\p_2u_3)\ dx
\notag\\
&\quad
-\int\left( u\cdot\na\p_1^3b_2\p_1^3b_3+u\cdot\na\p_1^3b_3\p_1^3b_2\right)(b_3-\eta\p_2u_3) dx  \notag\\
&\quad+\int \left(b\cdot\na\p_1^3 u_2\p_1^3 b_3+b\cdot\na\p_1^3 u_3\p_1^3 b_2\right)(b_3-\eta\p_2u_3) dx\notag\\
&\triangleq\sum_{i=1}^{6}M_{i}.\label{EF234}
\end{align}

In a similar manner as that used in the derivation of (\ref{R1}), we find
\begin{align}
\sum_{i=1}^4M_i
&\leq C\|\left(u, b\right)\|_{H^3}^2\left(\|\p_2u\|_{H^2}^2+\|(\p_3u,\p_2b,\p_3b)\|_{H^3}^2\right).\label{A1234}
\end{align}

Integrating by parts and using  the divergence-free condition $\na \cdot u=0$, we infer from Lemma \ref{anip2} that
\begin{align}
M_5&= \int \p_1^3b_2 \p_1^3b_3 u\cdot\na(b_3-\eta\p_2u_3)\ dx \notag\\
&\leq
C\left(\|(\p_1^3b_2,\p_1^3b_3)\|_{L^2}^{2}+\|(u,\na b_3,  \na\p_2u_3)\|_{H^1}^{2}\right)\notag\\
&\qquad\times\left(\|\p_2(\p_1^3b_2,\p_1^3b_3)\|_{L^2}^{2}+\|\p_3(u,\na b_3,  \na\p_2u_3)\|_{H^1}^{2}\right)\notag\\
&\leq C\| (u, b )\|_{H^3}^2\left(\|\p_2u\|_{H^2}^2+\|(\p_3u, \p_2b,\p_3b)\|_{H^3}^2\right).\label{A5}
\end{align}

Based on \eqref{E211}, \eqref{EF234}, \eqref{A1234} and \eqref{A5},  we know from (\ref{E2122}) that
\begin{align}
E_{21}+E_{22}
&\leq C(\|(u, b)\|_{H^3}+\|(u, b)\|_{H^3}^2)\notag\\
&\quad\times(\|\p_2u\|_{H^2}^2+\|(\p_3u, \p_2b, \p_3b) \|_{H^3}^2)+M_6,\label{E2122g}
\end{align}
where
$$
M_6=\int \left(b\cdot\na\p_1^3 u_2\p_1^3 b_3+b\cdot\na\p_1^3 u_3\p_1^3 b_2\right)(b_3-\eta\p_2u_3)\ dx.
$$

Analogously to the derivation of (\ref{E1235}), we have by Lemmas \ref{anip1}  and  \ref{anip2} that
\begin{align}
E_{23}+E_{24}+E_{25}
&\leq C\left(\| (u, b )\|_{H^3}+\| (u, b )\|_{H^3}^2\right)\notag\\
&\quad\times\left(\|\p_2u\|_{H^2}^2+\| (\p_3u, \p_2b, \p_3b ) \|_{H^3}^2 \right),\label{E2346}
\end{align}

It remains to  bound $E_{26}$.  Indeed, it follows from (\ref{p}) and the $L^r$-estimates ($1<r<\infty$) of Riesz operator  that
\begin{align*}
\|\p_2\p_3p\|_{{L_{x_1,x_2}^{\infty} L_{x_3}^{2}}}
& \leq C\|\p_2\p_3p\|_{L^{2}}^{\frac14}\|\p_2^2\p_3p\|_{L^{2}}^{\frac14}\|\p_1\p_2\p_3p\|_{L^{2}}^{\frac14}\|\p_1\p_2^2\p_3p\|_{L^{2}}^{\frac14}\notag\\
& \leq C\|\p_2(u\cdot \na u- b\cdot\na b)\|_{L^{2}}^{\frac14}\|\p_2(\p_iu_j\p_ju_i-\p_ib_j\p_jb_i)\|_{L^{2}}^{\frac12}\notag\\
&\quad\times\|\p_2^2(\p_iu_j\p_ju_i-\p_ib_j\p_jb_i)\|_{L^{2}}^{\frac14},
\end{align*}
so that, by \eqref{22pg}  we see that
$$
\|\p_2\p_3p\|_{{L_{x_1,x_2}^{\infty} L_{x_3}^{2}}}\leq C\|(u,b)\|_{H^3}^{\frac12}\|(\p_2u,\p_2b)\|_{H^2}^{\frac{11}{8}}\|(\p_3u,\p_3b)\|_{H^3}^{\frac{1}{8}}.
$$
Hence, similarly to the estimate of (\ref{E14}), we find
\begin{align}
E_{26}
&\leq C\|(u,b)\|^2_{H^3}\left(\|\p_2u\|_{H^2}^2+\|(\p_3u,\p_2b,\p_3b)\|_{H^3}^2\right).\label{E25}
\end{align}

Now, putting \eqref{E2122g}, \eqref{E2346}, \eqref{E25} into \eqref{E2-1} gives
\begin{align}
E_{2}& \leq-\frac{d}{d t}\int \p_1^3b_2\p_1^3b_3 (b_3-\eta\p_2u_{3})\ dx +M_6\notag\\
&\quad + C\left(\| (u, b )\|_{H^3}+\| (u, b )\|_{H^3}^2\right)\left(\|\p_2u\|_{H^2}^2+\|(\p_3u,\p_2b,\p_3b)\|_{H^3}^2\right),  \notag\
\end{align}
which, combining with \eqref{D2g}, yields
\begin{align}
D_2+E_{2}&\leq-\frac{d}{d t}\int \left( \p_1^3u_2\p_1^3u_3 +\p_1^3b_2\p_1^3b_3\right) (b_3-\eta\p_2u_{3})\ dx\notag\\
&\quad +C\left(\| (u, b )\|_{H^3}+\| (u, b )\|_{H^3}^2\right)\left(\|\p_2u\|_{H^2}^2+\|(\p_3u,\p_2b,\p_3b)\|_{H^3}^2\right),   \label{DE2g}
\end{align}
since it is easily deduced in a similar manner as the treatment of $R_4+F_4$ that
\begin{align}
M_6+S_2&=\int b\cdot\na\left(\p_1^3 u_2\p_1^3b_3+\p_1^3u_3\p_1^3 b_2\right)(b_3-\eta\p_2u_3) dx\notag\\
&=- \int\left(\p_1^3 u_2\p_1^3b_3+\p_1^3u_3\p_1^3 b_2\right) b\cdot\na(b_3-\eta\p_2u_3) dx \notag\\
&\leq C\|(u, b)\|_{H^3}^2\left(\|\p_2u\|_{H^2}^2+\|\p_3u \|_{H^3}^2+\|\p_2b\|_{H^3}^2+\|\p_3b\|_{H^3}^2\right).\notag
\end{align}

\vskip .1in
\textbf{Step IV. The estimate of $\mathcal{E}_1(t)$}
\vskip .1in

Now, inserting  \eqref{DE1g}, \eqref{DE3g} and \eqref{DE2g}  into \eqref{HU3}, we conclude that
\begin{align}
\frac12&\frac{d}{d t}\|(\nabla^3 u, \nabla^3 b)\|^2_{L^2}+\mu\|\p_3\nabla^3 u \|^2_{L^2}+\eta\left(\|\p_2\nabla^3 b\|^2_{L^2}+\|\p_3\nabla^3 b\|^2_{L^2}\right)\notag\\
& \leq 2\frac{d}{d t}\int\left(\left(\p_1^3u_2\right)^2+\left(\p_1^3b_2\right)^2\right)(b_2-\eta\p_2u_{2})\ dx\notag\\
&\quad -\frac{d}{d t}\int \left( \p_1^3u_2\p_1^3u_3+ \p_1^3b_2\p_1^3b_3\right)  (b_3-\eta\p_2u_{3})\ dx\notag\\
&\quad +3\frac{d}{d t}\int\left(\left(\p_1^3u_3\right)^2+\left(\p_1^3b_3\right)^2\right)(b_2-\eta\p_2u_{2})\ dx\notag\\
&\quad +C\left(\|(u, b)\|_{H^3}+\|(u, b)\|_{H^3}^2\right)\left(\|\p_2u\|_{H^2}^2+\|(\p_3u,\p_2b,\p_3b)\|_{H^3}^2\right),   \nonumber
\end{align}
which, added to (\ref{L2bound}) and integrated over $(0,t)$, we obtain
\begin{align*}
\mathcal{E}_{1}(t)
&\leq \left(\mathcal{E}_1(0)+\mathcal{E}_1^{\frac32}(0)\right)+C\mathcal{E}_{1}^{\frac32}(t)
+C\left(\mathcal{E}_1^{\frac12}(t)+\mathcal{E}_1(t)\right)\left(\mathcal{E}_2(t)+\mathcal{E}_1(t)\right), 
\end{align*}
which, combined with Cauchy-Schwarz's inequality, gives rise to (\ref{EE1}).
The proof of Lemma \ref{lemma3.1} is therefore complete.
\end{proof}

\vskip .1in

\subsection{ A Priori Estimates (II)}

In the subsection,  we study the dissipation generated by the background magnetic field and prove the estimate of  $\mathcal{E}_{2}(t)$.

\begin{lem}\label{lemma3.2} For any $t\geq0$, it holds that
	\begin{align}
	\D \mathcal{E}_2(t)
	\leq C\mathcal{E}_1(0)+C\mathcal{E}_1(t)+C\mathcal{E}_1^{\frac32}(t)+C\mathcal{E}_2^{\frac32}(t). \label{EE2}
	\end{align}
\end{lem}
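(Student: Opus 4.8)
The plan is to derive an evolution equation for $\p_2u$ by exploiting the linear structure of \eqref{PMHD}, in which the background magnetic field produces the term $\p_2b$ in the velocity equation; this is precisely the mechanism that converts dissipation on $b$ into dissipation on $\p_2u$. Concretely, I would differentiate the equation for $\p_2u_2$ and $\p_2u_3$ (cf. \eqref{m13}) up to second order, or equivalently test the $u$-equation against a suitable multiplier involving $\p_2$-derivatives of $b$, and integrate by parts. The key algebraic identity is that $\p_2u$ can be read off from the $b$-equation as $\p_2u=\p_tb-\eta\Delta_{\nu}b-(b\cdot\na u-u\cdot\na b)$, where $\Delta_\nu=\p_2^2+\p_3^2$; pairing this with $\p_2u$ itself (after applying $\p_i^2$, $i=0,1,2,3$, or working at the $H^2$-level) generates $\|\p_2u\|_{H^2}^2$ on the left-hand side, plus a time-derivative term of the form $\frac{d}{dt}\int \p_2u\cdot(\text{something involving }b)\,dx$, plus lower-order dissipative terms controlled by $\mathcal E_1$, plus cubic nonlinear terms.

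The steps, in order, are: (1) Apply $\p_i^j$ for $j\le 2$ to the relevant components of \eqref{PMHD} and take the $L^2$-inner product with $\p_2\p_i^j u$, using \eqref{pu2}--\eqref{pu3} to substitute for one copy of $\p_2u$ in terms of $\p_t b$, $\eta\Delta_\nu b$ and the nonlinearities. (2) Integrate by parts in $t$ on the term $\int \p_2u\cdot\p_t b$ to produce $\frac{d}{dt}\int \p_2u\cdot b$ minus $\int \p_t(\p_2u)\cdot b$, and then use the $u$-equation again to rewrite $\p_t(\p_2u)$; the boundary-in-time contribution at $t=0$ is bounded by $\mathcal E_1(0)$ and the one at time $t$ by $\mathcal E_1(t)$ via Cauchy--Schwarz. (3) Bound the dissipative cross terms $\eta\int \Delta_\nu b\cdot\p_2u$, $\mu\int \p_3^2 u\cdot(\cdots)$, etc., by Cauchy--Schwarz absorbing a small fraction of $\|\p_2u\|_{H^2}^2$ into the left-hand side and dominating the remainder by $C\mathcal E_1(t)$ (all these are quadratic in quantities already present in $\mathcal E_1$). (4) Estimate the cubic nonlinear terms $\int (b\cdot\na u)\,\p_2u$, $\int(u\cdot\na b)\,\p_2u$ and the pressure contributions using the anisotropic inequalities of Lemmas \ref{anip1}--\ref{anip2} exactly as in the proof of Lemma \ref{lemma3.1}, obtaining bounds of the form $C\|(u,b)\|_{H^3}\big(\|\p_2u\|_{H^2}^2+\|(\p_3u,\p_2b,\p_3b)\|_{H^3}^2\big)$, i.e. $C\mathcal E_1^{1/2}(\mathcal E_1+\mathcal E_2)'$-type integrands; integrating in time and applying Young's inequality yields $C\mathcal E_1^{3/2}(t)+C\mathcal E_2^{3/2}(t)$. (5) Finally, absorb the small multiple of $\mathcal E_2(t)$ coming from step (3), collect everything, and integrate over $(0,t)$ to arrive at \eqref{EE2}.

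The main obstacle I anticipate is step (3) together with the interplay of steps (2) and (3): because the multiplier used to generate $\|\p_2u\|_{H^2}^2$ is itself only controlled in $H^2$, the cross terms $\eta\int\p_i^j\Delta_\nu b\cdot\p_i^j\p_2u$ and the term coming from $-\int\p_t(\p_2u)\cdot b$ after substituting the $u$-equation (which reintroduces $\p_2b$, $\p_3^2u$ and nonlinearities paired against $\p_2u$) must be handled so that every factor lands inside $\mathcal E_1$ or inside a strictly smaller multiple of $\|\p_2u\|_{H^2}^2$ than what sits on the left. Getting the constants right — ensuring the coefficient of $\|\p_2u\|_{H^2}^2$ absorbed from the right-hand side is genuinely less than the coefficient produced on the left by the identity $\p_2u=\p_tb-\eta\Delta_\nu b-(\cdots)$ — is the delicate bookkeeping point. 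The pressure terms require the same anisotropic $L^2_{x_1,x_2}L^\infty_{x_3}$-type estimates \eqref{22pg} already established, so they pose no new difficulty beyond careful application. Once the absorption is justified, the remaining terms are all of the shapes already treated in Lemma \ref{lemma3.1}, and the bootstrap-compatible bound \eqref{EE2} follows.
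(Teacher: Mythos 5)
Your proposal is correct and follows essentially the same route as the paper: isolate $\p_2u$ from the magnetic field equation as $\p_2u=\p_tb-\eta(\p_2^2+\p_3^2)b-(b\cdot\nabla u-u\cdot\nabla b)$, test against $\p_2u$ at the $L^2$ and $\dot H^2$ levels, convert $\int\p_2u\cdot\p_tb$ into $\frac{d}{dt}\int\p_2u\cdot b$ plus a term where the $u$-equation is substituted, absorb the diffusion cross term into $\frac12\|\p_2u\|_{H^2}^2$, and bound the cubic terms with the anisotropic inequalities before integrating in time. The only minor over-caution is the pressure: after substitution it is paired against the divergence-free field $\p_2 b$ and vanishes, so no analogue of \eqref{22pg} is needed here.
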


\begin{proof}
 It suffices to establish the estimates of the following two items:
$$\int_0^t  \|\p_2u(\tau)\|_{L^2}^2 d \tau \quad  \rm {and} \quad \int_0^t\|\na^2\p_2u(\tau)\|_{L^2}^2  d \tau,$$
which will be achieved by using  the special structure of    equation  $\eqref{PMHD}_2$,
\begin{align}
\partial_{2}u=\partial_t b+u\cdot
\nabla b-\eta\p_2^2 b-\eta\p_3^2b-b\cdot \nabla u. \label{ue}
\end{align}

\vskip .1in
\textbf{Step I. The estimate of $\|\p_2u(t)\|_{L^2}$}
\vskip .1in

Multiplying \eqref{ue} by $\partial_2 u$ in $L^2$ and  integrating by parts, we get
\begin{align}
\|\partial_{2}u\|^2_{L^2}&= \int \p_2 u\cdot \p_tb \ dx+\int  u\cdot
\nabla b \cdot \p_2 u\ dx\notag\\
&\quad -\eta\int\left(\p_2^2 b+\p_3^2b\right)\cdot\p_2u \ dx-\int b\cdot \nabla u \cdot \p_2 u\ dx\notag\\
&\triangleq J_1+J_2+J_3+J_4.\label{E21}
\end{align}

In terms of $\eqref{PMHD}_1$ and the fact that $\nabla\cdot b=0$, we deduce after integrating by parts that
\begin{align}
J_1&=\frac{d}{d t}\int  \p_2 u\cdot b \ dx+\int \p_2 b\cdot
\left(\partial_{2}b+\mu\p_3^2u+b\cdot \nabla b-u\cdot
\nabla u\right)dx \nonumber\\
&\leq \frac{d}{d t}\int  \p_2 u\cdot b \ dx+C \| \p_2 b\|^2_{H^3}+C\|\p_3u\|^2_{H^3}\notag\\
&\quad+ C\|\p_2 b\|^{\frac 12}_{L^2}\|\p_1\p_2b\|^{\frac 12}_{L^2}\|b\|^{\frac 12}_{L^2}\|\p_2 b\|^{\frac 12}_{L^2}\|\nabla b\|^{\frac 12}_{L^2}\|\p_3\nabla b\|^{\frac 12}_{L^2}\nonumber\\
&\quad+C \|\p_2 b\|^{\frac 12}_{L^2}\|\p_1\p_2b\|^{\frac 12}_{L^2}\|u\|^{\frac 12}_{L^2}\|\p_2 u\|^{\frac 12}_{L^2}\|\nabla u\|^{\frac 12}_{L^2}\|\p_3\nabla u\|^{\frac 12}_{L^2}\nonumber\\
&\leq \frac{d}{d t}\int  \p_2 u\cdot b \ dx +C\|(\p_3u,\p_2b )\|^2_{H^3}\notag\\
&\quad+ C\| (u, b )\|_{H^3}\left(\|\p_2 u\|^2_{H^2}+\|(\p_3u,\p_2b,\p_3b)\|^2_{H^3}\right),\label{J1}
\end{align}
and similarly,
\begin{align}
J_{2}+J_{4}
&\leq C \|\p_2 u\|^{\frac 12}_{L^2}\|\p_1\p_2u\|^{\frac 12}_{L^2}\|u\|^{\frac 12}_{L^2}\|\p_2 u\|^{\frac 12}_{L^2}\|\nabla b\|^{\frac 12}_{L^2}\|\p_3\nabla b\|^{\frac 12}_{L^2}\nonumber\\
&\quad+C \|\p_2 u\|^{\frac 12}_{L^2}\|\p_1\p_2u\|^{\frac 12}_{L^2}\|b\|^{\frac 12}_{L^2}\|\p_2 b\|^{\frac 12}_{L^2}\|\nabla u\|^{\frac 12}_{L^2}\|\p_3\nabla u\|^{\frac 12}_{L^2}\nonumber\\
&\leq  C\| (u, b )\|_{H^3}\left(\|\p_2 u\|^2_{H^2}+\|(\p_3u,\p_2b,\p_3b)\|^2_{H^3}\right).\label{J24}
\end{align}

Finally, it is easily seen that
\begin{align*}
J_{3} \leq \frac12\|\partial_{2} u \|^2_{L^2}+C\|\left(\p_2b,\p_3b\right)\|_{H^3}^2,
\end{align*}
from which, \eqref{J1}, \eqref{J24} and  (\ref{E21}), we know
\begin{align}
\|\p_2u\|_{L^2}^2&\leq2 \frac{d}{d t}\int  \p_2 u\cdot b \ dx+C\|(\p_3u,\p_2b,\p_3b)\|^2_{H^3}\notag\\
&\quad+C\| (u, b )\|_{H^3}\left(\|\p_2 u\|^2_{H^2}+\|(\p_3u,\p_2b,\p_3b)\|^2_{H^3}\right).\label{p2u}
\end{align}

\vskip .1in
\textbf{Step II. The estimate of $\|\nabla^2\p_2u(t)\|_{L^2}$}
\vskip .1in

In order to estimate $\|\nabla^2\p_2u\|_{L^2}$, we apply $\nabla^2$ to \eqref{ue} and multiply it by $ \nabla^2 \partial_2u$ in $L^2$ to get that
\begin{align}
\|\nabla^2 \p_2 u\|^2_{L^2}&= \int \nabla^2\p_2 u\cdot \p_t \nabla^2 b \ dx+\int \nabla^2(u\cdot
\nabla b ) \cdot \nabla^2 \p_2 u\ dx\notag\\
&\quad -\eta\int \nabla^2\left(\p_2^2 b+\p_3^2b\right) \cdot \nabla^2\p_2 u \ dx-\int\nabla^2 (b\cdot \nabla u) \cdot \nabla^2\p_2 u \ dx\notag\\
&\triangleq W_1+W_2+W_3+W_4. \label{E22}
\end{align}

Owing to $\eqref{PMHD}_1$ and $\nabla\cdot b=0$, we see that (\ref{E22})
\begin{align}
W_1&=\frac{d}{d t}\int  \nabla^2\p_2 u\cdot \nabla^2 b \ dx+\int  \nabla^2 \p_2 b\cdot
\nabla^2\left(\p_2b+\mu\p_3^2u \right) dx\notag\\
&\quad+\int  \nabla^2 \p_2 b\cdot
\nabla^2 \left( b\cdot \nabla b-u\cdot
\nabla u\right) dx\notag\\
&\triangleq\frac{d}{d t}\int  \nabla^2\p_2 u\cdot \nabla^2 b \ dx+ W_{11}+W_{12},\label{W1}
\end{align}
where the second term on the right-hand side can be easily bounded by
\begin{align}
W_{11}\leq C \| \p_2 b\|^2_{H^3}+C\|\p_3  u\|_{H^3}^2.\label{W11}
\end{align}

Integrating by parts and Lemma \ref{anip1}, we have
\begin{align*}
W_{12}
&=\int \nabla^2\p_2  b\cdot
\left(\nabla^2 b_i \cdot \p_i b+2\nabla b_i \cdot \p_i\nabla b+b_i\p_i\nabla^2 b\right) dx\notag\\
&\quad-\int \nabla^2\p_2 b\cdot
\left(\nabla^2 u_i \cdot \p_i u+2\nabla u_i \cdot \p_i\nabla u+u_i\p_i\nabla^2 u\right) dx\notag\\
&\leq C\|\p_2 \nabla^2b\|^{\frac 12}_{L^2}\|\p_1\p_2\nabla^2b\|^{\frac 12}_{L^2}\|\na b\|^{\frac 12}_{L^2}\|\p_2\na b\|^{\frac 12}_{L^2}\|\nabla^2 b\|^{\frac 12}_{L^2}\|\p_3\nabla^2 b\|^{\frac 12}_{L^2}\notag\\
&\quad+C\|\p_2\nabla^2b\|^{\frac 12}_{L^2}\|\p_1\p_2\nabla^2b\|^{\frac 12}_{L^2}\|b\|^{\frac 12}_{L^2}\|\p_2 b\|^{\frac 12}_{L^2}\|\nabla^3 b\|^{\frac 12}_{L^2}\|\p_3\nabla^3 b\|^{\frac 12}_{L^2}\\
&\quad+ C\|\p_2 \nabla^2b\|^{\frac 12}_{L^2}\|\p_1\p_2\nabla^2b\|^{\frac 12}_{L^2}\|\na u\|^{\frac 12}_{L^2}\|\p_2\na u\|^{\frac 12}_{L^2}\|\nabla^2 u\|^{\frac 12}_{L^2}\|\p_3\nabla^2 u\|^{\frac 12}_{L^2}\\
&\quad+C\|\p_2\nabla^2b\|^{\frac 12}_{L^2}\|\p_1\p_2\nabla^2b\|^{\frac 12}_{L^2}\|u\|^{\frac 12}_{L^2}\|\p_2 u\|^{\frac 12}_{L^2}\|\nabla^3 u\|^{\frac 12}_{L^2}\|\p_3\nabla^3 u\|^{\frac 12}_{L^2}\\
&\leq C\|(u, b)\|_{H^3}\left(\|\p_2 u\|^2_{H^2}+\|(\p_3u,\p_2b,\p_3b)\|^2_{H^3}\right),
\end{align*}
which, together with (\ref{W11}) and (\ref{W1}), implies that
\begin{align}
W_1&\leq \frac{d}{d t}\int  \nabla^2\p_2 u\cdot \nabla^2 b \ dx+ C \| \p_2 b\|^2_{H^3}+C\|\p_3  u\|_{H^3}^2\notag
\\
&\quad+
C\|(u, b)\|_{H^3}\left(\|\p_2 u\|^2_{H^2}+\|(\p_3u,\p_2b,\p_3b)\|^2_{H^3}\right).\label{W1-1}
\end{align}

Analogously, we deduce from  Lemma \ref{anip1} that
\begin{align}
W_2+W_4
&=\int \nabla^2\p_2 u\cdot
\left(\nabla^2 u_i \cdot \p_i b+2\nabla u_i \cdot \p_i\nabla b +u_i\p_i\nabla^2 b\right) dx\notag\\
&\quad-\int\nabla^2\p_2u\cdot
\left(\nabla^2 b_i \cdot \p_i u+2\nabla b_i \cdot \p_i\nabla u+ b_i\p_i\nabla^2 u\right) dx\notag\\
&\leq C\|\p_2 \nabla^2u\|_{L^2}\|\na b\|_{H^1}^{\frac12} \|\partial_{2}\na b\|_{H^1}^{\frac12}\|\na^2u\|_{L^2}^{\frac12}\|\p_3\na^2u\|_{L^2}^{\frac12}\notag\\
&\quad+C\|\p_2 \nabla^2u\|_{L^2}\|\na u\|_{H^1}^{\frac12}\|\p_2 \na u\|_{H^1}^{\frac12}\|\na^2 b\|_{L^2}^{\frac12}\|\partial_{3}\na^2 b\|_{L^2}^{\frac12}\notag\\
&\quad+ C\|\p_2 \nabla^2u\|_{L^2}\|u\|_{H^1}^{\frac12}\|\p_2 u\|_{H^1}^{\frac12}\|\na^3 b\|_{L^2}^{\frac12}\|\partial_{3}\na^3 b\|_{L^2}^{\frac12}\notag\\
&\quad+C\|\p_2 \nabla^2u\|_{L^2}\|b\|_{H^1}^{\frac12}\|\p_2 b\|_{H^1}^{\frac12}\|\na^3 u\|_{L^2}^{\frac12}\|\partial_{3}\na^3 u\|_{L^2}^{\frac12}\notag\\
&\leq C\|(u, b)\|_{H^3}\left(\|\p_2 u\|^2_{H^2}+\|(\p_3u,\p_2b,\p_3b)\|^2_{H^3}\right),\label{W24}
\end{align}
and finally,
\begin{align}
W_{3}\leq \frac12\|\partial_{2}\na^2 u \|^2_{L^2}+C\|\left(\p_2b,\p_3b\right)\|_{H^3}^2.\label{W3}
\end{align}

Hence, inserting  (\ref{W1-1}), (\ref{W24}) and (\ref{W3}) into (\ref{E22}), we arrive at
\begin{align}
\|\p_2\nabla^2u\|_{L^2}&\leq2\frac{d}{d t}\int  \nabla^2\p_2 u\cdot \nabla^2 b \ dx+C\|\left(\p_2b,\p_3b, \p_3u\right)\|_{H^3}^2\notag\\
&\quad+C\| (u, b )\|_{H^3}\left(\|\p_2 u\|^2_{H^2}+\|(\p_3u,\p_2b,\p_3b)\|^2_{H^3}\right).\label{p2nu}
\end{align}

\vskip .1in
\textbf{Step III. The estimate of $\mathcal{E}_2(t)$}
\vskip .1in
Now,  adding up  \eqref{p2u} and \eqref{p2nu}, we obtain after  integrating it over $(0,t)$ that
\begin{align*}
\int_0^t\| \p_2 u\|^2_{H^2}d\tau
&\leq C\mathcal{E}_1(0)+C\mathcal{E}_1(t)+C\mathcal{E}_1^{\frac12}(t)\left(\mathcal{E}_1(t)+\mathcal{E}_2(t)\right)
\end{align*}
 which, together with Cauchy-Schwarz's inequality, leads to the desired estimate  \eqref{EE2} immediately.
 \end{proof}

\subsection{ Proof of Theorem \ref{thm1.1} } With Lemmas \ref{lemma3.1} and \ref{lemma3.2} at hand, we can now
prove the global stability result stated in Theorem \ref{thm1.1} by applying the bootstrapping argument (see, e.g. \cite{Tao2006}).

\begin{proof}[Proof of Theorem \ref{thm1.1}]
	Since the local well-posedness of smooth solutions to the problem \eqref{PMHD} can be established
	by the standard approach (see, e.g. \cite{MaBe}), it suffices to  to establish the global bounds  of the solutions.

Indeed, since it holds that $\mathcal{E}(0)=\mathcal{E}_1(0)=\|(u_0,b_0)\|^2_{H^3}$ for $\mathcal{E}(t)=\mathcal{E}_1(t)+\mathcal{E}_2(t)$,  we easily deduce   from  \eqref{EE1} and   \eqref{EE2} that
	\begin{align}
	\mathcal{E}(t)\leq C_1\left(\mathcal{E}(0)
	 +\mathcal{E}(0)^{\frac32}\right) +C_2 \mathcal{E}(t)^{\frac32}
	+C_3 \mathcal{E}(t)^2.\label{bsa}
	\end{align}
Then, based upon  \eqref{bsa}, an application of the bootstrap  argument leads to the  stability result established in Theorem \ref{thm1.1}, provided the initial data $ \|(u_0,b_0)\|^2_{H^3}$ is chosen to be sufficiently small such that
	\begin{align}
	C_1\left(\mathcal{E}(0)
	+\mathcal{E}(0)^{\frac32}\right)\leq \frac14\min \left\{\frac1{16 C^2_2},\   \frac{1}{4C_3} \right\}.\label{iee}
	\end{align}
	In fact, if we make the ansatz that for $0<t\le \infty$,
	\begin{equation}\label{ee}
	\mathcal{E}(t)\le \min\left\{\frac1{16 C^2_2},\   \frac{1}{4C_3} \right\},
	\end{equation}
	then (\ref{bsa}) implies
	\begin{align}
	\mathcal{E}(t)&\leq C_1\left(\mathcal{E}(0)
	+\mathcal{E}(0)^{\frac32}\right)+\frac12\mathcal{E}(t), \notag
	\end{align}
    which, together with (\ref{iee}), yields
	\begin{align*}
	\mathcal{E}(t)&\leq 2C_1\left(\mathcal{E}(0)
	+\mathcal{E}(0)^{\frac32}\right)\leq  \frac12\min\left\{\frac1{16 C^2_2},\   \frac{1}{4C_3} \right\}.
	\end{align*}
	The bootstrapping argument then asserts that  (\ref{ee})  actually holds for all time. The proof of Theorem \ref{thm1.1} is therefore complete.
\end{proof}

\section{Decay Rates and Proof of Theorem \ref{thm1.2} }\label{Sec.3}
\subsection{Estimates of nonlinear terms.}

This section aims to  prove the decay rates of the global solutions, based on  Proposition \ref{lem2.1}. To do this, we first deal with the nonlinear term $N_1$,
\begin{align}
\widehat{N_1}=\widehat{\mathbb P\left(b\cdot\nabla b-u\cdot\nabla u\right)}=(\widehat{N_{11}},\widehat{N_{12}},\widehat{N_{13}}),\label{N1p}
\end{align}

In the following, we only consider the estimate of
$\widehat{\mathbb P\left(b\cdot\nabla b\right)}$, since the other term $\widehat{\mathbb P\left(u\cdot\nabla u\right)}$ can be treated in the same way. In fact,
\begin{align*}
\widehat{\mathbb P\left(b\cdot\nabla b\right)}&=\sum_{k=1}^{3}i\xi_k\widehat{b_kb}+\sum_{k,l=1}^{3}i\xi |\xi|^{-2}i\xi_ki\xi_l\widehat{b_kb_l}\\
&=\left(\widehat{\mathbb P_1\left(b\cdot\nabla b\right)}, \widehat{\mathbb P_2\left(b\cdot\nabla b\right)}, \widehat{\mathbb P_3\left(b\cdot\nabla b\right)}\right),
\end{align*}
where
\begin{align}
\widehat{\mathbb P_j\left(b\cdot\nabla b\right)}&\triangleq \sum_{k=1}^{3}i\xi_k\widehat{b_kb_j}+\sum_{k,l=1}^{3}i\xi_j |\xi|^{-2}i\xi_ki\xi_l\widehat{b_kb_l},\notag\\
&=\widehat{\left(b\cdot\nabla b_j\right)}-\widehat{\p_j\Delta^{-1}\na\cdot\left(b\cdot\na b\right)},
\quad j=1,2,3. \label{P}
\end{align}

Next, we present a more concise expression of $\widehat{\mathbb P_1\left(b\cdot\nabla b\right)}$.
Let $\xi_\nu=(\xi_2,\xi_3)$ and $|\xi|^2=\xi_1^2+|\xi_\nu|^2$. Then, by direct calculations we have
\begin{align}
\widehat{\mathbb P_1\left(b\cdot\nabla b\right)}&=\sum_{k=1}^{3}i\xi_k\widehat{b_kb_1}+\sum_{k,l=1}^{3}i\xi_1|\xi|^{-2}i\xi_ki\xi_l\widehat{b_kb_l} \nonumber\\
&=\sum_{k=1}^{3}i\xi_k\widehat{b_kb_1}-\sum_{k,l=1}^{3}i\xi_1|\xi|^{-2}\xi_k\xi_l\widehat{b_kb_l}\nonumber\\
&=\sum_{k=1}^{3}i\xi_k\widehat{b_kb_1}-\sum_{k=1}^{3}i\xi_1^2|\xi|^{-2}\xi_k\widehat{b_kb_1}-\sum_{k=1}^{3}\sum_{l=2}^{3}i\xi_1|\xi|^{-2}\xi_k\xi_l\widehat{b_kb_l} \nonumber\\
&=\sum_{k=1}^{3}i\xi_{\nu}^2|\xi|^{-2}\xi_k\widehat{b_kb_1}-\sum_{k=1}^{3}\sum_{l=2}^{3}i\xi_1|\xi|^{-2}\xi_k\xi_l\widehat{b_kb_l}.\label{P1}
\end{align}

The proof of decay rates is based on the  bootstrapping argument and the following ansatz with $k=0,1$, $i=2,3$ and $l=1,2$:
\begin{align}
& \|\p_1^k(u,b)(t)\|_{L^2}\leq C_0\varepsilon(1+t)^{-\frac{1}{2}},  \quad\quad\,  \|\p_{i}(u, b)(t)\|_{L^2}\leq C_0\varepsilon(1+t)^{-1},  \nonumber\\
& \|(u_1,b_1)(t)\|_{L^2}\leq C_0\varepsilon(1+t)^{-\frac{3}{4}},  \quad\quad\;\,   \|\p_{i}(u_1, b_1)(t)\|_{L^2}\leq C_0\varepsilon(1+t)^{-\frac{5}{4}},  \nonumber\\
& \|\p_{i}\p_3(u,b)(t)\|_{L^2}\leq C_0\varepsilon(1+t)^{-\frac32},\quad\;\,\,
\|\p_l\p_2(u,b)(t)\|_{L^2}\leq C_0\varepsilon(1+t)^{-\frac{11}{12}},  \nonumber\\
& \|\p_{1}\p_3(u,b)(t)\|_{L^2}\leq C_0\varepsilon(1+t)^{-1}, \quad\;\;\, \|\p_1^2(u,b)\|_{L^2}\leq C_0\varepsilon(1+t)^{-\frac{3}{8}}, \label{u10} \\
& \|\p_{i}\p_3(u_1, b_1)(t)\|_{L^2}\leq C_0\varepsilon(1+t)^{-\frac{7}{4}},\quad  \| \p_3^3(u_1, b_1)(t)\|_{L^2}\leq C_0\varepsilon(1+t)^{-\frac{9}{4}},\nonumber\\
& \|\p_l\p_2\p_3(u,b)(t)\|_{L^2}\leq C_0\varepsilon(1+t)^{-\frac{11}{12}},\;\,\, \|\p_1^2\p_3(u, b)(t)\|_{L^2}\leq C_0\varepsilon(1+t)^{-\frac12}, \nonumber
\\
&\|\p_1\p_3^2(u, b)(t)\|_{L^2}\leq C_0\varepsilon(1+t)^{-\frac54},\quad \;\; \|\p_2\p_3^2(u, b)(t)\|_{L^2}\leq C_0\varepsilon(1+t)^{-\frac{11}{6}},\nonumber\\
& \| \p_3^3(u, b)(t)\|_{L^2}\leq C_0\varepsilon(1+t)^{-\frac{23}{12}},\nonumber
\end{align}
where $C_0$ is an absolutely positive constant to be determined later.

By exploiting the integral expression of the solutions and the upper bound of the kernel function, we can show that the half upper bounds of  the solutions in \eqref{u10}  can be halved, provided $C_0$ is chosen to be large enough and $\varepsilon$ is chosen to be suitably small. Then the half upper bounds in \eqref{u10} are indeed justifiable by applying the  bootstrapping argument. The verification is a long and tedious process involving repeated applications of various anisotropic inequalities and the upper bounds on the kernel functions. To begin, we first derive the decay rates of the nonlinear terms, which plays a key role in the proof of Theorem \ref{thm1.2}.
\vskip .1in
\begin{lem}\label{lemma5.1}
	Assume that $(u,b)$	is a smooth solution of the problem \eqref{PMHD}, satisfying \eqref{u10}. Then, there exists  an absolute positive constant $C>0$, such that
	\begin{align}
	&\left\|\| b\cdot \na b\|_{L_{x_2,x_3}^1}\right\|_{L_{x_1}^2}\leq CC_0^2\varepsilon^2 \left(1+t\right)^{-\frac{11}{8}},\label{pbl1}\\
	&\left\|\|\p_1\left( b\cdot \na b\right)\|_{L_{x_2,x_3}^1}\right\|_{L_{x_1}^2}\leq CC_0^2\varepsilon^2 \left(1+t\right)^{-\frac{5}{4}},\label{1bb11}\\
	&\|\p_1\left(b\cdot \na b\right)\|_{L^2}\leq  C C_0^{2} \varepsilon^2 \left(1+t\right)^{-\frac{37}{24}}, \label{11bb2}\\
	&\|\p_2\left( b\cdot \na b\right)\|_{L^2}\leq CC_0^2\varepsilon^2 \left(1+t\right)^{-\frac{11}{6}},   \label{22bb2}\\
	&\|\p_3\left( b\cdot \na b\right)\|_{L^2}\leq CC_0^2\varepsilon^2 \left(1+t\right)^{-\frac{13}{6}},   \label{23bb2}\\
	&\left\|\|\p_1^2\left( b\cdot \na b\right)\|_{L_{x_2,x_3}^1}\right\|_{L_{x_1}^2}\leq  CC_0\varepsilon\left(1+t\right)^{-\frac12}\|\p_{\nu}b\|_{H^2}+ CC_0\varepsilon^2\left(1+t\right)^{-\frac78},\label{11bb11} \\
	&\|\p_l\p_h(b\cdot \na b)\|_{L^2}\leq CC_0\varepsilon^2\left(1+t\right)^{-\frac{11}{12}},\quad\forall\ l,h\in\{1,2\},   \label{lhbb2}\\
	&\|\p_1\p_3(b\cdot \na b)\|_{L^2}\leq CC_0^2\varepsilon^2 \left(1+t\right)^{-\frac{5}{3}}, \label{13bbl2} \\
	&\|\p_2\p_3(b\cdot \na b)\|_{L^2}\leq CC_0^2\varepsilon^2 \left(1+t\right)^{-\frac{11}{6}},   \label{23bbl2} \\
	&\|\p_3^2(b\cdot \na b)\|_{L^2}\leq CC_0^2\varepsilon^2 \left(1+t\right)^{-\frac{29}{12}}.   \label{33bbl2}
	\end{align}
\end{lem}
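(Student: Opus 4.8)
The strategy is to estimate each norm of the quadratic terms $b\cdot\nabla b$ by first splitting the product componentwise, writing $b\cdot\nabla b=\sum_{k=1}^3 b_k\partial_k b$, and then placing the ``rough'' factor (the one carrying most derivatives) in $L^2$ and the remaining factor(s) in $L^\infty$ via the anisotropic Sobolev embeddings of Lemmas \ref{anip1} and \ref{anip2}. The crucial bookkeeping device is to always exploit the divergence-free condition through \eqref{u1-23}--\eqref{b1-23}: whenever a factor $b_1$ or $\partial_1 b_1$ or $\partial_1^2 b_1$ appears, it gets replaced by $\partial_2 b_2+\partial_3 b_3$ (and derivatives thereof), which carries the faster decay rate of $\partial_i b$, $i=2,3$, rather than the slow rate of $b_1$ itself. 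After the factors are distributed, one simply substitutes the ansatz bounds \eqref{u10} and adds the time-exponents. For the three $L^1_{x_2,x_3}L^2_{x_1}$ estimates \eqref{pbl1}, \eqref{1bb11}, \eqref{11bb11}, I would use Minkowski's inequality \eqref{MIE} to pull the $L^2_{x_1}$ norm inside, then Hölder in $(x_2,x_3)$ to split the $L^1_{x_2,x_3}$ norm into a product of two $L^2_{x_2,x_3}$ norms, and finally the one-dimensional Sobolev inequality \eqref{1IE} in $x_1$ to convert $L^\infty_{x_1}$ control into $L^2_{x_1}$ control of the function and its $\partial_1$-derivative; this produces the characteristic half-integer exponents like $-\tfrac{11}{8}$ from combining $-\tfrac12-\tfrac58$ or $-\tfrac34-\tfrac58$.

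\textbf{Order of execution.} I would proceed exactly in the order listed. For \eqref{pbl1}: estimate $\|b_k\partial_k b\|_{L^1_{x_2,x_3}L^2_{x_1}}\lesssim \|b_k\|_{\cdot}\|\partial_k b\|_{\cdot}$, using for $k=2,3$ the bounds $\|(u,b)\|_{L^2}\lesssim C_0\varepsilon(1+t)^{-1/2}$ for the low-order piece and $\|\partial_i(u,b)\|_{L^2}\lesssim C_0\varepsilon(1+t)^{-1}$ for one derivative, upgraded by \eqref{1IE} in one direction to gain an extra $(1+t)^{-1/4}$-type factor, and for $k=1$ replacing $b_1$'s contribution by $\partial_2b_2+\partial_3b_3$ via \eqref{b1-23}. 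Then \eqref{1bb11} and \eqref{11bb11} follow by the same scheme with one (resp. two) extra $\partial_1$'s distributed by Leibniz; in \eqref{11bb11} the term where both $\partial_1$'s hit the same factor is the one responsible for the ``linear-in-$\|\partial_\nu b\|_{H^2}$'' term, since there we can only afford to put $\partial_1^2 b$ in $L^2$ at the cost of its full $H^2$-regularity. The pure-$L^2$ bounds \eqref{11bb2}--\eqref{23bb2}, \eqref{lhbb2}--\eqref{33bbl2} are then routine: distribute all derivatives by Leibniz, put the highest-derivative factor in $L^2$ and the others in $L^\infty$ using $\|\cdot\|_{L^\infty}\lesssim\|\cdot\|_{H^2}$ controlled again by the ansatz, and in every term involving $\partial_1$ of a first component invoke \eqref{u1-23} or \eqref{b1-23} to trade the slow $\partial_1 u_1,\partial_1 b_1$ decay for the fast $\partial_2,\partial_3$ decay. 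The exponents such as $-\tfrac{29}{12}$ in \eqref{33bbl2} or $-\tfrac{11}{6}$ in \eqref{22bb2}, \eqref{23bbl2} arise precisely by summing a ``low'' exponent (like $-\tfrac12$ or $-\tfrac34$) with a ``high'' one (like $-\tfrac{23}{12}$ or $-\tfrac{11}6$), occasionally improved by the $\tfrac14$-gain from applying \eqref{1IE} in a direction in which we have an extra derivative bound available from \eqref{u10}.

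\textbf{Main obstacle.} The principal difficulty is a matter of careful case analysis rather than a single hard inequality: for each of the ten estimates one must identify, among all the $b_k\partial_k(\cdots)$ pieces produced by Leibniz, the worst-decaying one, and verify that even that piece meets the claimed rate. This requires choosing, for each factor, the optimal direction $x_i$ in which to apply the one-dimensional Sobolev inequality \eqref{1IE} (so as to land on a derivative bound in \eqref{u10} that decays fastest), and—most delicately—deciding when to spend the divergence-free substitution \eqref{b1-23}: in the mixed terms like $\partial_1\partial_3(b_1\partial_1 b_3)$ one must be sure that after replacing $\partial_1 b_1$ by $\partial_\nu b_\nu$ the remaining derivative budget still closes. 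A secondary subtlety is \eqref{11bb11}, where we cannot obtain a pure $\varepsilon^2$-quadratic bound with the claimed exponent and must instead accept the term $CC_0\varepsilon(1+t)^{-1/2}\|\partial_\nu b\|_{H^2}$; recognizing that this is the best possible and that it will later be absorbed (it multiplies a kernel factor that is itself integrable) is the one genuinely non-mechanical judgement in the proof. Once these choices are made, each bound reduces to adding exponents and invoking Lemmas \ref{anip1}, \ref{anip2}, \eqref{1IE}, \eqref{MIE}, and the ansatz \eqref{u10}.
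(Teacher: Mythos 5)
Your plan coincides with the paper's proof of Lemma \ref{lemma5.1}: componentwise splitting of $b\cdot\nabla b$, systematic replacement of $\partial_1 b_1$ by $-\partial_2b_2-\partial_3b_3$ to trade slow for fast decay, Minkowski plus H\"older plus the one-dimensional Sobolev inequality \eqref{1IE} for the $L^2_{x_1}L^1_{x_2,x_3}$ norms, anisotropic $L^\infty$ and mixed-norm bounds fed by the ansatz \eqref{u10} for the pure $L^2$ norms, and acceptance of the un-decayed factor $\|\partial_\nu b\|_{H^2}$ in \eqref{11bb11}, which indeed originates from the third-order derivative $\partial_\nu\partial_1^2 b$ not covered by the ansatz. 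The one caution is that the $L^\infty$ factors must be estimated by the full anisotropic product $\|b\|_{L^\infty}\leq C\prod_{\alpha}\|\partial^\alpha b\|_{L^2}^{1/8}$ over the eight mixed derivatives (giving $(1+t)^{-11/12}$ for $b$ and $(1+t)^{-7/6}$ for $b_1$), exactly as your ``optimal direction'' remark suggests, and not by the crude embedding $\|\cdot\|_{L^\infty}\lesssim\|\cdot\|_{H^2}$ you mention in passing, which would only yield $(1+t)^{-3/8}$ and would not close several of the claimed rates.
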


\begin{proof}
Using the  Minkowski inequality (\ref{MIE}) and  the ansatz in \eqref{u10}, we have
\begin{align*}
\left\|\| b\cdot \na b \|_{L_{x_2,x_3}^1}\right\|_{L_{x_1}^2}&
\leq \left\|\| b_1\p_1b\|_{L_{x_2,x_3}^1}\right\|_{L_{x_1}^2}+\left\|\| b_{\nu}\p_{\nu}b\|_{L_{x_2,x_3}^1}\right\|_{L_{x_1}^2} \\
&\leq \left\|\| b_1\p_1b\|_{L_{x_1}^2}\right\|_{L_{x_2,x_3}^1}+\left\|\| b_{\nu}\p_{\nu}b\|_{L_{x_1}^2}\right\|_{L_{x_2,x_3}^1} \\
&\leq \left\|\| b_1\|_{L_{x_1}^\infty}\|\p_1b\|_{L_{x_1}^2}\right\|_{L_{x_2,x_3}^1}+\left\|\| b_{\nu}\|_{L_{x_1}^\infty}\|\p_{\nu}b\|_{L_{x_1}^2}\right\|_{L_{x_2,x_3}^1} \\
&\leq C\|b_1\|_{L^2}^{\frac12}\|\p_1b_1\|_{L^2}^{\frac12}\|\p_1b\|_{L^2}+C\|b_{\nu}\|_{L^2}^{\frac12}\|\p_1b_{\nu}\|_{L^2}^{\frac12}\|\p_{\nu}b\|_{L^2} \\
&\leq CC_0^2\varepsilon^2 \left(1+t\right)^{-\frac{11}{8}}+ CC_0^2\varepsilon^2\left(1+t\right)^{-\frac32}\\
& \leq CC_0^2\varepsilon^2 \left(1+t\right)^{-\frac{11}{8}},
\end{align*}
where we have used the fact that $\nabla\cdot b=0$ to infer from \eqref{u10} that
\beq\label{p1b1-1}
\|\p_1 b_1\|_{L^2}\leq \|\p_2b_2\|_{L^2}+\|\p_3b_3\|_{L^2}\leq CC_0\varepsilon (1+t)^{-1}.
\eeq
In a similar manner,
\begin{align}
&\left\|\|\p_1\left( b\cdot \na b\right)\|_{L_{x_2,x_3}^1}\right\|_{L_{x_1}^2}\notag\\
&\quad\leq C\|\p_{\nu} b \|_{L^2}\|\p_1b_{\nu}\|_{L^2}^{\frac12}\|\p_1^2b_{\nu}\|_{L^2}^{\frac12}+C\|\p_{\nu}\p_1 b \|_{L^2}\|b_{\nu}\|_{L^2}^{\frac12}\|\p_1b_{\nu}\|_{L^2}^{\frac12}\nonumber\\
&\qquad+C\|\p_1 b_1 \|_{L^2}\|\p_1b\|_{L^2}^{\frac12}\|\p_1^2b\|_{L^2}^{\frac12}+C\|\p_1^2b \|_{L^2}\|b_1\|_{L^2}^{\frac12}\|\p_1b_1\|_{L^2}^{\frac12}\nonumber\\
&\quad\leq CC_0^2\varepsilon^2\left(1+t\right)^{-\frac54}.  \label{1b1}
\end{align}

To proceed,  we first estimate $\| b \|_{L^\infty} $. It is easy to derive from the anisotropic Sobolev embedding inequality and  \eqref{u10} that
\begin{align}
\| b \|_{L^\infty}&
\leq C \| b \|_{L^2}^{\frac 18}\|\p_1b \|_{L^2}^{\frac 18}\|\p_2b \|_{L^2}^{\frac 18}\|\p_3b \|_{L^2}^{\frac 18}\nonumber\\
&\quad\times\|\p_{1}\p_2b \|_{L^2}^{\frac 18}\|\p_{1}\p_3b \|_{L^2}^{\frac 18}\|\p_{2}\p_3b \|_{L^2}^{\frac 18}\|\p_{1}\p_2\p_3b \|_{L^2}^{\frac 18}\nonumber\\
&\leq CC_0 \varepsilon \left(1+t\right)^{-\frac{11}{12}}. \label{binfty}
\end{align}
Using the similar inequalities as that in (\ref{p1b1-1}), we have by (\ref{u10}) that
\begin{align}
\| b_1 \|_{L^\infty}
&\leq C \| b_1 \|_{L^2}^{\frac 18}\|\p_1b_1 \|_{L^2}^{\frac 18}\|\p_2b_1 \|_{L^2}^{\frac 18}\|\p_3b_1 \|_{L^2}^{\frac 18}\notag\\
&\quad\times\|\p_{1}\p_2b_1 \|_{L^2}^{\frac 18}\|\p_{1}\p_3b_1 \|_{L^2}^{\frac 18}\|\p_{2}\p_3b_1 \|_{L^2}^{\frac 18}\|\p_{1}\p_2\p_3b_1 \|_{L^2}^{\frac 18}\nonumber\\
&\leq CC_0 \varepsilon \left(1+t\right)^{-\frac{7}{6}}. \label{b1infty}
\end{align}

With the help of  \eqref{u10}, \eqref{binfty}  and \eqref{b1infty}, we obtain
\begin{align}
\| b\cdot \na b \|_{L^2}
&\leq C \| b_1 \|_{L^\infty} \|\p_1 b \|_{L^2} +C \| b_\nu\|_{L^\infty} \|\p_\nu b \|_{L^2}\nonumber\\
&\leq CC_0^{2} \varepsilon^2 \left(1+t\right)^{-\frac{5}{3}}, \label{3bb2}
\end{align}
and
\begin{align}
\|\p_1\left(b\cdot \na b\right)\|_{L^2}
&\leq  \| b_1 \|_{L^\infty}\| \p_1^2 b \|_{L^2}+\| b_\nu \|_{L^\infty}\| \p_1\p_\nu b \|_{L^2}  +\| \p_1  b\cdot \na b\|_{L^2}\nonumber\\
& \leq C C_0^{2} \varepsilon^2 \left(1+t\right)^{-\frac{37}{24}}+\|\p_1b_1\|_{L_{x_1,x_3}^2L_{x_2}^\infty}\| \p_1 b \|_{L_{x_1,x_3 }^\infty L_{x_2}^2} \nonumber\\
&\quad+\|\p_1  b_\nu\|_{L_{x_1,x_3}^\infty L_{x_2}^2}\| \p_\nu b \|_{L_{x_1,x_3}^2L_{x_2}^\infty}
\nonumber\\
&\leq C C_0^{2} \varepsilon^2 \left(1+t\right)^{-\frac{37}{24}},\label{1b2}
\end{align}
where we have used the following anisotropic Sobolev inequalities for $i,j,k\in \{1,2,3\}$ and $i\neq j\neq k$:
\begin{align}
\|f\|_{L_{x_i,x_j}^2L_{x_k}^\infty}&\leq C\|f\|_{L^2}^{\frac12}\|\p_kf\|_{L^2}^{\frac12},\notag
\\
\| f \|_{L_{x_i,x_j }^\infty L_{x_k}^2}&\leq C\|f\|_{L^2}^{\frac14}\|\p_if\|_{L^2}^{\frac{1}{4}}\|\p_jf\|_{L^2}^{\frac{1}{4}}\|\p_i\p_jf\|_{L^2}^{\frac{1}{4}}.\label{anso}
\end{align}

Similarly to the derivation of (\ref{1b2}), we have
\begin{align}
\|\p_2\left( b\cdot \na b\right)\|_{L^2}
&\leq \|b_1\|_{ L^\infty}\|\p_1\p_2 b\|_{L^2}+\|b_\nu\|_{ L^\infty}\|\p_2\p_\nu b\|_{L^2} \nonumber\\
&\quad+\|\p_2  b_1\|_{L_{x_1,x_3}^\infty L_{x_2}^2}\| \p_1 b\|_{L_{x_1,x_3}^2 L_{x_2}^\infty} \notag\\
&\quad+\|\p_2  b_\nu\|_{L_{x_1,x_3}^2L_{x_2}^\infty}\| \p_\nu b \|_{L_{x_1,x_3}^\infty L_{x_2}^2} \nonumber\\
&\leq C  C_0^{2} \varepsilon^2\left(1+t\right)^{-\frac{11}{6}},\nonumber
\end{align}
and
\begin{align}
\|\p_3(b\cdot \na b)\|_{L^2}
&\leq \|b_1\|_{ L^\infty}\|\p_1\p_3 b\|_{L^2}+\|b_\nu\|_{ L^\infty}\|\p_\nu\p_3 b\|_{L^2}\nonumber\\
&\quad+\|\p_3  b_1\|_{L_{x_1,x_3}^\infty L_{x_2}^2}\| \p_1 b\|_{L_{x_1,x_3}^2 L_{x_2}^\infty}\nonumber\\
&\quad+\|\p_3  b_\nu\|_{L_{x_2,x_3}^\infty L_{x_1}^2}\| \p_\nu b \|_{L_{x_2,x_3}^2 L_{x_1}^\infty }\nonumber\\
&\leq C C_0^2 \varepsilon^2\left(1+t\right)^{-\frac{13}{6}}. \nonumber
\end{align}

Using the divergence-free condition $\nabla\cdot b=0$ and (\ref{p1b1-1}) again, we deduce from H\"{o}lder inequality  and (\ref{u10}) that
\begin{align}
&\left\|\|\p_1^2 ( b\cdot \na b )\|_{L_{x_2,x_3}^1}\right\|_{L_{x_1}^2}\notag\\
&\quad\leq C\|\p_{\nu} b\|_{L^2}\|\p_1^2b\|_{L^2}^{\frac12}\|\p_1^3 b\|_{L^2}^{\frac12}+C\|\p_{1}b\|_{L^2}\|\p_\nu\p_1 b\|_{L^2}^{\frac12}\|\p_\nu\p_1^2 b \|_{L^2}^{\frac12}\nonumber\\
&\qquad+C\|\p_{\nu}\p_1^2 b \|_{L^2}\|b_{\nu}\|_{L^2}^{\frac12}\|\p_1b_{\nu}\|_{L^2}^{\frac12}+C\|\p_1^3b \|_{L^2}\|b_1\|_{L^2}^{\frac12}\|\p_1b_1\|_{L^2}^{\frac12}\nonumber\\
&\quad\leq  CC_0\varepsilon\left(1+t\right)^{-\frac12}\|\p_{\nu}b\|_{H^2}+ CC_0\varepsilon^2\left(1+t\right)^{-\frac78}.\label{p11bb}
\end{align}

For $l,h\in\{1,2\}$, by (\ref{p1b1-1}) one infers from (\ref{u10}) and (\ref{binfty}) in a similar manner as the derivation of  (\ref{1b2}) that
\begin{align}
\|\p_l\p_h ( b\cdot \na b )\|_{L^2}&\leq \|b\|_{L^\infty}\|b\|_{H^3}+ \|\p_l\p_h b_1\|_{L^2_{x_1,x_2}L^\infty_{x_3}}\|\p_1 b\|_{L^\infty_{x_1,x_2}L^2_{x_3}}\notag\\
&\quad+ \|\p_l\p_h b_\nu\|_{L^2_{x_2,x_3}L^\infty_{x_1}}\|\p_\nu b\|_{L^\infty_{x_2,x_3}L^2_{x_1}}\notag\\
&\quad+\|\p_l b_1\|_{L^\infty_{x_2,x_3}L^2_{x_1}}\|\p_h\p_1 b\|_{L^2_{x_2,x_3}L^\infty_{x_1}}\notag\\
&\quad+\|\p_l b_\nu\|_{L^\infty_{x_1,x_2}L^2_{x_3}}\|\p_h\p_\nu b\|_{L^2_{x_1,x_2}L^\infty_{x_3}}\notag\\
&\quad+\|\p_h b_1\|_{L^\infty_{x_2,x_3}L^2_{x_1}}\|\p_l\p_\nu b\|_{L^2_{x_2,x_3}L^\infty_{x_1}}\notag\\
&\quad+\|\p_h b_\nu\|_{L^\infty_{x_1,x_2}L^2_{x_3}}\|\p_l\p_\nu b\|_{L^2_{x_1,x_2}L^\infty_{x_3}}\notag\\
&\leq CC_0\varepsilon^2\left(1+t\right)^{-\frac{11}{12}}. \nonumber
\end{align}

Using (\ref{binfty}), (\ref{b1infty}), the anisotropic inequalities (\ref{anso}) and the analogous inequality as that in (\ref{p1b1-1}), we infer from (\ref{u10}) that
\begin{align*}
&\|\p_1\p_3( b\cdot \na b )\|_{L^2}\notag\\
&\quad\leq \| b_1\|_{L^\infty}\|\p_1^2\p_3 b\|_{L^2}+\|b_\nu\|_{L^\infty}\|\p_1\p_3\p_\nu b\|_{L^2}\nonumber\\
&\qquad+\|\p_1\p_3b_1\|_{L^2_{x_2,x_3}L^\infty_{x_1}}\|\p_1 b \|_{L^\infty_{x_2,x_3}L^2_{x_1}}+\|\p_1\p_3b_\nu\|_{L^2_{x_2,x_3}L^\infty_{x_1}}\|\p_\nu b \|_{L^\infty_{x_2,x_3}L^2_{x_1}}\nonumber\\
&\qquad+\|\p_1 b_1\|_{L^\infty_{x_1,x_3}L^2_{x_2}}\|\p_1\p_3 b \|_{L^2_{x_1,x_3}L^\infty_{x_2}}+\|\p_1b_\nu\|_{L^\infty_{x_2,x_3}L^2_{x_1}}\|\p_3\p_\nu b \|_{L^2_{x_2,x_3}L^\infty_{x_1}}\nonumber\\
&\qquad+\|\p_3 b_1\|_{L^\infty_{x_1,x_2}L^2_{x_3}}\|\p_1^2 b \|_{L^2_{x_1,x_2}L^\infty_{x_3}}+\|\p_3 b_\nu\|_{L^\infty_{x_1,x_2}L^2_{x_3}}\|\p_1\p_\nu b \|_{L^2_{x_1,x_2}L^\infty_{x_3}}\nonumber\\
&\quad\leq  CC_0^2\varepsilon^2\left(1+t\right)^{-\frac53},
\notag\\[2mm]
&\|\p_2\p_3\left( b\cdot \na b\right)\|_{L^2}\notag\\
&\quad\leq \| b_1\|_{L^\infty}\|\p_1\p_2\p_3 b\|_{L^2}+\|b_\nu\|_{L^\infty}\|\p_2\p_3\p_\nu b\|_{L^2}\nonumber\\
&\qquad+\|\p_2\p_3b\|_{L^2_{x_2,x_3}L^\infty_{x_1}}\|\nabla b \|_{L^\infty_{x_2,x_3}L^2_{x_1}}+\|\p_2b\|_{L^\infty_{x_1,x_3}L^2_{x_2}}\|\p_3\nabla b \|_{L^2_{x_1,x_3}L^\infty_{x_2}}\nonumber\\
&\qquad+\|\p_3b\|_{L^\infty_{x_1,x_2}L^2_{x_3}}\|\p_2\nabla b \|_{L^2_{x_1,x_2}L^\infty_{x_3}}\nonumber\\
&\quad\leq  CC_0^2\varepsilon^2\left(1+t\right)^{-\frac{11}{6}} ,
\end{align*}
and
\begin{align*}
&\|\p_3^2 ( b\cdot \na b )\|_{L^2}\notag\\
&\quad\leq \| b_1\|_{L^\infty}\|\p_1\p_3^2 b\|_{L^2}+\|b_\nu\|_{L^\infty}\|\p_3^2\p_\nu b\|_{L^2}\nonumber\\
&\qquad+\|\p_3^2b_1\|_{L^2_{x_2,x_3}L^\infty_{x_1}}\|\p_1 b \|_{L^\infty_{x_2,x_3}L^2_{x_1}}+\|\p_3^2b_\nu\|_{L^2_{x_2,x_3}L^\infty_{x_1}}\|\p_\nu b \|_{L^\infty_{x_2,x_3}L^2_{x_1}}\nonumber\\
&\qquad+2\|\p_3b_1\|_{L^\infty_{x_1,x_2}L^2_{x_3}}\|\p_3\p_1 b \|_{L^2_{x_1,x_2}L^\infty_{x_3}}+2\|\p_3b_\nu\|_{L^\infty_{x_1,x_2}L^2_{x_3}}\|\p_3\p_\nu b \|_{L^2_{x_1,x_2}L^\infty_{x_3}}\nonumber\\
&\quad\leq  CC_0^2\varepsilon^2\left(1+t\right)^{-\frac{29}{12}} .
\end{align*}

Thus, collecting the above estimates together ends the proof of Lemma \ref{lemma5.1}.
\end{proof}

\subsection{Decay rates of $(u,b)$.}

With Lemma \ref{lemma5.1} at hand,  we are now ready to justify the decay estimate of the solution $(u,b)$. For brevity, we will focus on the decay rates of $u$, since the same decay rates of $b$ can be achieved in a similar
manner  due to  the resemblance between \eqref{u} and \eqref{b}. The proof will be divided into several steps by considering the different-order derivatives of the solutions. We only focus on the large-time behavior on the interval $t\geq1$, since the short-time boundedness  of the solutions on $0\leq t\leq 1$ is trivial, due to Theorem \ref{thm1.1}. To begin, we first observe from Proposition \ref{lem2.1}   that
\begin{align}
& |\widehat{K_1}|, |\widehat{K_2}|\leq Ce^{-c\xi_{\nu}^2t},\quad{\rm if}\quad\xi \in \Om_1\cup \Om_{21}\cup \Om_{22},\label{S1}
\\& |\widehat{K_1}|, |\widehat{K_2}| \leq C\left(e^{-c t}+e^{-c\xi_{\nu}^2t}\right),\quad{\rm if}\quad\xi \in \Om_{23}.\label{S2}
\end{align}

\textbf{Step  I.   The decay estimates  of $\|u\|_{L^2}$ }

\vskip .1in

It follows from Plancherel's Theorem and \eqref{u} that
\begin{align}
\|u\|_{L^2}=\|\widehat{u}\|_{L^2}&\leq \left\| \widehat{K_1}(t)\widehat{u_{0}}\right\|_{L^2}+\left\|\widehat{K_2}(t)\widehat{b_{0}}\right\|_{L^2}\nonumber\\
&\quad+\int_0^t\left\|\widehat{K_1}(t-\tau)\widehat{N_{1}}(\tau)\right\|_{L^2}d\tau\nonumber\\
&\quad+\int_0^t\left\|\widehat{K_2}(t-\tau)\widehat{N_{2}}(\tau)\right\|_{L^2}d\tau \triangleq \sum_{m=1}^4\text{I}_m. \label{uL2}
\end{align}

First, using  Corollary \ref{PI} and  Plancherel's theorem, we see that for any $t\geq1$,
\begin{align}
\text{I}_1+\text{I}_2&\leq C\|e^{-c\xi_{\nu}^2t}(\widehat{u_0},\widehat{b_0})\|_{L^2}+ Ce^{-ct} \|(\widehat{u_0},\widehat{b_0})\|_{L^2}\nonumber\\
&\leq C(1+t)^{-\frac12}\left(\|(u_{0},b_{0}) \|_{L^2_{x_1}L^1_{x_2 ,x_3}}+ \| (u_{0},b_{0} )\|_{L^2 }\right).\label{ui12}
\end{align}

Keeping the definitions of $N_1$ and $N_2$ in mind, by \eqref{S1} and \eqref{S2} we have
\begin{align}
\text{I}_3+\text{I}_4
&\leq C\int_0^t\left\|e^{-c\xi_{\nu}^2\left(t-\tau \right)}\widehat{b\cdot \na b} \right\|_{L^2}d\tau+C\int_0^t\left\|e^{-c\xi_{\nu}^2\left(t-\tau \right)}\widehat{u\cdot \na u} \right\|_{L^2}d\tau\nonumber\\
&\quad+C\int_0^t\left\|e^{-c\left(t-\tau \right)}\widehat{b\cdot \na b} \right\|_{L^2}d\tau+C\int_0^t\left\|e^{-c\left(t-\tau \right)}\widehat{u\cdot \na u} \right\|_{L^2}d\tau\nonumber\\
&\quad+ C\int_0^t\left\|e^{-c\xi_{\nu}^2\left(t-\tau \right)}\widehat{b\cdot \na u} \right\|_{L^2}d\tau+C\int_0^t\left\|e^{-c\xi_{\nu}^2\left(t-\tau \right)}\widehat{u\cdot \na b} \right\|_{L^2}d\tau\nonumber\\
&\quad+C\int_0^t\left\|e^{-c\left(t-\tau \right)}\widehat{b\cdot \na u} \right\|_{L^2}d\tau+C\int_0^t\left\|e^{-c\left(t-\tau \right)}\widehat{u\cdot \na b} \right\|_{L^2}d\tau\nonumber\\
&\triangleq  \text{I}_{34}^1+\ldots+\text{I}_{34}^8,\label{ij3}
\end{align}
where we have used the fact that the Leray projection operator $\mathbb P$ is bounded in $L^{2}$ (see \cite{Stein}). In the next, we only deal with $\text{I}_{34}^1$ and $\text{I}_{34}^3$, since the other terms can be treated in a similar manner, due to their analogous  structures.


By   Corollary \ref{PI} with $\alpha=0$ and Lemmas \ref{ID}--\ref{ED}, we infer from \eqref{pbl1}  that
\begin{align}
\text{I}_{34}^1
&\leq C\int_0^{t}\left(1+t-\tau \right)^{-\frac12}\left\|\| b\cdot \na b \|_{L_{x_2,x_3}^1}\right\|_{L_{x_1}^2}d\tau  \nonumber\\
&\leq CC_0^2\varepsilon^2\int_0^t\left(1+t-\tau \right)^{-\frac12} \left(1+\tau\right)^{-\frac{11}{8}}d\tau\nonumber\\
&\leq C C_0^2 \varepsilon^2\left(1+t\right)^{-\frac12},\quad \forall\ t\geq1.  \label{r31g}
\end{align}
It is easily derived from Lemma \ref{ED} and \eqref{3bb2} that
\begin{align}
\text{I}_{34}^3
&\leq C\int_{0}^{t} e^{-c\left( t-\tau \right)}\|  b\cdot \na b  \|_{L^2}d\tau  \nonumber\\
&\leq  CC_0^2\varepsilon^2\int_0^t e^{-c\left( t-\tau \right)} \left(1+\tau\right)^{-\frac{5}{3}}d\tau \leq CC_0^2\varepsilon^2\left(1+t\right)^{-\frac12}, \label{r33}
\end{align}
where we have also used Plancherel's theorem. Analogously, the other terms in (\ref{ij3}) admit the same decay rates. Thus,
in view of  \eqref{r31g} and \eqref{r33}, we deduce from \eqref{ij3} that
\begin{align*}
\text{I}_3+\text{I}_4
\leq   C C_0^2\varepsilon^2\left(1+t\right)^{-\frac12}, \quad\forall~ t\geq 1.  
\end{align*}
which, combined with \eqref{uL2} and \eqref{ui12}, yields
\begin{align}\label{ul2}
\|u(t)\|_{L^2}
&\leq C_1\varepsilon(1+t)^{-\frac{1}{2}}+C_2 C_0^2 \varepsilon^2(1+t)^{-\frac{1}{2}}.
\end{align}

Now, if   $C_0$ and $\varepsilon$ are chosen to be such that
\begin{align*} 
C_0\geq 4C_1  \quad\text{and}\quad  0<\varepsilon\leq (4C_2C_0 )^{-1},
\end{align*}
then it readily follows from (\ref{ul2}) that
\begin{align*}
\|u(t)\|_{L^2}\leq \frac{C_0}{2}\varepsilon(1+t)^{-\frac{1}{2}},\quad\forall\ t\geq1.
\end{align*}

\vskip .1in

\textbf{Step  II.   The decay estimates  of $\|\p_iu\|_{L^2}$ with $i\in\{1,2,3\}$ }

\vskip .1in
For $i\in\{1,2,3\}$, we have by  Plancherel's Theorem and \eqref{u} that
\begin{align}
\|\p_iu\|_{L^2}&=\|\widehat{\p_iu}\|_{L^2}=\||\xi_i|\widehat u\|_{L^2}\nonumber\\
&\leq \left\| |\xi_i|\widehat{K_1}(t)\widehat{u_{0}}\right\|_{L^2}+\left\||\xi_i|\widehat{K_2}(t)\widehat{b_{0}}\right\|_{L^2}\nonumber\\
&\quad+\int_0^t\left\||\xi_i|\widehat{K_1}(t-\tau)\widehat{N_{1}}(\tau)\right\|_{L^2}d\tau\nonumber\\
&\quad+\int_0^t\left\||\xi_i|\widehat{K_2}(t-\tau)\widehat{N_{2}}(\tau)\right\|_{L^2}d\tau\triangleq\sum_{m=1}^3\text{II}_{i,m}.
\label{iuL2}
\end{align}

\textbf{Step  II-1.   The decay estimate of $\|\p_1u\|_{L^2}$ }
\vskip .1in
Similarly to the derivation of (\ref{ui12}) for $\text{I}_1$ and $\text{I}_2$ in Step I-1, by Plancherel's Theorem and Corollary \ref{PI} we infer from (\ref{iuL2}) that for $i=1$,
\begin{align}
\text{II}_{1,1}+\text{II}_{1,2} &\leq \left\| \widehat{K_1}(t)\widehat{\p_1 u_{0}}\right\|_{L^2}+\left\| \widehat{K_2}(t)\widehat{\p_1 b_{0}}\right\|_{L^2}\nonumber\\
&\leq C\| e^{-c\xi_{\nu}^2t} (\widehat{\p_1 u_0},\widehat{\p_1 b_0} )\|_{L^2}+ Ce^{-ct} \left\| (\widehat{\p_1 u_0},\widehat{\p_1b_0})\right\|_{L^2}\nonumber\\
&  \leq  C(1+t)^{-\frac12}\left(\|(\p_1u_0, \p_1b_0)\|_{L^2_{x_1}L^1_{x_2, x_3}}+\|(\p_1u_0, \p_1b_0)\|_{L^2}\right)\nonumber\\
& \leq C \varepsilon\left(1+t\right)^{-\frac12}. \label{p1ug1}
\end{align}

From now on, analogously to treatment of (\ref{ij3}) in Step I-1, for simplicity we only consider the terms associated with $b\cdot\nabla b$ in $N_1$ and $N_2$. So, to deal with $\text{II}_{1,3}$ and $\text{II}_{1,4}$,   by (\ref{S1}) and (\ref{S2}) we deduce from   Plancherel's Theorem,  Corollary \ref{PI},  (\ref{1bb11}) and (\ref{11bb2}) that
\begin{align}
&\text{II}_{1,3}+\text{II}_{1,4}\nonumber\\
&\quad\triangleq\int_0^t\left\|\xi_1\widehat{K_1}(t-\tau)\widehat{b\cdot\nabla b} \right\|_{L^2}d\tau+\int_0^t\left\|\xi_1\widehat{K_2}(t-\tau)\widehat{b\cdot\nabla b} \right\|_{L^2}d\tau\nonumber\\
& \quad\leq \int_0^t\left\| \widehat{K_1}(t-\tau)\widehat{
\p_1(b\cdot\nabla b)} \right\|_{L^2}d\tau+\int_0^t\left\| \widehat{K_2}(t-\tau)\widehat{\p_1(b\cdot\nabla b)} \right\|_{L^2}d\tau\nonumber\\
& \quad\leq C\int_0^t\left\| e^{-c\xi_\nu^2(t-\tau)}\widehat{
\p_1(b\cdot\nabla b)} \right\|_{L^2}d\tau+C\int_0^te^{-c(t-\tau)}\left\| \widehat{\p_1(b\cdot\nabla b)} \right\|_{L^2}d\tau\nonumber\\
& \quad\leq C\int_0^{t}\left[(1+t-\tau )^{-\frac12}\|\p_1 (b\cdot \na b ) \|_{L^2_{x_1}L_{x_2,x_3}^1}+e^{-c (t-\tau )}\| \p_1 (b\cdot \na b ) \|_{L^2}\right]d\tau \nonumber\\
&\quad \leq CC_0^2\varepsilon^2\int_0^t\left[\left(1+t-\tau \right)^{-\frac12} \left(1+\tau\right)^{-\frac54}+ e^{-c\left(t-\tau\right)}\left(1+\tau\right)^{-\frac{37}{24}}\right]d\tau \nonumber\\
&\quad\leq  C C_0^2  \varepsilon^2 (1+t )^{-\frac12}. \label{p1ug2}
\end{align}

In view of (\ref{p1ug1}) and (\ref{p1ug2}), by choosing $C_0$ large enough and $\varepsilon$ suitably small, we know
$$\|\p_1u(t)\|_{L^2}\leq \left(C \varepsilon +C C_0^2  \varepsilon^2 \right)(1+t )^{-\frac12}\leq \frac {C_0}{2}  \varepsilon (1+t )^{-\frac12}.$$

\vskip .1in

\textbf{Step  II-2.   The decay estimates of $\|\p_i u\|_{L^2}$ with $i\in\{2,3\}$ }
\vskip .1in

Analogously  to the treatment of  (\ref{ij3}) in Step I-1,   we focus our attention on the estimates  of the terms involving $b\cdot\nabla b$.
Using  Corollary \ref{PI} with $\alpha=1$, Lemmas \ref{ID} and \ref{ED}, we deduce from \eqref{iuL2}, (\ref{pbl1}),  (\ref{22bb2}) and (\ref{23bb2}) that for $i\in\{2,3\}$,
\begin{align*}
\|\p_i u\|_{L^2}
&\leq  C\|\xi_ie^{-c\xi_{\nu}^2t} (\widehat{u_0},\widehat{b_0} )\|_{L^2}+ Ce^{-ct} \left\||(\widehat{\p_i u_0},\widehat{\p_i b_0})\right\|_{L^2}\nonumber\\
&\quad +C\int_0^t\left\| \xi_i  e^{-c\xi_\nu^2(t-\tau)}\widehat{
b\cdot\nabla b} \right\|_{L^2}d\tau+C\int_0^te^{-c(t-\tau)}\left\| \widehat{\p_i(b\cdot\nabla b)} \right\|_{L^2}d\tau\notag\\
&\leq  C(1+t)^{-1}\left(\| (u_0, b_0 )\|_{L^2_{x_1}L^1_{x_2, x_3}}+\|(\p_i u_0, \p_i b_0)\|_{L^2}\right)\nonumber\\
&\quad+C\int_0^{t}\left[\left(1+t-\tau \right)^{-1} \| b\cdot \na b  \|_{L^2_{x_1}L^1_{x_2, x_3}} +e^{-c\left(t-\tau\right)}\| \p_i (b\cdot \na b ) \|_{L^2}\right] d\tau \nonumber\\
&\leq  C\varepsilon(1+t)^{-1} +CC_0^2\varepsilon^2\int_0^t \left(1+t-\tau \right)^{-1} \left(1+\tau\right)^{-\frac{11}{8}} d\tau \nonumber\\
&\quad+CC_0^2\varepsilon^2\int_0^t e^{-c\left(t-\tau\right)}\left(1+\tau\right)^{-\frac{11}{6}} d\tau \nonumber\\
&\leq C \varepsilon\left(1+t\right)^{-1}+C C_0^2  \varepsilon^2\left(1+t\right)^{-1},\nonumber
\end{align*}
from which it follows that
\begin{equation*}\|\p_i u(t)\|_{L^2}\leq \frac {C_0}{2}  \varepsilon (1+t )^{-1},\quad\forall\ i\in\{2,3\},
\end{equation*}
provided $C_0$ is chosen to be large enough and $\varepsilon$ is chosen to be suitably small.

\vskip .1in

\textbf{Step  III. The decay estimates  of $\|\p_{i}\p_ju\|_{L^2}$ with  $i,j\in\{1,2,3\}$}
\vskip .1in

In terms of (\ref{u}), we see that for $i,j\in\{1,2,3\}$,
\begin{align}
\|\p_{i}\p_ju\|_{L^2}&=\|\widehat{\p_{ij}u}\|_{L^2}=\|\xi_i\xi_j\widehat u\|_{L^2}\nonumber\\
&\leq \left\| \xi_{i}\xi_{j}\widehat{K_1}(t)\widehat{u_{0}}\right\|_{L^2}+\left\|\xi_{i}\xi_{j}\widehat{K_2}(t)\widehat{b_{0}}\right\|_{L^2}\nonumber\\
&\quad+\int_0^t\left\|\xi_{i}\xi_{j}\widehat{K_1}(t-\tau)\widehat{N_{1}}(\tau)\right\|_{L^2}d\tau\nonumber\\
&\quad+\int_0^t\left\|\xi_{i}\xi_{j}\widehat{K_2}(t-\tau)\widehat{N_{2}}(\tau)\right\|_{L^2}d\tau\triangleq \sum_{m=1}^4\text{III}_{ij,m}.\label{iiiuL2}
\end{align}

\textbf{Step  III-1.  The decay estimates  of $\|\p_3\p_{j}u\|_{L^2}$ with $j\in\{2,3\}$}
\vskip .1in

Since $|\xi_3\xi_j|\leq \xi_\nu^2$ for $j\in\{2,3\}$, using (\ref{S1}), (\ref{S2}) and  Corollary \ref{PI} with $\alpha=2$,  we deduce from (\ref{iiiuL2}) with $i=3$ and $j\in\{2,3\}$ that
\begin{align}
\text{III}_{3j,1}+\text{III}_{3j,2}
&\leq C\|\xi_3\xi_je^{-c\xi_{\nu}^2t} (\widehat{u_0},\widehat{b_0} )\|_{L^2}+ Ce^{-ct} \left\|\xi_3\xi_j(\widehat{u_0},\widehat{b_0})\right\|_{L^2}\nonumber\\
&\leq C(1+t)^{-\frac32}\left(\| (u_0, b_0 )\|_{L^2_{x_1}L^1_{x_2, x_3}}+\|\p_3\p_j (u_0, b_0 )\|_{L^2}\right) \nonumber\\
&\leq C\varepsilon(1+t)^{-\frac32}. \label{p3p2ul2-1}
\end{align}

Thanks to (\ref{KS1}) and (\ref{KS21})--(\ref{KS23}), to bound $\text{III}_{3j,3}$ and $\text{III}_{3j,4}$, it suffices to consider the following two items with  $j\in\{2,3\}$:
\begin{align*}
\text{III}_{3j,3}&\triangleq\int_0^t\left\|\xi_3\xi_je^{-c\xi_{\nu}^2\left(t-\tau \right)}\widehat{b\cdot \na b}\right\|_{L^2}d\tau ,
\\
\text{III}_{3j,4}&\triangleq\int_0^t\left\|\xi_3\xi_je^{-c\left(1+\xi_3^2\right)\left(t-\tau \right)}\widehat{b\cdot \na b}\right\|_{L^2}d\tau.
\end{align*}
It is worth mentioning here that the factor $e^{-\xi_3^2 t}$ is taken into account (see (\ref{KS23})), which is  different from the previous one in (\ref{S2}). However, this is of particular importance  for the decay estimates involving the $x_3$-direction.  Using Corollary \ref{PI} and Lemmas \ref{ID}--\ref{ED}, we can make use of (\ref{pbl1}) and (\ref{23bb2}) to bound $\text{III}_{3j,3}$ as follows,
\begin{align}
\text{III}_{3j,3}
&\leq C\int_0^{\frac t2}\left(1+t-\tau \right)^{-\frac32}\| b\cdot \na b \|_{L_{x_1}^2L_{x_2,x_3}^1}d\tau\nonumber\\
 &\quad +C\int_{\frac t2}^{t}\left(t-\tau \right)^{-\frac12}\|\p_3(b\cdot \na b)\|_{L^2}d\tau \nonumber\\
&\leq CC_0^2\varepsilon^2\int_0^{\frac t2}\left(1+t-\tau \right)^{-\frac32} \left(1+\tau\right)^{-\frac{11}{8}}d\tau\nonumber\\
&\quad+CC_0^2\varepsilon^2\int_{\frac t2}^{t} \left(t-\tau \right)^{-\frac12}\left(1+\tau\right)^{-\frac{13}{6}}d\tau \nonumber\\
&\leq C C_0^2 \varepsilon^2\left(1+t\right)^{-\frac32},\quad \forall\ j\in\{2,3\}.\label{iii3231-2}
\end{align}

Due to  (\ref{22bb2}), (\ref{23bb2}) and the fact that $|\xi_3|e^{-\xi_3^2t}\leq Ct^{-1/2}$, we see that for $j\in\{2,3\}$,
\begin{align}
\text{III}_{3j,4}
&\leq C\int_0^{t}  |\xi_3| e^{-c\xi_3^2\left(t-\tau\right)} e^{-c\left(t-\tau\right)}\|\p_j(b\cdot \na b)\|_{L^2}d\tau\nonumber\\
&\leq CC_0^2\varepsilon^2\int_0^{t}e^{-c\left(t-\tau\right)}\left(t-\tau \right)^{-\frac12} \left(1+\tau\right)^{-\frac{11}{6}}d\tau  \nonumber\\
&\leq CC_0^2\varepsilon^2\left(1+t\right)^{-\frac{11}{6}},\label{iii3233}
\end{align}
since  Lemma \ref{ED}, together with H\"{o}lder inequality, shows that  for $m_1\in(1,2)$, $\frac{1}{m_1}+\frac{1}{m_2}=1$ and $q>0$,
\begin{align}
&\int_0^{t}\left(t-\tau \right)^{-\frac12}e^{-c\left(t-\tau\right)}\left(1+\tau\right)^{-q}d\tau  \nonumber\\
&\quad\leq \left(\int_0^t\left(t-\tau \right)^{-\frac{m_1}{2}}e^{-\frac {c m_1}{2}\left(t-\tau\right)}d\tau\right)^{
	\frac{1}{m_1}} \left(\int_0^te^{-\frac {cm_2}{2}\left(t-\tau\right)}\left(1+\tau\right)^{-q m_2}d\tau\right)^{\frac{1}{m_2}} \nonumber\\
&\quad\leq C\left(1+t\right)^{-q}.\label{ineq}
\end{align}

Thus, collecting (\ref{p3p2ul2-1}), \eqref{iii3231-2} and \eqref{iii3233} together implies that for $\forall$ $j\in\{2,3\}$,
\begin{align*}
\|\p_3\p_{j}u(t)\|_{L^2}
&\leq \left( C\varepsilon +C C_0^2 \varepsilon^2\right)\left(1+t\right)^{-\frac32}\leq \frac{C_0}{2} \varepsilon \left(1+t\right)^{-\frac32}, 
\end{align*}
provided $C_0$ is chosen to be large enough and $\varepsilon$ is chosen to be  suitably small.

\vskip .1in

\textbf{Step III-2. The decay estimates of $\|\p_2\p_{l}u\|_{L^2}$ with $l=1,2$}
\vskip .1in
In view of (\ref{S1}), (\ref{S2}) and Corollary \ref{PI} with $\alpha=1$, we infer from \eqref{iiiuL2} with $i=2$ and $j=1$ that
\begin{align}
\text{III}_{21,1}+\text{III}_{21,2}
&\leq  C\|\xi_2\xi_1e^{-c\xi_{\nu}^2t} (\widehat{u_0},\widehat{b_0} )\|_{L^2}+ Ce^{-ct} \left\|\xi_2\xi_1(\widehat{u_0},\widehat{b_0})\right\|_{L^2}\nonumber\\
&\leq  C(1+t)^{-1}\left(\|\p_1(u_0, b_0)\|_{L^2_{x_1}L^1_{x_2, x_3}}+\|\p_2\p_1 (u_0, b_0)\|_{L^2}\right)\nonumber\\
&\leq C \varepsilon\left(1+t\right)^{-1}.\label{p21ug-1}
\end{align}

For  $\text{III}_{21,3}$ and $\text{III}_{21,4}$, we can deal with the terms involving $b\cdot\nabla b$ as follows:
\begin{align}
\text{III}_{21,3}+\text{III}_{21,4}
&\leq C\int_0^{t}\left(1+t-\tau \right)^{-1}\|\p_1(b\cdot \na b)  \|_{L_{x_1}^2L_{x_2,x_3}^1}d\tau\nonumber\\
&\quad+C\int_{0}^{t}e^{-c\left(t-\tau\right)}\|\p_2\p_1(b\cdot \na b)\|_{L^2}d\tau\nonumber\\
&\leq
CC_0^2\varepsilon^2\int_0^t\left(1+t-\tau \right)^{-1} \left(1+\tau\right)^{-\frac{5}{4}}d\tau\nonumber\\
&\quad+CC_0^2 \varepsilon^2\int_{0}^{t} e^{-c\left(t-\tau\right)}\left(1+\tau\right)^{-\frac{11}{12}}d\tau \nonumber\\
&\leq  C C_0^2  \varepsilon^2\left(1+t\right)^{-\frac{11}{12}},
\label{piug-2}
\end{align}
where we have   used  \eqref{1bb11}, \eqref{lhbb2}  Lemmas \ref{ID} and \ref{ED}. So, combining (\ref{p21ug-1}) with (\ref{piug-2}) gives
\begin{align*}
\|\p_2\p_{1}u(t)\|_{L^2}
&\leq C\varepsilon(1+t)^{-1} +C C_0^2 \varepsilon^2\left(1+t\right)^{-\frac{11}{12}},\quad \forall\ t\geq1.\nonumber 
\end{align*}
The decay estimate of $\|\p_2^2u\|_{L^2}$ can be achieved in a similar manner by applying Corollary \ref{PI} with $\alpha=2$, and the decay rate is  of the same order (i.e.  $11/12$), since it is completely determined by the estimate of $\|\p_2^2(b\cdot\nabla b)\|_{L^2}$ in \eqref{lhbb2}. As a result, by choosing $C_0$ large enough and $\varepsilon$ suitably small, we arrive at
$$
\|\p_2\p_{l}u(t)\|_{L^2}\leq
\frac{ C_0}{2} \varepsilon\left(1+t\right)^{-\frac{11}{12}},\quad \forall\ l\in\{1,2\}.
$$

\vskip .1in

\textbf{Step III-3.  The decay estimate of $\|\p_3\p_1u\|_{L^2}$}
\vskip .1in
Similarly to that in Step III-2, by (\ref{S1}), (\ref{S2}) and (\ref{13bbl2}) we deduce from (\ref{iiiuL2}) with $i=3$ and $j=1$ that
\begin{align*}
\|\p_3\p_{1}u \|_{L^2}
&\leq   C(1+t)^{-1}\left(\|\p_1 (u_0, b_0 )\|_{L^2_{x_1}L^1_{x_2, x_3}}+\|\p_1\p_3 (u_0, b_0 )\|_{L^2}\right)\nonumber\\
&\quad+C\int_0^{t}\left(1+t-\tau \right)^{-1}\|\p_1(b\cdot \na b)  \|_{L_{x_1}^2L_{x_2,x_3}^1}d\tau\nonumber\\
&\quad+C\int_{0}^{t}e^{-c\left(t-\tau\right)}\|\p_3\p_1(b\cdot \na b)\|_{L^2}d\tau\nonumber\\
&\leq  C\varepsilon (1+t)^{-1}+CC_0^2\varepsilon^2\int_0^t\left(1+t-\tau \right)^{-1} \left(1+\tau\right)^{-\frac{5}{4}}d\tau\nonumber\\
&\quad+CC_0^2 \varepsilon^2\int_{0}^{t} e^{-c\left(t-\tau\right)}\left(1+\tau\right)^{-\frac{5}{3}}d\tau \nonumber\\
&\leq C\varepsilon (1+t)^{-1}+ C C_0^2  \varepsilon^2\left(1+t\right)^{-1}\leq \frac{ C_0}{2} \varepsilon\left(1+t\right)^{-1},
\end{align*}
provided $C_0$ is chosen to be large enough and $\varepsilon$ is chosen to be  suitably small.

\vskip .1in

\textbf{Step III-4.  The decay estimate  of $\|\p_1^2u\|_{L^2}$}
\vskip .1in
In order to estimate $\|\p_1^2u\|_{L^2}$, by (\ref{S1}), (\ref{S2}) and Corollary \ref{PI} with $\alpha=0$ we first infer from (\ref{iiiuL2}) with $i=j=1$ that
\begin{align}
\text{III}_{11,1}+\text{III}_{11,2}
&\leq  C\|\xi_1^2e^{-c\xi_{\nu}^2t} (\widehat{u_0},\widehat{b_0} )\|_{L^2}+ Ce^{-ct} \|\xi_1^2(\widehat{u_0},\widehat{b_0}) \|_{L^2}\nonumber\\
&\leq  C(1+t)^{-\frac{1}{2}}\left(\|\p_1^2 (u_0, b_0 )\|_{L^2_{x_1}L^1_{x_2, x_3}}+\|\p_1^2 (u_0, b_0 )\|_{L^2}\right)\nonumber\\
&\leq C\varepsilon\left(1+t\right)^{-\frac12}.\label{12u-1}
\end{align}

Next, it follows from (\ref{S1}), (\ref{S2}), \eqref{11bb11},  \eqref{lhbb2}, Corollary  \ref{PI} and  Lemma \ref{ID} that for any $\delta\in(0,1)$,
\begin{align}
\text{III}_{11,3}
&\leq   C\int_0^{t} (1+t-\tau  )^{-\frac12}\|\p_1^2( b\cdot \na b )\|_{L_{x_1}^2L_{x_2,x_3}^1}  d\tau \nonumber\\
&\leq  CC_0\varepsilon^2\int_0^{t} (1+t-\tau  )^{-\frac12} (1+\tau )^{-\frac78}  d\tau \nonumber\\
&\quad+CC_0\varepsilon\int_0^{t} (1+t-\tau  )^{-\frac12} (1+\tau )^{-\frac12}\|\p_{\nu}b\|_{H^2}d\tau\nonumber\\
&\leq  C C_0 \varepsilon^2\left(1+t\right)^{-\frac38}+ CC_0\varepsilon^2\left(\int_0^{t}\left(1+t-\tau \right)^{-1}\left(1+\tau\right)^{-1}d\tau\right)^{\frac12}\nonumber\\
&\leq  C C_0 \varepsilon^2\left(1+t\right)^{-\frac38}+CC_0\varepsilon^2\left(1+t\right)^{-\frac{1-\delta}{2}}\label{piug}
\end{align}
and
\begin{align}
 \text{III}_{11,4}
&\leq   C\int_0^{t}  e^{-c (t-\tau )}\|\p_1^2(b\cdot \na b)\|_{L^2}  d\tau \nonumber\\
&\leq  CC_0\varepsilon^2\int_0^{t} e^{-c (t-\tau )} (1+\tau )^{-\frac{11}{12}} d\tau  \leq  C C_0 \varepsilon^2\left(1+t\right)^{-\frac{11}{12}}.\label{piug-1}
\end{align}

Thus, by choosing $\delta\in(0,1/4]$,  we conclude from (\ref{12u-1})--(\ref{piug-1}) that
$$
\|\p_1^2u(t)\|_{L^2}\leq C_3\varepsilon\left(1+t\right)^{-\frac12}+C_4C_0 \varepsilon^2\left(1+t\right)^{-\frac38},\quad\forall\ t\geq1.
$$
So, if $C_0$ and $\varepsilon$ are chosen to be such that
$$
C_0\geq 4C_3\quad\text{and}\quad 0<\varepsilon\leq (4C_4)^{-1},
$$
then it holds that
$$
\|\p_1^2u(t)\|_{L^2}\leq \frac{ C_0}{2} \varepsilon\left(1+t\right)^{-\frac38},\quad\forall\ t\geq1.
$$
\vskip .1in

\textbf{Step  IV.  The decay estimates of $\|\p_i\p_j\p_k u\|_{L^2}$ with  $i,j,k\in\{1,2,3\}$}
\vskip .1in

For $i,j,k\in\{1,2,3\}$, it readily follows from  \eqref{u} that
\begin{align}
\|\p_i\p_j\p_k u\|_{L^2}&=\|\widehat{\p_{ijk}u}\|_{L^2}
=\|\xi_{i}\xi_{j}\xi_{k}\widehat{u}\|_{L^2}\nonumber\\
&\leq \left\| \xi_{i}\xi_{j}\xi_{k}\widehat{K_1}(t)\widehat{u_0}\right\|_{L^2}+\left\|\xi_{i}\xi_{j}\xi_{k}\widehat{K_2}(t)\widehat{b_0}\right\|_{L^2}\nonumber\\
&\quad+\int_0^t\left\|\xi_{i}\xi_{j}\xi_{k}\widehat{K_1}(t-\tau)\widehat{N_1}(\tau)\right\|_{L^2}d\tau\nonumber\\
&\quad+\int_0^t\left\|\xi_{i}\xi_{j}\xi_{k}\widehat{K_2}(t-\tau)\widehat{N_2}(\tau)\right\|_{L^2}d\tau\triangleq\sum_{m=1}^{4}\text{IV}_{ijk,m}.\label{p3uL2}
\end{align}

\textbf{Step IV-1.  The decay estimates of $\|\p_{l}\p_2\p_{3}u\|_{L^2}$ with $l\in\{1,2\}$}
\vskip .1in
Taking $i=1, j=2$ and $k=3$ in \eqref{p3uL2},   by (\ref{S1}), (\ref{S2}) and Corollary \ref{PI} with $\alpha=2$  we derive from \eqref{1bb11}  that
\begin{align}
\text{IV}_{123,1}+\text{IV}_{123,2}
&\leq  C\|\xi_1\xi_2\xi_3e^{-c\xi_{\nu}^2t} (\widehat{u_0},\widehat{b_0} )\|_{L^2}+ Ce^{-ct} \left\|\xi_1\xi_2\xi_3(\widehat{u_0},\widehat{b_0})\right\|_{L^2}\nonumber\\
&\leq  C(1+t)^{-\frac{3}{2}} \|\p_1 (u_0, b_0 )\|_{L^2_{x_1}L^1_{x_2 ,x_3}}+Ce^{-ct}\|\p_1\p_2\p_3 (u_0, b_0 )\|_{L^2} \nonumber\\
&\leq C \varepsilon\left(1+t\right)^{-\frac{3}{2}}.\label{iv12k1}
\end{align}

Using (\ref{KS23}), (\ref{S1}) and (\ref{ineq}),  we deduce from \eqref{1bb11}, \eqref{lhbb2} and Corollary \ref{PI} in a similar manner as the derivation of (\ref{iii3233}) that
\begin{align*}
\text{IV}_{123,3}+\text{IV}_{123,4}
&\leq C\int_0^t\left\|\xi_1\xi_2\xi_3e^{-c\xi_{\nu}^2\left(t-\tau \right)}\widehat{b\cdot \na b} \right\|_{L^2}d\tau\nonumber\\
&\quad+C\int_0^t\left\|\xi_1\xi_2\xi_3e^{-c\left(1+\xi_3^2\right)\left(t-\tau \right)}\widehat{b\cdot \na b} \right\|_{L^2}d\tau\nonumber\\
&\leq    C\int_0^{t}\left(1+t-\tau \right)^{-\frac{3}{2}}\|\p_1(b\cdot \na b)  \|_{L_{x_1}^2L_{x_2,x_3}^1}d\tau\nonumber\\
&\quad+C\int_{0}^{t}e^{-c\left(t-\tau\right)}\left(t-\tau\right)^{-\frac12}\|\p_1\p_2(b\cdot \na b)\|_{L^2}d\tau\nonumber\\
&\leq  CC_0^2\varepsilon^2\int_0^t \left(1+t-\tau \right)^{-\frac{3}{2}} \left(1+\tau\right)^{-\frac{5}{4}}d\tau\nonumber\\
&\quad+CC_0 \varepsilon^2\int_{0}^{t} e^{-c\left(t-\tau\right)}\left(t-\tau\right)^{-\frac12}\left(1+\tau\right)^{-\frac{11}{12}}d\tau \nonumber\\
&\leq C  C_0^2\varepsilon^2(1+t)^{-\frac54}+ C C_0  \varepsilon^2\left(1+t\right)^{-\frac{11}{12}},
\end{align*}
which, together with (\ref{iv12k1}), gives
\begin{align}
\|\p_{1}\p_2\p_{3}u(t)\|_{L^2}&\leq  C \varepsilon\left(1+t\right)^{-\frac{3}{2}}+ C  C_0^2\varepsilon^2(1+t)^{-\frac54}+ C C_0  \varepsilon^2\left(1+t\right)^{-\frac{11}{12}}\nonumber\\
&\leq
C \varepsilon\left(1+t\right)^{-\frac{3}{2}}+ C  C_0^2\varepsilon^2(1+t)^{-\frac{11}{12}}\leq  \frac{C_0}{2}\varepsilon (1+t)^{-\frac{11}{12}},\label{iv12k}
\end{align}
provided $C_0$ is chosen to be large enough and $\varepsilon $ is chosen to be suitably small.

Analogously, applying Corollary \ref{PI}
 with $\alpha=3$ to the terms $\| |\xi_{2}\xi_{2}\xi_{3}|e^{-c\xi_{\nu}^2 t }\widehat{u_0} \|_{L^2}$ and $\||\xi_2\xi_2\xi_3|e^{-c\xi_{\nu}^2 (t-\tau )}\widehat{b\cdot \na b}\|_{L^2}$, we can make use of \eqref{pbl1} and \eqref{lhbb2}  to obtain the same estimate as that in (\ref{iv12k}) for $\|\p_{2}\p_2\p_{3}u(t)\|_{L^2}$. Hence,
$$
\|\p_{l}\p_2\p_{3}u(t)\|_{L^2}\leq  \frac{C_0}{2}\varepsilon (1+t)^{-\frac{11}{12}},\quad\forall\ l\in\{1,2\}.
$$

\vskip .1in

\textbf{Step IV-2.  The decay estimate of $\|\p_1^2\p_{3}u\|_{L^2}$}
\vskip .1in
To derive the decay estimate of $\|\p_1^2\p_{3}u\|_{L^2}$,  we first infer from (\ref{KS23}) and (\ref{S1}) and Corollary \ref{PI} with $\alpha=1$ that
\begin{align}
\|\p_1^2\p_{3}u\|_{L^2}
&\leq   C(1+t)^{-1}\left(\|\p_1^2 (u_0, b_0 )\|_{L^2_{x_1}L^1_{x_2 ,x_3}}+\|\p_1^2\p_3 (u_0, b_0 )\|_{L^2}\right)\nonumber\\
&\quad+ C\int_0^t\left\|\xi_1^2\xi_3e^{-c\xi_{\nu}^2\left(t-\tau \right)}\widehat{b\cdot \na b}(\tau)\right\|_{L^2}d\tau\nonumber\\
&\quad+C\int_0^t\left\|\xi_1^2\xi_3e^{-c\left(1+\xi_3^2\right)\left(t-\tau \right)}\widehat{b\cdot \na b}(\tau)\right\|_{L^2}d\tau\nonumber\\
&\leq  C\varepsilon(1+t)^{-1} +\text{IV}_{113,3}+\text{IV}_{113,4}.\label{iv1131}
\end{align}

Noting that
$$
\int_0^t\|\p_{\nu}b\|_{H^2}^2d\tau\leq C\varepsilon^2
$$
due to Theorem \ref{thm1.1},
by (\ref{11bb11}), Corollary \ref{PI} with $\alpha=1$ and Lemma \ref{ID} we have
\begin{align}
\text{IV}_{113,3}
&\leq    C\int_0^{t}\left(1+t-\tau \right)^{-1}\|\p_1^2\left(b\cdot \na b\right)  \|_{L_{x_1}^2L_{x_2,x_3}^1}d\tau\nonumber\\
&\leq
CC_0 \varepsilon \int_0^{t}\left(1+t-\tau\right)^{-1}   \left(1+\tau\right)^{-\frac12}\|\p_{\nu}b\|_{H^2} d\tau\nonumber\\
&\quad
+CC_0 \varepsilon^2\int_0^{t}\left(1+t-\tau\right)^{-1}   \left(1+\tau\right)^{-\frac78}d\tau \nonumber\\
&\leq C C_0 \varepsilon^2\left(1+t\right)^{-\frac78+\delta}+CC_0 \varepsilon^2 \left(\int_0^{t}\left(1+t-\tau\right)^{-2}   \left(1+\tau\right)^{-1}d\tau\right)^{\frac12}  \nonumber\\
&\leq C C_0\varepsilon^2\left(1+t\right)^{-\frac12}, \label{iv1133}
\end{align}
provided $\delta$ is chosen to be such that  $0<\delta\leq \frac38$. The term $\text{IV}_{113,4}$ can be bounded  by (\ref{ineq}) as follows,
\begin{align*}
\text{IV}_{113,4}
&\leq    C\int_{0}^{t}e^{-c\left(t-\tau\right)}\left(t-\tau\right)^{-\frac12}\|\p_1^2(b\cdot \na b)\|_{L^2}d\tau\nonumber\\
&\leq CC_0 \varepsilon^2\int_{0}^{t} e^{-c\left(t-\tau\right)}\left(t-\tau\right)^{-\frac12}\left(1+\tau\right)^{-\frac{11}{12}}d\tau \nonumber\\
&\leq   C C_0  \varepsilon^2\left(1+t\right)^{-\frac{11}{12}},
\end{align*}
which, combined with (\ref{iv1131}) and (\ref{iv1133}), gives rise to
$$
\|\p_1^2\p_{3}u(t)\|_{L^2}
 \leq  C\varepsilon(1+t)^{-1} +C C_0\varepsilon^2\left(1+t\right)^{-\frac12}\leq \frac{C_0}{2}\varepsilon\left(1+t\right)^{-\frac12},
 $$
provided $C_0$ is chosen to be large enough and $\varepsilon $ is chosen to be suitably small.

\vskip .1in

\textbf{Step IV-3.  The decay estimate of $\|\p_1\p_{3}^2u\|_{L^2}$}
\vskip .1in

Similarly to the arguments in Step IV-2, choosing $C_0$ large enough and $\varepsilon$ suitably small,  we   deduce from  \eqref{1bb11} and \eqref{13bbl2} that
\begin{align*}
\|\p_{1}\p_3^2u\|_{L^2}
&\leq  C(1+t)^{-\frac32}\left(\|\p_{1} (u_0, b_0 )\|_{L^2_{x_1}L^1_{x_2, x_3}}+\|\p_{1}\p_3^2 (u_0, b_0 )\|_{L^2}\right)\nonumber\\
&\quad+ C\int_0^{t}\left(1+t-\tau \right)^{-\frac32}\|\p_{1} (b\cdot \na b   )\|_{L_{x_1}^2L_{x_2x_3}^1}d\tau\nonumber\\
&\quad+C\int_{0}^{t}e^{-c\left(t-\tau\right)}\left(t-\tau\right)^{-\frac12}\|\p_{1}\p_3(b\cdot \na b)\|_{L^2}d\tau\nonumber\\
&\leq C \varepsilon\left(1+t\right)^{-\frac32}
+CC_0^2\varepsilon^2\int_0^t\left(1+t-\tau \right)^{-\frac32} \left(1+\tau\right)^{-\frac{5}{4}}d\tau\nonumber\\
&\quad+CC_0^2 \varepsilon^2\int_{0}^{t} e^{-c\left(t-\tau\right)}\left(t-\tau\right)^{-\frac12}\left(1+\tau\right)^{-\frac{5}{3}}d\tau \nonumber\\
&\leq  \left(C\varepsilon + C C_0^2 \varepsilon^2\right)\left(1+t\right)^{-\frac{5}{4}}\leq \frac{ C_0}{2} \varepsilon\left(1+t\right)^{-\frac{5}{4}}.
\end{align*}

\vskip .1in

\textbf{Step IV-4. The decay estimate of $\|\p_2\p_{3}^2u\|_{L^2}$}
\vskip .1in
In view of (\ref{pbl1}), (\ref{23bb2}), (\ref{33bbl2}), (\ref{ineq}) and Corollary \ref{PI} with $\alpha=3$,  we have from \eqref{p3uL2}, (\ref{S1}) and (\ref{S2}) that
\begin{align*}
\|\p_{2}\p_3^2u\|_{L^2}
&\leq C\varepsilon(1+t)^{-2}+C\int_0^{\frac t2}\left(1+t-\tau \right)^{-2}\|b\cdot \na b\|_{L_{x_1}^2L_{x_2,x_3}^1}d\tau\nonumber\\
&\quad+C\int_{\frac t2}^{t}\left(t-\tau \right)^{-\frac12} \|\p_3^2(b\cdot \na b)\|_{L^2}d\tau \nonumber\\
&\quad+ C\int_{0}^{t}e^{-c\left(t-\tau\right)}\left(t-\tau\right)^{-\frac12}\|\p_{2}\p_3(b\cdot \na b)\|_{L^2}d\tau\nonumber\\
&\leq C\varepsilon(1+t)^{-2}+CC_0^2\varepsilon^2\int_0^{\frac t2}\left(1+t-\tau \right)^{-2} \left(1+\tau\right)^{-\frac{11}{8}}d\tau\nonumber\\
&\quad+CC_0^2\varepsilon^2\int_{\frac t2}^{t} \left(t-\tau\right)^{-\frac12}\left(1+\tau \right)^{-\frac{29}{12}} d\tau\nonumber\\
&\quad +CC_0^2 \varepsilon^2\int_{0}^{t} e^{-c\left(t-\tau\right)}\left(t-\tau\right)^{-\frac12}\left(1+\tau\right)^{-\frac{11}{6}}d\tau \nonumber\\
&\leq   C\varepsilon(1+t)^{-2}+ C C_0^2 \varepsilon^2 \left(1+t\right)^{-\frac{11}{6}},
\end{align*}
from which we obtain after choosing $C_0$ large  and $\varepsilon$ suitably   small that
\begin{align*}
\|\p_{2}\p_3^2u(t)\|_{L^2}
&\leq    C\varepsilon(1+t)^{-2}+C  C_0^2 \varepsilon^2\left(1+t\right)^{-\frac{11}{6}}\leq \frac{C_0}{2} \varepsilon\left(1+t\right)^{-\frac{11}{6}}.
\end{align*}

\vskip .1in

\textbf{Step IV-5.  The decay estimate of $\|\p_3^3u\|_{L^2}$}
\vskip .1in
Analogously  to that in Step IV-4, by \eqref{pbl1} and  \eqref{33bbl2}  we have
\begin{align*}
\|\p_3^3 u\|_{L^2}
&\leq C\varepsilon (1+t)^{-2} + C\int_0^{\frac{t}{2}} ( 1+ t-\tau  )^{-2}\|b\cdot \na b\|_{L_{x_1}^2L_{x_2,x_3}^1}d\tau\nonumber\\
&\quad +C\int_{\frac{t}{2}}^{t}  (t-\tau )^{-\frac12}\|\p_3^2(b\cdot \na b)\|_{L^2}d\tau\nonumber\\
 & \quad+C\int_{0}^{t} e^{-c (t-\tau )} (t-\tau )^{-\frac12}\|\p_3^2(b\cdot \na b)\|_{L^2}d\tau \nonumber\\
&\leq  C \varepsilon (1+t )^{-2}+CC_0^2\varepsilon^2\int_0^{\frac t2} (1+t-\tau )^{-2}  (1+\tau )^{-\frac{11}{8}}d\tau\nonumber\\
&\quad+CC_0^2\varepsilon^2\int_{\frac t2}^{t} (t-\tau)^{-\frac12} (1+\tau  )^{-\frac{29}{12}} d\tau\nonumber\\
&\quad +CC_0^2 \varepsilon^2\int_{0}^{t} e^{-c (t-\tau )}(t-\tau )^{-\frac12} (1+\tau )^{-\frac{29}{12}}d\tau \nonumber\\
&\leq C \varepsilon (1+t )^{-\frac{23}{12}}+C C_0^2  \varepsilon^2 (1+t )^{-\frac{23}{12}}+C C_0^2  \varepsilon^2 (1+t )^{-\frac{29}{12}},
\end{align*}
so that, choosing $C_0$ large enough and $\varepsilon$ suitably small, we find
$$
\|\p_3^3 u(t)\|_{L^2}\leq \frac{C_0}{2}  \varepsilon  (1+t )^{-\frac{23}{12}},\quad\forall\ t\geq1.
$$

\vskip .1in
\subsection{Enhanced decay rates of $(u_1,b_1)$.}

In this subsection, we aim to improve the decay rates of $(u_1,b_1)$ by applying Corollaries \ref{PI} and   \ref{divfree}.

\vskip .1in

\textbf{Part I. Improved decay rate of  $\|u_1\|_{L^2}$}
\vskip .1in

We start with the improved decay estimate of $\|u_1\|_{L^2}$. First, it follows from   Plancherel's Theorem  and  \eqref{u} that
\begin{align}
\|u_1\|_{L^2}=\|\widehat{u_1}\|_{L^2}&\leq \left\| \widehat{K_1}(t)\widehat{u_{10}}\right\|_{L^2}+\left\|\widehat{K_2}(t)\widehat{b_{10}}\right\|_{L^2}\nonumber\\
&\quad+\int_0^t\left\|\widehat{K_1}(t-\tau)\widehat{N_{11}}(\tau)\right\|_{L^2}d\tau\nonumber\\
&\quad+\int_0^t\left\|\widehat{K_2}(t-\tau)\widehat{N_{21}}(\tau)\right\|_{L^2}d\tau\triangleq \sum_{m=1}^4\Lambda_{1,m}\label{u1L2}
\end{align}

 Since $\nabla\cdot u_0=\nabla\cdot b_0=0$, by \eqref{S1} and \eqref{S2}  we can make use of Corollary \ref{divfree} with $\beta=0$ and   Plancherel's Theorem to bound the first two terms on the right-hand side of (\ref{u1L2}) as follows,
\begin{align}
\Lambda_{1,1}+\Lambda_{1,2} &\leq C\|e^{-c\xi_{\nu}^2t}(\widehat{u_{10}},\widehat{b_{10}})\|_{L^2}+ Ce^{-ct}  \|(\widehat{u_{10}},\widehat{b_{10}}) \|_{L^2}\nonumber\\
&\leq C(1+t)^{-\frac{3}{4}}\left(\|(u_0,b_{0})\|_{L^1} +\|(u_0,b_0)\|_{L^2 }\right)\nonumber\\
&\leq C\varepsilon (1+t)^{-\frac{3}{4}}.\label{i12}
\end{align}

Since $N_{11}=\mathbb P_1(b\cdot\nabla b)+\mathbb P_1(u\cdot\nabla u)$, it holds that
\begin{align}
\Lambda_{1,3}
&\leq \int_0^t\left\|\widehat{K_1}(t-\tau)\widehat{\mathbb P_1\left(b\cdot\nabla b\right)}\right\|_{L^2}d\tau\nonumber\\
&\quad+\int_0^t\left\|\widehat{K_1}(t-\tau)\widehat{\mathbb P_1\left(u\cdot\nabla u\right)}\right\|_{L^2}d\tau \triangleq \Lambda_{1,31}+\Lambda_{1,32}.\label{i3}
\end{align}

Clearly, it suffices to consider the first  term $\Lambda_{1,31}$, since the second one can be treated in the same manner due to their  mathematical resemblance. To do this, we first infer from \eqref{P1} that
\begin{align}
\Lambda_{1,31}
&\leq C\int_0^t\left\|e^{-c\xi_{\nu}^2\left(t-\tau \right)}\sum_{k=1}^{3}\xi_{\nu}^2|\xi|^{-2}\xi_k\widehat{b_kb_1}\right\|_{L^2}d\tau\nonumber\\
&\quad+C\int_0^t\left\|e^{-c\xi_{\nu}^2\left(t-\tau \right)}\sum_{k=1}^{3}\sum_{l=2}^{3}\xi_1|\xi|^{-2}\xi_k\xi_l\widehat{b_kb_l}\right\|_{L^2}d\tau\nonumber\\
&\quad+C\int_0^t\left\|e^{-c\left(t-\tau \right)}\sum_{k=1}^{3}\xi_{\nu}^2|\xi|^{-2}\xi_k\widehat{b_kb_1}\right\|_{L^2}d\tau\nonumber\\
&\quad+C\int_0^t\left\|e^{-c\left(t-\tau \right)}\sum_{k=1}^{3}\sum_{l=2}^{3}\xi_1|\xi|^{-2}\xi_k\xi_l\widehat{b_kb_l}\right\|_{L^2}d\tau \triangleq \sum_{m=1}^4 \Lambda_{1,31m} .\label{i3b}
\end{align}

Using Corollary \ref{PI} with $\alpha=1$ and Lemma \ref{ID}, we see that for any $t\geq1$,
\begin{align}
\Lambda_{1,311}
&\leq C\int_0^t\left\||\xi_\nu| e^{-c\xi_{\nu}^2\left(t-\tau \right)}\sum_{k=1}^{3}|\widehat{b_kb_1}|\right\|_{L^2}d\tau \nonumber\\
&\leq C\int_0^{t}\left(1+t-\tau \right)^{-1}\left\|\| b b_1 \|_{L_{x_2,x_3}^1}\right\|_{L_{x_1}^2}d\tau \nonumber\\
&\leq CC_0^2\varepsilon^2\int_0^t\left(1+t-\tau \right)^{-1} \left(1+\tau\right)^{-\frac{11}{8}}d\tau  \nonumber\\
&\leq CC_0^2\varepsilon^2\left(1+t\right)^{-1}\leq C C_0^2 \varepsilon^2\left(1+t\right)^{-\frac34},  \label{i31g}
\end{align}
where we have used  (\ref{1IE}), (\ref{MIE}) and the divergence-free condition $\nabla\cdot b=0$ to get from (\ref{u10}) that
\begin{align}
&\left\|\| bb_1  \|_{L_{x_2,x_3}^1}\right\|_{L_{x_1}^2}\leq \left\|\| bb_1  \|_{L_{x_1}^2} \right\|_{L_{x_2,x_3}^1}\leq \left\|\| b\|_{L^2_{x_1}}\|b_1  \|_{L_{x_1}^\infty} \right\|_{L_{x_2,x_3}^1}\notag\\
&\quad\leq  C\left\|\| b\|_{L^2_{x_1}}\|b_1  \|_{L_{x_1}^2}^{\frac12} \|\p_1 b_1  \|_{L_{x_1}^2}^{\frac12} \right\|_{L_{x_2,x_3}^1} \leq  C\| b\|_{L^2}\|b_1  \|_{L^2}^{\frac12} \|\p_1 b_1  \|_{L^2}^{\frac12}   \notag\\
&\quad\leq  C\| b\|_{L^2}\|b_1  \|_{L^2}^{\frac12} \|\p_\nu b   \|_{L^2}^{\frac12}  \leq CC_0^2\varepsilon^2\left(1+t\right)^{-\frac{11}{8}}.\label{bb11}
\end{align}

Similarly, by choosing $0<\delta\leq1/4$ we can bound $\Lambda_{1,312}$ as follows,
\begin{align}
\Lambda_{1,312}
&\leq C\int_0^{t}\left(1+t-\tau \right)^{-1}\left\|\| (b b_2, bb_3) \|_{L_{x_2,x_3}^1}\right\|_{L_{x_1}^2}d\tau \nonumber\\
&\leq CC_0^2\varepsilon^2\int_0^t\left(1+t-\tau \right)^{-1} \left(1+\tau\right)^{-1}d\tau\notag\\
& \leq  C C_0^2 \varepsilon^2\left(1+t\right)^{-1+\delta}\leq C C_0^2 \varepsilon^2\left(1+t\right)^{-\frac34}, \label{i32g}
\end{align}
since it follows from (\ref{u10}) in a manner similar to the derivation of (\ref{bb11}) that
\begin{align}\label{1bbv2}
\left\|\| bb_{\nu} \|_{L_{x_2,x_3}^1}\right\|_{L_{x_1}^2}
\leq \| b \|_{L^2}\| b_{\nu}\|_{L^2}^{\frac12}\|\p_1 b_{\nu}\|_{L^2}^{\frac12} \leq CC_0^2\varepsilon^2\left(1+t\right)^{-1}.
\end{align}

Due to (\ref{u10}), (\ref{binfty}), (\ref{b1infty}) and the fact that $\nabla\cdot b=0$, one has
\begin{align}
\| b\cdot \na b_1 \|_{L^2}
&\leq \| b_1 \|_{L^\infty}\| \p_1 b_1\|_{L^2}+\| b_\nu \|_{L^\infty}\| \p_\nu b_1\|_{L^2} \nonumber\\
&\leq \| b_1 \|_{L^\infty}\| \p_\nu b\|_{L^2}+\| b_\nu \|_{L^\infty}\| \p_\nu b_1\|_{L^2} \nonumber\\
& \leq CC_0^2\varepsilon^2\left(1+t\right)^{-\frac{13}{6}} \label{bb2}
\end{align}
and
\begin{align}
\|(\p_2(bb_2),\p_3(bb_3)) \|_{L^2}
& \leq C \| b  \|_{L^\infty}\| \p_\nu b \|_{L^2}  \leq   CC_0^2\varepsilon^2\left(1+t\right)^{-\frac{23}{12}},\label{2bbl2}
\end{align}
so that, by Lemma \ref{ED} we have
\begin{align}
\Lambda_{1,313}+ \Lambda_{1,314}
&\leq C\int_0^te^{-c (t-\tau  )}\left(\| b\cdot\na b_1 \|_{L^2}+\|\p_2 (b b_2 )\|_{L^2}+\|\p_3 (b b_3)\|_{L^2}\right)d\tau  \nonumber\\
&\leq  CC_0^2\varepsilon^2\int_0^te^{-c\left(t-\tau \right)} \left(1+\tau\right)^{-\frac{23}{12}}d\tau\leq C C_0^2\varepsilon^2\left(1+t\right)^{-\frac34}. \label{i33}
\end{align}

Thus, based upon \eqref{i31g}, \eqref{i32g}  and \eqref{i33}, we conclude from (\ref{i3b}) that
\begin{align*}
\Lambda_{1,31}
\leq C C_0^2 \varepsilon^2\left(1+t\right)^{-\frac34},\quad \forall\ t\geq1.
\end{align*}
The same estimate also holds for $\Lambda_{1,32}$. Thus,
it follows from \eqref{i3} that
\begin{align}
\Lambda_{1,3}
\leq C C_0^2 \varepsilon^2\left(1+t\right)^{-\frac34},\quad \forall\ t\geq1.\label{i13g}
\end{align}

We proceed to estimate $\Lambda_{1,4}$. Noting that
\begin{align}
N_{21}&=b\cdot \nabla u_1-u\cdot \nabla b_1=\nabla \cdot (b u_1-ub_1)\nonumber\\
&=\p_2(b_2 u_1-u_2b_1)+\p_3(b_3 u_1-u_3b_1), \label{N21}
\end{align}
we have from \eqref{S1} and \eqref{S2} that
\begin{align}
\Lambda_{1,4}
&\leq \int_0^t\left\|e^{-c\xi_{\nu}^2\left(t-\tau \right)}\xi_{2}\widehat{b_2 u_1}(\tau) \right\|_{L^2}d\tau+\int_0^t\left\|e^{-c\xi_{\nu}^2\left(t-\tau \right)}\xi_{2}\widehat{u_2 b_1}(\tau) \right\|_{L^2}d\tau\nonumber\\
&\quad+\int_0^t\left\|e^{-c\xi_{\nu}^2\left(t-\tau \right)}\xi_{3}\widehat{b_3 u_1}(\tau) \right\|_{L^2}d\tau+\int_0^t\left\|e^{-c\xi_{\nu}^2\left(t-\tau \right)}\xi_{3}\widehat{u_3 b_1}(\tau) \right\|_{L^2}d\tau\nonumber\\
&\quad+\int_0^t\left\|e^{-c\left(t-\tau \right)}\xi_{2}\widehat{b_2 u_1}(\tau) \right\|_{L^2}d\tau+\int_0^t\left\|e^{-c\left(t-\tau \right)}\xi_{2}\widehat{u_2 b_1}(\tau) \right\|_{L^2}d\tau\nonumber\\
&\quad+\int_0^t\left\|e^{-c\left(t-\tau \right)}\xi_{3}\widehat{b_3 u_1}(\tau)\right\|_{L^2}d\tau+\int_0^t\left\|e^{-c\left(t-\tau \right)}\xi_{3}\widehat{u_3 b_1}(\tau) \right\|_{L^2}d\tau\nonumber\\
&\triangleq \Lambda_{1,41}+\ldots+\Lambda_{1,48}\label{i4}.
\end{align}

It is easily checked that  the estimates (\ref{bb11}) and (\ref{bb2}) hold for $bu_1$ and $\p_\nu(b_\nu u_1)$, respectively. So, analogously to the derivations of \eqref{i31g} and (\ref{i33}),   we have
\begin{align}
&\Lambda_{1,41}+\Lambda_{1,43}+\Lambda_{1,45}+\Lambda_{1,47}\nonumber \\
&\quad \leq C\int_0^{t}\left[(1+t-\tau )^{-1 } \| b_\nu u_1 \|_{L^2_{x_1}L_{x_2,x_3}^1}+e^{-c (t-\tau )} \|\p_\nu ( b_\nu u_1 )\|_{L^2}\right] d\tau \nonumber\\
&\quad\leq CC_0^2\varepsilon^2\int_0^t \left[(1+t-\tau )^{-1} (1+\tau )^{-\frac{11}{8}}+e^{-c (t-\tau )} (1+\tau )^{-\frac{13}{6}}\right] d\tau \nonumber\\
&\quad\leq C C_0^2 \varepsilon^2\left(1+t\right)^{-1}. \label{i4b}
\end{align}

The terms involving $u_2b_1$ and $u_3b_1$ can be treated similarly and admit the same decay estimate as that in (\ref{i4b}). Consequently,
$$
\Lambda_{1,4}\leq CC_0^2\varepsilon^2(1+t)^{-1},\quad\forall\ t\geq1,
$$
which, together with  \eqref{u1L2},  \eqref{i12} and \eqref{i13g}, yields
\begin{align}
\|u_1(t)\|_{L^2}\leq C_5\varepsilon(1+t)^{-\frac34}+ C_6C_0^2  \varepsilon^2\left(1+t\right)^{-\frac34},\quad \forall \ t\geq1.\label{u1L2g}
\end{align}

Thus, if $C_0$ and $\varepsilon$ are chosen to be such that
\begin{align*}
C_0\geq 4C_5\quad \text{and}\quad  0<\varepsilon\leq (4C_6C_0 )^{-1},
\end{align*}
then one infers from (\ref{u1L2g}) that
$$
\| u_1(t)\|_{L^2}\leq \frac{C_0}{2}\varepsilon(1+t)^{-\frac34},\quad \forall\ t\geq1.
$$

\vskip .1in

\textbf{Part II.   Improved decay rates of $\|\p_\nu u_{1}\|_{L^2}$ with $\nu\in\{2,3\}$}
\vskip .1in

Let $\xi_\nu=(\xi_2,\xi_3)$ and $\xi_\nu^2=\xi_2^2+\xi_3^2$. This step aims  to improve the decay estimate of $\|\p_\nu u_1\|_{L^2}$ by applying Corollaries \ref{PI}, \ref{divfree} and (\ref{P1}). First, it is easy to see that
\begin{align}
\|\p_\nu u_1\|_{L^2}&=\|\widehat{\p_\nu u_1}\|_{L^2}=\|\xi_\nu\widehat{u_1}\|_{L^2}\nonumber\\
&\leq \left\| \xi_\nu\widehat{K_1}(t)\widehat{u_{10}}\right\|_{L^2}+\left\|\xi_\nu\widehat{K_2}(t)\widehat{b_{10}}\right\|_{L^2}\nonumber\\
&\quad+\int_0^t\left\|\xi_\nu\widehat{K_1}(t-\tau)\widehat{N_{11}}(\tau)\right\|_{L^2}d\tau\nonumber\\
&\quad+\int_0^t\left\|\xi_\nu\widehat{K_2}(t-\tau)\widehat{N_{21}}(\tau)\right\|_{L^2}d\tau\triangleq \sum_{m=1}^4\Lambda_{2,m} . \label{iu1L2}
\end{align}

Since $\nabla\cdot u_0=\nabla\cdot b_0=0$, it follows from Corollary \ref{divfree} with $\beta=1$ that
\begin{align}
\Lambda_{2,1}+\Lambda_{2,2}&\leq  C\||\xi_\nu|e^{-c\xi_{\nu}^2t}( \widehat{u_{10}},\widehat{b_{10}})\|_{L^2}+ Ce^{-ct} \|\p_\nu(\widehat{u_{10}},\widehat{b_{10}}) \|_{L^2}\nonumber\\
&\leq C(1+t)^{-\frac{5}{4}}\left(\|(u_{0},b_0)\|_{L^1}+\|\p_\nu (u_0,b_0)\|_{L^2 }\right)\notag
\\
&\leq C\varepsilon(1+t)^{-\frac{5}{4}}. \label{ii1}
\end{align}

To estimate $\Lambda_{2,3}$, similarly to the treatment of (\ref{i3}) in Part I, we only deal with  the term associated with $\widehat{\mathbb{P}_1(b\cdot\nabla b)}$ in $\widehat{N_{11}}$, which is still denoted by $\Lambda_{2,3}$ for simplicity. By (\ref{S1}) and (\ref{S2}), we infer from \eqref{P1} that
\begin{align}
\Lambda_{2,3}
&\leq C\int_0^t\left\|\xi_\nu e^{-c\xi_{\nu}^2\left(t-\tau \right)} \sum_{k=1}^{3}\xi_{\nu}^2|\xi|^{-2}\xi_k\widehat{b_kb_1}\right\|_{L^2}d\tau\notag\\
&\quad+C\int_0^t\left\|\xi_\nu e^{-c\xi_{\nu}^2\left(t-\tau \right)} \sum_{k=1}^{3}\sum_{l=2}^{3}\xi_1|\xi|^{-2}\xi_k\xi_l\widehat{b_kb_l}\right\|_{L^2}d\tau\nonumber\\
&\quad+C\int_0^t\left\|\xi_\nu e^{-c\left(t-\tau \right)} \sum_{k=1}^{3}\xi_{\nu}^2|\xi|^{-2}\xi_k\widehat{b_kb_1}\right\|_{L^2}d\tau\notag\\
&\quad+C\int_0^t\left\|\xi_\nu e^{-c\left(t-\tau \right)} \sum_{k=1}^{3}\sum_{l=2}^{3}\xi_1|\xi|^{-2}\xi_k\xi_l\widehat{b_kb_l}\right\|_{L^2}d\tau
\triangleq \sum_{m=1}^4\Lambda_{2,3m} .\label{i13b}
\end{align}

Using Corollary \ref{PI} with $\alpha=2$, Lemma \ref{ID} and (\ref{bb11}), we obtain
\begin{align}
\Lambda_{2,31}
&\leq C\int_0^{t}\left(1+t-\tau \right)^{-\frac32}\left\|\| b b_1 \|_{L_{x_2,x_3}^1}\right\|_{L_{x_1}^2}d\tau \nonumber\\
&\leq CC_0^2\varepsilon^2 \int_0^t\left(1+t-\tau \right)^{-\frac32} \left(1+\tau\right)^{-\frac{11}{8}}d\tau  \nonumber\\
&\leq C C_0^2 \varepsilon^2\left(1+t\right)^{-\frac54},\quad \forall\ t\geq1.  \label{ii2131-2}
\end{align}

The treatment of $\Lambda_{2,32}$ is slightly different. Indeed, noting that
\begin{align*}
\Lambda_{2,32}
&\leq  C\left(\int_0^{\frac t2}+\int_{\frac t2}^{t}\right)\left\|\xi_\nu e^{-c\xi_{\nu}^2\left(t-\tau \right)}\sum_{k=1}^{3}\sum_{l=2}^{3}\xi_1|\xi|^{-2}\xi_k\xi_l\widehat{b_kb_l}\right\|_{L^2}d\tau \nonumber\\
&\leq C\int_0^{\frac t2}\left\|\xi_\nu^2e^{-c\xi_{\nu}^2\left(t-\tau \right)}  \widehat{b b_\nu}\right\|_{L^2}d\tau+C\int_{\frac t2}^t\left\|\xi_\nu e^{-c\xi_{\nu}^2\left(t-\tau \right)}  \widehat{\p_\nu (b b_\nu)}\right\|_{L^2}d\tau ,
\end{align*}
we infer from Corollary \ref{PI}, (\ref{1bbv2}) and \eqref{2bbl2} that for any $0<\delta<1$,
\begin{align*}
\Lambda_{2,32}& \leq C\int_0^{\frac t2}\left(1+t-\tau \right)^{-\frac32}\left\|\| b b_\nu \|_{L_{x_2,x_3}^1}\right\|_{L_{x_1}^2}d\tau\notag\\
&\quad+C\int_{\frac t2}^{t}\left\|\xi_{\nu} e^{-c\xi_{\nu}^2\left(t-\tau \right)}  \right\|_{L^\infty}  \| \p_\nu (b b_\nu) \|_{L^2} d\tau \nonumber\\
&\leq CC_0^2\varepsilon^2\int_0^{\frac t2}\left(1+t-\tau \right)^{-\frac 32}\left(1+\tau \right)^{-1}d\tau\notag\\
&\quad+CC_0^2\varepsilon^2\int_{\frac t2}^{t} \left(t-\tau \right)^{-\frac12}\left(1+\tau \right)^{-\frac{23}{12}}d\tau \nonumber\\
&\leq CC_0^2\varepsilon^2 \left(1+t \right)^{-\frac 32+\delta}  +CC_0^2\varepsilon^2  \left(1+\tau\right)^{-\frac{17}{12}} ,
\end{align*}
and hence, by choosing $0<\delta\leq 1/4$ we easily deduce
\begin{align}
\Lambda_{2,32}\leq C C_0^{2} \varepsilon^2\left(1+t\right)^{-\frac54}. \label{ii2132-2}
\end{align}

Next, by virtue of (\ref{22bb2}), (\ref{23bb2}) and Lemma \ref{ED}, we see that
\begin{align}
\Lambda_{2,33}+ \Lambda_{2,34}
&\leq C\int_0^te^{-c\left(t-\tau \right)}\left\|\widehat{\p_\nu\left(b\cdot\na b\right)}\right\|_{L^2}d\tau\nonumber\\
&\leq C C_0^{2} \varepsilon^2\int_0^te^{-c\left(t-\tau \right)} \left(1+\tau\right)^{-\frac{11}{6}}  d\tau \nonumber\\
&\leq C C_0^{2} \varepsilon^2\left(1+t\right)^{-\frac54}. \label{ii2133}
\end{align}

Thus, substituting \eqref{ii2131-2}, \eqref{ii2132-2}  and \eqref{ii2133} into (\ref{i13b}) gives
\begin{align}
\Lambda_{2,3}
\leq C  C_0^2  \varepsilon^2\left(1+t\right)^{-\frac54}.\label{ii33}
\end{align}

For $\Lambda_{2,4}$, analogously to treatment of (\ref{i4}) in Part I, we only deal with the terms associated with $\widehat{b_2 u_1}$ and $\widehat{b_3 u_1}$ in $\widehat{N_{21}}$. By (\ref{N21}) we have
\begin{align*}
\Lambda_{2,4}
&\leq \int_0^t\left\|\xi_\nu e^{-c\xi_{\nu}^2\left(t-\tau \right)}\xi_{2}\widehat{b_2 u_1}  \right\|_{L^2}d\tau +\int_0^t\left\|\xi_\nu e^{-c\xi_{\nu}^2\left(t-\tau \right)}\xi_{3}\widehat{b_3 u_1}  \right\|_{L^2}d\tau \nonumber\\
&\quad+\int_0^t\left\|\xi_\nu e^{-c\left(t-\tau \right)}\xi_{2}\widehat{b_2 u_1}  \right\|_{L^2}d\tau +\int_0^t\left\|\xi_\nu e^{-c\left(t-\tau \right)}\xi_{3}\widehat{b_3 u_1} \right\|_{L^2}d\tau d\tau.
\end{align*}

The estimate of $\Lambda_{2,4}$ is based on the  estimates of $\|\p_i\p_2 (b_2u_1 )\|_{L^2}$ and $\|\p_i\p_3 (b_3u_1 )\|_{L^2}$ with $ i=2,3$.  Noting that (\ref{u10}), together with (\ref{anso}), implies
\begin{align}
\|\p_\nu b \|_{L_{x_1,x_3}^2L_{x_2}^\infty}
& \leq C\|\p_\nu b \|_{L^2}^{\frac12} \|\p_2\p_\nu b \|_{L^2}^{\frac12}
 \leq CC_0\varepsilon (1+t)^{-\frac{23}{24}}\label{23b-1}
  \end{align}
and
\begin{align}
\|\p_\nu u_1 \|_{L_{x_1,x_3}^{\infty}L_{x_2}^2} &\leq    C
\|\p_\nu u_1 \|_{L^2}^{\frac14} \|\p_1\p_\nu u_1 \|_{L^2}^{\frac14}\|\p_3\p_\nu u_1\|_{L^2}^{\frac14}\|\p_1\p_3\p_\nu u_1\|_{L^2}^{\frac14}\notag\\
&  \leq CC_0\varepsilon (1+t)^{-\frac{29}{24}}.\label{23u1-2}
\end{align}
Thus, it follows from  (\ref{23b-1}), (\ref{23u1-2}), (\ref{binfty}), (\ref{b1infty}) and (\ref{u10}) that for $\nu\in\{2,3\}$,
\begin{align}
&\|\p_2\p_\nu\left(b_2 u_1\right)\|_{L^2} +\|\p_3\p_\nu\left(b_3 u_1\right)\|_{L^2}  \nonumber\\
&\quad \leq C \|b \|_{L^\infty} \left(\|\p_2\p_\nu u_1 \|_{L^2}+\|\p_3\p_\nu u_1 \|_{L^2}\right) \nonumber\\
&\qquad +C\|u_1\|_{L^\infty}\left( \|\p_2\p_\nu b  \|_{L^2} +\|\p_3\p_\nu b\|_{L^2}\right)\nonumber\\
&\qquad+ C\|\p_\nu u_1 \|_{L_{x_1,x_3}^{\infty}L_{x_2}^2} \|\p_\nu b\|_{L_{x_1,x_3}^{2}L_{x_2}^\infty} \nonumber\\
&\quad\leq   CC_0^2\varepsilon^2\left(1+t\right)^{-\frac{11}{6}}. \label{i22N2N}
\end{align}

With the help of  \eqref{bb11} and \eqref{i22N2N}, similarly to the derivation of (\ref{i4b}), we deduce
\begin{align}
\Lambda_{2,4}&\leq C\int_0^{t}\left(1+t-\tau \right)^{-\frac32}\left(\left\|\|  b_2u_1 \|_{L_{x_2,x_3}^1}\right\|_{L_{x_1}^2}+\left\|\|  b_3u_1 \|_{L_{x_2,x_3}^1}\right\|_{L_{x_1}^2}\right)d\tau \nonumber\\
&\quad\quad+C\int_{0}^{t} e^{-c\left(t-\tau\right)}\left(\|\p_\nu\p_2\left( b_2u_1\right)  \|_{L^2}+\|\p_\nu\p_3\left(b_3u_1\right)  \|_{L^2}\right)d\tau \nonumber\\
&\quad\leq CC_0^2\varepsilon^2\int_0^t\left[\left(1+t-\tau \right)^{-\frac32} \left(1+\tau\right)^{-\frac{11}{8}}+e^{-c\left(t-\tau\right)}\left(1+\tau\right)^{-\frac{11}{6}}\right]d\tau \nonumber\\
&\quad\leq C C_0^2 \varepsilon^2\left(1+t\right)^{-\frac{11}{8}}. \label{ii14b}
\end{align}

Plugging (\ref{ii1}), (\ref{ii33}) and (\ref{ii14b}) into (\ref{iu1L2}), choosing $C_0$ large enough and $\varepsilon$ suitably small, we get that for $ \nu\in\{2,3\}$,
\begin{align*}
\|\p_\nu u_1(t)\|_{L^2}\leq \left(C\varepsilon + C C_0^2 \varepsilon^2\right)\left(1+t\right)^{-\frac54}\leq \frac{ C_0}{2} \varepsilon\left(1+t\right)^{-\frac54}.
\end{align*}

\vskip .1in

\textbf{Part III.  Improved decay rates  of $\|\p_3\p_{i}u_1 \|_{L^2} $ with $i\in\{2,3\}$}
\vskip .1in

This step aims to improve the decay estimates of $\|\p_3\p_i u_1\|_{L^2}$ with $i=2,3$. First of all, we have by
  \eqref{u}  that
\begin{align}
\|\p_{3}\p_{i}u_1\|_{L^2}&=\|\widehat{\p_{3}\p_{i}u_1}\|_{L^2}=\|\xi_{3}\xi_{i}\widehat{u_1}\|_{L^2}\nonumber\\
&\leq \left\| \xi_{3}\xi_{i}\widehat{K_1}(t)\widehat{u_{10}}\right\|_{L^2}+\left\|\xi_{3}\xi_{i}\widehat{K_2}(t)\widehat{b_{10}}\right\|_{L^2}\nonumber\\
&\quad+\int_0^t\left\|\xi_{3}\xi_{i}\widehat{K_1}(t-\tau)\widehat{N_{11}}(\tau)\right\|_{L^2}d\tau\nonumber\\
&\quad+\int_0^t\left\|\xi_{3}\xi_{i}\widehat{K_2}(t-\tau)\widehat{N_{21}}(\tau)\right\|_{L^2}d\tau
\triangleq\sum_{m=1}^4 \Lambda_{3,m}.\label{p3p2u1L2}
\end{align}

For $i\in\{2,3\}$, we can apply (\ref{S1}), (\ref{S2}) and Corollary \ref{divfree} with $\beta=2$ to get
\begin{align}
\Lambda_{3,1}+\Lambda_{3,2}
&\leq C\||\xi_\nu|^2e^{-c\xi_{\nu}^2t}\widehat{u_{10}}\|_{L^2}+ Ce^{-ct}  \|\xi_3\xi_i \widehat{u_{10}}  \|_{L^2}\nonumber\\
&\leq C(1+t)^{-\frac{7}{4}}\left(\| u_{0}\|_{L^1}+\|\p_3\p_iu_0\|_{L^2 }\right)\leq C\varepsilon(1+t)^{-\frac{7}{4}}.\label{M321}
\end{align}

Analogously to that in Part II, to deal with $\Lambda_{3,3}$ and $\Lambda_{3,4}$, it suffices to consider the terms associated with $\widehat{\mathbb{P}_1(b\cdot\nabla b)}$ in $\widehat{N_{11}}$ and $\widehat{b_2u_1}, \widehat{ b_3u_1}$ in $\widehat{N_{21}}$ (still denoted by $\Lambda_{3,3}$ and $\Lambda_{3,4}$).
First, it follows from \eqref{P1} and Proposition \ref{lem2.1} that
\begin{align}
\Lambda_{3,3}
&\triangleq\int_0^t\left\|\xi_3\xi_i\widehat{K_1}(t-\tau)\widehat{\mathbb P_1 (b\cdot\nabla b )}\right\|_{L^2}d\tau\nonumber\\
&\leq C\int_0^t\left\|\xi_3\xi_i e^{-c\xi_{\nu}^2 (t-\tau  )}\sum_{k=1}^{3}\xi_{\nu}^2|\xi|^{-2}\xi_k\widehat{b_kb_1}\right\|_{L^2}d\tau\nonumber\\
&\quad+C\int_0^t\left\|\xi_3\xi_i e^{-c\xi_{\nu}^2 (t-\tau  )} \sum_{k=1}^{3}\sum_{l=2}^{3}\xi_1|\xi|^{-2}\xi_k\xi_l\widehat{b_kb_l}\right\|_{L^2}d\tau\nonumber\\
&\quad+C\int_0^t\left\|\xi_3\xi_i e^{-c (1+\xi_3^2 ) (t-\tau  )} \sum_{k=1}^{3}\xi_{\nu}^2|\xi|^{-2}\xi_k\widehat{b_kb_1}\right\|_{L^2}d\tau\nonumber\\
&\quad+C\int_0^t\left\| \xi_3\xi_i e^{-c (1+\xi_3^2 ) (t-\tau )} \sum_{k=1}^{3}\sum_{l=2}^{3}\xi_1|\xi|^{-2}\xi_k\xi_l\widehat{b_kb_l}\right\|_{L^2}d\tau\nonumber\\
&\triangleq \Lambda_{3,31} +\Lambda_{3,32} +\Lambda_{3,33} +\Lambda_{3,34}.\label{M323b}
\end{align}

To bound $\Lambda_{3,31}$, we first observe from (\ref{anso}) and (\ref{u10})  that
\begin{align}
\|\p_3 ( b\cdot \na b_1 )\|_{L^2}
&\leq \|b\|_{ L^\infty}\|\p_3 \na b_1 \|_{L^2} +C\|\p_3 b \|_{L^2}^{\frac12}\|\p_2\p_3b \|_{L^2}^{\frac12}\nonumber\\
&\qquad\times\|\na b_1 \|_{L^2}^{\frac14}\|\p_1\na b_1\|_{L^2}^{\frac14}\|\p_3\na b_1\|_{L^2}^{\frac14}
 \|\p_1\p_3\na b_1\|_{L^2}^{\frac14} \nonumber\\
&\leq CC_0^2 \varepsilon^2\left(1+t\right)^{-\frac{7}{3}}, \label{p3bb}
\end{align}
which,  together with  \eqref{bb11} and Corollary \ref{PI}, shows that
\begin{align}
\Lambda_{3,31}&\leq \left(\int_0^{\frac{t}{2}}+ \int_{\frac t2}^t \right) \left\|\xi_3\xi_i e^{-c\xi_{\nu}^2 (t-\tau  )}\sum_{k=1}^{3}\xi_{\nu}^2|\xi|^{-2}\xi_k\widehat{b_kb_1}\right\|_{L^2}d\tau\nonumber\\
&  \leq  C\int_0^{\frac{t}{2}}\left\||\xi_\nu|^3 e^{-c\xi_{\nu}^2 (t-\tau  )}\widehat{b b_1 }\right\|_{L^2}d\tau \nonumber\\
&\quad + C\int^t_{\frac{t}{2}}\left\|\xi_\nu e^{-c\xi_{\nu}^2 (t-\tau  )}\widehat{\p_3(b \cdot \nabla b_1) }\right\|_{L^2}d\tau\nonumber\\
& \leq C\int_0^{\frac{t}{2}}\left(1+t-\tau \right)^{-2} \| bb_1 \|_{L^2_{x_1}L_{x_2,x_3}^1} d\tau \nonumber\\
 &\quad+C\int_{{\frac{t}{2}}}^{t}\left\|\xi_\nu e^{-c\xi_{\nu}^2 (t-\tau  )}\right\|_{L^\infty} \| \p_3(b\cdot \na b_1)  \|_{L^2}d\tau  \nonumber\\
& \leq CC_0^2\varepsilon^2 \int_0^{\frac{t}{2}}\left(1+t-\tau \right)^{-2} \left(1+\tau\right)^{-\frac{11}{8}}d\tau\nonumber\\
 &\quad+ CC_0^2 \varepsilon^2 \int_{{\frac{t}{2}}}^t\left(t-\tau \right)^{-\frac12} \left(1+\tau\right)^{-\frac{7}{3}}d\tau  \nonumber\\
& \leq   C C_0^2 \varepsilon^2\left(1+t\right)^{-\frac74}. \label{M3231}
\end{align}

In a similar manner, by choosing $ 0<\delta\leq1/4$ we deduce that for $i\in\{2,3\}$,
\begin{align}
\Lambda_{3,32}
&\leq C\int_0^{\frac t2} (1+t-\tau  )^{-2}  \| bb_\nu \|_{L^2_{x_1}L_{x_2,x_3}^1}  d\tau \nonumber\\
&\quad+C\int_{\frac t2}^{t}  (t-\tau  )^{-\frac12} \|\p_3\p_\nu (b b_\nu )  \|_{L^2} d\tau \nonumber\\
& \leq CC_0^2\varepsilon^2\int_0^{\frac t2}\left(1+t-\tau \right)^{-2}\left(1+\tau \right)^{-1}d\tau\nonumber\\
&\quad+CC_0^2\varepsilon^2\int_{\frac t2}^{t} \left(t-\tau \right)^{-\frac12}\left(1+\tau\right)^{-\frac{7}{3}}d\tau \nonumber\\
& \leq   CC_0^2\varepsilon^2 \left(1+t\right)^{-\frac74}, \label{M3232}
\end{align}
where we have used    \eqref{1bbv2} and the following inequality (due to (\ref{u10})):
\begin{align}
\|\p_3\p_\nu ( b  b_\nu )\|_{L^2} &\leq C\|b\|_{ L^\infty}\|\p_3\p_
\nu b \|_{L^2}+C\||\p_3b|| \p_\nu b| \|_{L^2}\nonumber\\
&\leq C\|b\|_{ L^\infty}\|\p_3\p_
\nu b \|_{L^2}+C\|\p_3 b\|_{L^2}^{\frac12}\|\p_2\p_3b \|_{L^2}^{\frac12}\nonumber\\
&\qquad\times\|\p_\nu b \|_{L^2}^{\frac14}\|\p_1\p_\nu b \|_{L^2}^{\frac14}\|\p_3\p_\nu b\|_{L^2}^{\frac14}\|\p_1\p_3\p_\nu b \|_{L^2}^{\frac14}\nonumber\\
& \leq CC_0^2 \varepsilon^2\left(1+t\right)^{-\frac{7}{3}}.\nonumber 
\end{align}

Similarly to that in (\ref{iii3233}),
 by \eqref{22bb2}, \eqref{23bb2} and (\ref{ineq}) we  obtain
\begin{align}
\Lambda_{3,33}+\Lambda_{3,34}
&\leq C\int_0^te^{-c (t-\tau  )} (t-\tau )^{-\frac{1}{2}} \|\p_i (b\cdot\na b ) \|_{L^2}d\tau\nonumber\\
&\leq  CC_0^2\varepsilon^2\int_0^te^{-c (t-\tau  )} (t-\tau )^{-\frac{1}{2}} (1+\tau\ )^{-\frac{11}{6}} d\tau \nonumber\\
&\leq CC_0^2\varepsilon^2\left(1+t\right)^{-\frac{7}{4}},\quad\forall\ i\in\{2,3\}. \label{M3233}
\end{align}

Dragging \eqref{M3231}, \eqref{M3232}  and \eqref{M3233} into \eqref{M323b}, we see that for $i\in\{2,3\}$,
\begin{align}
\Lambda_{3,3}
\leq C C_0^2 \varepsilon^2\left(1+t\right)^{-\frac74} .\label{M323bg}
\end{align}

Next, we deal with $\Lambda_{3,4}$. First, we make use of (\ref{u10}), (\ref{binfty}), (\ref{b1infty}) and the divergence-free condition $\nabla \cdot u=0$ in a manner as that in (\ref{p1b1-1}) to get
\begin{align}
&\|\p_2\p_3 (b_2 u_1 )\|_{L^2} +\|\p_3^2 (b_3 u_1 )\|_{L^2}   \nonumber\\
&\quad \leq  \|b \|_{L^\infty} \|\p_\nu \p_3 u_1 \|_{L^2} +\|u_1\|_{L^\infty} \|\p_\nu\p_3 b  \|_{L^2} \nonumber\\
&\qquad + \|\p_2 u_1 \|_{L_{x_1,x_2}^{2}L_{x_3}^\infty} \|\p_3 b_2\|_{L_{x_1,x_2}^{\infty}L_{x_3}^2}+\|\p_3 u_1 \|_{L_{x_1,x_2}^{\infty}L_{x_3}^2} \|\p_2 b_2\|_{L_{x_1,x_2}^{2}L_{x_3}^\infty} \nonumber\\
&\qquad+ 2\|\p_3 u_1 \|_{L_{x_1,x_3}^{\infty}L_{x_2}^2} \|\p_3 b_3\|_{L_{x_1,x_3}^{2}L_{x_2}^\infty}  \nonumber\\
&\quad\leq   CC_0^2\varepsilon^2\left(1+t\right)^{-\frac{125}{48}}.\label{23b23u1}
\end{align}

Analogously to the derivations of (\ref{iii3231-2}) and (\ref{iii3233}), using  \eqref{bb11}, \eqref{i22N2N}, (\ref{23b23u1}), Proposition \ref{lem2.1} and  Corollary \ref{PI}, we infer from the definition of $\widehat{N_{21}}$ in (\ref{N21}) that
\begin{align}
\Lambda_{3,4}& \leq C\int_0^{\frac{t}{2}}\left(1+t-\tau \right)^{-2}\left\|\| ( b_2u_1, b_3u_1)\|_{L_{x_2x_3}^1}\right\|_{L_{x_1}^2} d\tau \nonumber\\
&\quad+ C\int_{\frac{t}{2}}^{t}\left(t-\tau \right)^{-\frac12}\|\left (\p_2\p_3 (b_2u_1 ),\p^2_3 (b_3u_1 )\right)\|_{L^2}d\tau \nonumber\\
&\quad+C\int_{0}^{t} e^{-c\left(t-\tau\right)}\left(t-\tau \right)^{-\frac12}\|\left(\p_i\p_2 ( b_2u_1 ) ,\p_i\p_3 (b_3u_1)\right)  \|_{L^2} d\tau \nonumber\\
&\leq CC_0^2\varepsilon^2\int_0^{\frac{t}{2}}\left(1+t-\tau \right)^{-2} \left(1+\tau\right)^{-\frac{11}{8}}d\tau\nonumber\\
&\quad+CC_0^2\varepsilon^2\int_{\frac{t}{2}}^t\left(t-\tau \right)^{-\frac{1}{2}} \left(1+\tau\right)^{-\frac{125}{48}}d\tau\nonumber\\
&\quad
+CC_0^2\varepsilon^2\int_0^te^{-c\left(t-\tau\right)}\left(t-\tau \right)^{-\frac{1}{2}}\left(1+\tau\right)^{-\frac{11}{6}}d\tau \nonumber\\
&\leq C C_0^2 \varepsilon^2\left(1+t\right)^{-\frac{7}{4}}. \label{M324b}
\end{align}

Thus, plugging   \eqref{M321}, (\ref{M323bg})  and \eqref{M324b} into (\ref{p3p2u1L2}), we see that for $i\in\{2,3\}$,
\begin{align*}
\|\p_3\p_iu_1(t)\|_{L^2}\leq \left( C\varepsilon +  C C_0^2 \varepsilon^2\right)\left(1+t\right)^{-\frac74}\leq \frac{ C_0}{2} \varepsilon\left(1+t\right)^{-\frac74}, \nonumber
\end{align*}
provided $C_0$ is chosen to be large enough and $\varepsilon$ is chosen to be suitably small.

\vskip .1in

\textbf{Part IV. Improved decay rate of $\|\p_3^3u_1\|_{L^2}$}
\vskip .1in

The enhanced decay rate of $\|\p_3^3u_1\|_{L^2}$ needs more subtle analysis. To do so, we first infer from \eqref{u} that
\begin{align}
\|\p_3^3u_1\|_{L^2}&=\|\widehat{\p_3^3u_1}\|_{L^2}=\|\xi_{3}^3\widehat{u_1}\|_{L^2}\nonumber\\
&\leq \left\| \xi_{3}^3\widehat{K_1}(t)\widehat{u_{10}}\right\|_{L^2}+\left\|\xi_{3}^3\widehat{K_2}(t)\widehat{b_{10}}\right\|_{L^2}\nonumber\\
&\quad+\int_0^t\left\|\xi_{3}^3\widehat{K_1}(t-\tau)\widehat{N_{11}}(\tau)\right\|_{L^2}d\tau\nonumber\\
&\quad+\int_0^t\left\|\xi_{3}^3\widehat{K_2}(t-\tau)\widehat{N_{21}}(\tau)\right\|_{L^2}d\tau \triangleq \sum_{m=1}^4\Lambda_{4,m},\label{p33u1L2}
\end{align}
where the first two terms can be easily bounded by Corollary \ref{divfree} with $\beta=3$,
\begin{align}
\Lambda_{4,1}+\Lambda_{4,2}&\leq  C\|\xi_{3}^3e^{-c\xi_{\nu}^2t}\widehat{u_{10}}\|_{L^2}+ Ce^{-ct} \left\|\xi_{3}^3\widehat{u_{10}}\right\|_{L^2}\nonumber\\
&\leq C(1+t)^{-\frac{9}{4}}\left(\| u_{0}\|_{L^1}+\|\p_3^3u_0\|_{L^2 }\right)\leq C\varepsilon(1+t)^{-\frac{9}{4}}.\label{Q1}
\end{align}

Next, we consider the term  $\widehat{\mathbb P_1 (b\cdot\nabla b )}$ in $\widehat{N_{11}}$, which is still denoted by $\Lambda_{4,3}$ for notational convenience. It follows from (\ref{P1}), (\ref{S1}) and (\ref{KS23}) that
\begin{align}
\Lambda_{4,3}
&\leq C\int_0^t\left\|\xi_3^3e^{-c\xi_{\nu}^2\left(t-\tau \right)} \sum_{k=1}^{3}\xi_{\nu}^2|\xi|^{-2}\xi_k\widehat{b_kb_1}\right\|_{L^2}d\tau\nonumber\\
&\quad+C\int_0^t\left\|\xi_3^3e^{-c\xi_{\nu}^2\left(t-\tau \right)} \sum_{k=1}^{3}\sum_{l=2}^{3}\xi_1|\xi|^{-2}\xi_k\xi_l\widehat{b_kb_l}\right\|_{L^2}d\tau\nonumber\\
&\quad+C\int_0^t\left\|\xi_3^3e^{-c\left(1+\xi_3^2\right)\left(t-\tau \right)} \sum_{k=1}^{3} \xi_{\nu}^2|\xi|^{-2}\xi_k\widehat{b_kb_1}\right\|_{L^2}d\tau\nonumber\\
&\quad+C\int_0^t\left\|\xi_3^3e^{-c\left(1+\xi_3^2\right)\left(t-\tau \right)} \sum_{k=1}^{3}\sum_{l=2}^{3}\xi_1|\xi|^{-2}\xi_k\xi_l\widehat{b_kb_l}\right\|_{L^2}d\tau\nonumber\\
&\triangleq \Lambda_{4,31}+\Lambda_{4,32}+\Lambda_{4,33}+\Lambda_{4,34}.\label{Q3b}
\end{align}

Similarly to the derivation of \eqref{ii2132-2}, by (\ref{bb11})  we have
\begin{align}
\Lambda_{4,31}
&\leq C\int_0^{\frac{t}{2}} (1+t-\tau  )^{-\frac52}\|  bb_1 \|_{L_{x_1}^2 L_{x_2,x_3}^1} d\tau \nonumber\\
 &\quad+C\int_{{\frac{t}{2}}}^{t} (t-\tau  )^{-\frac12} \| \p_3^2(b\cdot \na b_1)(\tau)\|_{L^2}d\tau \nonumber\\
&\leq CC_0^2\varepsilon^2 \int_0^{\frac{t}{2}}\left(1+t-\tau \right)^{-\frac52} \left(1+\tau\right)^{-\frac{11}{8}}d\tau\nonumber\\
&\quad+ CC_0^2 \varepsilon^2 \int_{{\frac{t}{2}}}^t\left(t-\tau \right)^{-\frac12} \left(1+\tau\right)^{-\frac{11}{4}}d\tau  \nonumber\\
&\leq C C_0^2 \varepsilon^2\left(1+t\right)^{-\frac94}, \label{Q31}
\end{align}
since it  follows from (\ref{u10}) and the divergence-free condition $\nabla\cdot b=0$ that
\begin{align*}
\|\p_3^2\left( b\cdot \na b_1\right)\|_{L^2}
&\leq \|b\|_{L^\infty}\|\na \p_3^2b_1\|_{L^2} +\|\p_3^2  b\|_{L_{x_1,x_3}^2L_{x_2}^\infty}\| \na b_1 \|_{L_{x_1,x_3}^\infty L_{x_2}^2}\nonumber\\
&\quad+\|\na\p_3 b_1\|_{L_{x_1,x_2}^2L_{x_3}^\infty}\| \p_3 b \|_{L_{x_1,x_2}^\infty L_{x_3}^2}\nonumber\\
& \leq CC_0^2 \varepsilon^2\left(1+t\right)^{-\frac{11}{4}}. 
\end{align*}

In a similar manner, applying Corollary \ref{PI}  with $\alpha=4$, by \eqref{1bbv2} we have
\begin{align}
\Lambda_{4,32}
&\leq C\int_0^{\frac t2}\left(1+t-\tau \right)^{-\frac52}  \| (bb_2 , bb_3  )\|_{L_{x_1}^2 L_{x_2,x_3}^1} d\tau\nonumber\\
&\quad+C\int_{\frac t2}^{t}\left\|\xi_{\nu} e^{-c\xi_{\nu}^2\left(t-\tau \right)} \right\|_{L^\infty}  \|(\p_3^3 (b b_2), \p_3^3(b b_3 )) \|_{L^2} d\tau \nonumber\\
&\leq CC_0^2\varepsilon^2\int_0^{\frac t2}\left(1+t-\tau \right)^{-\frac52}\left(1+\tau \right)^{-1}d\tau\nonumber\\
&\quad+CC_0^2\varepsilon^2\int_{\frac t2}^{t} \left(t-\tau \right)^{-\frac12}\left(1+\tau\right)^{-\frac{17}{6}}d\tau \nonumber\\
&\leq CC_0^2\varepsilon^2 \left(1+t \right)^{-\frac52+\delta}  +CC_0^2\varepsilon^2  \left(1+\tau\right)^{-\frac{7}{3}}\nonumber\\
&\leq CC_0^2\varepsilon^2 \left(1+t\right)^{-\frac{9}{4}}, \label{Q32}
\end{align}
provided  $0<\delta\leq1/4$. Here, we have used (\ref{binfty}) and (\ref{u10}) to get that
\begin{align}
\|&\p_3^3\left( b  b_2\right)\|_{L^2}+\|\p_3^3\left(b b_3\right) \|_{L^2}   \nonumber\\
&\leq C\|b\|_{ L^\infty}\|\p_3^3b \|_{L^2}+C\|\p_3^2 b\|_{L^2_{x_2,x_3}L^\infty_{x_1}}\|\p_3b\|_{L^\infty_{x_2,x_3}L^2_{x_1}}\nonumber\\
&\leq C\|b\|_{ L^\infty}\|\p_3^3b \|_{L^2}+ C\|\p_3^2 b\|_{L^2}^{\frac12}\|\p_1\p_3^2b \|_{L^2}^{\frac12}\nonumber\\
&\qquad\qquad \times\|\p_3 b \|_{L^2}^{\frac14}\|\p_2\p_3 b \|_{L^2}^{\frac14}\|\p_3^2 b\|_{L^2}^{\frac14}\|\p_2\p_3^2b \|_{L^2}^{\frac14}\nonumber\\
& \leq CC_0^2 \varepsilon^2\left(1+t\right)^{-\frac{17}{6}}.\nonumber 
\end{align}

Due to \eqref{33bbl2} and (\ref{ineq}), it is easily seen that
\begin{align*}
\Lambda_{4,33}+\Lambda_{4,34}
&\leq C\int_0^te^{-c\left(t-\tau \right)}\left(t-\tau\right)^{-\frac{1}{2}}\left\|\p_3^2\left(b\cdot\na b\right) \right\|_{L^2}d\tau\nonumber\\
&\leq  CC_0^2\varepsilon^2\int_0^te^{-c\left(t-\tau \right)}\left(t-\tau\right)^{-\frac{1}{2}}\left(1+\tau\right)^{-\frac{29}{12}} d\tau \nonumber\\
&\leq CC_0^2\varepsilon^2\left(1+t\right)^{-\frac{29}{12}}, 
\end{align*}
which,  together with (\ref{Q31}), (\ref{Q32}) and (\ref{Q3b}), yields
\begin{equation}
\Lambda_{4,3}\leq CC_0^2\varepsilon^2\left(1+t\right)^{-\frac{9}{4}},\quad\forall \ t\geq1.\label{Q3}
\end{equation}

Analogously to that in Part II, we only deal with the terms $\widehat{b_2u_1}, \widehat{ b_3u_1}$ in $\Lambda_{4,4}$, which are still marked by $\Lambda_{4,4}$. Due to (\ref{u10}), \eqref{binfty} and (\ref{b1infty}), one has
\begin{align}
&\| \p_3^3\left(b_2u_1\right)\|_{L^2}+\|\p^3_3\left(b_3u_1\right)\|_{L^2}  \nonumber\\
&\quad \leq C\|b\|_{L^\infty}\|\p_3^3 u_1 \|_{L^2}
+C\|\p_3u_1\|_{L_{x_1,x_3}^{\infty}L_{x_2}^2}\|\p_3^2b\|_{L_{x_1,x_3}^{2}L_{x_2}^\infty} \nonumber\\
&\qquad+C\|u_1\|_{L^\infty}\|\p_3^3b \|_{L^2}+C\|\p_3^2u_1\|_{L_{x_1,x_2}^{2}L_{x_3}^\infty}\|\p_3b\|_{L_{x_1,x_2}^{\infty}L_{x_3}^2}\nonumber\\
&\quad\leq CC_0^2\varepsilon^2\left(1+t\right)^{-\frac{37}{12}},\nonumber
\end{align}
and
\begin{align*}
&\|\p_2\p_3^2\left( b_2u_1\right)\|_{L^2}+\|\p_3^3\left(b_3u_1\right)\|_{L^2}  \nonumber\\
&\quad \leq \|b\|_{L^\infty}\left(\|\p_2\p_3^2u_1\|_{L^2}+\|\p_3^3 u_1 \|_{L^2}\right) +\|u_1\|_{L^\infty}\left(\|\p_2\p_3^2b_2\|_{L^2}+\|\p_3^3b_3 \|_{L^2}\right) \nonumber\\
&\qquad+C\|\p_3^2b_2\|_{L_{x_1,x_3}^2L_{x_2}^\infty}\|\p_2u_1\|_{L_{x_1,x_3}^{\infty}L_{x_2}^2}+C\|\p_3u_1 \|_{L_{x_1,x_2}^{\infty}L_{x_3}^2}\|\p_2\p_3b_2\|_{L_{x_1,x_2}^{2}L_{x_3}^\infty}\nonumber\\
&\qquad
+C\|\p_2\p_3u_1\|_{L_{x_1,x_2}^{2}L_{x_3}^\infty}\|\p_3b_2\|_{L_{x_1,x_2}^{\infty}L_{x_3}^2}+C\|\p_3^2u_1\|_{L_{x_1,x_3}^{2}L_{x_2}^\infty}\|\p_2b_2\|_{L_{x_1,x_3}^{\infty}L_{x_2}^2}\nonumber\\
&\qquad
+C\|\p_3u_1\|_{L_{x_2,x_3}^\infty L_{x_1}^{2}}\|\p_3^2b_3\|_{L_{x_2,x_3}^2 L_{x_1}^{\infty}}+C\|\p_3^2u_1\|_{L_{x_1,x_2}^{2}L_{x_3}^\infty}\|\p_3b_3\|_{L_{x_1,x_2}^{\infty}L_{x_3}^2}\nonumber\\
&\quad\leq CC_0^2\varepsilon^2\left(1+t\right)^{-\frac{11}{4}},
\end{align*}
 which, together with \eqref{bb11}, \eqref{ineq} and \eqref{i22N2N}, yields
\begin{align}
\Lambda_{4,4}&\leq C\int_0^{\frac{t}{2}}\left(1+t-\tau \right)^{-\frac52}\left\|\| ( b_2u_1, b_3u_1)\|_{L_{x_2,x_3}^1}\right\|_{L_{x_1}^2}d\tau \nonumber\\
&\quad\quad+ C\int_{\frac{t}{2}}^{t}\left(t-\tau \right)^{-\frac12} \left\| \left(\p_3^3(b_2u_1),\p^3_3 (b_3u_1)\right)\right\|_{L^2} d\tau \nonumber\\
&\quad\quad+C\int_{0}^{t} e^{-c\left(t-\tau\right)}\left(t-\tau \right)^{-\frac12}\left\|\left(\p_2\p_3^2 ( b_2u_1),  \p_3^3 (b_3u_1)\right)\right \|_{L^2} d\tau \nonumber\\
&\quad\leq CC_0^2\varepsilon^2\int_0^{\frac{t}{2}}\left(1+t-\tau \right)^{-\frac52}\left(1+\tau\right)^{-\frac{11}{8}}d\tau\nonumber\\
&\qquad+CC_0^2\varepsilon^2\int_{\frac{t}{2}}^t\left(t-\tau \right)^{-\frac{1}{2}} \left(1+\tau\right)^{-\frac{11}{4}}d\tau\nonumber\\
&\quad\quad
+CC_0^2\varepsilon^2\int_0^te^{-c\left(t-\tau\right)}\left(t-\tau \right)^{-\frac{1}{2}}\left(1+\tau\right)^{-\frac{37}{12}}d\tau \nonumber\\
&\quad\leq C C_0^2 \varepsilon^2\left(1+t\right)^{-\frac{9}{4}}. \label{Q4b}
\end{align}

Thus, inserting
  \eqref{Q1}, \eqref{Q3}  and \eqref{Q4b} into \eqref{p33u1L2}, we obtain by choosing $C_0$ large enough and $\varepsilon$ suitably small that
$$
\|\p_3^3u_1(t)\|_{L^2}\leq \left(C\varepsilon +  C C_0^2 \varepsilon^2\right)\left(1+t\right)^{-\frac94}\leq \frac{C_0}{2}\varepsilon (1+t)^{-\frac94}.
$$

\vskip .1in

\begin{proof}[Proof of Theorem \ref{thm1.2}]Based on (\ref{u10}) and all the estimates established in Subsections 4.2 and 4.3, we
immediately arrive at the desired decay rates of the solutions stated in Theorem \ref{thm1.2} by applying the bootstrapping arguments.
\end{proof}

\vskip .1in

\section{Improved Decay Rates and Proof of Theorem \ref{thm1.3} }
\subsection{Estimates of nonlinear terms.}

This section is devoted to the proof of Theorem \ref{thm1.3}. It is worth pointing out that the following decay rates of $(u,b)$ stated in Theorem \ref{thm1.2}  are optimal and cannot be improved any more,
\begin{align}
& \|\p_1^k(u,b)(t)\|_{L^2}\leq C_0\varepsilon(1+t)^{-\frac{1}{2}},  \quad\quad\,  \|\p_{i}(u, b)(t)\|_{L^2}\leq C_0\varepsilon(1+t)^{-1},  \nonumber\\
& \|(u_1,b_1)(t)\|_{L^2}\leq C_0\varepsilon(1+t)^{-\frac{3}{4}},  \quad\quad\;\,   \|\p_{i}(u_1, b_1)(t)\|_{L^2}\leq C_0\varepsilon(1+t)^{-\frac{5}{4}},  \nonumber\\
& \|\p_{i}\p_3(u_1, b_1)(t)\|_{L^2}\leq C_0\varepsilon(1+t)^{-\frac{7}{4}},\;\;\,  \| \p_3^3(u_1, b_1)(t)\|_{L^2}\leq C_0\varepsilon(1+t)^{-\frac{9}{4}},\label{u101}
\end{align}
where $k\in\{0,1\}$ and $i\in\{2,3\}$. Moreover, under the conditions of Theorem \ref{thm1.3} it has been shown in \cite{LinWZ1} that for any $t\geq0$,
\begin{equation}
	\|(u,b)(t)\|_{H^4}^2+\int_0^T\left(\|(\p_3u,\p_2b,\p_3 b)\|_{H^4}^2+\|\p_2u\|_{H^3}^2\right)d t\leq \ep^2,	\label{u102}
	\end{equation}
provided $\varepsilon>0$ is sufficiently small.
In addition to (\ref{u101}),  to prove Theorem \ref{thm1.3},  we make the following further ansatz:
	\begin{align}
	& \|\p_1\p_{i}(u,b)(t)\|_{L^2}\leq C_0\varepsilon(1+t)^{-1},\;\; \;\;\;
	\|\p_i\p_j(u, b)(t)\|_{L^2}\leq C_0\varepsilon(1+t)^{-\frac32},  \nonumber\\
	& \|\p_1^2(u, b)\|_{L^2}\leq C_0\varepsilon(1+t)^{-\frac{1}{2}}, \;\;\;\;\;\;\;\;\;\; \; \|\p_{1}\p_{i}\p_3(u,b)(t)\|_{L^2}\leq C_0\varepsilon(1+t)^{-\frac32}, \nonumber\\
	& \|\p_l\p_2^2(u,b)(t)\|_{L^2}\leq C_0\varepsilon(1+t)^{-1},\;\;\; \;\; \|\p_1^2\p_i(u,b)(t)\|_{L^2}\leq C_0\varepsilon(1+t)^{-1},\nonumber\\
	& \|\p_{i}\p_{j}\p_3(u,b)(t)\|_{L^2}\leq C_0\varepsilon(1+t)^{-2}, \;\;
	\|\p_1^3(u,b)(t)\|_{L^2}\leq C_0\varepsilon(1+t)^{-\frac38},\nonumber\\
	&\|\p_1^3\p_3(u,b)(t)\|_{L^2}\leq C_0\varepsilon(1+t)^{-\frac12},\;\;\;\; \|\p_1^2\p_3^2(u,b)\|_{L^2}\leq C_0\varepsilon(1+t)^{-\frac{11}{8}},\nonumber\\
	& \|\p_l\p_h\p_2\p_3(u,b)\|_{L^2}\leq C_0\varepsilon(1+t)^{-1},\;\; \;\|\p_1\p_3^3(u,b)\|_{L^2}\leq C_0\varepsilon(1+t)^{-2},\nonumber\\
	&\|\p_l\p_2\p_3^2(u,b)\|_{L^2}\leq C_0\varepsilon(1+t)^{-2},
	\;\;\;\;\;\;
	\|\p_i\p_3^3(u,b)\|_{L^2}\leq C_0\varepsilon(1+t)^{-\frac52} \label{u4}
\end{align}
and
\begin{align}
&\|\p_i\p_j(u_1,b_1)\|_{L^2}\leq C_0\varepsilon(1+t)^{-\frac74},
		\quad \,\,
		\|\p_i\p_3^2(u_1,b_1)\|_{L^2}\leq C_0\varepsilon(1+t)^{-\frac94},\nonumber\\
		&\|\p_i\p_3^3(u_1,b_1)\|_{L^2}\leq C_0\varepsilon(1+t)^{-\frac{11}{4}},\label{u5}
\end{align}
where $k\in\{0,1\}$, $i,j\in\{2,3\}$ and $l,h\in\{1,2\}$ and $C_0$ will be specified later.  By means of the method of spectral analysis, we can show that the bounds in the ansatz are indeed halved, provided $C_0$ is chosen to be large enough and $\varepsilon$ is chosen to be suitably small. Then the bootstrapping argument asserts that the  half upper bounds  in the ansatz indeed hold for all time.

We begin with the following estimates of nonlinear terms, which are mathematically crucial for the entire analysis.
\begin{lem}\label{lemma4.5}
	Assume that  $(u,b)$	 is a smooth solution of the problem \eqref{PMHD}, satisfying \eqref{u101} and \eqref{u4}. Then, there exists  an absolute positive constant $C$, such that
	\begin{align}
	&\left\|\| b\cdot \na b \|_{L_{x_2,x_3}^1}\right\|_{L_{x_1}^2}\leq CC_0^2\varepsilon^2 \left(1+t\right)^{-\frac{11}{8}},\label{b12}\\
	&\left\|\|\p_1\left( b\cdot \na b\right)\|_{L_{x_2,x_3}^1}\right\|_{L_{x_1}^2}\leq CC_0^2\varepsilon^2 \left(1+t\right)^{-\frac{11}{8}},\label{p1b1}\\
	&\|\p_1\left(b\cdot \na b\right)\|_{L^2}\leq  C C_0^{2} \varepsilon^2 \left(1+t\right)^{-\frac{15}{8}}, \label{p1b2}\\
	&\|\p_i\left( b\cdot \na b\right)\|_{L^2}\leq CC_0^2\varepsilon^2 \left(1+t\right)^{-\frac{19}{8}}, \quad\forall\ i\in\{2,3\},  \label{pib2}\\
	&\left\|\|\p_1^2\left( b\cdot \na b\right)\|_{L_{x_2,x_3}^1}\right\|_{L_{x_1}^2}\leq   CC_0^2\varepsilon^2\left(1+t\right)^{-\frac{5}{4}},\label{p11b1} \\
	&\|\p_1^2(b\cdot \na b)\|_{L^2}\leq CC_0^2\varepsilon^2\left(1+t\right)^{-\frac{7}{4}},   \label{p11b2}\\
	&\|\p_l\p_2(b\cdot \na b)\|_{L^2}\leq C C_0^2\varepsilon^2\left(1+t\right)^{-2}, \quad\forall \ l\in\{1,2\},   \label{pl2b2}\\
	&\|\p_1\p_3(b\cdot \na b)\|_{L^2}\leq CC_0^2\varepsilon^2 \left(1+t\right)^{-\frac{19}{8}}, \label{p13b2} \\
	&\|\p_i\p_3(b\cdot \na b)\|_{L^2}\leq CC_0^2\varepsilon^2 \left(1+t\right)^{-\frac{23}{8}},\quad\forall\ i\in\{2,3\},   \label{pi3b2} \\
	&\left\|\|\p_1^3\left( b\cdot \na b\right)\|_{L_{x_2,x_3}^1}\right\|_{L_{x_1}^2}\leq  CC_0\varepsilon\left(1+t\right)^{-\frac12}\|\p_{\nu}b\|_{H^3}+   CC_0\varepsilon^2\left(1+t\right)^{-\frac78},\label{p111b1} \\
	&\|\p_l\p_g\p_h(b\cdot \na b)\|_{L^2}\leq CC_0\varepsilon^2\left(1+t\right)^{-1}, \quad \forall\ l,g,h\in\{1,2\},  \label{plghb2}\\
	&\|\p_1^2\p_3(b\cdot \na b)\|_{L^2}\leq CC_0^2\varepsilon^2\left(1+t\right)^{-
   \frac{15}{8}},   \label{p113b2}\\
	&\|\p_l\p_2\p_3(b\cdot \na b)\|_{L^2}\leq CC_0^2\varepsilon^2\left(1+t\right)^{-2},\quad\forall\   l\in\{1,2\}, \label{pl23b2}\\
	&\|\p_1\p_3^2(b\cdot \na b)\|_{L^2}\leq CC_0^2\varepsilon^2\left(1+t\right)^{-\frac{21}{8}},   \label{p133b2}\\
	&\|\p_2\p_3^2(b\cdot \na b)\|_{L^2}\leq CC_0^2\varepsilon^2\left(1+t\right)^{-3},   \label{p233b2}\\
	&\|\p_3^3(b\cdot \na b)\|_{L^2}\leq CC_0^2\varepsilon^2\left(1+t\right)^{-\frac{27}{8}}.   \label{p333b2}
	\end{align}
\end{lem}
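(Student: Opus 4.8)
The plan is to reproduce, in the $H^4$-framework, the strategy already used for Lemma \ref{lemma5.1}, exploiting the sharper ansatz \eqref{u101}--\eqref{u4} (which is one derivative ``better'' in several places than \eqref{u10}) together with the divergence-free condition $\nabla\cdot b=0$. The core tools are the anisotropic Sobolev embeddings \eqref{anso}, the one-dimensional inequality \eqref{1IE}, the Minkowski inequality \eqref{MIE}, and the $L^\infty$-bounds for $b$ and $b_1$ obtained exactly as in \eqref{binfty}--\eqref{b1infty}; with the new ansatz these become $\|b\|_{L^\infty}\le CC_0\varepsilon(1+t)^{-\frac{11}{12}}$ (unchanged) and $\|b_1\|_{L^\infty}\le CC_0\varepsilon(1+t)^{-\frac{7}{6}}$ (unchanged), but we also gain improved $L^\infty$-type bounds for first derivatives of $b$, e.g. $\|\partial_i b\|_{L^\infty}$ and $\|\partial_i b_1\|_{L^\infty}$, from the extra regularity afforded by \eqref{u102}. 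For each estimate \eqref{b12}--\eqref{p333b2} I would split $b\cdot\nabla b=b_1\partial_1 b+b_\nu\cdot\nabla_\nu b$, use $\|\partial_1 b_1\|_{L^2}\le\|\partial_2 b_2\|_{L^2}+\|\partial_3 b_3\|_{L^2}$ (and its higher-order analogues) to trade a ``bad'' $\partial_1$-derivative for ``good'' $\partial_\nu$-derivatives whenever $b_1$ is differentiated, then distribute derivatives by the Leibniz rule and bound each resulting product by placing the fast-decaying factor in $L^\infty$ (via the anisotropic embedding) and the remaining factor in $L^2$.

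In more detail, I would proceed in the order the estimates are listed, since the later ones reuse the intermediate bounds of the earlier ones. The $L^2_{x_1}L^1_{x_2,x_3}$ estimates \eqref{b12}, \eqref{p1b1}, \eqref{p11b1}, \eqref{p111b1} follow as in \eqref{pbl1}, \eqref{1b1}, \eqref{p11bb}, \eqref{p11bb}: apply Minkowski to move the $L^2_{x_1}$ norm inside, then Hölder in $x_1$ with $L^\infty_{x_1}L^2_{x_1}$ and the one-dimensional inequality $\|f\|_{L^\infty_{x_1}}\le C\|f\|_{L^2_{x_1}}^{1/2}\|\partial_1 f\|_{L^2_{x_1}}^{1/2}$, ending with products of $L^2$-norms controlled by \eqref{u101}--\eqref{u4}; the only place a non-quadratic (i.e. $\varepsilon$ rather than $\varepsilon^2$) term survives is \eqref{p111b1}, where the top-order factor $\partial_1^3 b$ must be kept as $\|\partial_\nu b\|_{H^3}$ and integrated in time later against the dissipation bound $\int_0^t\|\partial_\nu b\|_{H^3}^2\,d\tau\le C\varepsilon^2$ from \eqref{u102} — exactly as \eqref{11bb11} was handled. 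The pure-$L^2$ estimates \eqref{p1b2}--\eqref{pib2}, \eqref{p11b2}--\eqref{p333b2} follow the template of \eqref{1b2}--\eqref{33bbl2}: after the Leibniz expansion, terms in which no derivative hits the $L^\infty$-factor are bounded by $\|b_1\|_{L^\infty}\|\partial^{k+1}b\|_{L^2}+\|b_\nu\|_{L^\infty}\|\partial^k\partial_\nu b\|_{L^2}$, and terms with split derivatives are bounded using the anisotropic inequalities \eqref{anso} in the form $\|f\|_{L^2_{x_i,x_j}L^\infty_{x_k}}\le C\|f\|_{L^2}^{1/2}\|\partial_k f\|_{L^2}^{1/2}$ and $\|f\|_{L^\infty_{x_i,x_j}L^2_{x_k}}\le C\|f\|_{L^2}^{1/4}\|\partial_i f\|_{L^2}^{1/4}\|\partial_j f\|_{L^2}^{1/4}\|\partial_i\partial_j f\|_{L^2}^{1/4}$, always arranging that the factor carrying more $x_3$- or $x_\nu$-derivatives is the one that decays fastest. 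Adding up the resulting powers of $(1+t)$ and taking the slowest one gives the stated rate in each case.

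The bookkeeping obstacle — and the step I expect to be the most delicate — is to verify that in every single Leibniz term the exponents of $(1+t)$ coming from \eqref{u101}, \eqref{u4}, \eqref{u5} (together with the divergence-substitution $\partial_1 b_1\rightsquigarrow\partial_\nu b_\nu$, which improves the rate of a $\partial_1 b_1$-factor from the $(1+t)^{-1/2}$ of a generic $\partial_1 b$ to the $(1+t)^{-1}$ of $\partial_\nu b$) actually sum to at least the claimed rate; a few borderline terms (for instance in \eqref{p333b2} and \eqref{p233b2}, where one must reach rate $-27/8$ and $-3$ respectively) leave essentially no slack, so the split of derivatives and the choice of which factor goes into $L^\infty$ must be made optimally. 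A secondary subtlety is \eqref{p111b1}: unlike all the others it is genuinely $O(\varepsilon)$, not $O(\varepsilon^2)$, because the third-order factor cannot be put in $L^\infty$; one must be careful to isolate exactly one $\|\partial_\nu b\|_{H^3}$ factor (with the correct $(1+t)^{-1/2}$ in front) and bound all other factors by the ansatz, mirroring \eqref{p11bb}. Apart from these, the argument is a long but routine iteration of the two anisotropic lemmas, and no new idea beyond those already deployed in Lemma \ref{lemma5.1} is required.
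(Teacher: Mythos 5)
Your overall strategy is the paper's: split $b\cdot\nabla b=b_1\partial_1 b+b_\nu\cdot\nabla_\nu b$, trade $\partial_1 b_1$ for $\partial_\nu b_\nu$ via $\nabla\cdot b=0$, distribute derivatives by Leibniz, and close each term with the anisotropic embeddings \eqref{anso}, \eqref{1IE}, \eqref{MIE}, isolating a single $\|\partial_\nu b\|_{H^3}$ factor in \eqref{p111b1} to be absorbed later by \eqref{u102}. However, there is a concrete error that breaks the bookkeeping you yourself identify as the crux. You assert that under the new ansatz the bounds $\|b\|_{L^\infty}$ and $\|b_1\|_{L^\infty}$ are ``unchanged'' at $(1+t)^{-11/12}$ and $(1+t)^{-7/6}$. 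That is false: redoing the eight-factor interpolation of \eqref{binfty}--\eqref{b1infty} with the inputs \eqref{u101}, \eqref{u4} (and, for $b_1$, the divergence substitutions $\partial_1 b_1\rightsquigarrow\partial_\nu b_\nu$, $\partial_1\partial_j b_1\rightsquigarrow\partial_j\partial_\nu b_\nu$) gives the exponent sums $\frac18(\frac12+\frac12+1+1+1+1+\frac32+\frac32)=1$ and $\frac18(\frac34+1+\frac54+\frac54+\frac32+\frac32+\frac74+2)=\frac{11}{8}$, i.e.\ the improved bounds $\|b\|_{L^\infty}\le CC_0\varepsilon(1+t)^{-1}$ and $\|b_1\|_{L^\infty}\le CC_0\varepsilon(1+t)^{-11/8}$ of \eqref{bbinfty}--\eqref{b1binfty}.

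This improvement is not cosmetic; it is exactly what makes the stated rates reachable. For instance, the leading term of \eqref{p1b2} is $\|b_1\|_{L^\infty}\|\partial_1^2 b\|_{L^2}$: with $(1+t)^{-11/8}\cdot(1+t)^{-1/2}$ you get the claimed $(1+t)^{-15/8}$, whereas your ``unchanged'' $(1+t)^{-7/6}$ only yields $(1+t)^{-5/3}$, which is strictly weaker. Likewise the leading term of \eqref{p333b2} is $\|b_1\|_{L^\infty}\|\partial_1\partial_3^3 b\|_{L^2}\le C(1+t)^{-11/8-2}=(1+t)^{-27/8}$, while your values give only $(1+t)^{-19/6}$. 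Several other borderline estimates (\eqref{pib2}, \eqref{p13b2}, \eqref{pi3b2}, \eqref{p133b2}, \eqref{p233b2}) fail by similar margins. Once you replace the $L^\infty$ bounds by their correct improved versions, the rest of your outline does go through along the lines of the paper's proof.
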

\begin{proof}
The estimates of (\ref{b12}) and (\ref{p1b1}) can be shown in the same way as that used for (\ref{pbl1}) and (\ref{1bb11}), using the decay rate of $\|\p_1^2(u,b)\|_{L^2}\leq C(1+t)^{-1/2}$ stated in (\ref{u4}). To proceed, we need to improve the estimates of $\|b\|_{L^\infty}$ and $\|b_1\|_{L^\infty}$.
Similarly to that in Lemma \ref{lemma5.1}, by (\ref{u101}) and \eqref{u4}  we have
\begin{align}
\| b \|_{L^\infty}
&\leq C \| b \|_{L^2}^{\frac 18}\|\p_1b \|_{L^2}^{\frac 18}\|\p_2b \|_{L^2}^{\frac 18}\|\p_3b \|_{L^2}^{\frac 18}\nonumber\\
&\qquad\times\|\p_{1}\p_2 b \|_{L^2}^{\frac 18}\|\p_{1}\p_3b \|_{L^2}^{\frac 18}\|\p_{2}\p_3b \|_{L^2}^{\frac 18}\|\p_{1}\p_2\p_3b \|_{L^2}^{\frac 18}\nonumber\\
&\leq CC_0 \varepsilon \left(1+t\right)^{-1}, \label{bbinfty}
\end{align}
and
\begin{align}
\| b_1 \|_{L^\infty}
&\leq C \| b_1 \|_{L^2}^{\frac 18}\|\p_1b_1 \|_{L^2}^{\frac 18}\|\p_2b_1 \|_{L^2}^{\frac 18}\|\p_3b_1 \|_{L^2}^{\frac 18}\nonumber\\
&\qquad\times\|\p_{1}\p_2b_1 \|_{L^2}^{\frac 18}\|\p_{1}\p_3 b_1 \|_{L^2}^{\frac 18}\|\p_{2}\p_3 b_1 \|_{L^2}^{\frac 18}\|\p_{1}\p_2\p_3b_1 \|_{L^2}^{\frac 18}\nonumber\\
&\leq CC_0 \varepsilon \left(1+t\right)^{-\frac{11}{8}}. \label{b1binfty}
\end{align}

With the help of  \eqref{bbinfty}, \eqref{b1binfty} and the anisotropic Sobolev inequalities \eqref{anso}, we infer from (\ref{u101})  and  \eqref{u4}  that
\begin{align}
\|\p_1 (b\cdot \na b )\|_{L^2}
&\leq  \| b_1 \|_{L^\infty}\| \p_1^2 b \|_{L^2}+\| b_\nu \|_{L^\infty}\| \p_1\p_\nu b \|_{L^2} +\| \p_1  b\cdot \na b\|_{L^2}\nonumber\\
& \leq C C_0^{2} \varepsilon^2 \left(1+t\right)^{-\frac{15}{8}}+\|\p_1b_1\|_{L_{x_1}^\infty L_{x_2,x_3}^2}\| \p_1 b \|_{ L_{x_1}^2L_{x_2,x_3 }^\infty}\nonumber\\
&\quad+\|\p_1  b_\nu\|_{L_{x_1}^2L_{x_2,x_3}^\infty }\|\p_\nu b \|_{L_{x_1}^\infty L_{x_2,x_3}^2} \nonumber\\
&\leq C C_0^{2} \varepsilon^2 \left(1+t\right)^{-\frac{15}{8}},\label{p1u1b2}
\end{align}
due to the divergence-free condition $\nabla\cdot b=0$ and the fact that $\p_1 b_1=-\p_\nu b_\nu$ with $\p_\nu=(\p_2,\p_3)$ and $b_\nu=(b_2,b_3)$.
Analogously,
\begin{align}
&
\|\p_2 ( b\cdot \na b )\|_{L^2}+\|\p_3(b\cdot \na b)\|_{L^2}\nonumber\\
&\quad \leq \|b_1\|_{ L^\infty}\|(\p_2\p_1 b,\p_3\p_1 b)\|_{L^2}+\|b_\nu\|_{ L^\infty}\|(\p_2\p_\nu b,\p_3\p_\nu b)\|_{L^2}\nonumber\\
&\qquad +\|(\p_2  b_1,\p_3b_1)\|_{L_{x_1}^\infty L_{x_2,x_3}^2}\|\p_1 b\|_{L_{x_1}^2L_{x_2,x_3}^\infty }\nonumber\\
&\qquad+\|(\p_2  b_\nu, \p_3b_\nu)\|_{L_{x_1}^\infty L_{x_2,x_3}^2}\|\p_\nu b \|_{ L_{x_1}^2L_{x_2,x_3}^\infty}\nonumber\\
&\quad\leq C  C_0^{2} \varepsilon^2\left(1+t\right)^{-\frac{19}{8}}. \label{p3bl2}
\end{align}

Based on the H\"{o}lder inequality and (\ref{anso}) that, by  (\ref{u101})  and  \eqref{u4} we have
\begin{align}
&\left\|\|\p_1^2\left( b\cdot \na b\right)\|_{L_{x_2,x_3}^1}\right\|_{L_{x_1}^2}\nonumber\\
&\quad\leq \left\|\|\p_1^2\left( b_{\nu} \p_{\nu} b\right)\|_{L_{x_2,x_3}^1}\right\|_{L_{x_1}^2}+\left\|\|\p_1^2\left( b_1\p_1 b\right)\|_{L_{x_2,x_3}^1}\right\|_{L_{x_1}^2}\nonumber\\
&\quad\leq \left\|\|\p_1^2b_{\nu}\|_{L_{x_2,x_3}^2}\|\p_{\nu} b \|_{L_{x_2,x_3}^2}\right\|_{L_{x_1}^2}+2\left\|\|\p_1b_{\nu}\|_{L_{x_2,x_3}^2}\|\p_{\nu}\p_1 b \|_{L_{x_2,x_3}^2}\right\|_{L_{x_1}^2}\nonumber\\
&\qquad+\left\|\|b_{\nu}\|_{L_{x_2,x_3}^2}\|\p_{\nu}\p_1^2b\|_{L_{x_2,x_3}^2}\right\|_{L_{x_1}^2}+\left\|\|\p_1^2b_1\|_{L_{x_2,x_3}^2}\|\p_1 b \|_{L_{x_2,x_3}^2}\right\|_{L_{x_1}^2}\nonumber\\
&\qquad+2\left\|\|\p_1b_1\|_{L_{x_2,x_3}^2}\|\p_1^2b \|_{L_{x_2,x_3}^2}\right\|_{L_{x_1}^2}+\left\|\|b_1\|_{L_{x_2,x_3}^2}\|\p_1^3b \|_{L_{x_2,x_3}^2}\right\|_{L_{x_1}^2}\nonumber\\
&\quad\leq C\|\p_1^2b_{\nu}\|_{L^2}\|\p_{\nu} b\|_{L^2}^{\frac12}\|\p_1\p_{\nu} b\|_{L^2}^{\frac12}+C\|\p_{\nu}\p_1b \|_{L^2}\|\p_1 b_{\nu}\|_{L^2}^{\frac12}\|\p_1^2 b_{\nu}\|_{L^2}^{\frac12}\nonumber\\
&\qquad+C\|\p_{\nu}\p_1^2 b \|_{L^2}\|b_{\nu}\|_{L^2}^{\frac12}\|\p_1b_{\nu}\|_{L^2}^{\frac12}+C\|\p_1 b  \|_{L^2}\|\p_1^2b_1\|_{L^2}^{\frac12}\| \p_1^3b_{1}\|_{L^2}^{\frac12}\nonumber\\
&\qquad+C\|\p_1^2b \|_{L^2}\|\p_1 b_1\|_{L^2}^{\frac12}\|\p_1^2 b_1\|_{L^2}^{\frac12}+C\|\p_1^3b \|_{L^2}\|b_1\|_{L^2}^{\frac12}\|\p_1b_1\|_{L^2}^{\frac12}\nonumber\\
&\quad\leq CC_0^2\varepsilon^2\left(1+t\right)^{-\frac{5}{4}}.\label{p11bl1}
\end{align}

Using the divergence-free condition $\nabla\cdot b=0$, (\ref{u101}) and (\ref{u4}), we deduce
\begin{align}
\|\p_1^2\left( b\cdot \na b\right)\|_{L^2}
&\leq C\|b_1\|_{L^\infty}\|\p_1^3 b\|_{L^2}+C\|b_{\nu}\|_{L^\infty}\|\p_{\nu}\p_1^2 b\|_{L^2}\nonumber\\
&\quad+C\|\p_1^2b_1\|_{L_{x_1,x_2}^2L_{x_3}^\infty}\|\p_1 b  \|_{L_{x_1,x_2}^\infty L_{x_3}^2}\nonumber\\
&\quad+C\|\p_1^2 b_\nu \|_{L_{x_1}^\infty L_{x_2,x_3}^2} \|\p_{\nu} b \|_{L_{x_1}^2 L_{x_2,x_3}^\infty}\nonumber\\
&\quad+C\|\p_1^2b\|_{L_{x_1,x_2}^2L_{x_3}^\infty}\|\p_1 b_1 \|_{L_{x_1,x_2}^\infty L_{x_3}^2}\nonumber\\
&\quad+C\|\p_1\p_\nu b \|_{L_{x_1}^\infty L_{x_2,x_3}^2} \|\p_1 b_\nu \|_{L_{x_1}^2 L_{x_2,x_3}^\infty}\nonumber\\
&\leq CC_0^2\varepsilon^2\left(1+t\right)^{-\frac74}. \label{p11bl2}
\end{align}
By the same token, we have
\begin{align*}
\|\p_l\p_2\left( b\cdot \na b\right)\|_{L^2}
&\leq C\|b_1\|_{L^\infty}\|\p_l\p_2\p_1 b\|_{L^2}+C\|b_{\nu}\|_{L^\infty}\|\p_l\p_2\p_{\nu} b\|_{L^2}\nonumber\\
&\quad+C\|\p_l\p_2 b_1\|_{L_{x_1,x_2}^2L_{x_3}^\infty}\|\p_1 b  \|_{L_{x_1,x_2}^\infty L_{x_3}^2}\nonumber\\
&\quad+C\|\p_l\p_2 b_\nu \|_{L_{x_1}^\infty L_{x_2,x_3}^2} \|\p_{\nu} b \|_{L_{x_1}^2 L_{x_2,x_3}^\infty}\nonumber\\
&\quad+C\|(\p_1\p_l b,\p_1\p_2 b)\|_{L_{x_1,x_2}^2L_{x_3}^\infty}\|(\p_2 b_1,\p_lb_1) \|_{L_{x_1,x_2}^\infty L_{x_3}^2}\nonumber\\
&\quad+C\|(\p_2\p_\nu b,\p_l\p_\nu b) \|_{L_{x_1}^\infty L_{x_2,x_3}^2} \|(\p_l b_\nu,\p_2 b_\nu) \|_{L_{x_1}^2 L_{x_2,x_3}^\infty}\nonumber\\
&\leq CC_0^2\varepsilon^2\left(1+t\right)^{-2},\quad \forall\ l\in\{1,2\},\nonumber\\[2mm]
\|\p_1\p_3\left( b\cdot \na b\right)\|_{L^2}
&\leq C\|b_1\|_{L^\infty}\|\p_1^2\p_3 b\|_{L^2}+C\|b_{\nu}\|_{L^\infty}\|\p_1\p_3\p_{\nu} b\|_{L^2}\nonumber\\
&\quad+C\|\p_1\p_3 b_1\|_{L_{x_1,x_2}^2L_{x_3}^\infty}\|\p_1 b  \|_{L_{x_1,x_2}^\infty L_{x_3}^2}\nonumber\\
&\quad+C\|\p_1\p_3 b_\nu \|_{L_{x_1}^\infty L_{x_2,x_3}^2} \|\p_{\nu} b \|_{L_{x_1}^2 L_{x_2,x_3}^\infty}\nonumber\\
&\quad+C\|\p_1\p_3 b \|_{L_{x_1,x_2}^2L_{x_3}^\infty}\|\p_1 b_1 \|_{L_{x_1,x_2}^\infty L_{x_3}^2}\nonumber\\
&\quad+C\|\p_3\p_\nu b\|_{L_{x_1}^\infty L_{x_2,x_3}^2} \|\p_1 b_\nu \|_{L_{x_1}^2 L_{x_2,x_3}^\infty}\nonumber\\
&\quad+C\| \p_1^2 b\|_{L_{x_1,x_2}^2L_{x_3}^\infty}\| \p_3 b_1\|_{L_{x_1,x_2}^\infty L_{x_3}^2}\nonumber\\
&\quad+C\|\p_1\p_\nu b \|_{L_{x_1}^\infty L_{x_2,x_3}^2} \|\p_3 b_\nu \|_{L_{x_1}^2 L_{x_2,x_3}^\infty}\nonumber\\
&\leq CC_0^2\varepsilon^2\left(1+t\right)^{-\frac{19}{8}},\nonumber
\end{align*}
and
\begin{align*}
\|\p_i\p_3\left( b\cdot \na b\right)\|_{L^2}
&\leq C\|b_1\|_{L^\infty}\|\p_1\p_i\p_3 b\|_{L^2}+C\|b_{\nu}\|_{L^\infty}\|\p_i\p_3\p_{\nu} b\|_{L^2}\nonumber\\
&\quad+C\|\p_i\p_3 b_1\|_{L_{x_2,x_3}^2L_{x_1}^\infty}\|\p_1 b  \|_{L_{x_2,x_3}^\infty L_{x_1}^2}\nonumber\\
&\quad+C\|\p_i\p_3 b_\nu \|_{ L_{x_2,x_3}^2L_{x_1}^\infty} \|\p_{\nu} b \|_{ L_{x_2,x_3}^\infty L_{x_1}^2}\nonumber\\
&\quad+C\|(\p_1\p_i b, \p_1\p_3 b) \|_{L_{x_1,x_3}^2L_{x_2}^\infty}\|(\p_3 b_1,\p_i b_1) \|_{L_{x_1,x_3}^\infty L_{x_2}^2}\nonumber\\
&\quad+C\|(\p_\nu\p_i b, \p_\nu\p_3 b) \|_{L_{x_1,x_3}^2L_{x_2}^\infty}\|(\p_3 b_\nu,\p_i b_\nu) \|_{L_{x_1,x_3}^\infty L_{x_2}^2}\nonumber\\
&\leq CC_0^2\varepsilon^2\left(1+t\right)^{-\frac{23}{8}},\quad \forall\ i\in\{2,3\}.
\end{align*}

Analogously to the derivations of  \eqref{p11bl1} and \eqref{p11bl2}, using the fact that $\nabla\cdot b=0$, we obtain
\begin{align}
&\left\|\|\p_1^3\left( b\cdot \na b\right)\|_{L_{x_2,x_3}^1}\right\|_{L_{x_1}^2}\nonumber\\
&\quad\leq \left\|\|\p_1^3\left( b_{\nu} \p_{\nu} b\right)\|_{L_{x_2,x_3}^1}\right\|_{L_{x_1}^2}+\left\|\|\p_1^3\left( b_1\p_1 b\right)\|_{L_{x_2,x_3}^1}\right\|_{L_{x_1}^2}\nonumber\\
&\quad\leq C\|\p_{\nu}b\|_{L^2}\|\p_{1}^3 b\|_{L^2}^{\frac12}\|\p_{1}^4 b   \|_{L^2}^{\frac12}+C\|\p_{\nu}\p_1 b \|_{L^2}\|\p_1^2b \|_{L^2}^{\frac12}\|\p_1^3b \|_{L^2}^{\frac12}\nonumber\\
&\qquad+C\|\p_{\nu}\p_1^2 b \|_{L^2}\|\p_1b \|_{L^2}^{\frac12}\|\p_1^2b \|_{L^2}^{\frac12}+C\|\p_{\nu}\p_1^3 b \|_{L^2}\|b_{\nu}\|_{L^2}^{\frac12}\|\p_1b_{\nu}\|_{L^2}^{\frac12}\nonumber\\
&\qquad +C\|\p_1^4b \|_{L^2}\|b_1\|_{L^2}^{\frac12}\|\p_1b_1\|_{L^2}^{\frac12}\nonumber\\
&\quad\leq CC_0\varepsilon^2\left(1+t\right)^{-\frac12}\|\p_{\nu}b\|_{H^3}+ CC_0\varepsilon^2\left(1+t\right)^{-\frac78} \nonumber
\end{align}
and
\begin{align}
\|\p_1^3\left( b\cdot \na b\right)\|_{L^2}
&\leq C\|b\|_{L^\infty}\|\nabla\p_1^3 b\|_{L^2} +C\| \p_1b\|_{L_{x_2,x_3}^\infty L_{x_1}^2}\|\na\p_1^2 b\|_{ L_{x_2,x_3}^2 L_{x_1}^\infty}\nonumber\\
&\quad+C\|\p_1^2 b\|_{ L_{x_2,x_3}^\infty L^2_{x_1}} \|\nabla\p_1 b \|_{ L_{x_2,x_3}^2L_{x_1}^\infty}\nonumber\\
&\quad +C\|\p_1^3 b \|_{ L_{x_2,x_3}^2 L_{x_1}^\infty}\|\nabla b\|_{L_{x_2,x_3}^\infty L_{x_1}^2}\nonumber\\
&\leq CC_0 \varepsilon^2\left(1+t\right)^{-1}. \label{bb112}
\end{align}

In a similar manner as that used for \eqref{bb112}, we conclude from (\ref{u101}) and (\ref{u4}) that
\begin{align*}
\|\p_l\p_g \p_h \left( b\cdot \na b\right)\|_{L^2}
 \leq  CC_0 \varepsilon^2\left(1+t\right)^{-1},\quad\forall\ l,g,h\in\{1,2\}.
\end{align*}

For (\ref{p113b2}) and (\ref{pl23b2}), by (\ref{u101}), (\ref{u4}) and direct calculations we have
\begin{align*}
&\|\p_1^2\p_3\left( b\cdot \na b\right)\|_{L^2}\nonumber\\
&\quad\leq C\|b_1\|_{L^\infty}\| \p_1^3\p_3 b\|_{L^2} +C\|b_\nu\|_{L^\infty}\| \p_1^2\p_3\p_\nu b\|_{L^2}\nonumber\\
&\qquad+C\|\p_3 b_1\|_{L^\infty_{x_1,x_2} L^2_{x_3}}\|\p_1^3 b\|_{L^2_{x_1,x_2} L^\infty_{x_3}} +C\|\p_3 b_\nu\|_{L^\infty_{x_1,x_2} L^2_{x_3}}\|\p_\nu\p_1^2 b\|_{L^2_{x_1,x_2} L^\infty_{x_3}}\nonumber\\
&\qquad+C\| \p_\nu b\|_{L_{x_2,x_3}^\infty L_{x_1}^2}\|\p_1^2\p_3 b\|_{ L_{x_2,x_3}^2 L_{x_1}^\infty} +C\| \p_1b \|_{L_{x_2,x_3}^\infty L_{x_1}^2}\|\p_\nu\p_1\p_3 b\|_{ L_{x_2,x_3}^2 L_{x_1}^\infty}\nonumber\\
&\qquad+C\|\p_\nu\p_1 b\|_{L^\infty_{x_2,x_3} L^2_{x_1}}\|\p_1\p_3 b\|_{L^2_{x_2,x_3} L^\infty_{x_1}} +C\|\p_1^2 b\|_{L^\infty_{x_2,x_3} L^2_{x_1}}\|\p_\nu\p_3 b\|_{L^2_{x_2,x_3} L^\infty_{x_1}}\nonumber\\
&\quad\leq CC_0^2 \varepsilon^2\left(1+t\right)^{-\frac{15}{8}},
\end{align*}
and analogously,
\begin{align*}
&\|\p_l\p_2\p_3\left( b\cdot \na b\right)\|_{L^2}\nonumber\\
&\quad\leq C\|b_1\|_{L^\infty}\|\p_l\p_2\p_3\p_1 b\|_{L^2} +C\|b_\nu\|_{L^\infty}\| \p_l\p_2\p_3\p_\nu b\|_{L^2}\nonumber\\
&\qquad+C\|\p_3 b\|_{L^\infty_{x_1,x_2} L^2_{x_3}}\|\nabla\p_l\p_2 b\|_{L^2_{x_1,x_2} L^\infty_{x_3}}+C\|\p_2 b\|_{L^\infty_{x_1,x_2} L^2_{x_3}}\|\nabla\p_l\p_3 b\|_{L^2_{x_1,x_2} L^\infty_{x_3}}\nonumber\\
&\qquad  +C\| \p_l b\|_{L_{x_1,x_2}^\infty L_{x_3}^2}\|\nabla\p_2\p_3 b\|_{ L_{x_1,x_2}^2 L_{x_3}^\infty} +C\|\p_l \p_2b \|_{L_{x_2,x_3}^\infty L_{x_1}^2}\|\nabla\p_3 b\|_{ L_{x_2,x_3}^2 L_{x_1}^\infty}\nonumber\\
&\qquad+C\|\p_l\p_3 b\|_{L^\infty_{x_2,x_3} L^2_{x_1}}\|\nabla\p_2 b\|_{L^2_{x_2,x_3} L^\infty_{x_1}} +C\|\p_2\p_3 b\|_{L^\infty_{x_1,x_2} L^2_{x_3}}\|\nabla\p_l b\|_{L^2_{x_1,x_2} L^\infty_{x_3}}\nonumber\\
&\qquad+C\|\p_l\p_2\p_3 b\|_{L^2_{x_2,x_3} L^\infty_{x_1}}\|\nabla b\|_{L^\infty_{x_2,x_3} L^2_{x_1}} \nonumber\\
&\quad\leq CC_0^2 \varepsilon^2\left(1+t\right)^{-2},\quad\forall\ l\in\{1,2\}.
\end{align*}

Finally, it is easily deduced from (\ref{anso}), (\ref{u101}), (\ref{u4}) and the divergence-free condition $\nabla\cdot b=0$ that
\begin{align*}
&\|\p_1\p_3^2\left( b\cdot \na b\right)\|_{L^2}\nonumber\\
&\quad\leq C\|b_1\|_{L^\infty}\| \p_1^2\p_3^2 b\|_{L^2} +C\|b_\nu\|_{L^\infty}\| \p_1\p_3^2\p_\nu b\|_{L^2}\nonumber\\
&\qquad+C\|\p_1 b_1\|_{L^\infty_{x_2,x_3} L^2_{x_1}}\|\p_1\p_3^2 b\|_{L^2_{x_2,x_3} L^\infty_{x_1}}+C\|\p_1 b_\nu\|_{L^\infty_{x_2,x_3} L^2_{x_1}}\|\p_\nu\p_3^2 b\|_{L^2_{x_2,x_3} L^\infty_{x_1}}\nonumber\\
&\qquad+C\|\p_3 b_1\|_{L^\infty_{x_1,x_2} L^2_{x_3}}\|\p_1^2\p_3 b\|_{L^2_{x_1,x_2} L^\infty_{x_3}}+C\|\p_3 b_\nu\|_{L^\infty_{x_1,x_2} L^2_{x_3}}\|\p_\nu\p_1\p_3 b\|_{L^2_{x_1,x_2} L^\infty_{x_3}}\nonumber\\
&\qquad  +C\| \p_1\p_3 b_1\|_{L_{x_2,x_3}^\infty L_{x_1}^2}\|\p_1\p_3 b\|_{ L_{x_2,x_3}^2 L_{x_1}^\infty} +C\|\p_1 \p_3b_\nu \|_{L_{x_2,x_3}^\infty L_{x_1}^2}\|\p_\nu\p_3 b\|_{ L_{x_2,x_3}^2 L_{x_1}^\infty}\nonumber\\
&\qquad+C\|\p_3^2 b_1\|_{L^\infty_{x_1,x_2} L^2_{x_3}}\|\p_1^2 b\|_{L^2_{x_1,x_2} L^\infty_{x_3}} +C\|\p_3^2 b_\nu\|_{L^\infty_{x_1,x_2} L^2_{x_3}}\|\p_\nu\p_1 b\|_{L^2_{x_1,x_2} L^\infty_{x_3}}\nonumber\\
&\qquad+C\|\p_1\p_3^2 b_1\|_{L^2_{x_2,x_3} L^\infty_{x_1}}\|\p_1b\|_{L^\infty_{x_2,x_3} L^2_{x_1}} +C\|\p_1\p_3^2 b_\nu\|_{L^2_{x_2,x_3} L^\infty_{x_1}}\|\p_\nu b\|_{L^\infty_{x_2,x_3} L^2_{x_1}}\nonumber\\
&\quad\leq CC_0^2 \varepsilon^2\left(1+t\right)^{-\frac{21}{8}},
\end{align*}
\begin{align*}
&\|\p_2\p_3^2\left( b\cdot \na b\right)\|_{L^2}\nonumber\\
&\quad\leq C\|b\|_{L^\infty}\|\nabla \p_2\p_3^2 b\|_{L^2}  +C\|\p_\nu b_1\|_{L^\infty_{x_1,x_2} L^2_{x_3}}\|\p_1\p_\nu\p_3 b\|_{L^2_{x_1,x_2} L^\infty_{x_3}}\nonumber\\
&\qquad+C\|\p_\nu b\|_{L^\infty_{x_1,x_2} L^2_{x_3}}\|\p_\nu^2\p_3 b\|_{L^2_{x_1,x_2} L^\infty_{x_3}} +C\| \p_2\p_3 b_1\|_{L_{x_1,x_3}^\infty L_{x_2}^2}\|\p_1\p_3 b\|_{ L_{x_1,x_3}^2 L_{x_2}^\infty}  \nonumber\\
&\qquad +C\|\p_2 \p_3b_\nu \|_{L_{x_2,x_3}^\infty L_{x_1}^2}\|\p_\nu\p_3 b\|_{ L_{x_2,x_3}^2 L_{x_1}^\infty} +C\|\p_3^2 b\|_{L^\infty_{x_1,x_2} L^2_{x_3}}\|\nabla\p_2 b\|_{L^2_{x_1,x_2} L^\infty_{x_3}} \nonumber\\
&\qquad+C\|\p_2\p_3^2 b\|_{L^2_{x_2,x_3} L^\infty_{x_1}}\|\nabla b\|_{L^\infty_{x_2,x_3} L^2_{x_1}}  \nonumber\\
&\quad\leq CC_0^2 \varepsilon^2\left(1+t\right)^{-3},
\end{align*}
and analogously,
\begin{align*}
&\|\p_3^3\left( b\cdot \na b\right)\|_{L^2}\nonumber\\
&\quad\leq C\|b_1\|_{L^\infty}\|\p_1 \p_3^3 b\|_{L^2} + C\|b_\nu\|_{L^\infty}\|\p_\nu \p_3^3 b\|_{L^2} \nonumber\\
&\qquad+C\|\p_3 b_1\|_{L^\infty_{x_1,x_2} L^2_{x_3}}\|\p_1\p_3^2 b\|_{L^2_{x_1,x_2} L^\infty_{x_3}}+C\|\p_3 b_\nu\|_{L^\infty_{x_1,x_2} L^2_{x_3}}\|\p_\nu \p_3^2 b\|_{L^2_{x_1,x_2} L^\infty_{x_3}} \nonumber\\
&\qquad+C\| \p_3^2 b_1\|_{L_{x_2,x_3}^2 L_{x_1}^\infty}\|\p_1\p_3 b\|_{ L_{x_2,x_3}^\infty L_{x_1}^2} +C\| \p_3^2b_\nu \|_{L_{x_2,x_3}^\infty L_{x_1}^2}\|\p_\nu\p_3 b\|_{ L_{x_2,x_3}^2 L_{x_1}^\infty}  \nonumber\\
&\qquad +C\|\p_3^3 b_1\|_{L^2_{x_2,x_3} L^\infty_{x_1}}\|\p_1 b\|_{L^\infty_{x_2,x_3} L^2_{x_1} }+C\|\p_3^3 b_\nu\|_{L^2_{x_2,x_3} L^\infty_{x_1}}\|\p_\nu b\|_{L^\infty_{x_2,x_3} L^2_{x_1} }  \nonumber\\
&\quad\leq CC_0^2 \varepsilon^2\left(1+t\right)^{-\frac{27}{8}}.
\end{align*}

Now, collecting the above estimates together finishes the proof of Lemma \ref{lemma4.5}.
\end{proof}

\vskip .1in
\subsection{The decay rates of $(u,b)$.}

In this subsection, we prove the decay estimates of $(u,b)$ stated in Theorem \ref{thm1.3}. Analogously to the proof of Theorem \ref{thm1.2}, we only consider the large-time behavior for $t\geq1$. Moreover, it suffices to deal with the decay estimates of $u$, since those of $b$ can be achieved similarly.

 \vskip .1in
\textbf{Step  I.  The decay rates  of $\|\p_{i}\p_j u\|_{L^2}$ with $ i,j\in\{1,2,3\}$ }

\vskip .1in

We start with the decay rates of the second-order derivatives. It follows from \eqref{u} that for $\forall\ i,j\in\{1,2,3\}$,
\begin{align}
\|\p_{i}\p_ju\|_{L^2}&=\|\widehat{\p_{ij}u}\|_{L^2}=\|\xi_{i}\xi_{j}\widehat{u}\|_{L^2}\nonumber\\
&\leq \left\| \xi_{i}\xi_{j}\widehat{K_1}(t)\widehat{u_0}\right\|_{L^2}+\left\||\xi_{i}\xi_{j}|\widehat{K_2}(t)\widehat{b_0}\right\|_{L^2}\nonumber\\
&\quad+\int_0^t\left\|\xi_{i}\xi_{j}\widehat{K_1}(t-\tau)\widehat{N_1}(\tau)\right\|_{L^2}d\tau\nonumber\\
&\quad+\int_0^t\left\|\xi_{i}\xi_{j}\widehat{K_2}(t-\tau)\widehat{N_2}(\tau)\right\|_{L^2}d\tau. \label{2uL2}
\end{align}

\textbf{Step I-1.  The decay rates  of $\|\p_i\p_{j}u\|_{L^2}$ with $i,j\in \{2,3\}$.}
\vskip .1in

Similarly the treatment of Step III-1 in Section 4, we only deal with the terms associated with $b\cdot\nabla b$. by (\ref{S1}), (\ref{S2}) and Corollary \ref{PI} we infer from  \eqref{b12}, \eqref{pib2}, \eqref{pl2b2}, \eqref{pi3b2} and \eqref{2uL2} that for $i,j\in\{2,3\}$,
\begin{align*}
\|\p_i\p_{j}u\|_{L^2}
&\leq   C(1+t)^{-\frac32}\left(\| ({u_0}, {b_0})\|_{L^2_{x_1}L^1_{x_2, x_3}}+\|\p_i\p_j ({u_0}, {b_0})\|_{L^2}\right)\nonumber\\
&\quad+ C\int_0^{\frac t2}\left(1+t-\tau \right)^{-\frac32}\|b\cdot \na b\|_{L_{x_1}^2L_{x_2,x_3}^1}d\tau\nonumber\\
&\quad+C\int_{\frac t2}^{t}\left(t-\tau \right)^{-\frac12} \|\p_i(b\cdot \na b)\|_{L^2}d\tau \nonumber\\
&\quad+C\int_{0}^{t}e^{-c\left(t-\tau\right)}\|\p_{i}\p_j(b\cdot \na b)\|_{L^2}d\tau\nonumber\\
&\leq C \varepsilon\left(1+t\right)^{-\frac32}+ CC_0^2\varepsilon^2\int_0^{\frac t2}\left(1+t-\tau \right)^{-\frac32}\left(1+\tau\right)^{-\frac{11}{8}}d\tau\nonumber\\
&\quad+CC_0^2\varepsilon^2\int_{\frac t2}^{t} \left(t-\tau\right)^{-\frac12}\left(1+\tau \right)^{-\frac{19}{8}} d\tau \nonumber\\
&\quad+CC_0^2 \varepsilon^2\int_{0}^{t} e^{-c\left(t-\tau\right)}\left(1+\tau\right)^{-2}d\tau \nonumber\\
&\leq  C\varepsilon(1+t)^{-\frac32}+ C C_0^2 \varepsilon^2\left(1+t\right)^{-\frac32}\leq \frac{C_0}{2} \varepsilon\left(1+t\right)^{-\frac32},
\end{align*}
provided $C_0$ is chosen to be large enough and $\varepsilon$ is chosen to be suitably small.

\vskip .1in
\textbf{Step I-2.  The decay rates   of  $\|\p_{1}\p_iu\|_{L^2}$ with $i\in\{2,3\}$}
\vskip .1in

For $i\in\{2,3\}$, using   \eqref{p1b1}, \eqref{pl2b2}, \eqref{p13b2} and  Corollary \ref{PI} with $\alpha=1$,   we deduce from (\ref{2uL2})  by choosing $C_0$ large enough and $\varepsilon$ suitably small that
\begin{align*}
\|\p_1\p_{i}u\|_{L^2}
&\leq   C\varepsilon(1+t)^{-1} +C\int_0^{t}\left(1+t-\tau \right)^{-1}\|\p_1(b\cdot \na b)  \|_{L_{x_1}^2L_{x_2,x_3}^1}d\tau\nonumber\\ &\quad+C\int_{0}^{t}e^{-c\left(t-\tau\right)}\|\p_i\p_1(b\cdot \na b)\|_{L^2}d\tau \nonumber\\
&\leq C \varepsilon\left(1+t\right)^{-1}+ CC_0^2\varepsilon^2\int_0^{t}\left(1+t-\tau \right)^{-1}\left(1+\tau\right)^{-\frac{11}{8}}d\tau\nonumber\\
&\quad+CC_0^2 \varepsilon^2\int_{0}^{t} e^{-c\left(t-\tau\right)}\left(1+\tau\right)^{-2}d\tau \nonumber\\
&\leq C\varepsilon(1+t)^{-1}+C C_0^2 \varepsilon^2(1+t)^{-1}
\leq \frac{C_0}{2} \varepsilon\left(1+t\right)^{-1}.
\end{align*}

\vskip .1in

\textbf{Step I-3.  The decay rate of $\|\p_1^2 u \|_{L^2}$}
 \vskip .1in

In view of \eqref{p11b1}, \eqref{p11b2} and Corollary \ref{PI} with $\alpha=0$, we obtain by taking $i=j=1$ in \eqref{2uL2} that
\begin{align*}
\|\p_1^2u\|_{L^2}
&\leq   C\varepsilon(1+t)^{-\frac12} +C\int_0^{t}\left(1+t-\tau \right)^{-\frac12}\|\p_1^2(b\cdot \na b)  \|_{L_{x_1}^2L_{x_2,x_3}^1}d\tau\nonumber\\ &\quad+C\int_{0}^{t}e^{-c\left(t-\tau\right)}\|\p_1^2(b\cdot \na b)\|_{L^2}d\tau \nonumber\\
&\leq C \varepsilon\left(1+t\right)^{-\frac12}+ CC_0^2\varepsilon^2\int_0^{t}\left(1+t-\tau \right)^{-1}\left(1+\tau\right)^{-\frac{5}{4}}d\tau\nonumber\\
&\quad+CC_0^2 \varepsilon^2\int_{0}^{t} e^{-c\left(t-\tau\right)}\left(1+\tau\right)^{-\frac{5}{4}}d\tau \nonumber\\
&\leq C\varepsilon(1+t)^{-\frac12}+C C_0^2 \varepsilon^2(1+t)^{-\frac12} \leq \frac{C_0}{2} \varepsilon\left(1+t\right)^{-\frac12},
\end{align*}
provided $C_0$ is chosen to be large enough and $\varepsilon$ is chosen to be suitably small.

\vskip .1in

\textbf{Step  II.  The decay rates  of $\|\p_i\p_j\p_ku\|_{L^2}$ with $ i,j,k\in\{1,2,3\}$}
\vskip .1in

It follows from  \eqref{u} that for $\forall\ i,j,k\in\{1,2,3\}$,
\begin{align}
\|\p_{ijk}u\|_{L^2}&=\|\widehat{\p_{ijk}u}\|_{L^2}
=\|\xi_{i}\xi_{j}\xi_{k}\widehat{u}\|_{L^2}\nonumber\\
&\leq \left\| \xi_{i}\xi_{j}\xi_{k}\widehat{K_1}(t)\widehat{u_0}\right\|_{L^2}+\left\|\xi_{i}\xi_{j}\xi_{k}\widehat{K_2}(t)\widehat{b_0}\right\|_{L^2}\nonumber\\
&\quad+\int_0^t\left\|\xi_{i}\xi_{j}\xi_{k}\widehat{K_1}(t-\tau)\widehat{N_1}(\tau)\right\|_{L^2}d\tau\nonumber\\
&\quad+\int_0^t\left\|\xi_{i}\xi_{j}\xi_{k}\widehat{K_2}(t-\tau)\widehat{N_2}(\tau)\right\|_{L^2}d\tau .\label{3uL2}
\end{align}

\vskip .1in

\textbf{Step II-1.  The decay rates  of $\|\p_1\p_{i}\p_{3}u\|_{L^2}$ with $i\in\{2,3\}$}
\vskip .1in

Using \eqref{p1b1},   \eqref{pl2b2}, \eqref{p13b2} and Corollary \ref{PI} with $\alpha=2$, we deduce from (\ref{S1}), (\ref{S2}), (\ref{ineq}) and  \eqref{3uL2} that for $i\in\{2,3\}$,
\begin{align*}
\|\p_1\p_{i}\p_3 u\|_{L^2}
&\leq   C\varepsilon(1+t)^{-\frac32} +C\int_0^{\frac t2}\left(1+t-\tau \right)^{-\frac32}\| \p_1(b\cdot \na b)\|_{L_{x_1}^2L_{x_2,x_3}^1}d\tau\nonumber\\
&\quad+C\int_{\frac t2}^{t}\left(t-\tau \right)^{-\frac12} \|\p_1\p_3(b\cdot \na b)\|_{L^2}d\tau \nonumber\\
&\quad+C\int_{0}^{t}e^{-c\left(t-\tau\right)}\left(t-\tau \right)^{-\frac12}\|\p_i\p_1(b\cdot \na b)\|_{L^2}d\tau \nonumber\\
&\leq C \varepsilon\left(1+t\right)^{-\frac32}+  CC_0^2\varepsilon^2\int_0^{\frac t2}\left(1+t-\tau \right)^{-\frac32} \left(1+\tau\right)^{-\frac{11}{8}}d\tau \nonumber\\ &\quad+CC_0^2\varepsilon^2\int_{\frac t2}^{t}\left(t-\tau \right)^{-\frac12} \left(1+\tau\right)^{-\frac{19}{8}}d\tau \nonumber\\
&\quad+CC_0^2 \varepsilon^2\int_{0}^{t} e^{-c\left(t-\tau\right)}\left(t-\tau \right)^{-\frac12}\left(1+\tau\right)^{-2}d\tau \nonumber\\
&\leq C\varepsilon(1+t)^{-\frac32}+C C_0^2 \varepsilon^2(1+t)^{-\frac32} \leq \frac{C_0}{2} \varepsilon(1+t)^{-\frac32},
\end{align*}
provided $C_0$ is chosen to be large enough and $\varepsilon$ is chosen to be suitably small.

\vskip .1in

\textbf{Step II-2. The decay rates  of $\|\p_1^2\p_{i}u\|_{L^2}$ with $i\in\{2,3\}$}
 \vskip .1in

Taking  $j=k=1$ in \eqref{3uL2}, using  (\ref{p11b1}), (\ref{plghb2}), (\ref{p113b2}) and    Corollary \ref{PI}, we obtain by  choosing $C_0$  large enough and $\varepsilon$   suitably small that for $i\in\{2,3\}$,
 \begin{align*}
 \|\p_1^2\p_iu\|_{L^2}
 &\leq   C\varepsilon(1+t)^{-1} +C\int_0^{t}\left(1+t-\tau \right)^{-1}\| \p_1^2(b\cdot \na b)\|_{L_{x_1}^2L_{x_2,x_3}^1}d\tau\nonumber\\
 &\quad +C\int_{0}^{t} e^{-c\left(t-\tau\right)}\|\p_1^2\p_i(b\cdot \na b)\|_{L^2}d\tau \nonumber\\
 &\leq C \varepsilon\left(1+t\right)^{-1}+CC_0^2\varepsilon^2\int_0^t\left(1+t-\tau \right)^{-1} \left(1+\tau\right)^{-\frac{5}{4}}d\tau\nonumber\\
 &\quad +CC_0^2\varepsilon^2\int_{0}^{t} e^{-c\left(t-\tau\right)}\left(1+\tau\right)^{-1}d\tau \nonumber\\
 &\leq C\varepsilon(1+t)^{-1}+C C_0^2 \varepsilon^2(1+t)^{-1} \leq \frac{C_0}{2} \varepsilon(1+t)^{-1}.
\end{align*}

\vskip .1in

\textbf{Step II-3.  The decay rates of $\|\p_l \p_{2}^2u\|_{L^2}$ with $l\in\{1,2\}$}
 \vskip .1in

For $i=1$ and $j=k=2$ in \eqref{3uL2}, by \eqref{p1b1} and \eqref{plghb2},  we have
\begin{align}
\|\p_1\p_2^2 u\|_{L^2}
&\leq   C\varepsilon(1+t)^{-\frac32} +C\int_0^{t}\left(1+t-\tau \right)^{-\frac32}\| \p_1(b\cdot \na b)\|_{L_{x_1}^2L_{x_2,x_3}^1}d\tau\nonumber\\
&\quad +C\int_{0}^{t} e^{-c\left(t-\tau\right)}\|\p_1\p_2^2(b\cdot \na b)\|_{L^2}d\tau \nonumber\\
&\leq C \varepsilon\left(1+t\right)^{-\frac32}+CC_0^2\varepsilon^2\int_0^t\left(1+t-\tau \right)^{-\frac32} \left(1+\tau\right)^{-\frac{11}{8}}d\tau\nonumber\\
&\quad+CC_0\varepsilon^2\int_{0}^{t} e^{-c\left(t-\tau\right)}\left(1+\tau\right)^{-1}d\tau \nonumber\\
&\leq C \varepsilon\left(1+t\right)^{-1}+C\left(C_0+C_0^2\right) \varepsilon^2\left(1+t\right)^{-1 }, \label{p122ug}
\end{align}
where we have used Corollary \ref{PI} with $\alpha=2$. Similarly, using \eqref{b12}, \eqref{plghb2} and  Corollary \ref{PI} with $\alpha=3$, we get
\begin{align*}
\|\p_2^3 u\|_{L^2}
&\leq C\varepsilon(1+t)^{-2} + C\int_0^{t}\left(1+t-\tau \right)^{-2}\|b\cdot \na b\|_{L_{x_1}^2L_{x_2,x_3}^1}d\tau\nonumber\\
&\quad +C\int_{0}^{t} e^{-c\left(t-\tau\right)}\|\p_2^3(b\cdot \na b)\|_{L^2}d\tau \nonumber\\
&\leq  C \varepsilon\left(1+t\right)^{-1}+CC_0^2\varepsilon^2\int_0^t\left(1+t-\tau \right)^{-2} \left(1+\tau\right)^{-\frac{11}{8}}d\tau\nonumber\\
&\quad +CC_0 \varepsilon^2\int_{0}^{t} e^{-c\left(t-\tau\right)}\left(1+\tau\right)^{-1}d\tau \nonumber\\
&\leq C \varepsilon\left(1+t\right)^{-1}+C\left(C_0+C_0^2\right) \varepsilon^2\left(1+t\right)^{-1 }, 
\end{align*}
which, together with  \eqref{p122ug}, shows
$$
\|\p_l\p_2^2 u(t)\|_{L^2}
 \leq \frac{C_0}{2} \varepsilon\left(1+t\right)^{-1 },\quad\forall~l\in\{1,2\},
 $$
provided $C_0$ is chosen to be large enough and $\varepsilon$ is chosen to be suitably small.

\vskip .1in

\textbf{Step II-4.  The decay rates of $\|\p_{i}\p_j\p_3u\|_{L^2}$ with $i,j\in \{2,3\}$}
 \vskip .1in

 Similarly to that in Step I-1, using \eqref{b12}, \eqref{pl2b2}, \eqref{pi3b2},  Corollary \ref{PI} with $\alpha=3$ and (\ref{ineq}),  we infer from  \eqref{3uL2} that for $i,j\in \{2,3\}$,
\begin{align*}
\|\p_i\p_{j}\p_3u\|_{L^2}
&\leq   C\varepsilon(1+t)^{-2} + C\int_0^{\frac t2}\left(1+t-\tau \right)^{-2}\|b\cdot \na b\|_{L_{x_1}^2L_{x_2,x_3}^1}d\tau\nonumber\\
&\quad+C\int_{\frac t2}^{t}\left(t-\tau \right)^{-\frac12} \|\p_i\p_3(b\cdot \na b)\|_{L^2}d\tau \nonumber\\
&\quad+C\int_{0}^{t}e^{-c\left(t-\tau\right)}\left(t-\tau\right)^{-\frac12}\|\p_{i}\p_j(b\cdot \na b)\|_{L^2}d\tau\nonumber\\
&\leq C \varepsilon\left(1+t\right)^{-2}+ CC_0^2\varepsilon^2\int_0^{\frac t2}\left(1+t-\tau \right)^{-2}\left(1+\tau\right)^{-\frac{11}{8}}d\tau\nonumber\\
&\quad+CC_0^2\varepsilon^2\int_{\frac t2}^{t} \left(t-\tau\right)^{-\frac12}\left(1+\tau \right)^{-\frac{23}{8}} d\tau \nonumber\\
&\quad+CC_0^2 \varepsilon^2\int_{0}^{t} e^{-c\left(t-\tau\right)}\left(t-\tau\right)^{-\frac12}\left(1+\tau\right)^{-2}d\tau \nonumber\\
&\leq  C\varepsilon(1+t)^{-2}+ C C_0^2 \varepsilon^2\left(1+t\right)^{-2}
 \leq \frac{C_0}{2} \varepsilon\left(1+t\right)^{-2 },
\end{align*}
provided $C_0$ is chosen to be large enough and $\varepsilon$ is chosen to be suitably small.

\vskip .1in

 \textbf{Step II-5. The decay rate of $\|\p_{1}^3u\|_{L^2}$}
 \vskip .1in
Taking $i=j=k=1$ in \eqref{3uL2} and applying Corollary \ref{PI} with $\alpha=0$,  by \eqref{p111b1} and \eqref{plghb2}  we deduce that for any $0<\delta<1$,
\begin{align*}
\|\p_1^3u\|_{L^2}
&\leq  C\varepsilon(1+t)^{-\frac{1}{2}} +C\int_0^{t}\left(1+t-\tau \right)^{-\frac12}\|\p_1^3( b\cdot \na b )\|_{L_{x_1}^2L_{x_2,x_3}^1}d\tau \nonumber\\
&\quad +C\int_{0}^{t}e^{-c\left(t-\tau\right)}\|\p_1^3(b\cdot \na b)\|_{L^2}d\tau \nonumber\\
&\leq C\varepsilon\left(1+t\right)^{-\frac12}+  CC_0\varepsilon^2\int_0^{t}\left(1+t-\tau \right)^{-\frac12}\left(1+\tau\right)^{-\frac78} d\tau \nonumber\\
&\quad+CC_0\varepsilon\int_0^{t}\left(1+t-\tau \right)^{-\frac12}\left(1+\tau\right)^{-\frac12}\|\p_{\nu}b\|_{H^3}d\tau\nonumber\\
&\quad +CC_0\varepsilon^2\int_{0}^{t}e^{-c\left(t-\tau\right)}\left(1+\tau\right)^{-1}d\tau \nonumber\\
&\leq C\varepsilon\left(1+t\right)^{-\frac12}+C C_0 \varepsilon^2\left(1+t\right)^{-\frac38}+C C_0 \varepsilon^2\left(1+t\right)^{-1}\nonumber\\
&\quad + CC_0\varepsilon\left(\int_0^{t}\left(1+t-\tau \right)^{-1}\left(1+\tau\right)^{-1}d\tau\right)^{\frac12}\left(\int_0^{t} \|\p_{\nu}b\|_{H^3}^2d\tau\right)^{\frac12}\nonumber\\
&\leq C\varepsilon\left(1+t\right)^{-\frac12}+C C_0 \varepsilon^2\left(1+t\right)^{-\frac38}+CC_0\varepsilon^2\left(1+t\right)^{-\frac{1-\delta}{2}},
\end{align*}
where we have also used (\ref{u102}).
So, if we choose $C_0$ large enough and $\varepsilon$ suitably small, then we have by taking $\delta\in(0,1/4]$  that
$$
\|\p_1^3u(t)\|_{L^2}\leq \frac{C_0}{2}\varepsilon\left(1+t\right)^{-\frac38}, \quad\forall\ t\geq1.
$$

\vskip .1in

\textbf{Step III. The decay rates of $\|\p_i\p_j\p_k\p_l u\|_{L^2}$ with  $i,j,k,l\in\{1,2,3\}$ }
\vskip .1in

For any $i,j,k,l\in\{1,2,3\}$,
we infer from  \eqref{u} that
\begin{align}
\|\p_{ijkl}u\|_{L^2}&=\|\widehat{\p_{ijkl}u}\|_{L^2}
=\|\xi_{i}\xi_{j}\xi_{k}\xi_{l}\widehat{u}\|_{L^2}\nonumber\\
&\leq \left\| \xi_{i}\xi_{j}\xi_{k}\xi_{l}\widehat{K_1}(t)\widehat{u_0}\right\|_{L^2}+\left\|\xi_{i}\xi_{j}\xi_{k}\xi_{l}\widehat{K_2}(t)\widehat{b_0}\right\|_{L^2}\nonumber\\
&\quad+\int_0^t\left\|\xi_{i}\xi_{j}\xi_{k}\xi_{l}\widehat{K_1}(t-\tau)\widehat{N_1}(\tau)\right\|_{L^2}d\tau\nonumber\\
&\quad+\int_0^t\left\|\xi_{i}\xi_{j}\xi_{k}\xi_{l}\widehat{K_2}(t-\tau)\widehat{N_2}(\tau)\right\|_{L^2}d\tau.\label{4uL2}
\end{align}

\vskip .1in

\textbf{Step III-1.  The decay rate of $\|\p_1^3\p_3u\|_{L^2}$}
\vskip .1in

Taking $i=j=k=1$ and  $l=3$ in \eqref{4uL2}, using (\ref{ineq}) and Corollary \ref{PI} with $\alpha=1$, by \eqref{p111b1} and \eqref{plghb2} we see that for any $0<\delta<1$,
\begin{align*}
\|\p_1^3\p_{3}u\|_{L^2}
&\leq  C\varepsilon(1+t)^{-1} + C\int_0^{t}\left(1+t-\tau \right)^{-1}\|\p_1^3\left(b\cdot \na b\right)  \|_{L_{x_1}^2L_{x_2,x_3}^1}d\tau\nonumber\\
&\quad+C\int_{0}^{t}e^{-c\left(t-\tau\right)}\left(t-\tau\right)^{-\frac12}\|\p_1^3(b\cdot \na b)\|_{L^2}d\tau\nonumber\\
&\leq C \varepsilon\left(1+t\right)^{-1}
+CC_0 \varepsilon \int_0^{t}\left(1+t-\tau\right)^{-1}   \left(1+\tau\right)^{-\frac12}\|\p_{\nu}b\|_{H^3} d\tau\nonumber\\
&\quad
+CC_0 \varepsilon^2\int_0^{t}\left(1+t-\tau\right)^{-1}   \left(1+\tau\right)^{-\frac78}d\tau \nonumber\\
&\quad+CC_0 \varepsilon^2\int_{0}^{t} e^{-c\left(t-\tau\right)}\left(t-\tau\right)^{-\frac12}\left(1+\tau\right)^{-1}d\tau \nonumber\\
&\leq C\varepsilon(1+t)^{-1}+C C_0 \varepsilon^2\left(1+t\right)^{-\frac78+\delta}+C C_0 \varepsilon^2\left(1+t\right)^{-1}\nonumber\\
&\quad
+ CC_0 \varepsilon \left(\int_0^{t}\left(1+t-\tau\right)^{-2}   \left(1+\tau\right)^{-1}d\tau\right)^{\frac12}\left(\int_0^{t}\|\p_{\nu}b\|_{H^3}^2 d\tau\right)^{\frac12} \nonumber\\
&\leq  C\varepsilon(1+t)^{-\frac12}+C C_0 \varepsilon^2\left(1+t\right)^{-\frac12} \leq \frac{C_0}{2} \varepsilon\left(1+t\right)^{-\frac12},
\end{align*}
provided $C_0$ is chosen to be large enough and $\varepsilon$ is chosen to be suitably small.

\vskip .1in

\textbf{Step III-2. The decay rates of $\|\p_l\p_h\p_2\p_3u\|_{L^2}$ with $l,h\in\{1,2\}$}
 \vskip .1in

Based upon \eqref{p1b1}, \eqref{plghb2}, \eqref{ineq} and Corollary \ref{PI}, it follows from  \eqref{4uL2} that
\begin{align}
\|\p_1\p_{2}^2\p_{3}u\|_{L^2}
&\leq  C\varepsilon (1+t)^{-2} + C\int_0^{t}\left(1+t-\tau \right)^{-2}\|\p_1\left(b\cdot \na b\right) \|_{L_{x_1}^2L_{x_2,x_3}^1}d\tau\nonumber\\
&\quad+C\int_{0}^{t}e^{-c\left(t-\tau\right)}\left(t-\tau\right)^{-\frac12}\|\p_1\p_2^2(b\cdot \na b)\|_{L^2}d\tau\nonumber\\
&\leq C \varepsilon\left(1+t\right)^{-2} +CC_0^2\varepsilon^2\int_0^{\frac t2}\left(1+t-\tau \right)^{-2} \left(1+\tau\right)^{-\frac{11}{8}}d\tau\nonumber\\
&\quad+CC_0 \varepsilon^2\int_{0}^{t} e^{-c\left(t-\tau\right)}\left(t-\tau\right)^{-\frac12}\left(1+\tau\right)^{-1}d\tau \nonumber\\
&\leq C\varepsilon(1+t)^{-2}+C\left(C_0+C_0^2\right)\varepsilon^2\left(1+t\right)^{-1}.\label{1223u2}
\end{align}

In a similar manner, we have by \eqref{p1b1}, \eqref{p11b1} and \eqref{plghb2}  that
\begin{align*}
\|\p_1^2\p_{2}\p_3u\|_{L^2}
&\leq  C\varepsilon(1+t)^{-\frac32}+ C\int_0^{t}\left(1+t-\tau \right)^{-\frac32}\|\p_1^2\left(b\cdot \na b\right) \|_{L_{x_1}^2L_{x_2,x_3}^1}d\tau\nonumber\\
&\quad+C\int_{0}^{t}e^{-c\left(t-\tau\right)}\left(t-\tau\right)^{-\frac12}\|\p_1\p_2^2(b\cdot \na b)\|_{L^2}d\tau\nonumber\\
&\leq
C\varepsilon(1+t)^{-\frac32} +CC_0^2\varepsilon^2\int_0^{t}\left(1+t-\tau \right)^{-\frac{3}{2}} \left(1+\tau\right)^{-\frac{5}{4}}d\tau\nonumber\\
&\quad+CC_0^2 \varepsilon^2\int_{0}^{t} \left(1+t-\tau\right)^{-\frac52}\left(1+\tau\right)^{-\frac{11}{8}}d\tau \nonumber\\
&\quad+CC_0 \varepsilon^2\int_{0}^{t} e^{-c\left(t-\tau\right)}\left(t-\tau\right)^{-\frac12}\left(1+\tau\right)^{-1}d\tau \nonumber\\
&\leq C\varepsilon(1+t)^{-\frac32}
  +C\left(C_0+C_0^2\right)\varepsilon^2\left(1+t\right)^{-1}
\end{align*}
and
\begin{align*}
\|\p_2^3\p_3u\|_{L^2}
&\leq C\varepsilon(1+t)^{-\frac52} + C\int_0^{t}\left(1+t-\tau \right)^{-\frac52}\|b\cdot \na b \|_{L_{x_1}^2L_{x_2,x_3}^1}d\tau\nonumber\\
&\quad+C\int_{0}^{t}e^{-c\left(t-\tau\right)}\left(t-\tau\right)^{-\frac12}\|\p_2^3(b\cdot \na b)\|_{L^2}d\tau\nonumber\\
&\leq
C\varepsilon(1+t)^{-\frac52} +CC_0^2\varepsilon^2\int_0^{t}\left(1+t-\tau \right)^{-\frac{3}{2}} \left(1+\tau\right)^{-\frac{5}{4}}d\tau\nonumber\\
&\quad+CC_0^2 \varepsilon^2\int_{0}^{t} \left(1+t-\tau\right)^{-\frac52}\left(1+\tau\right)^{-\frac{11}{8}}d\tau \nonumber\\
&\quad+CC_0 \varepsilon^2\int_{0}^{t} e^{-c\left(t-\tau\right)}\left(t-\tau\right)^{-\frac12}\left(1+\tau\right)^{-1}d\tau \nonumber\\
&\leq C\varepsilon(1+t)^{-\frac52}
  +C\left(C_0+C_0^2\right)\varepsilon^2\left(1+t\right)^{-1},
\end{align*}
which, together with \eqref{1223u2}, give rise to
$$
\|\p_l\p_h\p_{2}\p_3u(t)\|_{L^2}  \leq
\frac{C_0}{2}\varepsilon(1+t)^{-1},\quad \forall\ l,h\in\{1,2\},
$$
provided $C_0$ is chosen to be large enough and $\varepsilon$ is chosen to be suitably small.

\vskip .1in

\textbf{Step III-3.  The decay rate of $\|\p_1^2\p_3^2u\|_{L^2}$}
\vskip .1in
Using \eqref{p11b1}, \eqref{p113b2} and (\ref{ineq}), from \eqref{4uL2} we obtain after choosing $C_0$ large enough and $\varepsilon$ suitably small that
\begin{align*}
\|\p_1^2\p_{3}^2u\|_{L^2}
&\leq  C\varepsilon(1+t)^{-\frac32} + C\int_0^{\frac t2}\left(1+t-\tau \right)^{-\frac32}\|\p_1^2\left(b\cdot \na b\right) \|_{L_{x_1}^2L_{x_2,x_3}^1}d\tau\nonumber\\
&\quad +C\int_{\frac t2}^{t}\left(t-\tau \right)^{-\frac12} \|\p_1^2\p_3(b\cdot \na b)\|_{L^2}d\tau \nonumber\\
&\quad+C\int_{0}^{t}e^{-c\left(t-\tau\right)}\left(t-\tau\right)^{-\frac12}\|\p_1^2\p_3(b\cdot \na b)\|_{L^2}d\tau\nonumber\\
&\leq C \varepsilon\left(1+t\right)^{-\frac32} +CC_0^2\varepsilon^2\int_0^{\frac t2}\left(1+t-\tau \right)^{-\frac32} \left(1+\tau\right)^{-\frac54}d\tau\nonumber\\
&\quad+CC_0^2\varepsilon^2\int_{\frac t2}^{t} \left(t-\tau\right)^{-\frac12}\left(1+\tau \right)^{-\frac{15}{8}} d\tau \nonumber\\
&\quad+CC_0^2 \varepsilon^2\int_{0}^{t} e^{-c\left(t-\tau\right)}\left(t-\tau\right)^{-\frac12}\left(1+\tau\right)^{-\frac{15}{8}}d\tau \nonumber\\
&\leq C\varepsilon(1+t)^{-\frac32}+C C_0^2 \varepsilon^2\left(1+t\right)^{-\frac{11}{8}}\leq \frac{C_0}{2} \varepsilon\left(1+t\right)^{-\frac{11}{8}} .
\end{align*}

\vskip .1in

\textbf{Step III-4. The decay rates of $\|\p_2^2\p_3^2u\|_{L^2}, \|\p_1\p_i\p_3^2u\|_{L^2}$ with $ i\in\{2,3\}$}
 \vskip .1in
Using (\ref{KS1}), (\ref{KS21})--(\ref{KS23}) of Proposition \ref{lem2.1} and Corollary \ref{PI} with $\alpha=3$, we infer from \eqref{int2}, \eqref{p1b1}, \eqref{pl23b2}  and \eqref{p133b2} that for $i\in\{2,3\}$,
\begin{align}
\|\p_1\p_i\p_{3}^2u\|_{L^2}
&\leq  C\varepsilon(1+t)^{-2} + C\int_0^{\frac t2}\left(1+t-\tau \right)^{-2}\|\p_1\left(b\cdot \na b\right) \|_{L_{x_1}^2L_{x_2,x_3}^1}d\tau\nonumber\\
&\quad +C\int_{\frac t2}^{t}\left(t-\tau \right)^{-\frac12} \|\p_1\p_3^2(b\cdot \na b)\|_{L^2}d\tau \nonumber\\
&\quad+C\int_{0}^{t}e^{-c\left(t-\tau\right)}\left(t-\tau\right)^{-\frac12}\|\p_1\p_i\p_3(b\cdot \na b)\|_{L^2}d\tau\nonumber\\
&\leq C \varepsilon\left(1+t\right)^{-2} +CC_0^2\varepsilon^2\int_0^{\frac t2}\left(1+t-\tau \right)^{-2} \left(1+\tau\right)^{-\frac{11}{8}}d\tau\nonumber\\
&\quad+CC_0^2\varepsilon^2\int_{\frac t2}^{t} \left(t-\tau\right)^{-\frac12}\left(1+\tau \right)^{-\frac{21}{8}} d\tau \nonumber\\
&\quad+CC_0^2 \varepsilon^2\int_{0}^{t} e^{-c\left(t-\tau\right)}\left(t-\tau\right)^{-\frac12}\left(1+\tau\right)^{-2}d\tau \nonumber\\
&\leq C\varepsilon(1+t)^{-2}+C C_0^2 \varepsilon^2\left(1+t\right)^{-2}, \label{p1i33u}
\end{align}
where we have also used (\ref{ineq}). Analogously,
\begin{align*}
\|\p_{2}^2\p_3^2u\|_{L^2}
&\leq  C\varepsilon(1+t)^{-\frac52} + C\int_0^{\frac t2}\left(1+t-\tau \right)^{-\frac{5}{2}}\|b\cdot \na b\|_{L_{x_1}^2L_{x_2,x_3}^1}d\tau\nonumber\\
&\quad +C\int_{\frac t2}^{t}\left(t-\tau \right)^{-\frac12} \|\p_2\p_3^2(b\cdot \na b)\|_{L^2}d\tau \nonumber\\
&\quad+C\int_{0}^{t}e^{-c\left(t-\tau\right)}\left(t-\tau\right)^{-\frac12}\|\p_2^2\p_3(b\cdot \na b)\|_{L^2}d\tau\nonumber\\
&\leq C \varepsilon\left(1+t\right)^{-\frac{5}{2}} +CC_0^2\varepsilon^2\int_0^{\frac t2}\left(1+t-\tau \right)^{-\frac{5}{2}} \left(1+\tau\right)^{-\frac{11}{8}}d\tau\nonumber\\
&\quad+CC_0^2\varepsilon^2\int_{\frac t2}^{t} \left(1+t-\tau\right)^{-\frac12}\left(1+\tau \right)^{-3} d\tau \nonumber\\
&\quad+CC_0^2 \varepsilon^2\int_{0}^{t} e^{-c\left(t-\tau\right)}\left(t-\tau\right)^{-\frac12}\left(1+\tau\right)^{-2}d\tau \nonumber\\
&\leq C\varepsilon(1+t)^{-\frac52}+C C_0^2 \varepsilon^2\left(1+t\right)^{-2},
\end{align*}
which, combined with (\ref{p1i33u}), yields
$$
\|\p_2^2\p_3^2u(t)\|_{L^2}+\|\p_1\p_i\p_3^2u(t)\|_{L^2}\leq \frac{C_0}{2}\varepsilon(1+t)^{-2},\quad\forall\ i\in\{2,3\},
$$
provided $C_0$ is chosen to be large enough and $\varepsilon$ is chosen to be suitably small.

\vskip .1in

\textbf{Step III-5. The decay rates of $\|\p_i\p_3^3u\|_{L^2}$ with $i\in\{2,3\}$}
 \vskip .1in

In terms of Proposition \ref{lem2.1} and Corollary \ref{PI}  with $\alpha=4$, by choosing $C_0$ large enough and $\varepsilon$ suitably small we infer from \eqref{p233b2}  and \eqref{p333b2}  that for $i\in\{2,3\}$,
\begin{align*}
\|\p_i\p_{3}^3u\|_{L^2}
&\leq  C\varepsilon(1+t)^{-\frac52} + C\int_0^{\frac t2}\left(1+t-\tau \right)^{-\frac52}\|b\cdot \na b \|_{L_{x_1}^2L_{x_2,x_3}^1}d\tau\nonumber\\
&\quad +C\int_{\frac t2}^{t}\left(t-\tau \right)^{-\frac12} \|\p_3^3(b\cdot \na b)\|_{L^2}d\tau \nonumber\\
&\quad+C\int_{0}^{t}e^{-c\left(t-\tau\right)}\left(t-\tau\right)^{-\frac12}\|\p_i\p_3^2(b\cdot \na b)\|_{L^2}d\tau\nonumber\\
&\leq C \varepsilon\left(1+t\right)^{-\frac52} +CC_0^2\varepsilon^2\int_0^{\frac t2}\left(1+t-\tau \right)^{-\frac52} \left(1+\tau\right)^{-\frac{11}{8}}d\tau\nonumber\\
&\quad+CC_0^2\varepsilon^2\int_{\frac t2}^{t} \left(t-\tau\right)^{-\frac12}\left(1+\tau \right)^{-\frac{27}{8}} d\tau \nonumber\\
&\quad+CC_0^2 \varepsilon^2\int_{0}^{t} e^{-c\left(t-\tau\right)}\left(t-\tau\right)^{-\frac12}\left(1+\tau\right)^{-3}d\tau \nonumber\\
&\leq C\varepsilon(1+t)^{-\frac52}+C C_0^2 \varepsilon^2\left(1+t\right)^{-\frac52}\leq \frac{C_0}{2} \varepsilon\left(1+t\right)^{-\frac52}.
\end{align*}

\vskip .1in

\subsection{Enhanced decay rates of $(u_1, b_1)$}

In this subsection, we aim to improve the decay rates of $(u_1, b_1)$, based on (\ref{P1}), (\ref{N21}) and Corollary \ref{divfree}. Note that the decay estimates of $(u_1, b_1)$, $\p_i(u_1, b_1)$, $\p_i\p_3(u_1, b_1)$ with $i\in\{2,3\}$ and $\p_3^3(u_1, b_1)$ stated in (\ref{u101}) have been  achieved in Theorem \ref{thm1.2}.

\vskip .1in
\textbf{Part I. Improved decay rate of $\|\p_2^2u_1\|_{L^2}$}
\vskip .1in
It follows from \eqref{u} and Plancherel's theorem that
\begin{align}
\|\p_2^2u_1\|_{L^2}&=\|\widehat{\p_2^2u_1}\|_{L^2}=\|\xi_{2}^2\widehat{u_1}\|_{L^2}\nonumber\\
&\leq \left\| \xi_{2}^2\widehat{K_1}(t)\widehat{u_{10}}\right\|_{L^2}+\left\|\xi_{2}^2\widehat{K_2}(t)\widehat{b_{10}}\right\|_{L^2}\nonumber\\
&\quad+\int_0^t\left\|\xi_{2}^2\widehat{K_1}(t-\tau)\widehat{N_{11}}(\tau)\right\|_{L^2}d\tau\nonumber\\
&\quad+\int_0^t\left\|\xi_{2}^2\widehat{K_2}(t-\tau)\widehat{N_{21}}(\tau)\right\|_{L^2}d\tau\triangleq \sum_{m=1}^4 \Omega_{1,m}.\label{22u1}
\end{align}

It is easily derived from (\ref{S1}), (\ref{S2}) and Corollary \ref{divfree} with $\beta=2$ that
\begin{align}
\Omega_{1,1}+\Omega_{1,2}&\leq   C(1+t)^{-\frac{7}{4}}\|(u_0,b_0)\|_{L^1} + Ce^{-ct} \|\p_2^2(u_{10},b_{10}) \|_{L^2}\nonumber\\
&\leq C\varepsilon(1+t)^{-\frac{7}{4}}.\label{w1}
\end{align}

Analogously to the treatment in Subsection 4.3, for simplicity we only deal with the terms associated with $\widehat{\mathbb{P}_1(b\cdot\nabla b)}$ in $\widehat{N_{11}}$ and $\widehat{b_2u_1}, \widehat{ b_3u_1}$ in $\widehat{N_{21}}$, which are still denoted by $\Omega_{1,3}$ and $\Omega_{1,4}$. First, by (\ref{P1}) and Proposition \ref{lem2.1} we have
\begin{align}
\Omega_{1,3}
&\leq C\int_0^t\left\|\xi_2^2e^{-c\xi_{\nu}^2\left(t-\tau \right)}\sum_{k=1}^{3}\xi_{\nu}^2|\xi|^{-2}\xi_k\widehat{b_kb_1}\right\|_{L^2}d\tau\nonumber\\
&\quad+C\int_0^t\left\|\xi_2^2 e^{-c\xi_{\nu}^2\left(t-\tau \right)} \sum_{k=1}^{3}\sum_{l=2}^{3}\xi_1|\xi|^{-2}\xi_k\xi_l\widehat{b_kb_l}\right\|_{L^2}d\tau\nonumber\\
&\quad+C\int_0^t\left\|\xi_2^2 e^{-c\left(1+\xi_3^2\right)\left(t-\tau \right)} \sum_{k=1}^{3}\xi_{\nu}^2|\xi|^{-2}\xi_k\widehat{b_kb_1}\right\|_{L^2}d\tau\nonumber\\
&\quad+C\int_0^t\left\|\xi_2^2 e^{-c\left(1+\xi_3^2\right)\left(t-\tau \right)} \sum_{k=1}^{3}\sum_{l=2}^{3}\xi_1|\xi|^{-2}\xi_k\xi_l\widehat{b_kb_l}\right\|_{L^2}d\tau\nonumber\\
&\triangleq \Omega_{1,31}+ \Omega_{1,32}+ \Omega_{1,33}+ \Omega_{1,34}.\label{Q3b}
\end{align}

In terms of \eqref{bb11} and Corollary \ref{PI} with $\alpha=3$, we have
\begin{align}
\Omega_{1,31}
&\leq C\int_0^{\frac{t}{2}}\left(1+t-\tau \right)^{-2}\| bb_1 \|_{L_{x_1}^2L_{x_2,x_3}^1}d\tau\nonumber\\
 &\quad+C\int_{{\frac{t}{2}}}^{t}\left(t-\tau \right)^{-\frac12} \|\p_2(b\cdot \na b_1)  \|_{L^2}d\tau \nonumber\\
&\leq CC_0^2\varepsilon^2 \int_0^{\frac{t}{2}}\left(1+t-\tau \right)^{-2} \left(1+\tau\right)^{-\frac{11}{8}}d\tau\nonumber\\
&\quad+ CC_0^2 \varepsilon^2 \int_{{\frac{t}{2}}}^t\left(t-\tau \right)^{-\frac12} \left(1+\tau\right)^{-\frac{11}{4}}d\tau  \nonumber\\
&\leq C C_0^2 \varepsilon^2\left(1+t\right)^{-2}, \label{w11}
\end{align}
where we have used (\ref{u101}), (\ref{u4}), (\ref{u5}), \eqref{bbinfty}, \eqref{b1binfty} and the divergence-free condition $\nabla\cdot b=0$ to get that ($\p_\nu=(\p_2,\p_3)$ and $b_\nu=(b_2,b_3)$)
\begin{align}
&\|\p_2 ( b\cdot \na b_1 )\|_{L^2}\nonumber\\
&\quad\leq C\|\p_2 b_\nu \|_{L^2}^{\frac12}\|\p_2^2b_\nu \|_{L^2}^{\frac12}\|\p_\nu b_1 \|_{L^2}^{\frac14}\|\p_1\p_\nu b_1\|_{L^2}^{\frac14}\|\p_3\p_\nu  b_1\|_{L^2}^{\frac14}\|\p_1\p_3 \p_\nu b_1\|_{L^2}^{\frac14}\nonumber\\
&\qquad +C\|\p_2 b_1 \|_{L^2}^{\frac12}\|\p_2^2b_1 \|_{L^2}^{\frac12}\|\p_1 b_1 \|_{L^2}^{\frac14}\|\p_1^2 b_1\|_{L^2}^{\frac14}\|\p_1\p_3  b_1\|_{L^2}^{\frac14}\|\p_1^2\p_3 b_1\|_{L^2}^{\frac14}\nonumber\\
&\qquad+C\|b_1\|_{ L^\infty}\|\p_2\p_1 b_1 \|_{L^2} +C\|b_\nu\|_{ L^\infty}\|\p_2\p_\nu b_1 \|_{L^2} \nonumber\\
&\quad\leq CC_0^2 \varepsilon^2\left(1+t\right)^{-\frac{11}{4}}. \nonumber 
\end{align}

Similarly, by choosing $\delta\in(0,1/4]$  we deduce from \eqref{1bbv2} that
\begin{align}
\Omega_{1,32}
&\leq C\int_0^{\frac t2}\left( 1+t-\tau \right)^{-2}\|(bb_2 , bb_3)  \|_{L_{x_1}^2 L_{x_2,x_3}^1} d\tau\nonumber\\
&\quad+C\int_{\frac t2}^{t}\left\|\xi_{\nu} e^{-c\xi_{\nu}^2\left(t-\tau \right)} \right\|_{L^\infty} \|(\p_2^2 (b b_2 ),\p_2\p_3 (b b_3 ) ) \|_{L^2} d\tau \nonumber\\
&\leq CC_0^2\varepsilon^2\int_0^{\frac t2}\left(1+t-\tau \right)^{-2}\left(1+\tau \right)^{-1}d\tau\nonumber\\
&\quad+CC_0^2\varepsilon^2\int_{\frac t2}^{t} \left(t-\tau \right)^{-\frac12}\left(1+\tau\right)^{-\frac{5}{2}}d\tau \nonumber\\
&\leq CC_0^2\varepsilon^2 \left((1+t  )^{- 2+\delta}  + (1+t)^{-2}\right)\leq CC_0^2\varepsilon^2 \left(1+t\right)^{-\frac74}, \label{w12}
\end{align}
since it follows from  (\ref{u4}), \eqref{bbinfty} and \eqref{b1binfty} that
\begin{align}
&\|\p_2^2\left(b b_2\right)\|_{L^2}+\|\p_2\p_3\left(b b_3\right)\|_{L^2} \nonumber\\
 &\quad\leq C\|b\|_{ L^\infty}\|\p_2\p_\nu b\|_{L^2} +C\|\p_2 b\|_{L^2}^{\frac12}\|\p_1\p_2b \|_{L^2}^{\frac12} \nonumber\\
&\qquad\qquad\times\|\p_\nu b \|_{L^2}^{\frac14}\|\p_2\p_\nu b \|_{L^2}^{\frac14}\|\p_3\p_\nu b\|_{L^2}^{\frac14}\|\p_2\p_3\p_\nu b \|_{L^2}^{\frac14}\nonumber\\
& \quad\leq CC_0^2 \varepsilon^2\left(1+t\right)^{-\frac{5}{2}}.\nonumber
\end{align}

Thanks to \eqref{pl2b2}, it is easily seen from Lemma \ref{ED} that
\begin{align}
\Omega_{1,33}+ \Omega_{1,34}
&\leq C\int_0^te^{-c\left(t-\tau \right)}\left\|\p_2^2\left(b\cdot\na b\right) \right\|_{L^2}d\tau\nonumber\\
&\leq  CC_0^2\varepsilon^2\int_0^te^{-c\left(t-\tau \right)}\left(1+\tau\right)^{-2} d\tau \leq CC_0^2\varepsilon^2\left(1+t\right)^{-2}. \label{w134}
\end{align}

Plugging \eqref{w11}, \eqref{w12}  and \eqref{w134} into \eqref{Q3b}, we obtain
\begin{align}
\Omega_{1,3}\leq C C_0^2 \varepsilon^2\left(1+t\right)^{-\frac74}. \label{w3g}
\end{align}

Recalling the definition of $\widehat{N_{21}}$ in (\ref{N21}), by (\ref{KS1}) and (\ref{KS21})--(\ref{KS23}) of  Proposition \ref{lem2.1} we have
\begin{align*}
\Omega_{1,4}&=\int_0^t\left\|\xi^3_{2} e^{-c\xi_{\nu}^2\left(t-\tau \right)}\widehat{b_2 u_1} \right\|_{L^2}d\tau +\int_0^t\left\|\xi_{2}^2\xi_{3} e^{-c\xi_{\nu}^2\left(t-\tau \right)}\widehat{b_3 u_1} \right\|_{L^2}d\tau\nonumber \\
&\quad+\int_0^t\left\|\xi^3_{2} e^{-c\left(1+\xi_3^2\right)\left(t-\tau \right)}\widehat{b_2 u_1} \right\|_{L^2}d\tau +\int_0^t\left\|\xi_{2}^2\xi_{3} e^{-c\left(1+\xi_3^2\right)\left(t-\tau \right)}\widehat{b_3 u_1} \right\|_{L^2}d\tau,
\end{align*}
and hence,  similarly the derivation of (\ref{M324b}),  we deduce from  (\ref{ineq}), \eqref{bb11} and Corollary \ref{PI} with $\alpha=3$ that
\begin{align}
\Omega_{1,4}& \leq C\int_0^{\frac{t}{2}}\left(1+t-\tau \right)^{-2} \| ( b_2u_1 ,b_3u_1 )\|_{L_{x_1}^2 L_{x_2,x_3}^1} d\tau \nonumber\\
&\quad + C\int_{\frac{t}{2}}^{t}\left(t-\tau \right)^{-\frac12} \|( \p_2^2 (b_2u_1 ),\p_2\p_3 (b_3u_1 ) )\|_{L^2} d\tau \nonumber\\
&\quad +C\int_{0}^{t} e^{-c\left(t-\tau\right)}\left(\|\p_2^3( b_2u_1 ) \|_{L^2}+ (t-\tau )^{-\frac12}\|\p_2^2 (b_3u_1 )\|_{L^2}\right)d\tau \nonumber\\
& \leq CC_0^2\varepsilon^2\int_0^{\frac{t}{2}}\left(1+t-\tau \right)^{-2} \left(1+\tau\right)^{-\frac{11}{8}}d\tau\nonumber\\
&\quad+CC_0^2\varepsilon^2\int_{\frac{t}{2}}^t\left(t-\tau \right)^{-\frac{1}{2}} \left(1+\tau\right)^{-\frac{11}{4}}d\tau\nonumber\\
&\quad+CC_0^2\varepsilon^2\int_0^te^{-c\left(t-\tau\right)}\left[(1+\tau )^{-2}+(t-\tau)^{-\frac{1}{2}} (1+\tau )^{-\frac{11}{4}}\right]d\tau \nonumber\\
&\leq C C_0^2 \varepsilon^2\left(1+t\right)^{-2}. \label{w4b}
\end{align}
Here, we have also used  the following estimates due to (\ref{u4}), \eqref{bbinfty} and \eqref{b1binfty},
\begin{align}
\|\p_2\p_\nu\left(bu_1\right) \|_{L^2}
& \leq C\|b\|_{L^\infty} \|\p_2\p_\nu u_1\|_{L^2} +C\|u_1\|_{L^\infty} \|\p_2\p_\nu b\|_{L^2}  \nonumber\\
&\quad+C\|\p_\nu b\|_{L_{x_1,x_3}^\infty L_{x_3}^2}\|\p_\nu u_1\|_{L_{x_1,x_3}^2 L_{x_2}^{\infty}} \nonumber\\
&\leq CC_0^2\varepsilon^2\left(1+t\right)^{-\frac{11}{4}},\nonumber 
 \end{align}
and
\begin{align}
\|\p_2^3\left(bu_1\right) \|_{L^2}
& \leq C\|b\|_{L^\infty}\|\p_2^3 u_1 \|_{L^2}+C\|\p_2b\|_{L_{x_1,x_2}^\infty L_{x_3}^2}\|\p_2^2u_1\|_{L_{x_1,x_2}^{2}L_{x_3}^\infty} \nonumber\\
&\quad+C\|u_1\|_{L^\infty} \|\p_2^3 b \|_{L^2} +C\|\p_2u_1 \|_{L_{x_1,x_2}^{\infty}L_{x_3}^2}\|\p_2^2b\|_{L_{x_1,x_2}^{2}L_{x_3}^\infty}\nonumber\\
&\leq CC_0^2\varepsilon^2\left(1+t\right)^{-2}.\nonumber 
\end{align}

Thus, combining  \eqref{w1}, \eqref{w3g} and \eqref{w4b} with  \eqref{22u1}, we have
\begin{align}
\|\p_2^2u_1(t)\|_{L^2}\leq C\varepsilon(1+t)^{-\frac74}+  C C_0^2 \varepsilon^2\left(1+t\right)^{-\frac74}\leq \frac{C_0}{2} \varepsilon\left(1+t\right)^{-\frac74},
\end{align}
provided $C_0$ is chosen to be large enough and $\varepsilon$ is chosen to be suitably small.

\vskip .1in
\textbf{Part II. Improved decay rate of $\|\p_2\p_3^2u_1\|_{L^2}$}
\vskip .1in
Clearly, from  \eqref{u} and Plancherel's theorem it follows that
\begin{align}
\|\p_2\p_3^2u_1\|_{L^2}&=\|\widehat{\p_2\p_3^2u_1}\|_{L^2}=\|\xi_{2}\xi_{3}^2\widehat{u_1}\|_{L^2}\nonumber\\
&\leq \left\| \xi_{2}\xi_{3}^2\widehat{K_1}(t)\widehat{u_{10}}\right\|_{L^2}+\left\|\xi_{2}\xi_{3}^2\widehat{K_2}(t)\widehat{b_{10}}\right\|_{L^2}\nonumber\\
&\quad+\int_0^t\left\|\xi_{2}\xi_{3}^2\widehat{K_1}(t-\tau)\widehat{N_{11}}(\tau)\right\|_{L^2}d\tau\nonumber\\
&\quad+\int_0^t\left\|\xi_{2}\xi_{3}^2\widehat{K_2}(t-\tau)\widehat{N_{21}}(\tau)\right\|_{L^2}d\tau\triangleq \sum_{m=1}^4\Omega_{2,m}.\label{233u1}
\end{align}

Using (\ref{S1}), \eqref{S2} and Corollary \ref{divfree} with $\beta=3$, we see that
\begin{align}
\Omega_{2,1}+\Omega_{2,2} \leq C(1+t)^{-\frac{9}{4}}\left(\| u_{0}\|_{L^1}+\|\p_2\p_3^2u_0\|_{L^2 }\right)\leq  C\varepsilon(1+t)^{-\frac{9}{4}}.\label{x1}
\end{align}

Analogously to that in \eqref{Q3b}, by (\ref{P1}), (\ref{KS1}) and \eqref{KS21}--\eqref{KS23} one has
\begin{align}
\Omega_{2,3}
&\leq C\int_0^t\left\|\xi_2\xi_3^2 e^{-c\xi_{\nu}^2\left(t-\tau \right)}\sum_{k=1}^{3}\xi_{\nu}^2|\xi|^{-2}\xi_k\widehat{b_kb_1}\right\|_{L^2}d\tau\nonumber\\
&\quad+C\int_0^t\left\|\xi_2\xi_3^2 e^{-c\xi_{\nu}^2\left(t-\tau \right)} \sum_{k=1}^{3}\sum_{l=2}^{3}\xi_1|\xi|^{-2}\xi_k\xi_l\widehat{b_kb_l}\right\|_{L^2}d\tau\nonumber\\
&\quad+C\int_0^t\left\|\xi_2\xi_3^2 e^{-c\left(1+\xi_3^2\right)\left(t-\tau \right)} \sum_{k=1}^{3}\xi_{\nu}^2|\xi|^{-2}\xi_k\widehat{b_kb_1}\right\|_{L^2}d\tau\nonumber\\
&\quad+C\int_0^t\left\|\xi_2\xi_3^2 e^{-c\left(1+\xi_3^2\right)\left(t-\tau \right)} \sum_{k=1}^{3}\sum_{l=2}^{3}\xi_1|\xi|^{-2}\xi_k\xi_l\widehat{b_kb_l}\right\|_{L^2}d\tau\nonumber\\
&\triangleq \Omega_{2,31}+\Omega_{2,32}+\Omega_{2,33}+\Omega_{2,34}. \label{Q233b}
\end{align}

To deal with $\Omega_{2,31}$, we first utilize (\ref{u101}), (\ref{u4}), (\ref{u5}), \eqref{bbinfty}, \eqref{b1binfty} and the divergence-free condition $\nabla\cdot b=0$ to deduce
\begin{align}
&\|\p_3^2\left( b\cdot \na b_1\right)\|_{L^2}\nonumber\\
&\quad\leq C\|\p_3^2 b_1 \|_{L^2}^{\frac12}\|\p_2\p_3^2 b_1 \|_{L^2}^{\frac12}\|\p_1 b_1 \|_{L^2}^{\frac14}\|\p_1^2 b_1\|_{L^2}^{\frac14}\|\p_3 \p_1 b_1\|_{L^2}^{\frac14}\|\p_1^2\p_3 b_1\|_{L^2}^{\frac14}\nonumber\\
&\qquad+C\|\p_3^2 b_\nu \|_{L^2}^{\frac12}\|\p_2\p_3^2 b_\nu\|_{L^2}^{\frac12}\|\p_\nu b_1 \|_{L^2}^{\frac14}\|\p_1\p_\nu b_1\|_{L^2}^{\frac14}\|\p_3 \p_\nu b_1\|_{L^2}^{\frac14}\|\p_1\p_3\p_\nu b_1\|_{L^2}^{\frac14}\nonumber\\
&\qquad+C\|\p_3b_1 \|_{L^2}^{\frac14}\|\p_1\p_3 b_1 \|_{L^2}^{\frac14}\|\p_3^2 b_1 \|_{L^2}^{\frac14}\|\p_1\p_3^2 b_1\|_{L^2}^{\frac14}\|\p_1\p_3b_1\|_{L^2}^{\frac12}\|\p_1\p_2\p_3 b_1\|_{L^2}^{\frac12}\nonumber\\
&\qquad+C\|\p_3 b_\nu \|_{L^2}^{\frac14}\|\p_1\p_3b_\nu\|_{L^2}^{\frac14}\|\p_2\p_3b_\nu \|_{L^2}^{\frac14}\|\p_1\p_2\p_3b_\nu\|_{L^2}^{\frac14}\|\p_3\p_\nu b_1\|_{L^2}^{\frac12}\|\p_3^2\p_\nu b_1\|_{L^2}^{\frac12}\nonumber\\
&\qquad+C\|b_1\|_{ L^\infty}\|\p_1\p_3^2 b_1 \|_{L^2} +C\|b_\nu\|_{ L^\infty}\|\p_\nu\p_3^2 b_1 \|_{L^2} \nonumber\\
&\quad\leq CC_0^2 \varepsilon^2\left(1+t\right)^{-\frac{13}{4}},\nonumber 
\end{align}
which, combined with \eqref{bb11} and Corollary \ref{PI} with $\alpha=4$, yields
\begin{align}
\Omega_{2,31}
&\leq C\int_0^{\frac{t}{2}}\left(1+t-\tau \right)^{-\frac52}\|\| bb_1 \|_{L_{x_2,x_3}^1}\|_{L_{x_1}^2}d\tau \nonumber\\
&\quad+C\int_{{\frac{t}{2}}}^{t}\left(t-\tau \right)^{-\frac12} \|\p_3^2(b\cdot \na b_1)  \|_{L^2}d\tau \nonumber\\
&\leq CC_0^2\varepsilon^2 \int_0^{\frac{t}{2}}\left(1+t-\tau \right)^{-\frac52} \left(1+\tau\right)^{-\frac{11}{8}}d\tau\nonumber\\
&\quad+ CC_0^2 \varepsilon^2 \int_{{\frac{t}{2}}}^t\left(t-\tau \right)^{-\frac12} \left(1+\tau\right)^{-\frac{13}{4}}d\tau  \nonumber\\
&\leq C C_0^2 \varepsilon^2\left(1+t\right)^{-\frac{5}{2}}. \label{x31}
\end{align}

Analogously, since
\begin{align*}
\|\p_\nu\p_3^2(bb_\nu)\|_{L^2}
&\leq C\|b\|_{ L^\infty}\|\p_\nu\p_3^2b\|_{L^2}+C\|\p_\nu\p_3 b\|_{L^2}^{\frac12}\|\p_2\p_\nu\p_3b \|_{L^2}^{\frac12}
\nonumber\\
&\quad\qquad\times\|\p_\nu b \|_{L^2}^{\frac14}\|\p_1\p_\nu b \|_{L^2}^{\frac14}\|\p_3\p_\nu b\|_{L^2}^{\frac14}\|\p_1\p_3\p_\nu b \|_{L^2}^{\frac14}\nonumber\\
& \leq CC_0^2 \varepsilon^2\left(1+t\right)^{-3},
\end{align*}
due to (\ref{u4}) and \eqref{bbinfty}, by choosing $0<\delta\leq 1/4$ we infer from \eqref{1bbv2} that
\begin{align}
\Omega_{2,32}
&\leq C\int_0^{\frac t2}\left(1+t-\tau \right)^{-\frac52} \|( bb_2, bb_3 )\|_{L_{x_1}^2 L_{x_2,x_3}^1} d\tau\nonumber\\
&\quad+C\int_{\frac t2}^{t}\left\|\xi_2 e^{-c\xi_{\nu}^2\left(t-\tau \right)} \right\|_{L^\infty}  \|(\p_2\p_3^2(b b_2 ),  \p_3^3 (b b_3 )) \|_{L^2}d\tau \nonumber\\
&\leq CC_0^2\varepsilon^2\int_0^{\frac t2}\left(1+t-\tau \right)^{-\frac52}\left(1+\tau \right)^{-1}d\tau\nonumber\\
&\quad+CC_0^2\varepsilon^2\int_{\frac t2}^{t} \left(t-\tau \right)^{-\frac12}\left(1+\tau\right)^{-3}d\tau \nonumber\\
&\leq CC_0^2\varepsilon^2 \left( (1+t )^{-\frac52+\delta}  +  (1+t )^{-\frac52} \right)\leq CC_0^2\varepsilon^2 (1+t )^{-\frac94}. \label{x32}
\end{align}

Similarly to the treatment of $\Omega_{1,4}$, by \eqref{pi3b2} and (\ref{ineq}) we  obtain
\begin{align}
\Omega_{2,33}+ \Omega_{2,34}
&\leq C\int_0^te^{-c\left(t-\tau \right)}\left(t-\tau \right)^{-\frac12}\left\|\p_2\p_3\left(b\cdot\na b\right) \right\|_{L^2}d\tau\nonumber\\
&\leq  CC_0^2\varepsilon^2\int_0^te^{-c\left(t-\tau \right)}\left(t-\tau \right)^{-\frac12}\left(1+\tau\right)^{-\frac{23}{8}} d\tau\nonumber\\
&\leq CC_0^2\varepsilon^2\left(1+t\right)^{-\frac{23}{8}}. \label{x334}
\end{align}

Thus, inserting \eqref{x31}, \eqref{x32}  and \eqref{x334} into \eqref{Q233b} gives
\begin{align}
\Omega_{2,3}\leq C C_0^2 \varepsilon^2\left(1+t\right)^{-\frac94}. \label{x3g}
\end{align}

In view of (\ref{N21}), we observe that
\begin{align*}
\Omega_{2,4}&=\int_0^t\left\|\xi_2^2\xi^2_{3} e^{-c\xi_{\nu}^2\left(t-\tau \right)}\widehat{b_2 u_1} \right\|_{L^2}d\tau +\int_0^t\left\|\xi_{2}\xi^3_{3} e^{-c\xi_{\nu}^2\left(t-\tau \right)}\widehat{b_3 u_1}  \right\|_{L^2}d\tau\nonumber \\
&\quad+\int_0^t\left\|\xi_2^2\xi^2_{3} e^{-c\left(1+\xi_3^2\right)\left(t-\tau \right)}\widehat{b_2 u_1} \right\|_{L^2}d\tau +\int_0^t\left\|\xi_{2}\xi^3_{3} e^{-c\left(1+\xi_3^2\right)\left(t-\tau \right)}\widehat{b_3 u_1}  \right\|_{L^2}d\tau,
\end{align*}
so that, similarly to the treatment of $\Omega_{1,4}$ in (\ref{w4b}),   we find
\begin{align}
\Omega_{2,4}& \leq C\int_0^{\frac{t}{2}}\left(1+t-\tau \right)^{-\frac52} \| ( b_2u_1, b_3u_1 )\|_{L_{x_1}^2 L_{x_2,x_3}^1} d\tau \nonumber\\
&\quad + C\int_{\frac{t}{2}}^{t}\left(t-\tau \right)^{-\frac12} \| (\p_2\p_3^2 (b_2u_1 ), \p_3^3 (b_3u_1) )\|_{L^2} d\tau \nonumber\\
&\quad +C\int_{0}^{t} e^{-c\left(t-\tau\right)}\left(t-\tau\right)^{-\frac12}\|(\p_2^2\p_3 ( b_2u_1 ),  \p_2\p_3^2 (b_3u_1) )\|_{L^2} d\tau \nonumber\\
& \leq CC_0^2\varepsilon^2\int_0^{\frac{t}{2}}\left(1+t-\tau \right)^{-\frac52} \left(1+\tau\right)^{-\frac{11}{8}}d\tau\nonumber\\
&\quad+CC_0^2\varepsilon^2\int_{\frac{t}{2}}^t\left(t-\tau \right)^{-\frac{1}{2}} \left(1+\tau\right)^{-\frac{13}{4}}d\tau\nonumber\\
&\quad+CC_0^2\varepsilon^2\int_0^te^{-c\left(t-\tau\right)}\left(t-\tau\right)^{-\frac12}\left((1+\tau )^{-3}+(1+\tau )^{-\frac{13}{4}}\right)d\tau \nonumber\\
&\leq C C_0^2 \varepsilon^2\left(1+t\right)^{-\frac{5}{2}}, \label{x4b}
\end{align}
where we have used  \eqref{bb11}, (\ref{ineq}) and the following estimates due to
(\ref{u101}), (\ref{u4}), (\ref{u5}), \eqref{bbinfty}, \eqref{b1binfty} and the divergence-free condition $\nabla\cdot u=0$,
\begin{align}
\| \p_\nu\p_3^2(bu_1)\|_{L^2}
& \leq  C\|u_1\|_{L^\infty} \|\p_\nu\p_3^2b\|_{L^2} +C\|b\|_{L^\infty} \|\p_\nu\p_3^2u_1\|_{L^2} \nonumber\\
 &\quad+ C\|\p_\nu b\|_{L_{x_1,x_3}^{\infty}L_{x_2}^2}\| \p_3^2u_1\|_{L_{x_1,x_3}^2 L_{x_2}^\infty}\nonumber\\
&\quad
+C\|\p_3b \|_{L_{x_1,x_2}^{\infty}L_{x_3}^2}\|\p_\nu\p_3u_1\|_{L_{x_1,x_2}^2L_{x_3}^{\infty}}\nonumber\\
&\quad
 +C\|\p_\nu\p_3 b\|_{L_{x_1,x_3}^2 L_{x_2}^{\infty}}\|\p_\nu u_1\|_{L_{x_1,x_3}^\infty L_{x_2}^2}\nonumber\\
&\leq CC_0^2\varepsilon^2\left(1+t\right)^{-\frac{13}{4}},\nonumber 
\end{align}
and
\begin{align}
\|\p_2^2\p_3 (bu_1 )\|_{L^2}
& \leq C\|b\|_{L^\infty}\|\p_2^2\p_3 u_1 \|_{L^2}+C\|u_1\|_{L^\infty} \|\p_2^2\p_3 b_2 \|_{L^2} \nonumber\\
&\quad
+C\|\p_\nu b\|_{L_{x_1,x_2}^{\infty}L_{x_3}^2}\|\p_2\p_\nu u_1\|_{L_{x_1,x_2}^2 L_{x_3}^\infty} \nonumber\\
&\quad+C\|\p_3\p_2b\|_{L_{x_1,x_3}^2L_{x_2}^{\infty}}\|\p_2u_1\|_{L_{x_1,x_3}^\infty  L_{x_2}^2}\nonumber\\
&\quad+C\|\p_2^2b \|_{L_{x_1,x_2}^{2}L_{x_3}^\infty}\|\p_3u_1\|_{L_{x_1,x_2}^{\infty}L_{x_3}^2}\nonumber\\
&\leq CC_0^2\varepsilon^2\left(1+t\right)^{-3}.\nonumber 
\end{align}

Thus, inserting \eqref{x1}, \eqref{x3g}  and \eqref{x4b} into \eqref{233u1}, we obtain after choosing $C_0$ large enough and $\varepsilon$ suitably small that
$$
\|\p_2\p_3^2u_1(t)\|_{L^2}\leq C\varepsilon(1+t)^{-\frac94}+  C C_0^2 \varepsilon^2\left(1+t\right)^{-\frac94}\leq \frac{ C_0}{2} \varepsilon\left(1+t\right)^{-\frac94}. $$

\vskip .1in
\textbf{Part III. Improved decay rate of $\|\p_2\p_3^3u_1\|_{L^2}$}
\vskip .1in

Based on \eqref{u} and Plancherel's theorem, one has
\begin{align}
\|\p_2\p_3^3u_1\|_{L^2}&=\|\widehat{\p_2\p_3^3u_1}\|_{L^2}=\||\xi_{2}\xi_{3}^3|\widehat{u_1}\|_{L^2}\nonumber\\
&\leq \left\| \xi_{2}\xi_{3}^3\widehat{K_1}(t)\widehat{u_{10}}\right\|_{L^2}+\left\|\xi_{2}\xi_{3}^3\widehat{K_2}(t)\widehat{b_{10}}\right\|_{L^2}\nonumber\\
&\quad+\int_0^t\left\|\xi_{2}\xi_{3}^3\widehat{K_1}(t-\tau)\widehat{N_{11}}(\tau)\right\|_{L^2}d\tau\nonumber\\
&\quad+\int_0^t\left\|\xi_{2}\xi_{3}^3\widehat{K_2}(t-\tau)\widehat{N_{21}}(\tau)\right\|_{L^2}d\tau
\triangleq\sum_{m=1}^4\Omega_{3,m} .\label{2333u1}
\end{align}

It follows from  \eqref{S1}, \eqref{S2} and Corollary \ref{divfree} with $\beta=4$ that
\begin{align}
\Omega_{3,1}+\Omega_{3,2}
\leq C(1+t)^{-\frac{11}{4}}\left(\| u_{0}\|_{L^1}+\|\p_2\p_3^3u_0\|_{L^2 }\right)\leq C\varepsilon(1+t)^{-\frac{11}{4}}.\label{Re1}
\end{align}

Next, we deal with the term $\Omega_{3,3}$, which can be written in terms of (\ref{KS1}), (\ref{KS21})--(\ref{KS23}) and (\ref{P1}) as follows,
\begin{align}
\Omega_{3,3}
&\leq C\int_0^t\left\|\xi_2\xi_3^3 e^{-c\xi_{\nu}^2\left(t-\tau \right)}\sum_{k=1}^{3}\xi_{\nu}^2|\xi|^{-2}\xi_k\widehat{b_kb_1}\right\|_{L^2}d\tau\nonumber\\
&\quad+C\int_0^t\left\|\xi_2\xi_3^3 e^{-c\xi_{\nu}^2\left(t-\tau \right)} \sum_{k=1}^{3}\sum_{l=2}^{3}\xi_1|\xi|^{-2}\xi_k\xi_l\widehat{b_kb_l}\right\|_{L^2}d\tau\nonumber\\
&\quad+C\int_0^t\left\|\xi_2\xi_3^3 e^{-c\left(1+\xi_3^2\right)\left(t-\tau \right)} \sum_{k=1}^{3}\xi_{\nu}^2|\xi|^{-2}\xi_k\widehat{b_kb_1}\right\|_{L^2}d\tau\nonumber\\
&\quad+C\int_0^t\left\|\xi_2\xi_3^3 e^{-c\left(1+\xi_3^2\right)\left(t-\tau \right)} \sum_{k=1}^{3}\sum_{l=2}^{3}\xi_1|\xi|^{-2}\xi_k\xi_l\widehat{b_kb_l}\right\|_{L^2}d\tau\nonumber\\
&\triangleq \Omega_{3,31}+\Omega_{3,32}+\Omega_{3,33}+\Omega_{3,34}.\label{Q2333b}
\end{align}

Using (\ref{u101}), (\ref{u4}), (\ref{u5}), \eqref{bbinfty}, \eqref{b1binfty} and the divergence-free condition $\nabla\cdot b=0$, we have
\begin{align}
\|\p_3^3\left( b\cdot \na b_1\right)\|_{L^2}
&\leq C\|b_1\|_{ L^\infty}\|\p_1\p_3^3 b_1 \|_{L^2} +C\|b_\nu\|_{ L^\infty}\|\p_\nu\p_3^3 b_1 \|_{L^2} \nonumber\\
&\quad+ C\|\p_3^3b_1\|_{L_{x_1,x_3}^2L_{x_2}^{\infty}}\|\p_1b_1\|_{L_{x_1,x_3}^\infty L_{x_2}^2}\nonumber\\
&\quad+C\|\p_3^3b_\nu \|_{L_{x_1,x_3}^{2}L_{x_2}^\infty}\|\p_\nu b_1\|_{L_{x_1,x_3}^{\infty}L_{x_2}^2}\nonumber\\
&\quad+ C \|\p_3^2b_1 \|_{L_{x_1,x_3}^{2}L_{x_2}^\infty}\|\p_1\p_3b_1\|_{L_{x_1,x_3}^{\infty}L_{x_2}^2}\nonumber\\
&\quad+ C\|\p_3^2b_\nu\|_{L_{x_1,x_2}^{\infty}L_{x_3}^2}\|\p_\nu\p_3b_1\|_{L_{x_1,x_2}^2 L_{x_3}^\infty } \nonumber\\
&\quad+ C\|\p_3b_1\|_{L_{x_1x_2}^\infty L_{x_3}^2}\|\p_1\p_3^2b_1\|_{L_{x_1,x_2}^{2}L_{x_3}^\infty}\nonumber\\
&\quad+C\|\p_3b_\nu  \|_{L_{x_1,x_2}^{\infty}L_{x_3}^2}\|\p_\nu\p_3^2b_1\|_{L_{x_1,x_2}^2L_{x_3}^{\infty}}\nonumber\\
&\leq CC_0^2 \varepsilon^2\left(1+t\right)^{-\frac{15}{4}},  \label{p3bb}
\end{align}
which, together with \eqref{bb11} and Corollary \ref{PI} with $\alpha=5$, yields
\begin{align}
\Omega_{3,31}
&\leq C\int_0^{\frac{t}{2}}\left(1+t-\tau \right)^{-3}\|\| bb_1 \|_{L_{x_2x_3}^1}\|_{L_{x_1}^2}d\tau\nonumber\\
&\quad +C\int_{{\frac{t}{2}}}^{t}\left(t-\tau \right)^{-\frac12} \|\p_3^3(b\cdot \na b_1) \|_{L^2}d\tau \nonumber\\
&\leq CC_0^2\varepsilon^2 \int_0^{\frac{t}{2}}\left(1+t-\tau \right)^{-3} \left(1+\tau\right)^{-\frac{11}{8}}d\tau\nonumber\\
&\quad+ CC_0^2 \varepsilon^2 \int_{{\frac{t}{2}}}^t\left(t-\tau \right)^{-\frac12} \left(1+\tau\right)^{-\frac{15}{4}}d\tau  \nonumber\\
&\leq C C_0^2 \varepsilon^2\left(1+t\right)^{-3}. \label{R11}
\end{align}

Analogously, noting that
\begin{align}
\|\p_\nu\p_3^3(b b_\nu)\|_{L^2}
&\leq C\|b\|_{ L^\infty} \|\p_\nu\p_3^3b\|_{L^2} +C\|\p_3^3b\|_{L_{x_1,x_3}^2 L_{x_2}^\infty}\|\p_\nu b\|_{L_{x_1,x_3}^{\infty}L_{x_2}^2}\nonumber\\
&\quad+C\|\p_3^2\p_\nu b \|_{L_{x_1,x_2}^{2}L_{x_3}^\infty}\|\p_3b\|_{L_{x_1,x_2}^\infty L_{x_3}^{2}}\nonumber\\
&\quad+ C\|\p_3^2b\|_{ L_{x_2,x_3}^\infty L_{x_1}^2}\|\p_\nu\p_3b\|_{L_{x_2,x_3}^2L_{x_1}^{\infty}}\nonumber\\
& \leq CC_0^2 \varepsilon^2\left(1+t\right)^{-\frac72},  \label{334b2}
\end{align}
from which and \eqref{1bbv2}, we obtain by choosing $\delta\in(0,1/4]$ that
\begin{align}
\Omega_{3,32}
&\leq C\int_0^{\frac t2}\left(1+t-\tau \right)^{-3} \|( bb_2, bb_3)  \|_{L_{x_1}^2 L_{x_2,x_3}^1}d\tau\nonumber\\
&\quad+C\int_{\frac t2}^{t}\left\|\xi_2e^{-c\xi_{\nu}^2\left(t-\tau \right)}  \right\|_{L^\infty}  \|(\p_2\p_3^3 (b b_2 ), \p_3^4 (b b_3)) \|_{L^2} d\tau \nonumber\\
&\leq CC_0^2\varepsilon^2\int_0^{\frac t2}\left(1+t-\tau \right)^{-3}\left(1+\tau \right)^{-1}d\tau\nonumber\\
&\quad+CC_0^2\varepsilon^2\int_{\frac t2}^{t} \left(t-\tau \right)^{-\frac12}\left(1+\tau\right)^{-\frac72}d\tau \nonumber\\
&\leq CC_0^2\varepsilon^2 \left(1+t \right)^{-3+\delta}+ CC_0^2\varepsilon^2\left(1+\tau\right)^{-3}  \nonumber\\
  &\leq CC_0^2\varepsilon^2 \left(1+t\right)^{-\frac{11}{4}}. \label{R12}
\end{align}

It is easily deduced from \eqref{p233b2} and (\ref{ineq}) that
\begin{align}
\Omega_{3,33}+ \Omega_{3,34}
&\leq C\int_0^te^{-c\left(t-\tau \right)}\left(t-\tau \right)^{-\frac12}\left\|\p_2\p_3^2\left(b\cdot\na b\right) \right\|_{L^2}d\tau\nonumber\\
&\leq  CC_0^2\varepsilon^2\int_0^te^{-c\left(t-\tau \right)}\left(t-\tau \right)^{-\frac12}\left(1+\tau\right)^{-3} d\tau\nonumber\\
&\leq CC_0^2\varepsilon^2\left(1+t\right)^{-3}. \label{R34}
\end{align}

Thus, by virtue of \eqref{R11}, \eqref{R12}  and \eqref{R34}, we infer from \eqref{Q2333b} that
\begin{align}
\Omega_{3,3}\leq C C_0^2 \varepsilon^2\left(1+t\right)^{-\frac{11}{4}}. \label{R123g}
\end{align}

Similarly to that for $\Omega_{2,4}$,  we have from (\ref{KS1}), (\ref{KS21})--(\ref{KS23}) and  (\ref{N21}) that
\begin{align*}
\Omega_{3,4}&\leq \int_0^t\left\|\xi_2^2\xi^3_{3} e^{-c\xi_{\nu}^2\left(t-\tau \right)}\widehat{b_2 u_1} \right\|_{L^2}d\tau +\int_0^t\left\|\xi_{2}\xi^4_{3} e^{-c\xi_{\nu}^2\left(t-\tau \right)}\widehat{b_3 u_1}  \right\|_{L^2}d\tau\nonumber \\
&\quad+\int_0^t\left\|\xi_2^2\xi^3_{3}e^{-c\left(1+\xi_3^2\right)\left(t-\tau \right)}\widehat{b_2 u_1}  \right\|_{L^2}d\tau +\int_0^t\left\|\xi_{2}\xi^4_{3}e^{-c\left(1+\xi_3^2\right)\left(t-\tau \right)}\widehat{b_3 u_1}  \right\|_{L^2}d\tau
\end{align*}
and thus,
\begin{align}
\Omega_{3,4}
& \leq C\int_0^{\frac{t}{2}}\left(1+t-\tau \right)^{-3} \|  (b_2u_1, b_3u_1)\|_{L_{x_1}^2 L_{x_2,x_3}^1} d\tau \nonumber\\
&\quad + C\int_{\frac{t}{2}}^{t}\left(t-\tau \right)^{-\frac12} \| (\p_2\p_3^3 (b_2u_1 ),\p_3^4 (b_3u_1 ))\|_{L^2} d\tau \nonumber\\
&\quad +C\int_{0}^{t} e^{-c\left(t-\tau\right)}\left(t-\tau\right)^{-\frac12} \|(\p_2^2\p_3^2 ( b_2u_1 ),\p_2\p_3^3 (b_3u_1 ) )\|_{L^2}d\tau \nonumber\\
& \leq CC_0^2\varepsilon^2\int_0^{\frac{t}{2}}\left(1+t-\tau \right)^{-3} \left(1+\tau\right)^{-\frac{11}{8}}d\tau\nonumber\\
&\quad+CC_0^2\varepsilon^2\int_{\frac{t}{2}}^t\left(t-\tau \right)^{-\frac{1}{2}} \left(1+\tau\right)^{-\frac{15}{4}}d\tau\nonumber\\
&\quad +CC_0^2\varepsilon^2\int_0^te^{-c\left(t-\tau\right)}\left(t-\tau\right)^{-\frac12}\left((1+\tau )^{-3}+(1+\tau )^{-\frac{15}{4}}\right)d\tau \nonumber\\
&\leq C C_0^2 \varepsilon^2\left(1+t\right)^{-3}, \label{R4b}
\end{align}
where we have used (\ref{u101}), (\ref{u4}), (\ref{u5}), \eqref{bbinfty}, \eqref{b1binfty} and the divergence-free condition $\nabla\cdot u=0$ to get that
\begin{align}
&\|\p_2\p_3^3\left(bu_1\right)\|_{L^2}+\|\p_3^4\left(bu_1\right) \|_{L^2}+\| \p_2\p_3^3\left(bu_1\right) \|_{L^2} \nonumber\\
&\quad \leq C\|b\|_{L^\infty} \|\p_\nu\p_3^3u_1\|_{L^2} +C\|u_1\|_{L^\infty} \|\p_\nu\p_3^3b\|_{L^2} \nonumber\\
&\qquad +C\|\p_\nu\p_3^2b\|_{L_{x_1,x_2}^2L_{x_3}^{\infty}}\|\p_3u_1\|_{L_{x_1,x_2}^\infty L_{x_3}^2}+C\|\p_3^3b \|_{L_{x_1,x_3}^{2}L_{x_2}^\infty}\|\p_\nu u_1\|_{L_{x_1,x_3}^{\infty}L_{x_2}^2}\nonumber\\
&\qquad
+C\|\p_3^2b\|_{L_{x_1,x_2}^{\infty}L_{x_3}^2}\|\p_\nu \p_3u_1\|_{L_{x_1,x_2}^2 L_{x_3}^\infty}+\|\p_\nu\p_3b \|_{L_{x_1,x_3}^{\infty}L_{x_2}^2}\|\p_3^2u_1\|_{L_{x_1,x_3}^2L_{x_2}^{\infty}}
\nonumber\\
&\qquad
+C\|\p_3b\|_{L_{x_1,x_2}^{\infty}L_{x_3}^2}\|\p_2\p_3^2u_1\|_{L_{x_1,x_2}^2 L_{x_3}^\infty}+C\|\p_2b\|_{L_{x_1,x_3}^\infty L_{x_2}^{2}}\|\p_3^3u_1 \|_{L_{x_1,x_3}^{2}L_{x_2}^\infty}\nonumber\\
&\quad\leq CC_0^2\varepsilon^2\left(1+t\right)^{-\frac{15}{4}},\label{p2p3bb}
\end{align}
and
\begin{align}
\|\p_2^2\p_3^2\left(b_2u_1\right) \|_{L^2}
& \leq C\|b\|_{L^\infty}\|\p_2^2\p_3^2 u_1 \|_{L^2}+C\|u_1\|_{L^\infty} \|\p_2^2\p_3^2 b \|_{L^2} \nonumber\\
&\quad+C\|\p_i\p_j\p_3b\|_{L_{x_1,x_2}^2L_{x_3}^{\infty}}\|\p_\nu u_1\|_{L_{x_1,x_2}^\infty L_{x_3}^2} \nonumber\\
&\quad
+C\|\p_\nu \p_3b\|_{L_{x_1,x_2}^{\infty}L_{x_3}^2}\|\p_2\p_\nu u_1\|_{L_{x_1,x_2}^2 L_{x_3}^\infty}\nonumber\\
&\quad+C\|\p_2^2b \|_{L_{x_1,x_3}^{\infty}L_{x_2}^2}\|\p_3^2u_1\|_{L_{x_1,x_3}^2L_{x_2}^{\infty}}\nonumber\\
&\quad
+\|\p_\nu b \|_{L_{x_1,x_2}^{\infty}L_{x_3}^2}\|\p_i\p_j\p_3 u_1\|_{L_{x_1,x_2}^2L_{x_3}^{\infty}}\nonumber\\
&\leq CC_0^2\varepsilon^2\left(1+t\right)^{-3}\nonumber 
\end{align}
with  $i,j\in\{2,3\}$. Thus, inserting \eqref{Re1}, \eqref{R123g}  and \eqref{R4b} into \eqref{2333u1} gives rise to
\begin{align}
\|\p_2\p_3^2u_1(t)\|_{L^2}\leq \left( C\varepsilon+ C C_0^2 \varepsilon^2\right)\left(1+t\right)^{-\frac{11}{4}}
\leq  \frac{ C_0}{2} \varepsilon\left(1+t\right)^{-\frac{11}{4}},
\end{align}
provided $C_0$ is chosen to be large enough and $\varepsilon$ is chosen to be suitably small.

\vskip .1in
\textbf{Part IV. Improved decay rate of $\|\p_3^4u_1 \|_{L^2}$}
\vskip .1in
In terms of \eqref{u},  we have
\begin{align}
\|\p_3^4u_1\|_{L^2}&=\|\widehat{\p_3^4u_1}\|_{L^2}=\||\xi_{3}^4|\widehat{u_1}\|_{L^2}\nonumber\\
&\leq \left\| \xi_{3}^4\widehat{K_1}(t)\widehat{u_{10}}\right\|_{L^2}+\left\|\xi_{3}^4\widehat{K_2}(t)\widehat{b_{10}}\right\|_{L^2}\nonumber\\
&\quad+\int_0^t\left\|\xi_{3}^4\widehat{K_1}(t-\tau)\widehat{N_{11}}(\tau)\right\|_{L^2}d\tau\nonumber\\
&\quad+\int_0^t\left\|\xi_{3}^4\widehat{K_2}(t-\tau)\widehat{N_{21}}(\tau)\right\|_{L^2}d\tau \triangleq \sum_{m=1}^4\Omega_{4,m}.\label{34u1}
\end{align}

It follows from Corollary \ref{divfree} with $\beta=4$ that
\begin{align}
\Omega_{4,1}+\Omega_{4,2}
 \leq C(1+t)^{-\frac{11}{4}}\left(\| u_{0}\|_{L^1}+\|\p_3^4u_0\|_{L^2 }\right)\leq  C\varepsilon(1+t)^{-\frac{11}{4}}.\label{L1}
\end{align}

Due to (\ref{KS1}), (\ref{KS21})--(\ref{KS23}) and (\ref{P1}), we have
\begin{align}
\Omega_{4,3}&\leq C\int_0^t\left\|\xi_3^4 e^{-c\xi_{\nu}^2\left(t-\tau \right)}\sum_{k=1}^{3}\xi_{\nu}^2|\xi|^{-2}\xi_k\widehat{b_kb_1}\right\|_{L^2}d\tau\nonumber\\
&\quad+C\int_0^t\left\||\xi_3|^4 e^{-c\xi_{\nu}^2\left(t-\tau \right)} \sum_{k=1}^{3}\sum_{l=2}^{3}\xi_1|\xi|^{-2}\xi_k\xi_l\widehat{b_kb_l}\right\|_{L^2}d\tau\nonumber\\
&\quad+C\int_0^t\left\||\xi_3|^4 e^{-c\left(1+\xi_3^2\right)\left(t-\tau \right)} \sum_{k=1}^{3}\xi_{\nu}^2|\xi|^{-2}\xi_k\widehat{b_kb_1}\right\|_{L^2}d\tau\nonumber\\
&\quad+C\int_0^t\left\||\xi_3|^4 e^{-c\left(1+\xi_3^2\right)\left(t-\tau \right)} \sum_{k=1}^{3}\sum_{l=2}^{3}\xi_1|\xi|^{-2}\xi_k\xi_l\widehat{b_kb_l}\right\|_{L^2}d\tau\nonumber\\
&\triangleq \Omega_{4,31}+\Omega_{4,32}+\Omega_{4,33}+\Omega_{4,34}\label{Q3333b}
\end{align}

Similarly to the derivations of (\ref{R11}) and (\ref{R12}), by  \eqref{bb11} and \eqref{p3bb} we find
\begin{align}
\Omega_{4,31}
&\leq C\int_0^{\frac{t}{2}}\left(1+t-\tau \right)^{-3}\|  bb_1 \|_{L_{x_1}^2 L_{x_2,x_3}^1} d\tau\nonumber\\
 &\quad +C\int_{{\frac{t}{2}}}^{t}\left(t-\tau \right)^{-\frac12} \|\p_3^3(b\cdot \na b_1) \|_{L^2}d\tau \nonumber\\
&\leq C C_0^2 \varepsilon^2\left(1+t\right)^{-3}, \label{L11}
\end{align}
and by \eqref{1bbv2} and \eqref{334b2} we obtain
\begin{align}
\Omega_{4,32}&\leq C\int_0^{\frac t2}\left(1+t-\tau \right)^{-3} \| (bb_2, bb_3 ) \|_{L_{x_1}^2L_{x_2,x_3}^1} d\tau\nonumber\\
&\quad+C\int_{\frac t2}^{t}\left\||\xi_\nu|e^{-c\xi_{\nu}^2\left(t-\tau \right)} \right\|_{L^\infty}  \|\p_3^4 (b b_\nu) \|_{L^2} d\tau \nonumber\\
&\leq CC_0^2\varepsilon^2 \left(1+t\right)^{-\frac{11}{4}}. \label{L12}
\end{align}

Due to  \eqref{p333b2} and (\ref{ineq}), we deduce
\begin{align}
\Omega_{4,33}+ \Omega_{4,34}
&\leq C\int_0^te^{-c\left(t-\tau \right)}\left(t-\tau \right)^{-\frac12}\left\|\p_3^3\left(b\cdot\na b\right) \right\|_{L^2}d\tau\nonumber\\
&\leq  CC_0^2\varepsilon^2\int_0^te^{-c\left(t-\tau \right)}\left(t-\tau \right)^{-\frac12}\left(1+\tau\right)^{-\frac{27}{8}} d\tau\nonumber\\
&\leq CC_0^2\varepsilon^2\left(1+t\right)^{-\frac{27}{8}}. \label{L134}
\end{align}

By \eqref{L11}, \eqref{L12}  and \eqref{L134}, we conclude from (\ref{Q3333b}) that
\begin{align}
\Omega_{4,3}\leq C C_0^2 \varepsilon^2\left(1+t\right)^{-\frac{11}{4}}. \label{L123g}
\end{align}

Finally, analogously to the treatment of $\Omega_{3,4}$ in \eqref{R4b}, using \eqref{bb11}, \eqref{334b2} and  \eqref{p2p3bb}, we obtain
\begin{align}
\Omega_{4,4}&\leq C\int_0^{\frac{t}{2}}\left(1+t-\tau \right)^{-3} \|  (b_2u_1,  b_3u_1 )\|_{L_{x_1}^2L_{x_2,x_3}^1} d\tau \nonumber\\
&\quad+ C\int_{\frac{t}{2}}^{t}\left(t-\tau \right)^{-\frac12} \| (\p_3^4 ( b_2u_1 ),\p_3^4 (b_3u_1 ) \|_{L^2} d\tau \nonumber\\
&\quad +C\int_{0}^{t} e^{-c\left(t-\tau\right)}\left(t-\tau\right)^{-\frac12}\|(\p_2\p_3^3 ( b_2u_1 ),\p_3^4(b_3u_1 ) \|_{L^2}d\tau \nonumber\\
& \leq  C C_0^2 \varepsilon^2\left(1+t\right)^{-3}. \label{L4}
\end{align}

Now, plugging \eqref{L1}, \eqref{L123g}  and \eqref{L4} into \eqref{34u1} gives rise to
\begin{align*}
\|\p_3^4u_1\|_{L^2}\leq\left( C\varepsilon+ C C_0^2 \varepsilon^2\right) (1+t )^{-\frac{11}{4}}\leq \frac{C_0}{2}\varepsilon  (1+t )^{-\frac{11}{4}},
\end{align*}
provided $C_0$ is chosen to be large enough and $\varepsilon$ is chosen to be suitably small.

\vskip .1in

\begin{proof}[Proof of Theorem \ref{thm1.3}] With all the estimates established in Subsections 5.2 and 5.3 at hand,  we
immediately obtain the desired decay rates of the solutions stated in Theorem \ref{thm1.3},
based on  (\ref{u101}), (\ref{u4}), (\ref{u5}) and the bootstrapping arguments.
\end{proof}

\vskip .2in
\noindent {\bf Acknowledgments}.
S. Lai was partially supported by the National Natural Science Foundation of China (Grant Nos. 12201262, 12071390), the Natural Science Foundation of Jiangxi Province (Grant No. 20232BAB211001) and Jiangxi Province Key Subject Academic and Technical Leader Funding Project (Grant No. 20232BCJ23037). J. Wu was partially supported by the National Science Foundation of the United States
under the grant DMS 2104682 and DMS 2309748.  J. Zhang was partially supported by  the National Natural Science Foundation of China (Grant Nos. 12071390, 12131007, 12226344). X. Zhao was partially supported by  the National Natural Science Foundation of China  (Grant No. 12101200) and China Postdoctoral Science Foundation (Grant No. 2022M721035).

\vskip .1in

\end{document}